\documentclass[letterpaper,11pt]{amsart}
\usepackage{hyperref}
\usepackage{amsmath,amssymb,amsthm,amsopn}
\usepackage{fullpage}
\usepackage{verbatim}
\usepackage{graphicx}

\DeclareMathOperator{\Clo}{Clo}
\DeclareMathOperator{\Inv}{Inv}
\DeclareMathOperator{\lcm}{lcm}
\DeclareMathOperator{\CSP}{CSP}
\DeclareMathOperator{\Sg}{Sg}
\DeclareMathOperator{\Aut}{Aut}

\begin{document}

\makeatletter
\newtheorem*{rep@theorem}{\rep@title}
\newcommand{\newreptheorem}[2]{%
\newenvironment{rep#1}[1]{%
 \def\rep@title{#2 \ref{##1}}%
 \begin{rep@theorem}}%
 {\end{rep@theorem}}}
\makeatother

\newtheorem{thm}{Theorem}
\newreptheorem{thm}{Theorem}
\newtheorem{prop}{Proposition}
\newreptheorem{prop}{Proposition}
\newtheorem{cor}{Corollary}
\newtheorem{lem}{Lemma}
\newreptheorem{lem}{Lemma}

\theoremstyle{definition}
\newtheorem{defn}{Definition}
\newtheorem{conj}{Conjecture}
\newreptheorem{conj}{Conjecture}

\theoremstyle{remark}
\newtheorem{rem}{Remark}

\newcommand{\Rho}{\mathrm{P}}
\newcommand{\cS}{\mathcal{S}}
\newcommand{\cM}{\mathcal{M}}
\newcommand{\cF}{\mathcal{F}}
\newcommand{\cG}{\mathcal{G}}
\newcommand{\cP}{\mathcal{P}}
\newcommand{\cV}{\mathcal{V}}
\newcommand{\RR}{\mathbb{R}}
\newcommand{\ZZ}{\mathbb{Z}}
\newcommand{\NN}{\mathbb{N}}
\newcommand{\bA}{\mathbb{A}}
\newcommand{\bB}{\mathbb{B}}
\newcommand{\bC}{\mathbb{C}}
\newcommand{\bD}{\mathbb{D}}
\newcommand{\bE}{\mathbb{E}}
\newcommand{\bF}{\mathbb{F}}
\newcommand{\bI}{\mathbb{I}}
\newcommand{\bS}{\mathbb{S}}
\newcommand{\fA}{\mathbf{A}}
\newcommand{\fB}{\mathbf{B}}
\newcommand{\gk}{\kappa}
\newcommand{\gS}{\Sigma}
\newcommand{\gl}{\lambda}
\newcommand{\gt}{\theta}

\newcommand{\dotcup}{\ensuremath{\mathaccent\cdot\cup}}

\title{Examples, counterexamples, and structure in bounded width algebras}
\author{Zarathustra Brady}
\address{Department of Mathematics \\ Massachusetts Institute of Technology \\ 77 Massachusetts Avenue \\ Building 2, Room 350B\\ Cambridge, MA 02139-7307}
\email{notzeb@mit.edu}

\thanks{This material is based upon work supported by the National Science Foundation Graduate Research Fellowship under Grant No. (DGE-114747) as well as the NSF Mathematical Sciences Postdoctoral Research Fellowship under Grant No. (DMS-1705177).}

\maketitle

\begin{abstract} We study bounded width algebras which are minimal in the sense that every proper reduct does not have bounded width. We show that minimal bounded width algebras can be arranged into a pseudovariety with one basic ternary operation. We classify minimal bounded width algebras which have size at most three, and prove a structure theorem for minimal bounded width algebras which have no majority subalgebra, which form a pseudovariety with a commutative binary operation. As a byproduct of our results, we also classify minimal clones which have a Taylor term.
\end{abstract}

\section{Introduction}

In the past several years, a number of beautiful characterizations of bounded width algebras have been found. We summarize a few of them in the next proposition.

\begin{prop} Let $\bA$ be a finite idempotent algebra. The following are equivalent:
\begin{enumerate}
\item $\bA$ has bounded relational width.
\item $\bA$ has relational width at most $(2,3)$.
\item Every ``cycle consistent'' instance of $\CSP(\bA)$ has a solution. In fact, every ``$pq$ instance'' of $\CSP(\bA)$ has a solution.
\item $\CSP(\bA)$ has a ``robust'' satisfiability algorithm.
\item $\CSP(\bA)$ is solved by the canonical SDP relaxation.
\item $\bA$ generates a congruence meet-semidistributive variety.
\item $\bA$ generates a variety which omits types $\mathbf{1}$ and $\mathbf{2}$.
\item $\CSP(\bA)$ does not have ``the ability to count''.
\item No nontrivial quotient of any subalgebra of $\bA$ is affine.
\item $\bA$ has weak near-unanimity terms $g(x,y,z), h(x,y,z,w)$ satisfying
\[
g(x,x,y) \approx g(x,y,x) \approx g(y,x,x) \approx h(x,x,x,y) \approx h(x,x,y,x) \approx h(x,y,x,x) \approx h(y,x,x,x).
\]
\item For every sufficiently large $k$, $\bA$ has a $k$-ary weak near-unanimity term.
\item The commutator trivializes, that is, $[ \alpha,\beta ] = \alpha \wedge \beta$ for any congruences $\alpha,\beta$ on any algebra in the variety generated by $\bA$.
\end{enumerate}
\end{prop}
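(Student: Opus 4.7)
My plan is not to re-prove this proposition from scratch, since it is really a compendium of deep theorems by many authors. Instead, I would take the algebraic condition (6) (congruence meet-semidistributivity of the variety $\cV(\bA)$) as the central hub and prove the equivalence of every other item to (6). The reason (6) is the natural pivot is that it is the most robust characterization in universal-algebraic terms and is the one from which the identities in (10) can be produced.

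For the purely algebraic block $(6) \Leftrightarrow (7) \Leftrightarrow (9) \Leftrightarrow (10) \Leftrightarrow (11) \Leftrightarrow (12)$, I would argue as follows. The equivalence $(6) \Leftrightarrow (7)$ is Hobby--McKenzie tame congruence theory: meet-semidistributivity of a locally finite variety is exactly omission of types $\mathbf{1}$ and $\mathbf{2}$. The equivalence $(6) \Leftrightarrow (12)$ is the commutator-theoretic reformulation of SD$(\wedge)$, which follows from the standard commutator calculus in congruence-modular-or-SD settings applied at each prime quotient. For $(7) \Leftrightarrow (9)$, note that a Taylor term is implicit (it follows from any of the conditions under discussion, so in particular the variety omits type $\mathbf{1}$), and then omission of type $\mathbf{2}$ is precisely the absence of nontrivial affine sections of subalgebras. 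Finally, $(6) \Leftrightarrow (10) \Leftrightarrow (11)$ is the Mar\'oti--McKenzie / Kozik--Krokhin--Valeriote--Willard theorem: any SD$(\wedge)$ variety has weak near-unanimity terms of all sufficiently large arities and, by an averaging/symmetrization argument, one can produce the matching ternary/quaternary pair in (10).

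For the CSP block $(1) \Leftrightarrow (2) \Leftrightarrow (3) \Leftrightarrow (4) \Leftrightarrow (5) \Leftrightarrow (8)$ and its link to (6), I would go through the Barto--Kozik bounded-width theorem, which establishes $(6) \Rightarrow (1)$ via a Prague-type strategy analysis for $(2,3)$-consistency, thereby yielding $(2)$ and $(3)$ as free by-products once one knows the strategy terminates under cycle consistency. The converse $(1) \Rightarrow (6)$ is the easier direction, proved by contradiction: if some subalgebra of $\bA$ has a nontrivial affine quotient then $\CSP(\bA)$ encodes linear equations mod a prime, violating bounded width; this simultaneously yields $(8) \Rightarrow (9)$. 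The equivalence with $(4)$ is Dalmau--Krokhin--Manokaran's robust-satisfiability theorem, and $(5)$ comes from the SDP-rounding result of Barto--Kozik (and Thapper--\v Zivn\'y on the LP side for the approximation-resistance comparison), so in the write-up I would cite these rather than reproduce them.

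The step I expect to be the main obstacle, and the one on which the whole picture rests, is $(6) \Rightarrow (1)$: namely showing that SD$(\wedge)$ implies that $(2,3)$-consistency already solves the CSP. This is the hard direction of the Barto--Kozik theorem; reproducing it would require importing the machinery of absorption, linkedness, and the inductive ``few subpowers / bounded width'' strategy argument on subdirect products. Given the scope of the proposition (a survey statement), my write-up would therefore consist of short one-line reductions together with explicit pointers to the original papers for $(6)\Rightarrow(1)$, Mar\'oti--McKenzie for weak NU terms, and Hobby--McKenzie for the TCT backbone, rather than any attempt to reprove these.
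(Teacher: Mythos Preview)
Your proposal is correct and takes essentially the same approach as the paper: this proposition is a survey statement, and the paper's own proof is nothing more than a paragraph of citations (Barto for $(1)\Leftrightarrow(2)$, Kozik for $(1)\Leftrightarrow(3)$, Hobby--McKenzie for $(6)\Leftrightarrow(7)$, Kearnes--Kiss for $(6)\Leftrightarrow(12)$, etc.). Your choice to organize the citations around $(6)$ as a hub rather than listing pairwise references is a cosmetic difference, not a substantive one; a couple of your attributions are slightly off (e.g.\ $(6)\Leftrightarrow(12)$ is specifically Kearnes--Kiss rather than generic commutator calculus, and the robust/SDP equivalences $(4),(5)$ are bundled together in the paper under a single reference), but these are minor and do not affect correctness.
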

\begin{proof} For the equivalence of (1), and (2), see \cite{barto} (or \cite{bulatov-bounded}, for a slightly weaker result). For the equivalence of (1) and (3), see \cite{slac}, although cycle consistency is referred to as ``singleton linear arc consistency'' there. For the equivalence of (1), (4), and (5), see \cite{sdp}. For the equivalence of (6) and (7), see Theorem 9.10 of \cite{hobby-mckenzie}. For the equivalence of (7), (8), and (9), see \cite{ability-to-count}. For the equivalence of (1) and (7), see \cite{local-consistency}. For the equivalence of (7) and (10), see \cite{maltsev}. For the equivalence of (7) and (11), see \cite{weak-near-unanimity}. For the equivalence of (6) and (12) as well as several other equivalent congruence conditions, see Theorem 8.1 of \cite{kearnes-kiss}.
\end{proof}

It is curious that there are only a few explicit examples of bounded width algebras in the literature. The two basic examples of bounded width algebras are the \emph{majority} algebras and \emph{semilattice} algebras. A majority algebra is an algebra having a ternary operation $g$, called a majority operation, which satisfies the identity
\[
g(x,x,y) \approx g(x,y,x) \approx g(y,x,x) \approx x.
\]
The simplest example of a majority algebra is the \emph{dual discriminator} algebra, with the basic operation $d$ which is given by
\[
d(x,y,z) = \begin{cases}x & \text{ if } y \ne z,\\ y & \text{ if } y = z.\end{cases}
\]
Another fundamental example of a majority algebra is the \emph{median} algebra on an ordered set, whose basic operation returns the median of its three inputs. Generalizing majority algebras, there are the algebras of \emph{bounded strict width}, which (by the Baker-Pixley Theorem \cite{baker-pixley}) are the algebras which have a near-unanimity term of some arity. A \emph{near-unanimity} term $t$ of arity $n$ is defined to be a term which satisfies the identity
\[
t(x,x,...,x,y) \approx t(x,x,...,y,x) \approx \cdots \approx t(x,y,...,x,x) \approx t(y,x,...,x,x) \approx x.
\]
A binary operation $s$ is called a \emph{semilattice} operation if it is associative, commutative, and idempotent. A standard example is the operation $\vee$ of any lattice, and in general every semilattice corresponds to a poset in which every pair of elements have a unique least upper bound. Generalizing semilattices are the so-called \emph{2-semilattice} operations, in which associativity is replaced by the weaker identity
\[
s(x,s(x,y)) \approx s(x,y).
\]
Unlike semilattices, a 2-semilattice need not be associated to a consistent partial ordering.

The goal of this paper is to develop a better understanding of bounded width algebras. In particular we will study several Mal'cev conditions described in \cite{optimal-maltsev}. In that paper, they prove that in every locally finite variety of bounded width, there exists an idempotent term $t$ satisfying the identity
\begin{align*}
t_3(x,x,x,y) &\approx t_3(x,x,y,x) \approx t_3(x,y,x,x) \approx t_3(y,x,x,x)\tag{SM 3}\\
&\approx t_3(x,x,y,y) \approx t_3(x,y,x,y) \approx t_3(x,y,y,x)
\end{align*}
using a difficult Ramsey theoretic construction and the fact that every bounded width algebra has width $(2,3)$. In this paper we'll show that there is a much more direct argument for the existence of such a term (and, in fact, for many more terms) using $pq$ instances \cite{slac}.

Using these terms and a simple iteration argument, we show that an idempotent algebra has bounded width iff it has terms $f,g$ which satisfy the identities
\[
g(x,x,y) \approx g(x,y,x) \approx g(y,x,x) \approx f(x,y)
\]
and
\[
f(f(x,y),f(y,x)) \approx f(x,y).
\]
We conjecture that there are even nicer terms, which satisfy identities having a clear relationship to majority and semilattice algebras.

\begin{repconj}{conj-ws} In every bounded width algebra, there are terms $w,s$ satisfying the identities
\[
w(x,x,y) \approx w(x,y,x) \approx w(y,x,x) \approx s(x,y)
\]
and
\[
s(x,s(x,y)) \approx s(s(x,y),x) \approx s(x,y).
\]
\end{repconj}

We also use our terms to give a construction for an infinite family of special weak near-unanimity terms in Appendix \ref{a-special}, settling Problem 4.2 of \cite{optimal-maltsev}.

\begin{repthm}{special} Let $\bA$ be a finite algebra of bounded width. Then $\bA$ has an idempotent term $t(x,y)$ satisfying the identity
\[
t(x,t(x,y)) \approx t(x,y)
\]
along with an infinite sequence of idempotent weak near-unanimity terms $w_n$ of every arity $n > 2\lcm\{1,2,...,|\bA|-1\}$ such that for every sequence $(a_1, ..., a_n)$ with $\{a_1, ..., a_n\} = \{x,y\}$ and having strictly less than $\frac{n}{2\lcm\{1,2,...,|\bA|-1\}}$ of the $a_i$ equal to $y$ we have
\[
w_n(a_1, ..., a_n) \approx t(x,y).
\]
\end{repthm}

Two other conjectured terms from \cite{optimal-maltsev} are idempotent terms satisfying either the identity
\begin{align*}
t_5(x,y,z,y) &\approx t_5(y,x,z,z) \approx t_5(z,x,x,y)\tag{SM 5}
\end{align*}
or the identity
\begin{align*}
t_6(x,y,z,y) &\approx t_6(y,x,x,z) \approx t_6(z,y,x,x).\tag{SM 6}
\end{align*}
We give a three element bounded width algebra which does not have terms satisfying either identity (SM 5) or (SM 6) in Appendix \ref{a-sm}.


Since an algebra with fewer term operations corresponds to a relational clone containing more constraints, it is natural to study bounded width algebras such that their clone of operations is \emph{minimal} among bounded width clones. The main result of this paper is a (nonconstructive) proof of the existence of an underlying structure for bounded width algebras which are minimal in this sense. This structure result rests crucially on the following coherence theorem, the proof of which is inspired by a clever argument of Bulatov from \cite{colored-graph}.

\begin{repthm}{subalg} Let $\cV$ be a locally finite idempotent bounded width variety such that every algebra in $\cV$ is connected through two element semilattice and two element majority subalgebras, and let $\bA \in \cV$.

Suppose there is a term $m$ and a subset $S \subseteq \bA$ which is closed under $m$, such that $(S,m)$ is a bounded width algebra. Let $\cV'$ be the reduct of $\cV$ consisting of all terms $t$ of $\cV$ such that $S$ is closed under $t$ and such that $t|_S \in \Clo(m|_S)$. Then $\cV'$ also has bounded width.
\end{repthm}

Immediate consequences of this result are that every subalgebra and every quotient of a minimal bounded width algebra is also a minimal bounded width algebra. Using these, we show that the collection of minimal bounded width algebras can be arranged into a pseudovariety $\cV_{mbw}$ (see Theorem \ref{minvar} for details) - this is what is meant by the claim that minimal bounded width algebras have an underlying structure.

We note in passing that there is an analogue of Theorem \ref{subalg} for idempotent varieties having $p$-ary cyclic terms. To see this, suppose that $c$ is a $p$-ary cyclic term for $\cV$, that $\bA \in \cV$ and that there is a $p$-ary term $m$ and a subset $S \subseteq \bA$ which is closed under $m$ and such that $m|_S$ is cyclic. Then if we define a $p$-ary term $t$ by
\[
t(x_1, ..., x_p) = c(m(x_1, x_2, ..., x_p), m(x_2, x_3, ..., x_1), ..., m(x_p, x_1, ..., x_{p-1})),
\]
we see that $t$ is cyclic and that $t|_S = m|_S$ (by idempotence of $c$). By one of the main results of \cite{cyclic}, we can conclude that there is also an analogue of Theorem \ref{subalg} for locally finite idempotent Taylor varieties.

Using this structure result, we classify minimal bounded width algebras of size $3$ (see Theorem \ref{class-3} and Figure \ref{doodles}) as well as minimal bounded width algebras of size $4$ which are generated by two elements (see Theorem \ref{class-4} and Figure \ref{four}).

We also introduce a new generalization of 2-semilattices, which we call \emph{spirals}, in section \ref{spiral}.

\begin{defn} An algebra $\bA = (A,f)$ is a \emph{spiral} if $f$ is a commutative idempotent binary operation and every subalgebra of $\bA$ which is generated by two elements either has size two or has a surjective homomorphism to the free semilattice on two generators.
\end{defn}

We show that every spiral has bounded width, and that every minimal bounded width clone which has no majority subalgebra is term equivalent to a spiral (Theorem \ref{recursive}). As an easy consequence of our results, we can prove a classification theorem for minimal clones which are Taylor.

\begin{thm} Every minimal clone which has a Taylor term is either an affine algebra, a majority algebra, or a spiral.
\end{thm}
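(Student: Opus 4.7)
The plan is to invoke Rosenberg's classification of minimal clones, eliminate the cases incompatible with Taylor, and then identify each of the remaining cases with one of the three conclusions.

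By Rosenberg's theorem, every minimal clone on a finite set is generated by an operation of one of five types: (a) a unary operation (either a prime-order permutation or a retraction), (b) a binary idempotent non-projection, (c) a majority, (d) a minority, or (e) a $k$-ary semiprojection with $k \geq 3$. Taylor rules out (a) and (e): a unary generator supports no nontrivial idempotent identity in two variables, and a semiprojection fails any Taylor identity by evaluating with all but one coordinate equal. Case (c) already delivers a majority algebra, and case (d), by Cs\'ak\'any's classification of minimal Maltsev clones, delivers the affine algebra $(\mathbb{Z}/p\mathbb{Z}, x-y+z)$ for some prime $p$.

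The substantive case is (b): $\Clo(\bA)$ is minimal, Taylor, and generated by a binary idempotent non-projection $f$. I would proceed in three steps. \emph{First}, show that $\bA$ has no majority subalgebra: a majority term on a subalgebra would, by minimality and a Baker--Pixley-style local-to-global argument, lift to a majority term on all of $\bA$, which would place $\bA$ in Rosenberg case (c) rather than (b). \emph{Second}, show $\bA$ has bounded width by excluding affine subquotients, using characterization (9) of the opening proposition; an affine subquotient would embed Maltsev-style operations into $\Clo(f)$, and minimality would then collapse $\Clo(f)$ to the clone of a Maltsev term, contradicting that its generator is a binary non-Maltsev operation. \emph{Third}, apply Theorem \ref{recursive}, which asserts that a minimal bounded width clone without a majority subalgebra is term equivalent to a spiral, to conclude that $\bA$ is a spiral.

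The main obstacle is ruling out affine subquotients in Step 2, since the minimality of $\Clo(f)$ only constrains global terms, whereas an affine subquotient lives inside a subalgebra modulo a congruence. I expect the cleanest route to go through the commutator-trivial characterization (12): an abelian congruence in a subalgebra of $\bA$ would, via standard commutator theory, produce a ternary module-like operation in $\Clo(f)$ that is not expressible as a binary term in $f$, whereupon minimality of $\Clo(f)$ forces an actual collapse of the clone, giving the desired contradiction. The majority-subalgebra step is less delicate but still requires some care, as one must show that the majority identity on a subalgebra propagates under minimality to the whole clone.
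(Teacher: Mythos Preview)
Your approach via Rosenberg's classification is a legitimate alternative to the paper's direct argument, but Step~2 contains a genuine error. You claim that a minimal clone in Rosenberg's binary case~(b) which is Taylor must have bounded width, arguing that an affine subquotient would force a Mal'cev term into $\Clo(f)$, ``contradicting that its generator is a binary non-Maltsev operation.'' This contradiction is illusory: Rosenberg's types are not mutually exclusive, and for every odd prime $p$ the minimal affine clone over $\ZZ/p$ \emph{is} in case~(b). Concretely, over $\ZZ/3$ the binary idempotent operation $f(x,y)=2x+2y$ satisfies $f(y,f(x,z))=x-y+z$, so $\Clo(f)$ is exactly the affine clone; it is minimal, Taylor, generated by a binary idempotent non-projection, and not of bounded width. (Rosenberg's type~(d) covers only the Boolean case $p=2$, so your appeal to Cs\'ak\'any there handles only that one prime.)

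The fix is easy and essentially recovers the paper's argument inside case~(b): if $\bA$ fails bounded width, pick an affine algebra $\bB$ in $HS\!P(\bA)$ and a Mal'cev term $p$ for it. Since $p(x,y,y)\approx x$ and $p(y,y,x)\approx x$ are absorption identities holding in $\bB$, they must hold in $\bA$ (otherwise a nontrivial binary minor of $p$ generates a clone whose restriction to $\bB$ is trivial, contradicting minimality). The same trick then transfers \emph{every} identity of $\bB$ to $\bA$ via $p(x,t,s)\approx x$, so $\bA$ is affine---one of the three allowed conclusions, not a contradiction. The paper's proof simply runs this absorption-identity argument twice (once for Mal'cev, once for majority) and never invokes Rosenberg at all; this makes it shorter and avoids the case overlap that tripped you up. Your Step~1 and Step~3 are fine once Step~2 is repaired to allow the affine outcome.
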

\begin{proof} Suppose that $\bA$ is a Taylor algebra with $\Clo(\bA)$ minimal. If $\bA$ does not have bounded width, then there is a nontrivial affine algebra $\bB$ in the variety generated by $\bA$ by the classification of bounded width algebras. Then $\bB$ has a mal'cev term $p$ satisfying the identities $p(x,y,y) \approx p(y,y,x) \approx x$, and if these identities do not hold in $\bA$ then one of the binary operations $p(x,y,y), p(y,y,x)$ generates a strictly smaller clone. Similarly every \emph{absorption identity}, that is, any identity of the form $t \approx x$ where $t$ is a term and $x$ is a variable, which holds in $\bB$ must also hold in $\bA$. But then if any identity $t \approx s$ holds in $\bB$, then $p(x,t,s) \approx x$ holds in $\bB$, so it must also hold in $\bA$. Thus
\[
t \approx p(t,t,s) \approx s
\]
holds in $\bA$ as well, so in fact $\bA$ is in the variety generated by $\bB$, and is therefore an affine algebra.

Now suppose that $\bA$ has bounded width. If there is any majority subalgebra $\bB \le \bA$, then $\bB$ has a majority term $m$ satisfying the identities $m(x,x,y) \approx m(x,y,x) \approx m(y,x,x) \approx x$. Since these are absorption identities, they must also hold in $\bA$, so $\bA$ is a majority algebra. If $\bA$ has no majority subalgebra, then $\bA$ is a spiral by Theorem \ref{recursive}.
\end{proof}

{\bf Organization of the paper.} The first few sections of the paper concern general results about bounded width algebras. In Section \ref{s-partial}, we prove several fundamental results which generalize results on 2-semilattices to what we call ``partial semilattice operations'', which exist in every algebra having at least one semilattice subalgebra. In Section \ref{s-cycle}, we state the definitions of cycle consistency and $pq$ consistency as well as the main result of \cite{slac} which we will use a black box. In Section \ref{s-intersect}, we prove the existence of an infinite family of terms which behave similarly to the monotone self-dual functions on a two element domain, as well as an important connectivity result (Theorem \ref{connect}) which can be viewed as a refinement of Bulatov's results on the connectedness of a certain colored graph he associates to bounded width algebras \cite{bulatov-bounded}, \cite{colored-graph}. In Section \ref{s-pseudovariety} we prove our main results on the existence of a pseudovariety of minimal bounded width algebras (Theorem \ref{subalg} and Theorem \ref{minvar}). In Section \ref{s-connectivity}, we discuss several consequences of the ``yellow connectivity property'' proved by Bulatov \cite{bulatov-bounded}.

The last few sections consist of several classification results, examples, and conjectures based on these examples. In Section \ref{spiral}, we classify minimal bounded width algebras which have no majority subalgebras, which we call minimal \emph{spirals}, and give a recursive structure theory for them (Theorem \ref{recursive}). In Section \ref{s-three}, we classify minimal bounded width algebras of size $3$, and in Section \ref{s-two-gen} we classify minimal bounded width algebras of size $4$ which are generated by $2$ elements. In Section \ref{s-conj} we list several conjectures which are supported by the examples found in this paper.

In Appendix \ref{a-yellow}, we prove a refinement of Bulatov's yellow connectivity property. In Appendix \ref{a-special}, we construct an infinite family of special weak near-unanimity terms. In Appendix \ref{a-sm} we give a three element counterexample to the existence of terms satisfying the identities (SM 5) or (SM 6).

\section{Partial Semilattice Operations}\label{s-partial}

In this section we collect some results which generalize results about 2-semilattices, and which apply in all idempotent varieties (even those varieties which do not omit type $\mathbf{1}$). Most of these results were found by Bulatov in \cite{bulatov-bounded} and \cite{colored-graph}, but the presentation here is different. Lemma \ref{prepare} seems to be an entirely new result: it shows that one may modify the basic operations of an idempotent algebra (by ``preparing'' their inputs) in order to make any ``potential'' semilattice subalgebra into an an actual semilattice subalgebra without affecting any of the two-variable linear identities satisfied by these operations.

\begin{defn} We say that a binary operation $s$ is a \emph{partial semilattice} if it satisfies the identities
\[
s(x,s(x,y)) \approx s(s(x,y),x) \approx s(x,y),\;\;\; s(x,x) \approx x.
\]
\end{defn}

Note that the definition of a partial semilattice implies that for all $x,y$, the set $\{x,s(x,y)\}$ is closed under $s$, and the restriction of $s$ to this set is a semilattice operation. Partial semilattice operations are the binary analogue of unary operations $u$ which satisfy $u(u(x)) \approx u(x)$, in that they can be produced using an iteration argument.

\begin{lem}[Semilattice Iteration Lemma]\label{semi-iter} Let $t$ be a binary idempotent term of a finite algebra. Then there exists a partial semilattice $s \in \Clo(t)$ which is built out of $t$ in a nontrivial way (i.e. both $x$ and $y$ show up in the definition of $s(x,y)$ in terms of $t$). In particular, for any $a,b$ such that $t(a,b) = t(b,a) = b$, we have $s(a,b) = s(b,a) = b$.
\end{lem}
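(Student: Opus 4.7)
The plan is to produce $s$ by iterating $t$ inside the finite algebra $\bA$ and invoking pigeonhole. First I would achieve one-sided absorption. For each $x \in \bA$ the map $\phi_x(v) = t(x,v)$ is a self-map of $\bA$, and for $N$ divisible by $\lcm(1,2,\ldots,|\bA|)$ with $N \geq |\bA|$, the iterate $\phi_x^N$ is idempotent as a function of $v$, uniformly in $x$. Hence the term
\[
u(x,y) = t(x, t(x, \ldots, t(x, y) \ldots))
\]
with $N$ nested applications of $t$ satisfies the left absorption identity $u(x, u(x,y)) \approx u(x,y)$ and is visibly built nontrivially from $t$.

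Next I would add right absorption. A first attempt is to iterate the right-multiplication $\psi_x(v) = u(v,x)$, setting $s(x,y) = \psi_x^M(u(x,y))$ with $M$ large enough that $\psi_x^M$ is idempotent for every $x$. The danger is that this outer iteration can spoil the left absorption achieved in the first step, so I would instead iterate an operator on the finite set of binary operations in $\Clo(t)$---for instance $\Phi(s)(x,y) = s(x, s(y, x))$, or a similar variant, chosen so that every partial semilattice is a fixed point. Since there are only finitely many binary operations on $\bA$, the sequence $s_k = \Phi^k(t)$ is eventually periodic; the real work is to exhibit an invariant of the iteration (for instance that the image of $s_k(x,\cdot)$ remains absorbed from the left by $x$) that forces any eventually stable operation to satisfy \emph{both} absorption identities simultaneously, not merely the syntactic fixed-point condition of $\Phi$. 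This is the main obstacle: the one-sided iteration of the previous paragraph is routine pigeonhole, but combining left and right absorption requires finding the correct operator $\Phi$ and carefully tracking what it preserves along the iteration.

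Finally, the ``in particular'' assertion falls out once $s$ is known to be built from $t$ using both $x$ and $y$. Whenever $t(a,b) = t(b,a) = b$, the two-element set $\{a,b\}$ is closed under $t$, and $t|_{\{a,b\}}$ is the join of the two-element semilattice with $b$ on top. Restricting $s$ to $\{a,b\}$ yields an idempotent binary term over this semilattice whose syntactic expression still involves both variables; since the only such term on a two-element semilattice is the join itself (the two projections each forget a variable, and any tree built from $\vee$ containing both variables collapses via $x \vee x \approx x$ to $x \vee y$), we conclude $s(a,b) = s(b,a) = b$.
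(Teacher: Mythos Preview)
Your first step, producing $u = t^{\infty}$ with $u(x,u(x,y)) \approx u(x,y)$ by iterating $t(x,\cdot)$, is correct and is exactly what the paper does. Your final paragraph handling the ``in particular'' claim is also fine.

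The gap is your second step, and you say so yourself: you identify that combining left and right absorption is the main obstacle, propose iterating an operator like $\Phi(s)(x,y) = s(x,s(y,x))$, and then stop, saying the real work is to find an invariant. That is not a proof; it is a description of what a proof would need. The paper's argument fills exactly this gap, and in fact your guess $\Phi$ is the right first move. Set $u(x,y) = t^{\infty}(x,t^{\infty}(y,x))$, i.e.\ apply your $\Phi$ \emph{once} to $t^{\infty}$. Then do \emph{not} keep applying $\Phi$; instead iterate $u$ in the same left-variable sense as before, taking $u^{i+1}(x,y) = u(x,u^i(x,y))$ and $s = u^{\infty}$. The invariant you were looking for is simply
\[
t^{\infty}(x,u^{\infty}(x,y)) \approx u^{\infty}(x,y),
\]
which is immediate because every $u$-step begins with an outer $t^{\infty}(x,\cdot)$ and $t^{\infty}(x,t^{\infty}(x,z)) \approx t^{\infty}(x,z)$. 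With this in hand, a direct two-line calculation shows $u(u^{\infty}(x,y),x) \approx u^{\infty}(x,y)$: unfold $u$, use the invariant to collapse the inner $t^{\infty}(x,u^{\infty}(x,y))$ to $u^{\infty}(x,y)$, and then use idempotence of $t^{\infty}$. Hence $u^{\infty}(u^{\infty}(x,y),x) \approx u^{\infty}(x,y)$, while $u^{\infty}(x,u^{\infty}(x,y)) \approx u^{\infty}(x,y)$ is automatic from the definition of $u^{\infty}$. So the missing idea is: apply your $\Phi$ once, then re-run the left iteration on the result; the outer $t^{\infty}(x,\cdot)$ built into $u$ is what carries the left absorption through.
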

\begin{proof} Define $t^i(x,y)$ by $t^0(x,y) = y$, $t^1(x,y) = t(x,y)$, and
\[
t^{i+1}(x,y) = t(x,t^i(x,y)),
\]
and define $t^{\infty}(x,y)$ by 
\[
t^{\infty}(x,y) = \lim_{n\rightarrow \infty} t^{n!}(x,y) \approx t^{|\bA|!}(x,y).
\]
Note that we have
\[
t^{\infty}(x,t^{\infty}(x,y)) \approx t^{\infty}(x,y).
\]
Now define $u(x,y)$ by
\[
u(x,y) = t^{\infty}(x,t^{\infty}(y,x)),
\]
and define $u^i(x,y), u^{\infty}(x,y)$ in analogy with $t^i(x,y), t^{\infty}(x,y)$. We will show that $s = u^{\infty}$ is a partial semilattice.

First, note that $t^{\infty}(x,u^{\infty}(x,y)) \approx u^{\infty}(x,y)$, since we have
\begin{align*}
t^{\infty}(x,u^{i+1}(x,y)) &= t^{\infty}(x,u(x,u^i(x,y)))\\
&= t^{\infty}(x,t^{\infty}(x,t^{\infty}(u^i(x,y),x)))\\
&\approx t^{\infty}(x,t^{\infty}(u^i(x,y),x))\\
&= u(x,u^i(x,y)) = u^{i+1}(x,y)
\end{align*}
for any $i \ge 0$. Thus, for any $i \ge 0$ we have
\begin{align*}
u^{i+1}(u^{\infty}(x,y),x) &= u^i(u^{\infty}(x,y),u(u^{\infty}(x,y),x))\\
&= u^i(u^{\infty}(x,y),t^{\infty}(u^{\infty}(x,y),t^{\infty}(x,u^{\infty}(x,y))))\\
&\approx u^i(u^{\infty}(x,y),t^{\infty}(u^{\infty}(x,y),u^{\infty}(x,y)))\\
&\approx u^{\infty}(x,y),
\end{align*}
where the last step follows from idempotence. From this we see that
\[
u^{\infty}(u^{\infty}(x,y),x) \approx u^{\infty}(x,y),
\]
while
\[
u^{\infty}(x,u^{\infty}(x,y)) \approx u^{\infty}(x,y)
\]
follows directly from the definition of $u^{\infty}$.
\end{proof}

\begin{rem} Essentially the same result is proved in Proposition 10 of \cite{colored-graph}, with a slightly different construction. The construction given here has the nice additional property that for any $a,b \in \bA,$ the value of $s(a,b)$ may be computed from $t|_{\Sg_{\bA}\{a,b\}}$ in time polynomial in the size of $\Sg_{\bA}\{a,b\}$.
\end{rem}

From any partial semilattice operation $s$, we can define certain higher-arity terms $s_n$ which behave nicely when the number of distinct values among their inputs is at most two.

\begin{prop}\label{higher-semilattice} If $s$ is a partial semilattice operation, then for all $n \ge 1$ there are terms $s_n \in \Clo(s)$ of arity $n$ such that if $\{x,x_2, ..., x_n\} = \{x,y\}$, then
\[
s_n(x,x_2, ..., x_n) \approx s(x,y).
\]
\end{prop}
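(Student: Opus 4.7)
The plan is to exploit a simple consequence of the partial semilattice axioms: for any $a, b \in \bA$, the identities $s(a,a) \approx a$, $s(a, s(a,b)) \approx s(s(a,b), a) \approx s(a,b)$, together with idempotence applied to $s(a,b)$ itself, show that $s$ restricts to an honest semilattice operation on the two-element set $\{a, s(a,b)\}$, with $s(a,b)$ as the absorbing (top) element. Consequently, any iterated $s$-term which uses all of its arguments will, when applied to inputs from such a set, compute the semilattice join: it returns $s(a,b)$ if at least one input equals $s(a,b)$, and $a$ otherwise. The heart of the construction is therefore to first push all of our inputs into such a two-element semilattice.

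Accordingly, I would build $s_n$ in two stages. Set $s_1(x) = x$. For $n \ge 2$, first ``prepare'' the last $n-1$ arguments by applying $s(x_1, -)$ to each of them; under the hypothesis $\{x, x_2, \dots, x_n\} = \{x, y\}$ with $x_1 = x$, each prepared value $s(x, x_i)$ lands in the two-element set $\{x, s(x,y)\}$, since $s(x,x) = x$ and $s(x,y) = s(x,y)$. Second, aggregate the prepared values using the right-associated iteration
\[
R(a_2, \dots, a_n) \;=\; s(a_2, s(a_3, s(\dots, s(a_{n-1}, a_n) \dots))),
\]
and define $s_n(x_1, \dots, x_n) = R(s(x_1, x_2), s(x_1, x_3), \dots, s(x_1, x_n))$.

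To verify the identity, I would split into two cases. If $x = y$, then every input equals $x$, and repeated application of $s(x,x) \approx x$ gives $s_n(x, \dots, x) = x = s(x,x) = s(x,y)$. Otherwise $x \ne y$, so some $x_i = y$ with $i \ge 2$, and the corresponding prepared value $s(x, x_i) = s(x,y)$ appears among the arguments of $R$; the remaining arguments all lie in $\{x, s(x,y)\}$, so by the observation above $R$ returns the top element $s(x,y)$, as required. There is no real obstacle beyond spotting the preparation trick; once it is in place, the whole argument reduces to the trivial computation of a join in a two-element semilattice.
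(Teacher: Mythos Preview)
Your argument is correct and essentially mirrors the paper's approach: both constructions first ``prepare'' the non-first arguments via $s(x_1,-)$, forcing everything into the two-element semilattice $\{x, s(x,y)\}$, and then fold the prepared values with $s$. The only cosmetic difference is that the paper folds from the left (defining $s_n(x_1,\dots,x_n) = s(s_{n-1}(x_1,\dots,x_{n-1}), s(x_1,x_n))$ and verifying the identity by a three-case induction on $n$), whereas you fold from the right and replace the case analysis by the single observation that any $s$-term involving all its variables computes the join when restricted to a two-element semilattice.
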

\begin{proof} Define $n$-ary functions $s_n(x_1, ..., x_n)$ by $s_1(x) = x, s_2(x,y) = s(x,y)$ and
\[
s_n(x_1, ..., x_n) = s(s_{n-1}(x_1, ..., x_{n-1}), s(x_1,x_n)).
\]
We will show by induction on $n$ that if $\{x,x_2, ..., x_n\} = \{x,y\}$, then
\[
s_n(x,x_2, ..., x_n) \approx s(x,y).
\]
There are three cases. If $x_2 = \cdots = x_{n-1} = x$, then we must have $x_n = y$, so
\[
s_n(x,x_2, ..., x_n) = s(s_{n-1}(x, ..., x), s(x,y)) \approx s(x,s(x,y)) \approx s(x,y).
\]
If $\{x,x_2, ..., x_{n-1}\} = \{x,y\}$ and $x_n = x$, then by the inductive hypothesis
\[
s_n(x,x_2, ..., x_n) = s(s_{n-1}(x, x_2 ..., x_{n-1}), s(x,x)) \approx s(s(x,y),x) \approx s(x,y).
\]
Finally, if $\{x,x_2, ..., x_{n-1}\} = \{x,y\}$ and $x_n = y$, then by the inductive hypothesis
\[
s_n(x,x_2, ..., x_n) = s(s_{n-1}(x, x_2 ..., x_{n-1}), s(x,y)) \approx s(s(x,y),s(x,y)) \approx s(x,y).\qedhere
\]
\end{proof}

Recall that an identity is \emph{linear} if it does not involve any nesting of operations. Remarkably, the class of bounded width algebras and the class of algebras which omit type $\mathbf{1}$ (i.e. Taylor algebras) can both be characterized by Mal'cev conditions consisting of finitely many two-variable linear identities which involve both variables on each side (see \cite{optimal-maltsev} and \cite{optimal-taylor}), making them well-suited to the next lemma.

\begin{lem}[Semilattice Preparation Lemma]\label{prepare} Let $\bA = (A, (f_i)_{i \in I})$ be a finite idempotent algebra, and let $\Sigma$ be the set of all two-variable linear identities which involve both variables on each side and are satisfied in $\bA$. Then $\bA$ has terms $(f_i')_{i \in I}$ which satisfy the identities in $\Sigma$, with the following property:

For every pair of subalgebras $\bB, \bC$ of $(A,(f_i')_{i\in I})$ such that there exists a term $t \in \Clo((f_i')_{i\in I})$ with $t(b,c), t(c,b) \in \bC$ whenever $b \in \bB, c \in \bC$, we in fact have $f_i'(b_1, ..., b_m) \in \bC$ whenever $b_1, ..., b_m \in \bB\cup \bC$ such that at least one of $b_1, ..., b_m$ is an element of $\bC$. In particular, if there is a $t\in \Clo((f_i')_{i\in I})$ with $t(b,c) = t(c,b) = c$, then $\{b,c\}$ is a semilattice subalgebra of $(A,(f_i')_{i\in I})$.
\end{lem}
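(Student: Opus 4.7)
The plan is to define the $f_i'$ by precomposing each $f_i$ with a preprocessing term built from a universal partial semilattice operation. First I would enumerate every pair of subsets $(\bB, \bC) \subseteq A$ admitting a binary term $t \in \Clo(\bA)$ with $\bB, \bC$ closed under $t$ and $t(\bB, \bC), t(\bC, \bB) \subseteq \bC$, apply the Semilattice Iteration Lemma to $t$, and obtain a partial semilattice $s_{(\bB, \bC)} \in \Clo(t) \subseteq \Clo(\bA)$ inheriting the same set-valued absorbing property. A direct inspection of the nested iteration $u^{\infty}$ in the proof of Lemma~\ref{semi-iter} shows that the condition ``inputs in $\bB \cup \bC$ with at least one in $\bC$ forces output in $\bC$'' is preserved at every level of iteration. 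Since $A$ is finite there are only finitely many such pairs, and I would combine their partial semilattices into a single universal $s \in \Clo(\bA)$ by iteratively re-applying Lemma~\ref{semi-iter} to compositions of the $s_{(\bB, \bC)}$'s.

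With $s$ in hand I would set
\[
f_i'(x_1, \ldots, x_m) = f_i\bigl(s_m(x_1, x_2, \ldots, x_m),\, s_m(x_2, x_3, \ldots, x_m, x_1),\, \ldots,\, s_m(x_m, x_1, \ldots, x_{m-1})\bigr),
\]
where $s_m$ is the higher-arity partial semilattice term from Proposition~\ref{higher-semilattice}. Checking that the identities in $\Sigma$ are preserved is a short computation: on inputs taking only two values $\{x, y\}$, Proposition~\ref{higher-semilattice} evaluates each $s_m(\ldots)$ to either $s(x, y)$ or $s(y, x)$ (according to its first argument), so a two-variable linear identity in $\Sigma$ propagates to $f_i'$ as an instance of the original identity under the substitution $x \mapsto s(x, y)$, $y \mapsto s(y, x)$.

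For the absorbing property, consider a pair $(\bB, \bC)$ of subalgebras of $(A, (f_i'))$ and an absorbing term $t \in \Clo((f_i')) \subseteq \Clo(\bA)$. Then $(\bB, \bC)$ falls under the enumeration in the first step (which ranges over subsets, not just subalgebras of $\bA$), so the universal $s$ absorbs the pair. A short induction on $m$ mirroring the proof of Proposition~\ref{higher-semilattice} shows that $s_m$ inherits the generalized absorbing property: for inputs in $\bB \cup \bC$ with at least one in $\bC$, the output lies in $\bC$. Thus each of the $m$ preprocessed arguments fed into $f_i$ in the formula for $f_i'(x_1, \ldots, x_m)$ lies in $\bC$, and to pass from $f_i$ applied to these $\bC$-valued preprocessed inputs back into $\bC$ I would invoke the case-specific partial semilattice $s^* \in \Clo((f_i'))$ obtained by applying Lemma~\ref{semi-iter} to $t$ itself: since $s^* \in \Clo((f_i'))$, the subalgebra $\bC$ is closed under $s^*$, and this closure together with the structure of the preparation formula forces $f_i'(x_1, \ldots, x_m) \in \bC$.

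The main obstacle is twofold. Merging finitely many partial semilattices into a single universal $s$ in the first step is technically delicate, requiring an inductive tracking of the iteration in Lemma~\ref{semi-iter} through each combination while verifying that no absorbing condition is lost. The more conceptual difficulty is that the universal $s$ used for the preparation lies in $\Clo(\bA)$ but not necessarily in $\Clo((f_i'))$, so subalgebras of the prepared algebra need not a priori be closed under $s$ itself; this is what forces the argument to bring in the case-specific $s^*$ at the final step, rather than relying on $s$ throughout.
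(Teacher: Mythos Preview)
There is a genuine gap at the final step. You define $f_i'(x_1,\ldots,x_m) = f_i(s_m(\ldots),\ldots)$ with the \emph{original} $f_i$ on the outside, then consider a subalgebra $\bC$ of the prepared algebra $(A,(f_i'))$. Even granting that every preprocessed argument $s_m(\ldots)$ lands in $\bC$, you still need $f_i(\bC,\ldots,\bC) \subseteq \bC$, i.e.\ that $\bC$ is closed under the original $f_i$ --- and there is no reason for that, since $\bC$ is only assumed closed under the $f_i'$. Your patch via a case-specific $s^*\in\Clo((f_i'))$ does not help: closure of $\bC$ under $s^*$ says nothing about $f_i$ applied to elements of $\bC$, and re-preparing once more with $s^*$ changes the algebra yet again, forcing you to re-examine the subalgebras of the new one --- exactly the regress you were trying to avoid. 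A secondary issue is that your universal $s$ may not exist with the properties you need: each $s_{(\bB,\bC)}$ lies in $\Clo(t_{(\bB,\bC)})$, and when you compose two of them there is no reason the sets from one pair are closed under the partial semilattice coming from another, so even the inductive ``$s_m$ absorbs'' verification can break down.

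The paper sidesteps both problems with a maximality argument rather than a one-shot construction. It selects $(f_i')$, among all tuples satisfying $\Sigma$, so that the number of subalgebra pairs $(\bB,\bC)$ already absorbed by the basic operations is maximal. Assuming for contradiction that some pair with an absorbing term $t$ is not yet absorbed, it re-prepares via $f_i'' = f_i'\bigl(s_m(\ldots),\ldots\bigr)$ with the \emph{current} $f_i'$ on the outside and $s\in\Clo(t)\subseteq\Clo((f_i'))$. Now $\bC$ is closed under $f_i'$ by hypothesis, so the last step is immediate; and since any already-absorbing pair $(\bB',\bC')$ gives a semilattice quotient $\bB'\cup\bC'\to\{0,1\}$ on which every nontrivial term of $(A,(f_i'))$ acts as $\max$, the term $s$ automatically absorbs it as well, so old pairs survive and the count strictly increases, contradicting maximality. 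No universal $s$ is needed, and the mismatch between $\Clo(\bA)$ and $\Clo((f_i'))$ that you flag in your last paragraph never arises.
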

\begin{proof} Suppose that $(f_i')_{i\in I}$ are chosen to satisfy the identities in $\Sigma$, such that the number of pairs of subalgebras $\bB, \bC$ of $(A,(f_i')_{i\in I})$ such that for each $m$-ary term $f_i'$ and any $b_1, ..., b_m \in \bB\cup \bC$ with at least one of the $b_i$s in $\bC$ we have $f_i'(b_1, ..., b_m) \in \bC$ is maximized. Suppose that there is a pair of subalgebras $\bB, \bC$ of $(A,(f_i')_{i\in I})$ and a term $t \in \Clo((f_i')_{i \in I})$ with $t(b,c), t(c,b) \in \bC$ for all $b \in \bB, c \in \bC$. Apply Lemma \ref{semi-iter} to produce a nontrivial partial semilattice $s \in \Clo(t)$. By the construction of $s$, we will have $s(b,c), s(c,b) \in \bC$ whenever $b\in \bB, c \in \bC$.


Define functions $s_n$ in terms of $s$ as in Proposition \ref{higher-semilattice}. Now for each $m$-ary $f_i'$, we define $f_i''$ by
\[
f_i''(x_1, ..., x_m) = f_i'(s_m(x_1, ..., x_m), s_m(x_2, ..., x_m, x_1), ..., s_m(x_m, x_1 ..., x_{m-1})).
\]
It is clear that each $f_i''$ is such that for each $m$-ary term $f_i''$ and any $b_1, ..., b_m \in \bB\cup \bC$ with at least one of the $b_i$s in $\bC$ we have $f_i''(b_1, ..., b_m) \in \bC$, since each $s_m$ has this property and $\bC$ is closed under $f_i'$. Now suppose we have an identity
\[
f_i'(a_1, ..., a_m) \approx f_j'(b_1, ..., b_n),
\]
with $\{a_1, ..., a_m\} = \{b_1, ..., b_n\} = \{x,y\}$. Define $a_1', ..., a_m'$ by $a_k' = s(x,y)$ if $a_k = x$ and $a_k' = s(y,x)$ if $a_k = y$, and define $b_1', ..., b_n'$ similarly. Then for each $k$, we have
\[
s_m(a_k, ..., a_m, a_1, ..., a_{k-1}) \approx a_k',
\]
and similarly for the $b_l'$s, so
\[
f_i''(a_1, ..., a_m) \approx f_i'(a_1', ..., a_m') \approx f_j'(b_1', ..., b_n') \approx f_j''(b_1, ..., b_n).\qedhere
\]
\end{proof}

\begin{defn} We say that an idempotent algebra $\bA$ has been \emph{prepared} if for every pair $a,b$ and every binary term $t$ such that $t(a,b) = t(b,a) = b$, $\{a,b\}$ is a semilattice subalgebra of $\bA$. A partial semilattice term $s$ of $\bA$ is called \emph{adapted} to $\bA$ if it can be built out of the basic operations of $\bA$ in a nontrivial way.
\end{defn}

\begin{prop} If $\bA$ has been prepared as above and $a,b,c \in \bA$ have $c \in \Sg\{a,b\}$ with $\{a,c\}$ a two element semilattice subalgebra directed from $a$ to $c$, then $\bA$ has a partial semilattice term $s$ with $s(a,b) = c$.
\end{prop}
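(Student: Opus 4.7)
The plan is to apply the Semilattice Iteration Lemma (Lemma \ref{semi-iter}) to a carefully chosen binary idempotent term $t$, and to read off $s(a,b) = c$ from the resulting partial semilattice $s$.

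First, since $c \in \Sg\{a,b\}$, I would fix a binary idempotent term $u$ of $\bA$ with $u(a,b) = c$. Because $\bA$ has been prepared and $\{a,c\}$ is a two-element semilattice subalgebra directed from $a$ to $c$, there exists a binary term $\sigma$ of $\bA$ whose restriction to $\{a,c\}$ is the join with top $c$, so $\sigma(a,c) = \sigma(c,a) = c$ and $\sigma(x,x) \approx x$. Using that $\{a,c\}$ is a subalgebra, both $u(a,c)$ and $u(c,a)$ lie in $\{a,c\}$.

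The target is to build a binary idempotent term $t$ with $t(a,b) = t(a,c) = t(c,a) = c$; given such a $t$, the orbit of $b$ under $t(a,\cdot)$ terminates at $c$, so $t^\infty(a,b) = c$, and then tracing the two-level construction $s = U^\infty$ with $U(x,y) = t^\infty(x, t^\infty(y,x))$ from the proof of Lemma \ref{semi-iter} pins $s(a,b)$ at $c$. A first attempt is $t(x,y) = \sigma(x, u(x,y))$: this is idempotent, and it immediately delivers $t(a,b) = \sigma(a,c) = c$ and $t(c,a) = \sigma(c, u(c,a)) = c$, since $u(c,a) \in \{a,c\}$ and $\sigma(c,\cdot) \equiv c$ on $\{a,c\}$. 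The possible failure is $t(a,c) = \sigma(a, u(a,c))$ equaling $a$, which happens exactly when $u(a,c) = a$.

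To remove this failure I would first replace $u$ by its iterate $u^\infty$ in the sense of the proof of Lemma \ref{semi-iter}: the identity $u^\infty(x, u^\infty(x,y)) \approx u^\infty(x,y)$ ensures that whenever $u^\infty(a,b) = c$ we automatically have $u^\infty(a,c) = c$, and then $t(x,y) = \sigma(x, u^\infty(x,y))$ realizes all three target values. In the residual case where the orbit of $b$ under $u(a,\cdot)$ passes through $c$ and then collapses back to $a$, I would instead work with a symmetrized variant such as $\sigma(u(x,y), u(y,x))$, or interleave $u$ and $\sigma$ in the style of the Semilattice Preparation Lemma, and then re-iterate. The main obstacle of the proof is precisely this construction step: forcing $t|_{\{a,c\}}$ to be the join $\sigma$ without losing $t(a,b) = c$. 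Preparedness is essential exactly here, since it rules out the pathological configurations in which every combination of basic operations would restrict to a projection on $\{a,c\}$ while still hitting $c$ at $(a,b)$, and thus guarantees that after finitely many such modifications we arrive at the desired $t$, from which Lemma \ref{semi-iter} extracts the required partial semilattice $s$.
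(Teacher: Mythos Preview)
Your plan has a real gap at the step where you pass from $t$ to the partial semilattice $s$. Suppose you have built an idempotent $t$ with $t(a,b)=t(a,c)=t(c,a)=c$; then indeed $t^\infty(a,b)=c$. But the construction in Lemma~\ref{semi-iter} sets $U(x,y)=t^\infty(x,t^\infty(y,x))$ and $s=U^\infty$, so $U(a,b)=t^\infty(a,\,t^\infty(b,a))$. Nothing in your hypotheses controls $t^\infty(b,a)$, and hence nothing forces $t^\infty(a,\,t^\infty(b,a))=c$; the iteration can land anywhere in $\Sg\{a,b\}$. Your assertion that ``tracing the two-level construction \ldots\ pins $s(a,b)$ at $c$'' is exactly the missing argument. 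Incidentally, the ``residual case'' you worry about cannot occur: since $\bA$ is prepared and $\{a,c\}$ is a two-element semilattice subalgebra, every binary term that is not a projection already restricts to the semilattice on $\{a,c\}$, so any $u$ with $u(a,b)=c\notin\{a,b\}$ automatically satisfies $u(a,c)=u(c,a)=c$.

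The paper avoids iteration entirely. It picks \emph{any} adapted partial semilattice $s'$ and \emph{any} term $p$ with $p(a,b)=c$, and defines $s(x,y)=s'(x,p(x,y))$. Then $s(a,b)=s'(a,c)=c$ immediately, since $s'$ is not a projection and $\{a,c\}$ is a semilattice subalgebra. The partial semilattice identities for $s$ hold because, for every $x,y$, the pair $\{x,s(x,y)\}=\{x,s'(x,p(x,y))\}$ is witnessed as a semilattice pair by $s'$ itself; preparedness then forces the non-projection term $p$ to act as the semilattice there too, giving $p(x,s(x,y))=p(s(x,y),x)=s(x,y)$, after which the identities for $s$ reduce to those already known for $s'$. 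The conceptual point you are missing is that preparedness is used not at the single pair $\{a,c\}$ but globally, at every pair $\{x,s(x,y)\}$, and this is what lets one compose with $p$ on the inside rather than trying to push the value $c$ through an uncontrolled iteration.
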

\begin{proof} Let $s'$ be an arbitrary partial semilattice term which is adapted to $\bA$, and choose $p$ a binary term of $\bA$ with $p(a,b) = c$. Then take $s(x,y) = s'(x,p(x,y))$. We clearly have $s(a,b) = s'(a,p(a,b)) = s'(a,c) = c$, so we just have to check that $s$ is a partial semilattice. If $p$ is second projection then $s = s'$ and we are done. Otherwise, since $\bA$ has been prepared, $p$ and $s'$ act as the semilattice operation on $\{x,s'(x,p(x,y))\} = \{x,s(x,y)\}$. Thus,
\[
s(x,s(x,y)) = s'(x,p(x,s(x,y))) \approx s'(x,s(x,y)) \approx s(x,y),
\]
and
\[
s(s(x,y),x) = s'(s(x,y),p(s(x,y),x)) \approx s'(s(x,y),s(x,y)) \approx s(x,y).\qedhere
\]
\end{proof}

\begin{defn} If $s$ is a partial semilattice operation and $a,b$ have $s(a,b) = b$, then we write $a \rightarrow_s b$, or just $a \rightarrow b$ if $s$ is understood (or if the algebra has been prepared). We say that $b$ is \emph{reachable} from $a$ if there is a sequence $a = a_0, a_1, ..., a_k = b$ such that $a_i \rightarrow a_{i+1}$ for $i = 0, ..., k-1$.
\end{defn}

\begin{defn} We say that a subset $S$ of an algebra $\bA$ which has a partial semilattice operation $s$ is \emph{upwards closed} if whenever $a\in S$ and $a' \in \bA$ have $a \rightarrow_s a'$, we also have $a' \in S$.
\end{defn}

\begin{defn} We say that a set $A$ is \emph{strongly connected} if for every subset $S \subset A$ with $S\ne \emptyset, A$ there is an $a \in S$ and a $b \in A\setminus S$ such that $a \rightarrow b$. We say that a set $A$ is a \emph{maximal strongly connected component} of an algebra $\bA$ if $A$ is a strongly connected subset which is upwards closed (note that every finite upwards closed set contains at least one maximal strongly connected component). Finally, we call an element of an algebra $\bA$ \emph{maximal} if it is contained in any maximal strongly connected component of $\bA$.
\end{defn}

\begin{rem} If an algebra $\bA$ has been prepared and is strongly connected, then it has no proper absorbing subalgebra in the sense of \cite{cyclic}. However, even in the case of strongly connected algebras the next result is not a consequence of the Absorption Theorem of \cite{cyclic} since it applies even in varieties which have no Taylor term.
\end{rem}

\begin{defn} A relation $\RR \le_{sd} \bA\times\bB$ is \emph{linked} if for any two elements $a,a' \in \bA$ there exists a sequence $a = a_0, b_1, a_1, ..., b_k, a_k = a'$ with $(a_i,b_{i+1}) \in \RR$ and $(a_i,b_i) \in \RR$ for all $i$. In other words, $\RR$ is linked if it is connected when viewed as a bipartite graph on $\bA\sqcup \bB$, or equivalently when $\ker \pi_1 \vee \ker \pi_2$ is the total congruence of $\RR$. An element of $\bA$ is called a \emph{fork} for $\RR$ if there exist $b \ne b'$ such that $(a,b), (a,b') \in \RR$, and similarly for elements of $\bB$.
\end{defn}

\begin{thm}\label{strong-binary} Fix a partial semilattice operation $s$. Suppose $\RR \le_{sd} \bA \times \bB$ is subdirect and $A,B$ are maximal strongly connected components of $\bA, \bB$, respectively.
\begin{itemize}
\item[(a)] The set of $a$ such that $(\{a\} \times B) \cap \RR \ne \emptyset$ is upwards closed. In particular, if $(A\times B)\cap \RR$ is nonempty, then it is subdirect in $A\times B$.

\item[(b)] The set of $a$ such that $\{a\} \times B \subseteq \RR$ is upwards closed.

\item[(c)] If $A$ is contained in a linked component of $\RR$ (that is, a connected component of $\RR$ considered as a bipartite graph on $\bA \sqcup \bB$), $(A\times B) \cap \RR \ne \emptyset$, and $A,B$ are finite, then $A \times B \subseteq \RR$.
\end{itemize}
Additionally, the product $A\times B$ is a maximal strongly connected component of $\bA \times \bB$.
\end{thm}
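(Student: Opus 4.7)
For (a), (b), and the concluding claim I would reproduce the arguments from the commented-out predecessor proposition, which rely only on the partial semilattice identities together with the strong connectedness of $A$ and $B$. For (a), given $(a,b)\in\RR$ with $b\in B$ and $a\to a'$, use subdirectness to pick $(a',b')\in\RR$ and apply $s$ componentwise to obtain $(a',s(b,b'))\in\RR$, where $s(b,b')\in B$ because $b\to s(b,b')$ (from the identity $s(x,s(x,y))\approx s(x,y)$) and $B$ is upward closed; subdirectness of $(A\times B)\cap\RR$ in $A\times B$ then follows from upward closedness of both projections combined with the SCC property of $A$ and $B$. For (b), with $\{a\}\times B\subseteq\RR$ and $a\to a'$, the set $\{b\in B:(a',b)\in\RR\}$ is nonempty by (a); for upward closedness in $B$, given $b\to b'$ with $(a',b)\in\RR$, combine with $(a,b')\in\RR$ from the hypothesis via $s$ to land at $(a',b')\in\RR$, using $s(s(x,y),x)\approx s(x,y)$ to deduce $s(a',a)=a'$. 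Finally, $A\times B$ is upward closed in $\bA\times\bB$ because $\to_s$ acts componentwise, and it is strongly connected because any proper nonempty subset can be escaped by moving one coordinate out using the SCC of $A$ or $B$.

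The substantive part is (c). My plan is to work inside the linked component $\bar{\RR}$ of $\RR$ that contains $A$. First, $\bar{\RR}$ is a subalgebra of $\bA\times\bB$: the linked components of $\RR$ are exactly the equivalence classes of the congruence $\ker\pi_1\vee\ker\pi_2$ on $\RR$, and in an idempotent algebra (note $s$ is idempotent) congruence classes are always subuniverses. Combining (a) with the observation that any $(a,b)\in\RR$ with $a\in A$ automatically lies in $\bar{\RR}$ yields $A\subseteq\pi_1(\bar{\RR})$ and $B\subseteq\pi_2(\bar{\RR})$. Applying (b) to $\bar{\RR}$, the set $E:=\{e\in\pi_1(\bar{\RR}):\{e\}\times B\subseteq\bar{\RR}\}$ is upward closed in $\pi_1(\bar{\RR})$; once $E$ meets $A$, the SCC property of $A$ forces $A\subseteq E$, which gives $A\times B\subseteq\bar{\RR}\subseteq\RR$. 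The task therefore reduces to exhibiting a single $a_0\in A$ with $\{a_0\}\times B\subseteq\bar{\RR}$.

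To exhibit such an $a_0$, I would choose $a_0\in A$ maximizing $|\bar{\RR}_{a_0}\cap B|$ and argue the maximum is $|B|$. Suppose otherwise: by SCC of $B$ there are $b\in\bar{\RR}_{a_0}\cap B$ and $b'\in B\setminus\bar{\RR}_{a_0}$ with $b\to b'$. By (a) some $a\in A$ satisfies $(a,b')\in\bar{\RR}$, and linkedness of $\bar{\RR}$ provides a walk from $a_0$ to $a$ in the bipartite graph of $\bar{\RR}$. The plan is to apply $s$ componentwise to $(a_0,b_\star)$ (for $b_\star$ ranging over $\bar{\RR}_{a_0}\cap B$) against the successive edges of this walk, and finally against the closing edge $(a,b')$, in order to manufacture a new tuple $(a^*,b')\in\bar{\RR}$ with $a_0\to a^*\in A$ such that $\bar{\RR}_{a^*}\cap B$ strictly contains $\bar{\RR}_{a_0}\cap B$, contradicting maximality. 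Conceptually, linkedness plays the role of the ``seed'' element $e$ with $\{e\}\times B\subseteq\RR$ that the commented-out version of (c) assumed outright: climbing along the walk via $s$ manufactures an element that dominates $a_0$ and sees $b'$.

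The hard part will be executing that last step cleanly. When we apply $s$ componentwise along the walk, the first coordinate drifts upward in $A$ while the second coordinates are modified, and one must verify that no element of $\bar{\RR}_{a_0}\cap B$ is lost while $b'$ is picked up. I expect this will require iterating the construction until the sequence of fibers stabilizes (using finiteness of $\bA$ and $\bB$) and possibly invoking the higher-arity partial semilattice terms $s_n$ from Proposition~\ref{higher-semilattice} in order to combine many carried-along elements simultaneously with $b'$.
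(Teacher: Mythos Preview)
Your treatment of (a), (b), and the claim about $A\times B$ being a maximal strongly connected component is correct and matches the paper essentially verbatim.

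For (c), however, there is a genuine gap. The ``apply $s$ componentwise along the linkedness walk'' idea does not preserve the fiber $\bar{\RR}_{a_0}\cap B$: when you move from $a_0$ to $s(a_0,c)$ by combining each $(a_0,b_\star)$ with some $(c,d)\in\bar{\RR}$, the second coordinates become $s(b_\star,d)$, and the map $b_\star\mapsto s(b_\star,d)$ need not be injective, so the fiber can shrink even as it lands in $B$. The $s_n$ of Proposition~\ref{higher-semilattice} will not rescue this, since those terms are only controlled on inputs taking at most two distinct values, which is not the situation here. So the maximizing argument, as stated, does not close.

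The paper's route avoids tracking whole fibers. It first reduces to the case $A\times A\subseteq \RR\circ\RR^-$ (every pair in $A$ is one-step linked): if $k$ is minimal with $A\times A\subseteq(\RR\circ\RR^-)^{\circ k}$ and $k\ge 2$, then $\RR'=(\RR\circ\RR^-)^{\circ(k-1)}$ is symmetric and satisfies $A\times A\subseteq\RR'\circ\RR'$, so the base case applied to $\RR'$ forces $A\times A\subseteq\RR'$, contradicting minimality. In the base case, fix any $a\in A$, let $X=\pi_2((\{a\}\times\bB)\cap\RR)$ be the full fiber in $\bB$, and let $S$ be a maximal strongly connected component of $X\cap B$; since $B$ is upward closed in $\bB$, $S$ is actually a maximal strongly connected component of $X$. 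The one-step-linked hypothesis makes $(A\times X)\cap\RR$ subdirect in $A\times X$, so (b) gives $A\times S\subseteq\RR$, and then (b) applied to $\RR^-$ upgrades this to $A\times B\subseteq\RR$. The point is that instead of trying to enlarge a fiber to all of $B$, the paper finds a small strongly connected piece $S$ of one fiber and lets (b) do the work in both directions.
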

\begin{proof} For part (a), suppose that $(a,b) \in \RR$ and $b \in B$, and let $a \rightarrow a'$. Since $\RR$ is subdirect, there is some $b'$ with $(a',b') \in \RR$. Then
\[
\begin{bmatrix} a'\\ s(b,b')\end{bmatrix} = s\left(\begin{bmatrix} a\\ b\end{bmatrix}, \begin{bmatrix} a'\\ b'\end{bmatrix}\right) \in \RR,
\]
and $b \rightarrow s(b,b')$, so $s(b,b') \in B$.

For part (b), suppose that $\{a\}\times B \subseteq \RR$ and $a \rightarrow a'$. Let $S$ be the set of $b \in B$ such that $(a',b) \in \RR$, that is, $S = \pi_2((\{a'\}\times B) \cap \RR)$. By part (a), $S$ is nonempty. To finish, we just have to show that $S$ is upwards closed. Suppose $b \in S$ and $b \rightarrow b'$. Then by assumption we have $(a,b') \in \RR$, so
\[
\begin{bmatrix} a'\\ b'\end{bmatrix} = s\left(\begin{bmatrix} a'\\ b\end{bmatrix}, \begin{bmatrix} a\\ b'\end{bmatrix}\right) \in \RR.
\]

For part (c), suppose first that $A \times A \subseteq \RR\circ \RR^-$, where $\RR^- \le \bB \times \bA$ is the reverse of $\RR$, and $\circ$ is relational composition of binary relations, defined by $R\circ S = \{(a,c) \mid \exists b\ (a,b) \in R \wedge (b,c) \in S\}$. Let $a$ be any element of $A$, and let $X$ be the set of $b \in \bB$ such that $(a,b) \in \RR$, that is, $X = \pi_2((\{a\}\times \bB) \cap \RR)$. By part (a), $X \cap B \ne \emptyset$, and by the finiteness of $B$, the intersection $X \cap B$ has a maximal strongly connected component $S$. Since $B$ is a maximal strongly connected component of $\bB$, $S$ is a maximal strongly connected component of $X$.

By the assumption $A \times A \subseteq \RR\circ \RR^-$ and the definition of $X$, we see that $(A\times X)\cap \RR$ is subdirect in $A\times X$. Thus by part (b) and the fact that $\{a\} \times S \subseteq (A\times X)\cap \RR$, we see that $A \times S \subseteq (A\times X)\cap \RR$, so $A \times S \subseteq \RR$. Then by part (b) applied to $\RR^-$, we see that $A \times B \subseteq \RR$.

Now suppose that $A \times A \not\subseteq \RR\circ \RR^-$. From the finiteness of $A$ we see that there is some $k$ such that $A \times A \subseteq (\RR\circ \RR^-)^{\circ k}$. Choose $k$ minimal, and let $\RR' = (\RR \circ \RR^-)^{\circ (k-1)} \le_{sd} \bA^2$. Then $\RR'$ is equal to its own reverse $\RR'^-$, and $A \times A \subseteq \RR' \circ \RR'$ since $2(k-1) \ge k$ for $k \ge 2$. Thus the previous case applied to $\RR'$ shows that $A \times A \subseteq \RR'$, contradicting the minimality of $k$.
\end{proof}

\begin{cor}[Lemma 9 of \cite{bulatov-bounded}]\label{simple-strong} Fix a partial semilattice operation $s$. Suppose that $\RR \le_{sd} \bA\times \bB$ is a subdirect product of finite algebras $\bA, \bB$, and that $\bB$ is simple and $\bB = \Sg(B)$, with $B$ a maximal strongly connected component of $\bB$. Then:
\begin{itemize}
\item[(a)] if $\bA$ is also simple and $\bA = \Sg(A)$ with $A$ a maximal strongly connected component of $\bA$, and if $\RR \cap (A\times B) \ne \emptyset$, then $\RR$ is either the graph of an isomorphism or $\bA\times \bB$, and

\item[(b)] if $\bA$ is arbitrary and $\RR$ is not the graph of a homomorphism from $\bA$ to $\bB$, then there is an $a \in \bA$ with $\{a\}\times \bB \subseteq \RR$.
\end{itemize}
\end{cor}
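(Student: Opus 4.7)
The plan is to prove (b) first and derive (a) as a consequence. In both cases, the strategy is to leverage the connectivity statement Theorem \ref{strong-binary}(c).

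For (b), the key step is showing that $\RR$ is linked. If $\RR$ is not the graph of a homomorphism, then the composition $\RR \circ \RR^{-1}$ is a reflexive, symmetric, subdirect subalgebra of $\bB \times \bB$ that strictly contains the diagonal; by simplicity of $\bB$, the congruence it generates must be the total congruence, which unpacks to a zigzag path in $\RR$ between any two elements of $\bB$, and subdirectness then links every $a \in \bA$ to $\bB$. Next, Theorem \ref{strong-binary}(a) shows that $U := \{a' \in \bA : (\{a'\} \times B) \cap \RR \ne \emptyset\}$ is upward closed, and it is nonempty by subdirectness, so it contains a maximal strongly connected component $A$ of $\bA$. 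Since $\RR$ is linked and $(A \times B) \cap \RR \ne \emptyset$, Theorem \ref{strong-binary}(c) yields $A \times B \subseteq \RR$. For any $a \in A$, writing each $b \in \bB = \Sg(B)$ as a term applied to elements $b_1,\ldots,b_n \in B$ and using idempotence on the first coordinate of the tuples $(a, b_i) \in \RR$ extends $\{a\} \times B \subseteq \RR$ to $\{a\} \times \bB \subseteq \RR$.

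For (a), if $\RR$ is the graph of a homomorphism, then since $\bA$ is simple the homomorphism is either an isomorphism or constant; subdirectness rules out the constant case unless $|\bB| = 1$, in which case $\RR = \bA \times \bB$. Otherwise, applying (b) produces $a \in \bA$ with $\{a\} \times \bB \subseteq \RR$; moreover, the maximal strongly connected component used in the proof of (b) can be chosen to be the one given in our hypothesis (since $(A \times B) \cap \RR \ne \emptyset$ already places $A$ inside $U$), so we may arrange $a \in A$. By Theorem \ref{strong-binary}(b), the set $\{a'' : \{a''\} \times B \subseteq \RR\}$ is upward closed and contains $a \in A$, so by strong connectedness of $A$ it contains all of $A$. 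Extending rows from $B$ to $\bB$ as before gives $A \times \bB \subseteq \RR$, and the set $C := \{a'' \in \bA : \{a''\} \times \bB \subseteq \RR\}$ is a subalgebra of $\bA$ by idempotence. Since $A \subseteq C$ and $\bA = \Sg(A)$, we obtain $C = \bA$ and thus $\RR = \bA \times \bB$.

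The main subtlety is the fork-to-linked step in (b): one must produce honest zigzag paths from the fact that the generated congruence is total, which requires chasing the transitive closure of $\RR \circ \RR^{-1}$ through individual forks. Once linkedness and an appropriate maximal strongly connected component are in hand, the remaining closure arguments follow immediately from Theorem \ref{strong-binary} together with idempotence.
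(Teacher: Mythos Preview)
Your proof is correct and follows essentially the same route as the paper: simplicity of $\bB$ forces the linking congruence to be trivial or full, and in the full (linked) case Theorem~\ref{strong-binary}(c) does the work. The paper's proof is a one-line sketch; you have written out the details it leaves implicit, namely locating a suitable maximal strongly connected component $A$ via Theorem~\ref{strong-binary}(a), and then extending $\{a\}\times B\subseteq\RR$ to $\{a\}\times\bB\subseteq\RR$ (and $A\times\bB$ to $\bA\times\bB$) using idempotence and $\bB=\Sg(B)$, $\bA=\Sg(A)$. One small notational slip: with the paper's convention $R\circ S=\{(a,c):\exists b\,(a,b)\in R,(b,c)\in S\}$, the relation you want on $\bB\times\bB$ is $\RR^{-}\circ\RR$, not $\RR\circ\RR^{-1}$; your argument is unaffected.
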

\begin{proof} If $\bB$ is simple, then the linking congruence of $\RR$ on $\bB$ must either be the trivial congruence $0_\bB$, in which case $\RR$ is the graph of a homomorphism from $\bA$ to $\bB$, or the full congruence $1_\bB$, in which case $\RR$ is linked. In the second case, the results follow from Theorem \ref{strong-binary}(c).
\end{proof}

\begin{thm}\label{strong-ternary} Fix a partial semilattice operation $s$. Suppose $R \subseteq A \times B \times C$ is closed under $s$, $A$ is strongly connected, $\pi_{23}(R)$ is strongly connected, $\pi_{12}(R) = A \times B$, $\pi_{13}(R) = A\times C$, and $A,B,C$ are finite. Then $R = A \times \pi_{23}(R)$.
\end{thm}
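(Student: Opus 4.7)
The strategy is to view $R$ as a subdirect relation $R \le_{sd} A \times D$ with $D = \pi_{23}(R)$, and apply Theorem \ref{strong-binary}(c) with the partial semilattice algebras $(A,s)$ and $(D,s)$ whose maximal strongly connected components are all of $A$ and all of $D$ respectively. Since $R$ is $s$-closed, $A$ and $D$ are finite, and $(A\times D)\cap R = R$ is nonempty, the remaining hypothesis is that $R$ is linked as a bipartite graph on $A\sqcup D$. Equivalently, by Theorem \ref{strong-binary}(b), the $s$-closed upwards-closed set $A^* = \{a\in A : \{a\}\times D\subseteq R\}$ is either empty or equal to $A$, so it suffices to exhibit a single $a_0$ with $F(a_0)=D$ (where $F(a)=\{d:(a,d)\in R\}$); then $R = A\times D$, which is linked.

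The key computational tool is the partial semilattice identity $s(s(x,y),x) = s(x,y)$, which yields the useful implication $y \to x \implies s(x,y) = x$ (applying the identity with the roles of $x,y$ swapped and using $s(y,x) = x$). In particular, whenever $a' \to a''$ and $d^* \to d$, the transport computation
\[
s\bigl((a'',d^*),(a',d)\bigr) = \bigl(s(a'',a'),\,s(d^*,d)\bigr) = (a'',d)
\]
shows that any pair $(a',d),(a'',d^*) \in R$ produces the element $(a'',d)\in R$. Proceeding by contradiction, I would assume $R \ne A\times D$ and fix $(a_0,d_0)\notin R$; then $G(d_0):=\{a:(a,d_0)\in R\}$ is a proper nonempty $s$-closed subset of $A$, and strong connectivity of $A$ applied to $G(d_0)$ yields $a' \in G(d_0)$ and $a''\in A\setminus G(d_0)$ with $a' \to a''$. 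The transport computation then reduces the proof to producing some $d^* \in F(a'')$ with $d^* \to d_0$ and $d^* \ne d_0$ (the latter is automatic since $a''\notin G(d_0)$ forces $d_0 \notin F(a'')$).

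The main obstacle is exhibiting this $d^*$. The plan is to argue by contradiction at this level too: if no such $d^*$ exists for the witness $a''$, then $a''\notin G(d^*)$ for every $d^*\to d_0$ with $d^*\ne d_0$, and running the same argument with every $a''\in T(d_0):=\{a''\in A\setminus G(d_0):\exists a'\in G(d_0),\,a'\to a''\}$ gives $G(d^*)\cap T(d_0) = \emptyset$ for all such $d^*$. I expect this monotonicity-type constraint to propagate along backwards $\to$-paths in $D$, using strong connectivity of $D$ to exhaust every element, and symmetrically to propagate along $\to$-paths in $A$ (iterating $T_k := T(T_{k-1})$, starting from $T_0 = G(d_0)$, until we exhaust all of $A$ by strong connectivity); the eventual conclusion should be $G(d) \subseteq G(d_0)$ for every $d\in D$, whence $\pi_1(R) = \bigcup_d G(d) = G(d_0) \subsetneq A$, contradicting $\pi_1(R) = A$ (which follows from $\pi_{12}(R) = A\times B$). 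I would additionally invoke the symmetric fullness of $\pi_{13}(R)$ if needed to complete the propagation, since the argument so far has not used $\pi_{13}(R) = A\times C$ directly and this condition is surely needed to rule out pathological "off-axis" configurations of $F(a'')$.
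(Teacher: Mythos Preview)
Your reduction to Theorem~\ref{strong-binary}(c) is exactly the paper's reduction: view $R\le_{sd} A\times D$ with $D=\pi_{23}(R)$, note both factors are their own maximal strongly connected components, and verify linkedness. Your transport computation is also correct (it is essentially the content of part~(b)). The gap is in how you propose to get linkedness.

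You treat $D$ as an abstract $s$-closed set and try to produce, for your witness $a''\notin G(d_0)$, some $d^*\in F(a'')$ with $d^*\to d_0$. There is no reason such a $d^*$ exists: the fiber $F(a'')$ is upwards closed in $D$ (by Theorem~\ref{strong-binary}(a)), not downwards closed, so it need not contain any predecessor of $d_0$. Your fallback ``propagation'' sketch never becomes an argument, and as you yourself notice, it uses neither $\pi_{12}(R)=A\times B$ nor $\pi_{13}(R)=A\times C$ beyond the trivial consequence $\pi_1(R)=A$. Those two hypotheses are where all the content lies, and by collapsing $B\times C$ to the opaque set $D$ you have thrown away the only handle on them.

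The paper's proof proceeds differently and constructively: for any edge $a\to a'$ in $A$ it builds a common $D$-neighbor $(b',c')$ of $a$ and $a'$ in three $s$-applications, alternately using the fullness of $\pi_{13}$ and $\pi_{12}$. Starting from any $(a,b,c)\in R$, fullness of $\pi_{13}$ gives some $(a',b',c)\in R$, and one $s$-application lets you assume $b\to b'$; fullness of $\pi_{12}$ then gives some $(a,b',c')\in R$, and another $s$-application lets you assume $c\to c'$; finally $s\bigl((a',b',c),(a,b',c')\bigr)=(a',b',c')\in R$. Now $(b',c')$ is adjacent to both $a$ and $a'$, so $R$ is linked and Theorem~\ref{strong-binary}(c) finishes. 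The point you are missing is this back-and-forth between the two projection hypotheses, moving one coordinate at a time inside $B\times C$ rather than inside $D$.
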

\begin{proof} By Theorem \ref{strong-binary}(c), we just need to show that $R$ is linked as a subset of $A \times \pi_{23}(R)$. We will do this by showing that for any $a \rightarrow a'$ in $A$, some fork of $R$ links $a$ to $a'$ in one step.

Since $\pi_1(R) = A$, there exist $b \in B, c \in C$ such that $(a,b,c) \in R$. Since $\pi_{13}(R) = A\times C$, there exists some $b' \in B$ such that $(a',b',c) \in R$. Since
\[
\begin{bmatrix} a'\\ s(b,b')\\ c\end{bmatrix} = s\left(\begin{bmatrix} a\\ b\\ c\end{bmatrix}, \begin{bmatrix} a'\\ b'\\ c\end{bmatrix}\right) \in R,
\]
we may assume without loss of generality that $b' = s(b,b')$, that is, that $b \rightarrow b'$.

Since $\pi_{12}(R) = A\times B$, there exists some $c' \in C$ such that $(a,b',c') \in R$. Since
\[
\begin{bmatrix} a\\ b'\\ s(c,c')\end{bmatrix} = s\left(\begin{bmatrix} a\\ b\\ c\end{bmatrix}, \begin{bmatrix} a\\ b'\\ c'\end{bmatrix}\right) \in R,
\]
we may assume without loss of generality that $c' = s(c,c')$, that is, that $c \rightarrow c'$.

Since $(a',b',c)$ and $(a,b',c')$ are in $R$, we have
\[
\begin{bmatrix} a'\\ b'\\ c'\end{bmatrix} = s\left(\begin{bmatrix} a'\\ b'\\ c\end{bmatrix}, \begin{bmatrix} a\\ b'\\ c'\end{bmatrix}\right) \in R.
\]
Thus both $a$ and $a'$ meet $(b',c') \in \pi_{23}(R)$.
\end{proof}

\begin{cor}[Lemmas 10, 12, 13 of \cite{bulatov-bounded}]\label{triple} Fix a partial semilattice operation $s$. If $R \subseteq A\times B\times C$ is closed under $s$, $A,B,C$ are finite and strongly connected, and $\pi_{1,2} R = A\times B, \pi_{1,3} R = A\times C, \pi_{2,3} R = B\times C$, then $R = A\times B\times C$.
\end{cor}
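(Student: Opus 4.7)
The plan is to derive this as an almost immediate consequence of Theorem \ref{strong-ternary}. That theorem already concludes $R = A \times \pi_{23}(R)$ under the hypotheses that $A$ is strongly connected, $\pi_{23}(R)$ is strongly connected, $\pi_{12}(R) = A \times B$, $\pi_{13}(R) = A \times C$, and the factors are finite. In the corollary, all of these are given except possibly the strong connectivity of $\pi_{23}(R) = B \times C$, so the only real task is to verify that product of strongly connected sets (under $s$ applied coordinatewise) is strongly connected.

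First I would unpack the arrow relation on $B \times C$: since $s$ acts coordinatewise on pairs, we have $(b,c) \rightarrow_s (b',c')$ if and only if $b \rightarrow_s b'$ in $B$ and $c \rightarrow_s c'$ in $C$. So an $\rightarrow$-path in $B \times C$ is just a pair of synchronized $\rightarrow$-paths in $B$ and in $C$.

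Next I would exploit idempotence. Because $s(x,x) \approx x$, every element $x$ satisfies $x \rightarrow_s x$, so each vertex carries a self-loop. Now, given $(b_1,c_1), (b_2,c_2) \in B \times C$, strong connectivity of $B$ gives an $\rightarrow$-path in $B$ from $b_1$ to $b_2$ and strong connectivity of $C$ gives one in $C$ from $c_1$ to $c_2$; by padding the shorter path with self-loops I can make the two paths the same length, producing an $\rightarrow$-path in $B \times C$ from $(b_1,c_1)$ to $(b_2,c_2)$. Hence $B \times C$ is strongly connected, and since $B$ and $C$ are finite, so is $B \times C$.

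With this in hand, all hypotheses of Theorem \ref{strong-ternary} are met, and it delivers $R = A \times \pi_{23}(R) = A \times B \times C$. There is no real obstacle here; the mild subtlety is just recognizing that ``strongly connected'' in the paper's sense (an arrow crossing every nontrivial cut) is preserved by finite products thanks to the self-loops coming from idempotence of $s$.
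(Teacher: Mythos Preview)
Your proposal is correct and matches the paper's approach: the corollary is stated immediately after Theorem \ref{strong-ternary} with no separate proof, and the only detail to fill in is that $\pi_{23}(R) = B\times C$ is strongly connected, which you verify exactly as intended (and which is also asserted in the final sentence of Theorem \ref{strong-binary}).
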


\begin{defn} An algebra is \emph{polynomially complete} if the clone generated by its basic operations together with all constant functions is the clone of all operations.
\end{defn}

\begin{thm} Suppose $\bA$ is a simple idempotent algebra with a partial semilattice term $s$, which is generated by a maximal strongly connected component $A$. Then $\bA$ is polynomially complete.
\end{thm}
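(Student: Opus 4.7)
The plan is to prove polynomial completeness by showing that every subuniverse of $\bA^n$ containing the diagonal $\Delta_\bA$ is a ``diagonal'' relation of the form $R_\sim = \{\mathbf{x} \in \bA^n : x_i = x_j \text{ whenever } i \sim j\}$ for some equivalence relation $\sim$ on $\{1,\dots,n\}$. Since the compatible relations of $\Clo_c(\bA)$ are exactly the subuniverses of $\bA^n$ containing the diagonal, and every function $A^n \to A$ preserves each such $R_\sim$, this will imply $\Clo_c(\bA)$ is the clone of all operations on $A$. We may assume $|A| \geq 2$, since otherwise $\bA$ is a one-element algebra and the conclusion is trivial.

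I would induct on $n$. The case $n=2$ is immediate from Corollary \ref{simple-strong}(a): a reflexive $R \le \bA^2$ is subdirect and meets $A \times A$ (since $\Delta_A \subseteq R$), so it is either the graph of an automorphism of $\bA$ (forced to be the identity by reflexivity, giving $R = \Delta_\bA$) or $R = \bA^2$. For the inductive step, given reflexive $R \le \bA^n$, the $n=2$ case makes $i \sim j \iff \pi_{ij}(R) = \Delta_\bA$ an equivalence relation on $\{1,\dots,n\}$, and identifying $\sim$-equivalent coordinates reduces us to showing $R = \bA^n$ whenever every binary projection of $R$ equals $\bA^2$.

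I would establish this reduced claim first for $n=3$. Setting $R_A = R \cap A^3$ (closed under $s$, containing $\Delta_A$), applying Theorem \ref{strong-binary}(a) to each of the three splits $R \le_{sd} \bA^2 \times \bA$ corresponding to pairs of coordinates versus the third, and using that $A^2$ and $A$ are maximal strongly connected components of $\bA^2$ and $\bA$ respectively, I obtain $\pi_{ij}(R_A) = A^2$ for each pair. Corollary \ref{triple} then gives $R_A = A^3$, so $R \supseteq A^3$. For each $a \in A$, the slice $R_a = \{(y,z) : (a,y,z) \in R\}$ is a subdirect subuniverse of $\bA^2$ containing $A^2$, which by Corollary \ref{simple-strong}(a) combined with $|A| \geq 2$ (ruling out the isomorphism-graph case) forces $R_a = \bA^2$. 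Thus $R \supseteq A \times \bA^2$, and by symmetry $R$ contains every tuple with at least one coordinate in $A$. Finally, for $b \notin A$, the slice $R_b$ already contains $(A \times \bA) \cup (\bA \times A)$, and the same application of Corollary \ref{simple-strong}(a) gives $R_b = \bA^2$; hence $R = \bA^3$.

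For $n \geq 4$, the $n=3$ case applied to ternary projections of $R$ shows every ternary projection equals $\bA^3$; an induction on $n$ then alternates between restricting to $A^n$ (transferring fullness to binary projections of $R_A$ via Theorem \ref{strong-binary}(a), and iteratively combining Corollary \ref{triple} with Theorem \ref{strong-ternary} applied to splits of the form $A \times A \times A^{n-2}$ to conclude $R_A = A^n$) and slicing (using the arity-$(n{-}1)$ case to show each slice of $R$ at a coordinate in $A$ equals $\bA^{n-1}$, then handling coordinates in $\bA \setminus A$ by the same slice argument), yielding $R = \bA^n$. The main obstacle is the transfer step producing $\pi_{ij}(R_A) = A^2$ from $\pi_{ij}(R) = \bA^2$: this requires applying Theorem \ref{strong-binary}(a) to splits of $R$ into pairs of coordinates versus the rest, which depends both on the fact that $A^k$ is a maximal strongly connected component of $\bA^k$ and on suitable higher projections of $R$ already being full by the inductive hypothesis.
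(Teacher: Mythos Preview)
Your overall plan matches the paper's: reduce to showing that a reflexive $\RR \le \bA^n$ with every binary projection equal to $\bA^2$ must be all of $\bA^n$, then inductively assume all proper projections are full, and apply Corollary~\ref{triple} to $\RR \cap A^n$ in the split $A \times A \times A^{n-2}$, using Theorem~\ref{strong-binary}(a) to verify the needed projection hypotheses. The paper, however, finishes in one line after establishing $A^n \subseteq \RR$: since $\bA = \Sg(A)$ by hypothesis, $\bA^n = \Sg_{\bA^n}(A^n) \subseteq \RR$. This replaces your entire slicing apparatus, and a single application of Corollary~\ref{triple} suffices (no iteration with Theorem~\ref{strong-ternary} is needed).

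Your slicing argument, besides being unnecessary, has a small gap as written. The inductive hypothesis you formulated concerns \emph{reflexive} relations, but for $a \in A$ the slice $R_a = \{(x_2,\dots,x_n) : (a,x_2,\dots,x_n) \in R\}$ need not contain the full diagonal of $\bA^{n-1}$; it is only guaranteed to contain $A^{n-1}$. At $n=3$ you correctly bypass this by invoking Corollary~\ref{simple-strong}(a) directly on the slice, but for $n \ge 4$ you appeal to ``the arity-$(n{-}1)$ case,'' and that case as you stated it does not apply to a non-reflexive slice. The clean fix is the $\Sg$-trick above; alternatively one could run the induction on the statement ``any subdirect $R \le \bA^m$ with all binary projections full and $R \cap A^m \ne \emptyset$ equals $\bA^m$,'' which does pass to slices.
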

\begin{proof} It's enough to show that every subalgebra $\RR \le \bA^n$ which contains every diagonal tuple and which does not project onto the equality relation on any pair of coordinates is equal to $\bA^n$. First, note that by Corollary \ref{simple-strong} and the fact that $\pi_{i,j} \RR$ properly contains the equality relation, we have $\pi_{i,j} \RR = \bA^2$ for every $i,j$. We may assume by induction that every projection of $\RR$ onto any proper subset of its coordinates is the full relation.

We want to apply Corollary \ref{triple} to $R = \RR \cap A^n \subseteq A\times A\times A^{n-2}$. To this end, we will show that $\pi_{1,2} R = A\times A$, as $\pi_{2,3} R = \pi_{1,3} R = A\times A^{n-2}$ can be proved in exactly the same way. Thinking of $\RR$ as a subdirect product of $(\bA\times \bA) \times \bA^{n-2}$, from $\RR \cap (A\times A)\times A^{n-2} \ne \emptyset$ we see by Theorem \ref{strong-binary}(a) that $R$ is subdirect in $(A\times A)\times A^{n-2}$, and we are done.
\end{proof}

\section{Cycle Consistency and $pq$ instances}\label{s-cycle}

We will use the homomorphism description of the constraint satisfaction problem. Let $\fA$, $\fB$ be two relational structures with the same signature $\sigma$. The constraint satisfaction problem for the pair $\fB,\fA$ asks whether there exists a homomorphism $\fB\rightarrow \fA$. It might be helpful to think of $\fA$ as a set of possible values together with library of constraint relations indexed by $\sigma$, and to think of $\fB$ as an edge-colored, directed hypergraph on a set of variables, where the colors are the elements of $\sigma$, and to think of the homomorphism as an assignment of values to the variables satisfying the constraints corresponding to the edges of $\fB$.

If we restrict to constraint satisfaction problems with fixed target $\fA$, the resulting constraint satisfaction problem is denoted $\CSP(\fA)$. The complexity of this problem is known to only depend on the algebra of polymorphisms of $\fA$, which we will denote $\bA$, so we may also sometimes speak of the problem $\CSP(\bA)$, by which we mean $\CSP(\Inv(\bA))$, where $\Inv(\bA)$ is the relational structure on the same underlying set having as relations all (underlying sets of) subpowers of the algebra $\bA$.

We now define cycle consistent and $pq$ instances of a constraint satisfaction problem, and state the main theorem of \cite{slac} which we will use as a black box.

\begin{defn} Let $\fA, \fB$ be relational structures with the same signature $\sigma$. The instance $(\fB,\fA)$ is called \emph{1-minimal} if for any $R,S \in \sigma$ and any $(v_1, ..., v_k) \in R^{\fB}, (w_1, ..., w_l) \in S^{\fB}$ and $i,j$ such that $v_i = w_j$, we have $\pi_i(R^{\fA}) = \pi_j(S^{\fA})$. In this case, if $b$ is any element of $\fB$, we write $\fA_b$ for $\pi_i(R^{\fA})$, where $R \in \sigma, (v_1, ..., v_k) \in R^{\fB}, 1\le i \le k, v_i = b$ (note that this is well-defined by 1-minimality), or $\fA_b = \fA$ if no such $R, (v_1, ..., v_k), i$ exist.
\end{defn}

\begin{defn} Let $(\fB,\fA)$ be a 1-minimal instance. A \emph{pattern} $p$ of length $k-1>0$ from $b_1$ to $b_k$ (in $\fB$) is a tuple
\[
p = (b_1, (R_1,i_1,j_1), b_2, (R_2,i_2,j_2), ..., (R_{k-1},i_{k-1},j_{k-1}), b_k)
\]
such that each $b_l \in \fB$, each $R_l \in \sigma$ and for each $l$ there is a tuple $(t_1, ..., t_m) \in R_l^{\fB}$ such that $t_{i_l} = b_l, t_{j_l} = b_{l+1}$. The pattern $p$ is \emph{closed} if $b_1 = b_k$.


A \emph{realization} of $p$ (in $\fA$) is a tuple $(f_1, ..., f_{k-1})$ such that for each $l$ we have $f_l \in R_l^{\fA}$ and $\pi_{j_l}(f_l) = \pi_{i_{l+1}}(f_{l+1})$, and we say this realization sends $b_{l+1}$ to $\pi_{j_l}(f_l) = \pi_{i_{l+1}}(f_{l+1})$. We say that this realization \emph{connects} $\pi_{i_1}(f_1)$ to $\pi_{j_{k-1}}(f_{k-1})$, and we say that $p$ \emph{connects} $x \in \fA_{b_1}$ to $y\in \fA_{b_k}$ if it has a realization connecting $x$ to $y$.



If $p = (b_1, (R_1,i_1,j_1), ..., b_k)$ and $q = (b_k, (R_k,i_k,j_k), ..., b_{k+l})$, we put $p + q = (b_1, (R_1,i_1,j_1), ..., b_{k+l})$ (note that the last element of $p$ must match the first element of $q$). For a closed pattern $p$ and $m \ge 1$ we put $m\times p = p + p + \cdots + p$, with $m$ copies of $p$ on the right hand side.
\end{defn}

\begin{defn} An instance $(\fB,\fA)$ is \emph{cycle consistent} if it is 1-minimal and for any $b\in \fB$, any closed pattern $p$ from $b$ to $b$, and any $x \in \fA_b$, $p$ connects $x$ to $x$.
\end{defn}

\begin{defn} An instance $(\fB,\fA)$ is a $pq$ instance if it is 1-minimal and for any $b\in \fB$, any pair of closed patterns $p,q$ from $b$ to $b$, and any $x \in \fA_b$, there exists $m \ge 0$ such that $m\times(p+q) + p$ connects $x$ to $x$.
\end{defn}

Note that every cycle consistent instance is automatically a $pq$ instance.


\begin{thm}[Kozik, \cite{slac}] If a finite algebra $\bA$ has bounded width, then every $pq$ instance $(\fB,\fA)$ has a solution.
\end{thm}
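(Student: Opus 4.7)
The plan is to induct on $N = \sum_{b \in \fB} |\fA_b|$. The base case, in which every $\fA_b$ is a singleton, is immediate from 1-minimality: each constraint has a unique compatible tuple, which assembles into the required homomorphism. For the inductive step I would either exhibit a strictly smaller $pq$-subinstance or construct a solution directly using the extra structure forced by bounded width.

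For the shrinking step, apply Lemma~\ref{prepare} to prepare $\bA$ and let $s$ be a partial semilattice term supplied by Lemma~\ref{semi-iter}. For each variable $b$, define $x \rightarrow_b y$ to mean that some closed pattern from $b$ to $b$ has a realization sending $x$ to $y$; the $pq$ hypothesis makes this a preorder on $\fA_b$ (reflexivity comes from the empty pattern, and transitivity from concatenation combined with the $pq$ property applied to the concatenated pattern). Let $M_b \subseteq \fA_b$ be the union of the maximal strongly connected components of this preorder. Since two realizations of the same pattern can be $s$-averaged coordinatewise, the restricted instance $(\fB, \fA|_M)$ is closed under $s$, and by Theorem~\ref{strong-binary}(c) and Corollary~\ref{triple} it remains 1-minimal: each constraint slice still projects onto every $M_{v_i}$. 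If some $M_b$ is a proper subset of $\fA_b$, one checks that the restriction is still a $pq$ instance---any realization witnessing $m\times(p+q)+p$ can be $s$-averaged with a realization staying inside $M$---and then induction finishes.

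In the residual case where every $\fA_b$ already equals its own maximal strongly connected component, I would build a solution by extending a partial assignment one variable at a time along a spanning tree of the constraint hypergraph of $\fB$, using the higher-arity weak near-unanimity terms $g,h$ from item (10) of the opening Proposition together with Corollary~\ref{triple} to ensure a compatible extension exists at each step. Conflicts along non-tree edges (equivalently, along closed patterns in $\fB$) are patched by invoking the $pq$ property directly: a closed pattern around the conflicting cycle has some iterate $m\times(p+q)+p$ that connects any fixed value to itself, and $g$ or $h$ then act as a local ``repair'' operation merging the two assignments meeting at the seam.

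The principal obstacle is preserving the $pq$ property throughout both phases. Cycle consistency is preserved by most natural closure operations, but $pq$ consistency is genuinely stronger because it constrains \emph{pairs} of closed patterns simultaneously (and requires a uniform $m$ for each pair), so showing that the shrinking to $M$-land and the WNU-based merging both respect this more rigid condition is the technically hardest step, and is essentially the content of Kozik's original argument in \cite{slac}.
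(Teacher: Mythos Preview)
The paper does not prove this theorem at all: it is quoted from Kozik~\cite{slac} and used as a black box, as the paper states explicitly just before the theorem in Section~\ref{s-cycle}. So there is no proof in the paper to compare your proposal against.

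As for the sketch itself, there are genuine gaps. First, the induction variable $N=\sum_{b\in\fB}|\fA_b|$ is ill-defined when $\fB$ is infinite, and the paper's principal application of this theorem (the proof of Theorem~\ref{intersect}) takes $\fB=\NN$; a compactness reduction to finite $\fB$ is possible but must be stated. Second, and more seriously, your ``residual case'' is not an argument. Extending along a spanning tree and then ``patching conflicts along non-tree edges'' with WNU terms is exactly where the difficulty lives: the $pq$ hypothesis only promises that for each fixed pair $p,q$ of closed patterns some iterate $m\times(p+q)+p$ connects $x$ to itself, which gives no mechanism for merging two partial assignments that disagree at a seam, let alone for handling all cycles simultaneously. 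Your appeal to Corollary~\ref{triple} in the shrinking step also presumes that the constraint relations restricted to the $M_b$ have full pairwise projections, which is not what 1-minimality or the $pq$ property gives you. You acknowledge in your last paragraph that preserving the $pq$ property through these manipulations ``is essentially the content of Kozik's original argument,'' so what you have written is an outline of where one would need to work rather than a proof.
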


Cycle consistent instances and $pq$ instances are preserved by taking the closure with respect to an algebra of polymorphisms:

\begin{prop}\label{gen} If $(\fB,\mathbf{S})$ is a $pq$ instance, and $\bA$ is a finite algebra with underlying set containing the underlying set of $\mathbf{S}$ and we define $\mathbf{C}$ to be the relational structure such that for each $R\in\sigma$ of arity $k$ we have $R^{\mathbf{C}} = \Sg_{\bA^k}(R^{\mathbf{S}})$ (here $\Sg_{\bA^k}(X)$ is the subalgebra generated by $X$ in $\bA^k$), then $(\fB,\mathbf{C})$ is also a $pq$ instance. In particular, if $\bA$ has bounded width then the instance $(\fB,\mathbf{C})$ has a solution.
\end{prop}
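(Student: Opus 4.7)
The proof splits into 1-minimality, which is a short diagram chase, and the $pq$ condition, which is the substance. For 1-minimality, the key identity is that projections commute with generated subalgebras: $\pi_i \Sg_{\bA^k}(R^{\mathbf{S}}) = \Sg_\bA(\pi_i R^{\mathbf{S}})$. Combining this with the 1-minimality of $(\fB, \mathbf{S})$ forces $\pi_i R^{\mathbf{C}} = \pi_j S^{\mathbf{C}}$ whenever $v_i = w_j$ in $\fB$, and in particular $\mathbf{C}_b = \Sg_\bA(\mathbf{S}_b)$.

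Now fix $b \in \fB$, closed patterns $p, q$ at $b$, and $x \in \mathbf{C}_b$. Write $x = t(y_1, \ldots, y_r)$ for some term $t$ of $\bA$ and generators $y_1, \ldots, y_r \in \mathbf{S}_b$. The key reduction is: if one can find a \emph{single} $m$ such that $m(p+q) + p$ connects each $y_i$ to itself in $(\fB, \mathbf{S})$, then taking $r$ such realizations and combining them by $t$ coordinatewise produces a realization of $m(p+q) + p$ in $(\fB, \mathbf{C})$ connecting $x$ to $x$ (using $R^{\mathbf{C}} = \Sg_{\bA^k}(R^{\mathbf{S}})$).

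To produce this common $m$, the plan has three steps. First, apply the $pq$ condition of $(\fB, \mathbf{S})$ to the pair $(p+q, p+q)$ at each $y_i$; since the corresponding sum is $2(p+q)$, this yields a positive integer $N_i$ with $N_i(p+q)$ connecting $y_i$ to itself. Setting $N^* = \lcm(N_1, \ldots, N_r)$, concatenating $N^*/N_i$ copies of the $N_i(p+q)$ realization shows that $N^*(p+q)$ connects every $y_i$ to itself. Second, for each $i$ apply the $pq$ condition to the modified pair $(p,\, q + (N^* - 1)(p+q))$, whose sum is exactly $N^*(p+q)$: this produces $m_i' \ge 0$ such that $m_i' N^*(p+q) + p$ connects $y_i$ to $y_i$. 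Third, prepending $k$ loops of $N^*(p+q)$ (each a self-connecting pattern at $y_i$) extends this to a realization of $(k + m_i')N^*(p+q) + p$ connecting $y_i$ to itself for every $k \ge 0$. Taking $m = K N^*$ with $K \ge \max_i m_i'$ then works for every $i$ simultaneously.

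The main obstacle is exactly this common-$m$ step: running the $pq$ condition at different $y_i$'s independently yields exponents with no relation to each other. The trick is to first extract a ``universal loop length'' $N^*$ via $pq$ applied to $(p+q, p+q)$, then run $pq$ on the modified pair $(p,\, q + (N^* - 1)(p+q))$ so that all exponents land inside $N^* \mathbb{Z}_{\ge 0}$, and finally use a prefix-padding argument to align them into a single $m$.
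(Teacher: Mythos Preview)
The paper states this proposition without proof, treating it as routine. Your argument is correct and fills in exactly the point that deserves care: synchronizing the exponents $m$ across the generators $y_1,\ldots,y_r$ of $x$. The three-step trick (extract a common loop period $N^*$ via the pair $(p+q,p+q)$, rerun the $pq$ condition on the modified pair $(p,\,q+(N^*-1)(p+q))$ so that all exponents are multiples of $N^*$, then prefix-pad to align them) is clean and works as stated. One tiny cosmetic simplification: any common multiple of the $N_i$ would do in place of their $\lcm$, so the product $\prod_i N_i$ suffices and avoids mentioning $\lcm$; but this changes nothing of substance.
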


\section{Intersecting families of sets}\label{s-intersect}

\begin{defn} Let $S$ be a set. A family $\cF\subseteq \cP(S)$ is called an \emph{intersecting family} of subsets of $S$ if $A,B\in \cF$ implies $A\cap B\ne 0$.
\end{defn}

\begin{prop} An intersecting family of subsets of a set $S$ is maximal (with respect to containment) if and only if for every set $A\subseteq S$ we have either $A\in\cF$ or $(S\setminus A)\in\cF$.

For every $n\ge 1$ there is a bijection between the collection of maximal intersecting families $\cF$ of subsets of $\{1,...,n\}$ and the collection of self-dual monotone boolean functions $f:\{0,1\}^n\rightarrow \{0,1\}$.
\end{prop}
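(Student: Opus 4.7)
I would handle the two assertions in sequence, using the first to drive the second.

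For the first assertion, I would prove both directions separately. The easy direction: suppose $\cF$ has the property that for every $A\subseteq S$ either $A\in\cF$ or $S\setminus A\in\cF$. If $\cF$ were not maximal, some $B\notin\cF$ could be added; but then $S\setminus B\in\cF$, and $B\cap(S\setminus B)=\emptyset$ contradicts the intersecting property. For the converse, assume $\cF$ is maximal intersecting and fix $A\subseteq S$ with $A\notin\cF$. By maximality there is some $B\in\cF$ with $B\cap A=\emptyset$, i.e.\ $B\subseteq S\setminus A$. It remains to show $S\setminus A\in\cF$; if not, maximality again gives some $C\in\cF$ with $C\cap(S\setminus A)=\emptyset$, i.e.\ $C\subseteq A$, forcing $B\cap C=\emptyset$ and contradicting the intersecting property of $\cF$.

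For the second assertion, I would define the map $\Phi$ sending a maximal intersecting family $\cF\subseteq\cP(\{1,\ldots,n\})$ to the Boolean function $f_\cF(\chi_A)=1$ iff $A\in\cF$, where $\chi_A$ is the characteristic vector of $A$. Monotonicity of $f_\cF$ amounts to upward closedness of $\cF$, which is automatic from maximality: if $A\in\cF$ and $A\subseteq B$, then every set meeting $A$ meets $B$, so $\cF\cup\{B\}$ is still intersecting and hence $B\in\cF$. Self-duality $f_\cF(v)=1-f_\cF(\bar v)$ is a direct restatement of the dichotomy proved in part one.

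For the inverse $\Phi^{-1}$, given a self-dual monotone $f$, set $\cF_f=\{A\subseteq\{1,\ldots,n\}:f(\chi_A)=1\}$. Self-duality supplies the ``$A$ or $S\setminus A$'' dichotomy, so by part one $\cF_f$ will be maximal intersecting as soon as one knows it is intersecting at all. This is the single step requiring a small argument: if $A,B\in\cF_f$ with $A\cap B=\emptyset$ then $B\subseteq S\setminus A$, so monotonicity forces $S\setminus A\in\cF_f$, contradicting self-duality applied to $A\in\cF_f$. The two constructions are visibly mutual inverses, completing the bijection.

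The main obstacle is minor, and concentrated in the ``$A\notin\cF\Rightarrow S\setminus A\in\cF$'' step of part one; the tempting wrong move is to conclude only that \emph{some} element of $\cF$ is disjoint from $A$, whereas the correct argument requires ruling out that any element of $\cF$ is contained in $A$, which is what pushes $S\setminus A$ itself into $\cF$. Once that pivot is in place, the remainder of the argument is bookkeeping translating between subsets of $\{1,\ldots,n\}$ and their indicator vectors in $\{0,1\}^n$.
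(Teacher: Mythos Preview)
Your argument is correct. The paper states this proposition without proof, treating it as a standard combinatorial fact, so there is no approach to compare against; your write-up would serve perfectly well as the omitted verification. One tiny point worth making explicit when you write it up: in deducing self-duality $f_\cF(v)=1-f_\cF(\bar v)$ from part one, you are using not only the dichotomy ``$A\in\cF$ or $S\setminus A\in\cF$'' but also that \emph{not both} can hold, which follows immediately from $\cF$ being intersecting; you clearly know this (you use it in the inverse direction), but it is worth a half-sentence so that ``direct restatement'' is literally true.
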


\begin{thm}\label{intersect} Let $\cV$ be a locally finite idempotent variety of bounded width. Then there is an idempotent term $f$ of arity $2$ of $\cV$ and a family of idempotent terms $h_{\cF}$ in $\cV$, indexed by maximal intersecting families $\cF$ of subsets of $\{1, ..., n\}$ for $n\ge 2$, satisfying the following:

For each maximal intersecting family $\cF$ of $\{1,...,n\}$, $h_{\cF}$ has arity $n$, and for every sequence $(a_1, ..., a_n)$ such that $\{a_1, ..., a_n\} = \{x,y\}$ and $\{i \in \{1,...,n\}\mid a_i = x\} \in \cF$ we have
\[
h_{\cF}(a_1, ..., a_n) \approx f(x,y).
\]
Furthermore, we can choose $f$ such that for any binary term $t$ in the clone generated by the functions $h_{\cF}$ and any pair $a,b$ of distinct elements of some algebra in $\cV$, if $t(a,b) = t(b,a) = a$ then $f(a,b) = f(b,a) = a$, and such that additionally we have
\[
f(f(x,y),f(y,x)) \approx f(x,y).
\]
\end{thm}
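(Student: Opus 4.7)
The plan is to apply Kozik's $pq$ theorem (end of Section~\ref{s-cycle}) to a CSP instance constructed over the free algebra $\bF := F_\cV(x, y)$, which is finite by local finiteness of $\cV$. The key reformulation is that an $n$-ary term $h$ of $\cV$ is determined on $\{x,y\}$-valued inputs by its tuple of values $(h(a))_{a \in \{x,y\}^n}$, which lies in the $2^n$-ary subpower $T_n := \Sg_{\bF^{\{x,y\}^n}}(\pi_1, \ldots, \pi_n)$, where $\pi_i \in \bF^{\{x,y\}^n}$ sends $a \mapsto a_i$. Thus the simultaneous existence of $f$ and a family of $h_\cF$'s sharing a common pair $(f(x,y), f(y,x))$ becomes a satisfiability problem over $\bF$, which one will attack via Proposition~\ref{gen} after checking $pq$-consistency of the source.

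I build an instance $(\fB, \bF)$ with variables $v_+, v_-$ (intended as $f(x,y)$ and $f(y,x)$), variables $v_x, v_y$ (fixed to the generators by unary constraints), and, for each $n \ge 2$, each maximal intersecting family $\cF$ on $\{1,\ldots,n\}$, and each $a \in \{x,y\}^n$, a variable $v_{\cF, a}$ intended as $h_\cF(a)$. The constraints of $\fB$ are: (i) for each $\cF$ on $\{1,\ldots,n\}$, the tuple $(v_{\cF, a})_a$ lies in $T_n$; and (ii) the identifications $v_{\cF, a} = v_+$ when $a$ is non-constant and $\{i : a_i = x\} \in \cF$, $v_{\cF, a} = v_-$ when $a$ is non-constant and $\{i : a_i = x\} \notin \cF$, and $v_{\cF, a}$ equal to $v_x$ or $v_y$ for constant $a$. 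To additionally enforce the identity $f(f(x,y), f(y,x)) \approx f(x,y)$, I augment the instance with a second layer of analogous variables playing the same roles but with $v_+, v_-$ in the place of the generators, together with constraints identifying the new master variables with $v_+$ and $v_-$ themselves.

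The main technical step is to verify cycle consistency: for any variable $b$, any closed pattern from $b$ to $b$, and any $e \in \fA_b$, one must exhibit a realization connecting $e$ to itself. This follows from a propagation argument exploiting that each $T_n$ contains the projections $\pi_1, \ldots, \pi_n$ (so every element of $\bF$ is realizable at each non-constant coordinate) and that the identifications respect the $\cF$-partition, so that the chosen value can be threaded consistently across constraints. Once cycle consistency is established, $(\fB, \bF)$ is a $pq$ instance and Kozik's theorem yields a solution giving $f$ and the $h_\cF$'s satisfying both the main identities and $f(f(x,y), f(y,x)) \approx f(x,y)$. The remaining absorption property in the ``furthermore'' clause is then obtained by applying Lemma~\ref{semi-iter} to $f$ inside $\Clo(\{h_\cF\})$: the resulting partial semilattice $s$ inherits absorption from every binary $t$ in the clone with $t(a,b) = t(b,a) = a$, and replacing $f$ by $s$ while adjusting each $h_\cF$ by composition with $s$ preserves the main identities.

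The principal obstacle is the cycle consistency verification, since the many $T_n$-constraints (across varying $n$ and $\cF$) interact with the extensive system of identifications — including those linking the two layers that encode the identity for $f$ — in delicate ways; making the propagation rigorous requires carefully tracking how values move through both projection-generated coordinates of the $T_n$ and through the chains of identifications.
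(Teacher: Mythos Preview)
Your main technical claim---cycle consistency of the instance over $\bF$---fails already in the simplest case. Take $\cV$ the variety generated by the two-element majority algebra, so $\bF = \{x,y\}$ and $T_3$ consists of the three projections together with the majority operation. With $\cM$ the majority family and $a = (x,x,y)$, $a' = (x,y,x)$, both $v_{\cM,a}$ and $v_{\cM,a'}$ are identified with $v_+$, whose domain is all of $\bF$. The closed pattern $v_+ \to_{T_3,a,a'} v_+$ does not connect $y$ to $y$, since no $t \in T_3$ has $t(x,x,y) = t(x,y,x) = y$. So cycle consistency is out; at best one could hope for $pq$-consistency, but verifying that directly over $\bF$ for all $T_n$ simultaneously is not visibly easier than the theorem itself. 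The paper sidesteps this entirely: it sets up the instance over the two-element set $\{x,y\}$ (where $pq$-consistency is a short combinatorial check about local maxima of patterns on $\mathbb{N}$), and only then lifts to $\bF$ via Proposition~\ref{gen}. A single common value $f(x,y)$ across all $\cF$'s is extracted not by equality constraints but by using $\mathbb{N}$ as the variable set and applying infinite pigeonhole to the solution.

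Two further points. Your ``second layer'' does not encode the identity $f(f(x,y),f(y,x)) \approx f(x,y)$: the relation $T_n$ is generated by projections evaluated at $x,y$, so imposing it on a second copy of the variables still only asserts the existence of \emph{some} term with prescribed values on $x,y$-inputs, not on $v_+,v_-$-inputs. There is no subpower of $\bF$ expressing ``$t(v_+,v_-)=v_+$ for the same $t$'' with $v_+,v_-$ variable. The paper obtains this identity afterward by an iteration argument, composing with the dictator terms $f_n = h_{\mathcal{D}_n}$ and passing to a power at which the map $(a,b)\mapsto(f(a,b),f(b,a))$ on $\cF_\cV(x,y)^2$ becomes idempotent. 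Finally, Lemma~\ref{semi-iter} applied to $f$ only guarantees $s(a,b)=s(b,a)=a$ when $f(a,b)=f(b,a)=a$, not when an \emph{arbitrary} binary $t\in\Clo(\{h_\cF\})$ has $t(a,b)=t(b,a)=a$; for the universal absorption clause you need the extremal argument of Lemma~\ref{prepare}.
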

\begin{proof} The strategy is to apply Proposition \ref{gen} with $\mathbf{S}$ a relational structure on the two element set $\{x,y\}$, with signature $\sigma$ equal to the collection of all maximal intersecting families $\cF$ of subsets of $\{1, ..., n\}$ for all $n \ge 2$ and $\bA$ equal to the free algebra on two generators in $\cV$, $\cF_{\cV}(x,y)$ (which is finite and bounded width by assumption). A solution to the resulting instance $(\fB,\mathbf{C})$ will then correspond to a family of terms, one for each occurrence of each relation symbol $R\in \sigma$ in $\fB$ and one for each element of $\fB$. In order to show that these terms satisfy the required identities we will think of the solution to this instance as a coloring of the vertices of the directed hypergraph $\fB$, and use a pigeonhole argument to show that each $\cF^{\fB}$ contains an edge such that all of its vertices have the same color.

First we have to describe how we realize the relations $\cF$ in $\mathbf{S}$. If $\cF$ is a maximal intersecting family of subsets of $\{1, ..., n\}$, first we fix any ordering $A_1, ..., A_{2^{n-1}-1}$ of the elements of $\cF\setminus \{\{1,...,n\}\}$ such that $|A_1| \le |A_2| \le \cdots \le |A_{2^{n-1}-1}|$. Then $\cF^{\mathbf{S}}$ will be the arity $2^{n-1}-1$ relation with elements $t^1, ..., t^n \in \{x,y\}^{2^{n-1}-1}$ given by
\[
t^i_j = x\ \iff\ i\in A_j.
\]

Next we describe $\fB$. As the underlying set, we take the set of natural numbers $\mathbb{N}$. For each $\cF$ a maximal intersecting family of subsets of $\{1, ..., n\}$, we take $\cF^{\fB}$ to be the set of all strictly increasing sequences of $\mathbb{N}$ of length $2^{n-1}-1$.

Since no element of any $\cF\setminus \{\{1,...,n\}\}$ is either the empty set or the entire set, for each $1 \le j \le 2^{n-1}-1$ we have $\pi_j(\cF^{\mathbf{S}}) = \{x,y\}$, so the instance $(\fB,\mathbf{S})$ is 1-minimal. In order to check that it is a cycle consistent instance, we note that for any $1 \le i < j \le 2^{n-1}-1$ the relation $\pi_{i,j}(\cF^{\mathbf{S}})$ necessarily contains both $(x,x)$ (since $\cF$ is an intersecting family) and $(y,x)$ (since $|A_i| \le |A_j|$ and $A_i \ne A_j$), and contains at least one of $(x,y), (y,y)$ (since $A_j \ne \{1,...,n\}$).

For any closed pattern $p = (b_1, ..., b_k)$ in $\fB$, there must be some $j$ with $b_{j-1} < b_j > b_{j+1}$ (indices considered cyclically modulo $k-1$). If $1 < j < k$, then $p$ already connects $x$ to $x$ and $y$ to $y$ through realizations that send $b_j$ to $x$. Otherwise, for any closed pattern $q$ from $b_k$ to $b_1$, $p+q+p$ connects $x$ to $x$ and $y$ to $y$ through realizations that send every vertex in $q$ to $x$. Thus $(\fB,\mathbf{S})$ is a $pq$ instance.

Applying Proposition \ref{gen}, we find a solution to the instance $(\fB,\mathbf{C})$ where $\mathbf{C}$ is the closure of $\mathbf{S}$ in $\cF_{\cV}(x,y)$. Applying the infinite pigeonhole principle, we see that there is an infinite subset $X$ of $\fB$ which is all assigned the same value $f(x,y) \in \cF_{\cV}(x,y)$. Thus since each $\cF^{\fB}$ contains a tuple from $X$, there must be an element of $\Sg_{\cF_{\cV}(x,y)^{2^{n-1}-1}}(\cF^{\mathbf{S}})$ such that each coordinate is equal to $f(x,y)$. Calling the $n$ elements of $\cF^{\mathbf{S}}$ $t^1, ..., t^n$ as above, we then have a term $h_{\cF}$ such that each coordinate of $h_{\cF}(t^1, ..., t^n)$ is $f(x,y)$, which is what was needed.

In order to guarantee that whenever there is a term $t$ with $t(a,b) = t(b,a) = b$ we have $f(a,b) = f(b,a) = b$, we apply Lemma \ref{prepare} to the free algebra on $\cV$ with two generators.

Now we show how to modify these functions to ensure that $f(f(x,y),f(y,x)) \approx f(x,y)$. First we define the \emph{dictator family} $\mathcal{D}_n = \{A \subseteq \{1,...,n\}\mid 1 \in A\}$, and set $f_n(a_1, ..., a_n) = h_{\mathcal{D}_n}(a_1, ..., a_n)$. Then if $\{x,a_2, ...,a_n\} = \{x,y\}$, we have
\[
f_n(x,a_2, ..., a_n) \approx f(x,y).
\]
Now we define a sequence of functions $f^i(x,y)$ by $f^0(x,y) = x$, $f^1(x,y) = f(x,y)$,
\[
f^{i+1}(x,y) = f(f^i(x,y),f^i(y,x)),
\]
and define a sequence of functions $h^i_{\cF}(a_1, ..., a_n)$ by $h^1(a_1, ..., a_n) = h(a_1, ..., a_n)$,
\[
h^{i+1}_{\cF}(a_1, ..., a_n) = h^i_{\cF}(f_n(a_1, ..., a_n),f_n(a_2, ..., a_n,a_1), ..., f_n(a_n, a_1, ..., a_{n-1})).
\]
It's then easy to show by induction on $i \ge 1$ that if $\{a_1, ..., a_n\} = \{x,y\}$ and $\{i \in \{1,...,n\}\mid a_i = x\} \in \cF$, then
\[
h^i_{\cF}(a_1, ..., a_n) \approx f^i(x,y).
\]
Letting $\phi:\cF_{\cV}(x,y)^2 \rightarrow \cF_{\cV}(x,y)^2$ be the map $(a,b) \mapsto (f(a,b),f(b,a))$, we see from the finiteness of $\cF_{\cV}(x,y)^2$ that there is some $i$ such that $\phi^{\circ 2i} = \phi^{\circ i}$, and for this $i$ we have
\[
f^i(f^i(x,y),f^i(y,x)) \approx f^i(x,y).
\]
Replacing $f$ by $f^i$ and $h_{\cF}$ by $h^i_{\cF}$ finishes the proof.
\end{proof}

A term $t_3$ satisfying (SM 3) is now given by $h_{\mathcal{H}}$, where
\[
\mathcal{H} = \{\{1,2\},\{1,3\},\{1,4\},\{1,2,3\},\{1,2,4\},\{1,3,4\},\{2,3,4\},\{1,2,3,4\}\}.
\]

Note that the identity
\[
f(f(x,y),f(y,x)) \approx f(x,y)
\]
implies that for any $a,b$ we either have $f(a,b) = f(b,a)$ or the two element set $\{f(a,b),f(b,a)\}$ is closed under all of the operations $h_{\cF}$, and that they act on this set as the corresponding self-dual monotone functions. Furthermore, setting $g = h_{\mathcal{M}}$, where
\[
\cM = \{\{1,2\},\{1,3\},\{2,3\},\{1,2,3\}\},
\]
we see that $g$ acts as a majority operation on the set $\{f(a,b),f(b,a)\}$. Since every self-dual monotone function on a two element set is contained in the clone generated by the majority function, it's easy to show that we can in fact write all the functions $h_{\cF}$ in terms of the $f_n$s (corresponding to the dictator families) and $g$. Curiously, it doesn't seem easy to find a way to write the functions $f_n$ for $n \ge 3$ directly in terms of $g$.

\begin{cor}\label{min} A locally finite variety $\cV$ has bounded width if and only if it has an idempotent weak majority term $g$ satisfying the identity
\[
g(g(x,x,y),g(x,x,y),g(x,y,y)) \approx g(x,x,y)\ (\approx g(x,y,x) \approx g(y,x,x)).
\]
Furthermore, if $\cV$ has bounded width we can choose $g$ such that there exist $f,h_{\cF}$ as in the previous theorem with $g = h_{\cM}$ where $\cM = \{\{1,2\},\{1,3\},\{2,3\},\{1,2,3\}\}$, such that each $h_{\cF} \in \Clo(g)$, and such that $g$ satisfies the additional identity
\[
g(g(x,y,z),g(y,z,x),g(z,x,y)) \approx g(x,y,z).
\]
\end{cor}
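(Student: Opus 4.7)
The plan has three parts, matching the structure of the statement.

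For the backward direction of the main equivalence, assume $g$ satisfies the stated identities and set $f(x,y) := g(x,x,y)$. The weak majority of $g$ immediately yields $f(x,y) \approx g(x,x,y) \approx g(x,y,x) \approx g(y,x,x)$, and applying these identities with the roles of $x, y$ swapped gives $g(x,y,y) \approx f(y,x)$. Then applying weak majority of $g$ to the triple $(f(x,y), f(x,y), f(y,x))$ and invoking the hypothesis,
\[
f(f(x,y), f(y,x)) \approx g(g(x,x,y), g(x,x,y), g(x,y,y)) \approx g(x,x,y) = f(x,y),
\]
so $\cV$ has the pair $(f, g)$ from the bounded-width characterization given in the introduction. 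For the forward direction, apply Theorem \ref{intersect} to obtain $f$ and $h_{\cF}$ and set $g := h_{\cM}$ for $\cM = \{\{1,2\},\{1,3\},\{2,3\},\{1,2,3\}\}$. The tuples $(x,x,y),(x,y,x),(y,x,x)$ have $\{i:a_i=x\}\in\cM$, so $g$ is a weak majority with $g(x,x,y)\approx f(x,y)$; the tuples $(x,y,y),(y,x,y),(y,y,x)$ have $\{i:a_i=y\}\in\cM$, so $g(x,y,y)\approx f(y,x)$. The identity then reduces to $f(f(x,y),f(y,x))\approx f(x,y)$, which is part of Theorem \ref{intersect}.

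For the cyclic identity in the Furthermore, the plan is an iteration argument analogous to the one at the end of the proof of Theorem \ref{intersect}. Define $g^{(1)} := g$ and $g^{(n+1)}(x,y,z) := g(g^{(n)}(x,y,z), g^{(n)}(y,z,x), g^{(n)}(z,x,y))$, and consider the endomap $\phi: \cF_{\cV}(x,y,z)^3 \to \cF_{\cV}(x,y,z)^3$ given by $\phi(a,b,c) := (g(a,b,c), g(b,c,a), g(c,a,b))$. Since $\phi$ commutes with the cyclic permutation $(a,b,c) \mapsto (b,c,a)$, an easy induction on $n$ yields $\phi^n(x,y,z) = (g^{(n)}(x,y,z), g^{(n)}(y,z,x), g^{(n)}(z,x,y))$. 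By finiteness of the free algebra $\cF_{\cV}(x,y,z)$, pick $n$ with $\phi^{2n} = \phi^n$; reading off the first coordinate is exactly the cyclic identity for $g^{(n)}$. Using idempotence of $g$, induction on $n$ also shows $g^{(n)}$ remains a weak majority with $g^{(n)}(x,x,y)\approx f(x,y)$ and continues to satisfy the first identity of the corollary, so we may replace $g$ by $g^{(n)}$.

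Finally, to arrange $h_{\cF} \in \Clo(g)$, I would apply Theorem \ref{intersect} to the reduct of $\cV$ whose sole basic operation is $g$; by the backward direction of the main equivalence this reduct is of bounded width, so the theorem produces new $f', h'_{\cF}$ lying inside $\Clo(g)$. The key input is the observation made in the discussion preceding the corollary: on the two element subalgebra $\{f(x,y), f(y,x)\}$ (closed under $g$ thanks to $f(f(x,y),f(y,x))\approx f(x,y)$), $g$ acts as a genuine majority, and the clone generated by the majority on a two element set coincides with the clone of self-dual monotone functions, so every $\sigma_{\cF}$ is realized by a term of $\Clo(g)$ on this subalgebra. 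To end with $g = h'_{\cM}$, I would iterate this reduct application: set $g_0 := g$ and inductively define $g_{i+1}$ to be the weak majority output by Theorem \ref{intersect} applied to the reduct with basic operation $g_i$; the resulting descending chain $\Clo(g_0) \supseteq \Clo(g_1) \supseteq \cdots$ stabilizes by finiteness of the two-generated free algebra, and at the stable point $g := g_N$ one simultaneously has $g = h'_{\cM}$ and every $h'_{\cF} \in \Clo(g)$. The main obstacle in combining this with the cyclic iteration is that the cyclic identity must be re-established inside the stable clone; this is handled by running the cyclic iteration one final time at the end, which stays inside $\Clo(g)$ and hence does not disturb the clone equality.
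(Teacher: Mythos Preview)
Your backward direction is circular. You derive $f(f(x,y),f(y,x))\approx f(x,y)$ and then invoke ``the bounded-width characterization given in the introduction,'' but that characterization is precisely what Corollary~\ref{min} establishes; the introduction merely announces it. The paper proves the backward direction by a different route: assuming $g(x,y,z)\approx ax+by+cz$ in a module over a ring, the weak-majority identity forces $a=b=c$ and $3ax\approx x$, and then the identity $g(g(x,x,y),g(x,x,y),g(x,y,y))\approx g(x,x,y)$ becomes $5a^2x+4a^2y\approx 2ax+ay$, which after multiplying by $9$ gives $x\approx y$. Thus no nontrivial affine algebra supports such a $g$, and criterion~(9) of Proposition~1 yields bounded width. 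This same affine computation is also what justifies your later step ``by the backward direction of the main equivalence this reduct is of bounded width,'' so without it the ``furthermore'' argument does not get off the ground either.

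The remainder of your argument is essentially the paper's. Your descending-chain iteration $\Clo(g_0)\supseteq\Clo(g_1)\supseteq\cdots$ is equivalent to the paper's one-shot ``choose $g$ with $\Clo(g)$ having as few ternary terms as possible''; both use that a weakly decreasing chain in a finite poset stabilizes (note the relevant free algebra is $\cF_{\cV}(x,y,z)$, not the two-generated one, since you are counting ternary terms). Your cyclic iteration via $\phi$ is exactly the paper's $\gamma$, and the paper's observation that $\gamma^{\circ i}(a,b,c)$ is diagonal whenever two of $a,b,c$ coincide is what ensures $g(x,x,y)$ is unchanged by the replacement, so $f$ and the $h_{\cF}$ need not be rebuilt; combined with the minimality of $\Clo(g)$, the clone does not shrink and $h_{\cF}\in\Clo(g)$ is preserved.
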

\begin{proof} We've already shown that if $\cV$ has bounded width then such a $g$ exists. In order to prove the reverse implication, it's enough to show that such a $g$ can't be realized in a nontrivial module over a ring. Suppose for contradiction that $\mathbb{M}$ is a module over a ring $\RR$, and that we have $g^{\mathbb{M}}(x,y,z) \approx ax + by + cz$ for some fixed $a,b,c \in \RR$. Since $g(x,x,y) \approx g(x,y,x) \approx g(y,x,x)$, we have $ax\approx bx\approx cx$, and from $g(x,x,x) \approx x$, we have $3ax \approx x$. From
\[
g(g(x,x,y),g(x,x,y),g(x,y,y)) \approx g(x,x,y),
\]
we see that
\[
5a^2x + 4a^2y \approx 2ax + ay.
\]
Multiplying both sides by $9$ and using $3ax\approx x$, we get
\[
5x + 4y \approx 6x + 3y,
\]
so $x\approx y$ and the module $\mathbb{M}$ must be trivial.

In particular, for any such $g$ the clone $\Clo(g)$ must be bounded width as well. Choose such a $g$ such that $\Clo(g)$ contains as few terms of arity 3 as possible. By Theorem \ref{intersect}, there are terms $f,h_{\cF} \in \Clo(g)$. Let $g' = h_{\cM}$, then by the choice of $g$ we must have $\Clo(g) = \Clo(g')$, so in particular all $h_{\cF} \in \Clo(g')$. Replace $g$ by this $g'$ to get the second to last assertion of the Corollary.

In order to prove the last assertion, consider the map $\gamma: \cF_{\cV}(x,y,z)^3 \rightarrow \cF_{\cV}(x,y,z)^3$ defined by
\[
\gamma:(a,b,c)\mapsto (g(a,b,c),g(b,c,a),g(c,a,b)).
\]
Since $\cV$ is locally finite, there exists $i \ge 1$ such that $\gamma^{\circ 2i} = \gamma^{\circ i}$. Note that if any two of $a,b,c$ are equal, then $\gamma^{\circ i}(a,b,c) = (g(a,b,c),g(a,b,c),g(a,b,c))$ for all $i\ge 1$, so we may replace $g$ by the first coordinate of $\gamma^{\circ i}$ without changing the value of $g(x,x,y)$.
\end{proof}

\begin{cor} Let $\bA$ be a finite bounded width algebra. If $\Aut(\bA)$ is $2$-transitive, then $\bA$ has a majority term. If $\Aut(\bA)$ is $3$-transitive, then $\bA$ has the dual discriminator as a term.
\end{cor}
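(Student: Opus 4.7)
My approach is to invoke Corollary~\ref{min} to extract a weak majority term $g$ with
\[
g(x,x,y) \approx g(x,y,x) \approx g(y,x,x) \approx f(x,y),\qquad f(f(x,y),f(y,x)) \approx f(x,y),
\]
together with the cyclic identity $g(g(x,y,z),g(y,z,x),g(z,x,y)) \approx g(x,y,z)$, and then to exploit $\Aut(\bA)$-equivariance of term operations: for any $\Aut(\bA)$-orbit of input tuples, the output lies in a single $\Aut(\bA)$-orbit of $\bA$. Under 2-transitivity (resp.\ 3-transitivity), all ordered pairs (resp.\ triples) of pairwise distinct elements form one orbit, so $f(a,b)$ (resp.\ $g(a,b,c)$) has a uniform \emph{orbit type} across such inputs.

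\textbf{2-transitive case.} The uniform type of $f(a,b)$ for distinct $a,b$ is one of: (i) $f(a,b)=a$, giving $g$ as the desired majority; (ii) $f(a,b)=b$, ruled out since the identity would then give $b=f(a,b)=f(f(a,b),f(b,a))=f(b,a)=a$; or (iii) $f(a,b)\notin\{a,b\}$. In case (iii) I would first show that $f$ must be commutative on distinct pairs: letting $c:=f(a,b),\ d:=f(b,a)$, the identity gives $f(c,d)=c$, while the uniform type forces $f(c,d)\notin\{c,d\}$ whenever $c\ne d$. With $f$ commutative I would apply Lemma~\ref{semi-iter} to get a partial semilattice $s\in\Clo(f)$; orbit analysis on $s$ rules out $s(a,b)=b$ (the identity $s(s(x,y),x)\approx s(x,y)$ forces $a=b$) and $s(a,b)\notin\{a,b\}$ (writing $e:=s(a,b)$, the identity $s(x,s(x,y))\approx s(x,y)$ gives $s(a,e)=e$, contradicting uniformity on the pair $(a,e)$), forcing $s$ to act as first projection. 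Then, using 2-transitivity and the fact that $\Sg_{(\bA,f)}\{a,b\}$ contains $c$ with $|\Sg_{(\bA,f)}\{a,b\}|=|\Sg_{(\bA,f)}\{a,c\}|$, I would identify this three-element subalgebra with the Steiner quasigroup of $\ZZ/3$ under $(x,y)\mapsto-x-y$, producing a nontrivial affine subreduct of $(\bA,f)$. The module computation at the end of Corollary~\ref{min}'s proof then shows no operation satisfying the weak majority identity exists on a nontrivial module, contradicting the existence of $g$.

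\textbf{3-transitive case.} With the majority term $m$ in hand from the 2-transitive argument, the uniform type of $m(a,b,c)$ for pairwise distinct $a,b,c$ is either $i$-th coordinate for some $i\in\{1,2,3\}$, or ``new element'' outside $\{a,b,c\}$. The first-coordinate case gives $m$ as the dual discriminator directly; the second- and third-coordinate cases reduce to the first by precomposing $m$ with a fixed permutation of its arguments (the majority identity is symmetric in the three positions of the odd-one-out, so permuting variables preserves majority). For the ``new element'' case, substituting into the cyclic identity yields $m(e,e',e'')=e$, where $e:=m(a,b,c),\ e':=m(b,c,a),\ e'':=m(c,a,b)$. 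By 3-transitivity there is a $\sigma\in\Aut(\bA)$ with $\sigma(a,b,c)=(b,c,a)$, whence $(e',e'')=(\sigma e,\sigma^2 e)$ and $\{e,e',e''\}$ is a $\langle\sigma\rangle$-orbit of size $1$ or $3$. Size $3$ makes $(e,e',e'')$ a pairwise distinct triple, but then uniform ``new element'' type gives $m(e,e',e'')\notin\{e,e',e''\}$, contradicting $m(e,e',e'')=e$. Size $1$ means $\sigma$ fixes $e$; I would then invoke a transposition $\tau\in\Aut(\bA)$ swapping $a$ and $b$ (supplied by 3-transitivity) and compute $\tau(e)=m(b,a,c)$, whose orbit analysis forces a further contradiction with uniformity.

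\textbf{Main obstacle.} The subtlest step is case (iii) of the 2-transitive argument: realizing the Steiner quasigroup structure on $\Sg_{(\bA,f)}\{a,b\}$ requires an $\Aut(\bA)$-element fixing $a$ and swapping $b$ with $c=f(a,b)$, which is stronger than bare 2-transitivity. I would extract this by using that the stabilizer of $a$ acts transitively on $\bA\setminus\{a\}$ together with the equality $|\Sg_{(\bA,f)}\{a,b\}|=|\Sg_{(\bA,f)}\{a,c\}|$ to force $f(a,c)\in\{a,b,c\}$. An alternative, perhaps more in the spirit of the paper, is to bypass this entirely and derive the contradiction in case (iii) via the spiral classification of Section~\ref{spiral}: commutative idempotent $f$ with $f(a,b)\notin\{a,b\}$ uniformly implies $\bA$ has no two-element subalgebra of any kind (semilattice or majority), conflicting with the standard fact that bounded width algebras are covered by two-element semilattice and majority subalgebras.
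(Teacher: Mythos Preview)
Your overall strategy---exploiting orbit types under the action of $\Aut(\bA)$---is sound, and cases (i)--(ii) of the 2-transitive argument, together with the reduction of case (iii) to commutative $f$, are correct. But both case (iii) and the 3-transitive ``new element'' case have real gaps, and the paper's proof takes a different route in each.

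\textbf{2-transitive, case (iii).} Your Steiner-quasigroup approach, as you note, requires $\Sg_{(\bA,f)}\{a,b\}$ to have exactly three elements, which is unjustified. Your alternative can be made rigorous, but it is not a ``standard fact'': what you actually need is Theorem~\ref{connect} of this paper, which (for $g$ with $\Clo(g)$ minimal) produces a chain of two-element $g$-closed sets from $x$ to $y$ in $\cF_\cV(x,y)$, contradicting $f(u,v)=g(u,u,v)\notin\{u,v\}$ for all distinct $u,v$. The paper's own argument is more elementary and avoids Theorem~\ref{connect} entirely: since $f(a,b)\ne a$, 2-transitivity gives $\sigma\in\Aut(\bA)$ with $\sigma(a)=f(a,b)$ and $\sigma(b)=a$; setting $g_1=g$ and $g_{i+1}(x,y,z)=g_i(f(y,z),f(z,x),f(x,y))$ one checks inductively that $g_i(a,a,b)=g_i(a,b,a)=g_i(b,a,a)=\sigma^i(a)$, so taking $n$ with $\sigma^n(a)=a$ makes $g_n$ majority on $\{a,b\}$, hence everywhere by 2-transitivity.

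\textbf{3-transitive, ``new element'' case.} Your size-1-or-3 dichotomy for $\{e,e',e''\}$ is correct (the automorphism $\sigma$ with $\sigma(a,b,c)=(b,c,a)$ cycles $e\to e'\to e''\to e$, so if any two coincide all three do), and size 3 does contradict the uniform type. But your size-1 resolution via a transposition $\tau$ is not an argument---you never say what the contradiction is. In fact you are trying to show that $g$ \emph{itself} is the dual discriminator, and this need not hold even when the dual discriminator lies in $\Clo(g)$: on a four-element set, the majority operation sending each distinct triple to the remaining fourth element satisfies all the identities of Corollary~\ref{min} and has $\Aut=S_4$, yet lands in your size-1 case with $\tau(e)=e$ and no contradiction; here the dual discriminator is $g(g(x,y,z),y,z)$, not $g$. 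The paper instead first replaces $g$ by an iterate satisfying $g(g(x,y,z),y,z)\approx g(x,y,z)$; then if $d=g(a,b,c)\notin\{a,b,c\}$, this identity gives $g(d,b,c)=d$ on the distinct triple $(d,b,c)$, and 3-transitivity finishes.
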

\begin{proof} Since $\Aut(\bA)$ is transitive, every unary operation of $\bA$ must be surjective, so $\bA$ is core and we may replace it with its idempotent reduct. Choose $f,g$ as in the previous corollary. If there is any pair $x,y \in \bA$ such that $f(x,y) \ne f(y,x)$, then from $f(f(x,y),f(y,x)) = f(x,y)$ and $2$-transitivity we see that $f$ is first projection and $g$ is a majority operation.

Now suppose that $f(x,y) = f(y,x)$ for all $x,y \in \bA$. Choose any $a \ne b$ in $\bA$, and let $\sigma$ be an automorphism of $\bA$ such that $\sigma(a) = f(a,b)$ and $\sigma(b) = a$. Define a sequence of functions $g_i$ by $g_1 = g$ and $g_{i+1}(x,y,z) \approx g_i(f(y,z),f(z,x),f(x,y))$. Since $\sigma$ is an automorphism of $\bA$, we have
\[
g_{i+1}(a,a,b) = g_i(f(a,b),f(b,a),a) = g_i(\sigma(a),\sigma(a),\sigma(b)) = \sigma(g_i(a,a,b)) = \sigma^{i+1}(a)
\]
by induction on $i$, and similarly we get $g_{i+1}(a,b,a) = g_{i+1}(b,a,a) = \sigma^{i+1}(a)$. Choosing $n$ such that $\sigma^n$ fixes $a$, we see that $g_n(a,a,b) = g_n(a,b,a) = g_n(b,a,a) = a$, and thus by $2$-transitivity $g_n$ is a majority operation.

Now suppose that $\Aut(\bA)$ is $3$-transitive. Let $g$ be a majority operation on $\bA$. By iteration, we may assume that $g$ also satisfies the identity
\[
g(g(x,y,z),y,z) \approx g(x,y,z).
\]
Let $a,b,c$ be any three distinct elements of $\bA$. If $g(a,b,c) \in \{a,b,c\}$, then by $3$-transitivity (and possibly permuting the inputs of $g$) we are done. Otherwise, letting $d = g(a,b,c)$ we see that $d,b,c$ are three distinct elements of $\bA$ with $g(d,b,c) = d$, and we are done by $3$-transitivity.
\end{proof}

\begin{thm}\label{connect} If $\cV$ is a locally finite idempotent bounded width variety with terms $f, g$ as in Corollary \ref{min} such that the clone generated by $g$ is minimal, then there is a sequence of binary terms $p_0, ..., p_n \in \Clo(f)$ such that $p_0(x,y) \approx x$, $p_n(x,y) \approx y$, and for each $0 \le i < n$, the set $\{p_i(x,y), p_{i+1}(x,y)\} \subseteq \cF_{\cV}(x,y)$ is closed under $g$.
\end{thm}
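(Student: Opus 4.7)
The plan is to reduce to the case where $f$ is not a projection and then establish connectivity in the 2-element-subalgebra graph via a CSP argument patterned on the proof of Theorem \ref{intersect}.

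First, I would handle the projection case. The identity $f(f(x,y),f(y,x)) \approx f(x,y)$ rules out $f = \pi_2$, so if $f$ is a projection it must be $\pi_1$. Then $g(x,x,y) \approx g(x,y,x) \approx g(y,x,x) \approx x$ makes $g$ a full majority term and $\{x,y\}$ is closed under $g$, so the length-one sequence $p_0 = x$, $p_1 = y$ (both projections, hence in $\Clo(f)$) works. Otherwise $\Clo(f)$ is a nontrivial subclone of $\Clo(g)$, and minimality of $\Clo(g)$ forces $\Clo(f) = \Clo(g)$. Writing $F_f = \{p(x,y) : p \in \Clo(f)\} \subseteq \cF_\cV(x,y)$, this makes $F_f$ a finite bounded width algebra containing both generators, and since $g(a,a,b) = f(a,b)$ and $g(a,b,b) = f(b,a)$, the condition ``$\{a,b\}$ closed under $g$'' on pairs from $F_f$ reduces to ``$f(a,b), f(b,a) \in \{a,b\}$''.

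For the main construction, observe that the identity $f(f(x,y), f(y,x)) \approx f(x,y)$ together with its $x \leftrightarrow y$ dual shows that $\{f(x,y), f(y,x)\}$ is automatically a 2-element subalgebra closed under $f$, providing one edge of the graph $G$ on $F_f$ whose edges are pairs closed under $g$. So it suffices to connect $x$ to $f(x,y)$ (and symmetrically $y$ to $f(y,x)$) by walks in $G$. I would build such walks by mirroring the CSP setup in the proof of Theorem \ref{intersect}: take $\mathbf{S}$ to be a relational structure on the two-element set $\{x,y\}$ whose signature consists of binary relations encoding the allowable edge types, let $\fB$ have variables indexed by $\mathbb{N}$ with relation-tuples realized on strictly increasing sequences, verify that $(\fB, \mathbf{S})$ is a $pq$-instance, and apply Proposition \ref{gen}. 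The infinite pigeonhole on the resulting solution in $\cF_\cV(x,y)$ then yields monochromatic infinite subsets whose corresponding relation-tuples in the closure $\mathbf{C}$ produce the binary $\Clo(f)$-terms making up the walk.

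The main obstacle is the CSP design: unlike in Theorem \ref{intersect}, the relation ``$\{a,b\}$ closed under $g$'' is not directly a subpower of $\cF_\cV(x,y)^2$, so one has to encode its three allowable edge types (majority, and the two semilattice orientations) separately in $\mathbf{S}$ via a disjunctive setup and then reassemble them when extracting the walk. I expect this will require invoking Lemma \ref{prepare} so that any potential semilattice pair produced by the pigeonhole argument is an actual 2-element subalgebra, plus some bookkeeping to keep the walk inside $F_f$. Once the CSP is correctly set up, verifying $pq$-consistency via the same ``route closed patterns through long monotone arcs'' template of Theorem \ref{intersect}, and then extracting the walk from a pigeonhole-monochromatic solution, should be largely mechanical.
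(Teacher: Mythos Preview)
Your reduction ``$\Clo(f)$ is nontrivial, so by minimality $\Clo(f)=\Clo(g)$'' is false. Minimality of $\Clo(g)$ is only among bounded width clones, and $\Clo(f)$ need not be bounded width. Concretely, take the three-element minimal bounded width algebra $\bA_{11}$ from Figure~\ref{doodles}: there $\{a,b\}$ is a two-element majority subalgebra on which $f$ acts as first projection, so every term in $\Clo(f)$ restricts to a projection on $\{a,b\}$, while $g$ restricts to the majority operation; hence $g\notin\Clo(f)$ even though $f$ is not a projection. This kills the step ``$F_f$ is a bounded width algebra'' which you rely on for the CSP.

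The deeper issue is that the property ``$\{p_i,p_{i+1}\}$ is closed under $g$'' is a universal condition (all $g$-values land in a two-element set), whereas the pigeonhole/CSP machinery of Theorem~\ref{intersect} produces existential information (some tuple lies in some subpower). Your ``disjunctive setup'' does not explain how to bridge this; CSPs are conjunctive and there is no evident relation on $\{x,y\}$ whose closure in $\cF_\cV(x,y)$ consists of $g$-closed pairs. Invoking Lemma~\ref{prepare} only handles semilattice edges, but in mixed algebras such as $\bA_{11}$ the required path must traverse genuine majority edges.

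The paper's argument is entirely different. It runs a minimal-counterexample induction over all $f$-closed subsets $A$ of algebras in $\cV$, with the graph $\cG(A)$ whose edges are $g$-closed pairs. Given $a,b$ in different components of a minimal such $A$, one shows (using Lemma~\ref{prepare} and minimality) that $\bS=\Sg_{A^2}\{(a,b),(b,a)\}$ avoids the diagonal and hence induces an involution $\iota$ on the set of components, from which one deduces $C(f^i(a,b))=\iota^i(C(a))$ and after iterating may assume $f^N(a,b)=b$. The heart of the proof is then an explicit construction of new terms $f'',g''\in\Clo(g)$ satisfying the identities of Definition~\ref{mindef} with $f''\in\Clo(f)$ and with $f''(a,b),f''(b,a)$ lying in $C(a),C(b)$ respectively; minimality of $\Clo(g)$ now gives $\Clo(g)=\Clo(g'')$, so $\{f''(a,b),f''(b,a)\}$ is closed under $g$, contradicting disconnectedness. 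The use of minimality here is not to collapse $\Clo(f)$ and $\Clo(g)$, but to transport closure under a freshly constructed $g''$ back to closure under $g$.
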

\begin{proof} Let $f,g$ be terms as in Corollary \ref{min}, let $f_3 = h_{\mathcal{D}_3} \in \Clo(g)$ be as in the proof of Theorem \ref{intersect}, let $\bA$ be an algebra in $\cV$, and let $A$ be a subset of $\bA$ which is closed under $f$. Set $\bA_f = (A,f)$, the algebra on the underlying set $A$ with $f$ as its only basic operation. Define a graph $\cG(\bA_f) = \cG(\bA_f,g)$ on $\bA_f$ with an edge connecting a pair of elements $a,b \in \bA_f$ whenever $\{a,b\}$ is closed under $g$. We just need to show that each such graph $\cG(\bA_f)$ is connected. Suppose for contradiction that $\bA_f$ has minimal size such that $\cG(\bA_f)$ is not connected.

For every element $a \in \bA_f$, let $C(a)$ be the connected component of $a$ in $\cG(\bA_f)$. Letting $a,b \in \bA_f$ be any pair of elements with $C(a) \ne C(b)$, from the minimality of $\bA_f$ we see that $\Sg_{\bA_f}\{a,b\} = \bA_f$. Define a subalgebra $\bS = \bS_{a,b}$ of $\bA_f^2$ to be $\Sg_{\bA_f^2}\{(a,b),(b,a)\}$. Since $\{a,b\}$ is not an edge of $\cG(\bA_f)$, the Semilattice Preparation Lemma \ref{prepare} and the minimality of $\Clo(g)$ implies that $(a,a), (b,b) \not\in \bS$. Since $\Sg_{\bA_f}\{a,b\} = \bA_f$, we see that $\bS$ is subdirect in $\bA_f^2$.

Suppose now that $(u,t),(v,w) \in \bS$ and that $\{u,v\}$ is an edge of $\cG(\bA_f)$. Let $\bB = \bB_{u,v} \le \bA_f$ be the subalgebra of $x \in \bA_f$ such that at least one of $(u,x), (v,x)$ is in $\bS$, i.e.
\[
\bB = \pi_2((\{u,v\}\times \bA_f) \cap \bS).
\]
Suppose, for contradiction, that $\bB = \bA_f$. Then at least one of $C(a)$ or $C(b)$ is not equal to $C(u) = C(v)$, say $C(a)$. From $b \in \bB$, we see that at least one of $(u,b), (v,b) \in \bB$, say $(u,b) \in \bB$, and then from $C(a) \ne C(u)$ we see that $(b,b) \in \bA_f\times\{b\} = \Sg_{\bA_f^2}\{(a,b),(u,b)\} \subseteq \bS$, a contradiction.

Thus $\bB$ must be a proper subalgebra of $\bA_f$, and from the minimality of $\bA_f$ the graph $\cG(\bB)$ must be connected. In particular, since $t,w \in \bB$ we see that $C(t) = C(w)$. By a straightforward induction on the length of the path, we now see that for any $(u,t), (v,w) \in \bS$ with $C(u) = C(v)$ we must have $C(t) = C(w)$. Thus there must exist an involution $\iota$ on the set $\mathcal{C}$ of connected components of $\cG(\bA_f)$ such that
\[
\bS \subseteq \bigcup_{C \in \mathcal{C}} C \times \iota(C).
\]

Suppose, for contradiction, that $C(f(a,b)) = C(a)$. From $(a,b),(b,a) \in\bS$ we see that $(f(a,b),f(b,a)) \in \bS$, and so $C(f(b,a)) = \iota(C(f(a,b))) = \iota(C(a)) = C(b)$. Since $\{f(a,b),f(b,a)\}$ is a subalgebra of $\bA_f$, we see that $C(f(a,b)) = C(f(b,a))$ as well, so $C(a) = C(b)$, a contradiction. Thus, for any $a,b$ with $C(a) \ne C(b)$, we have $C(a) \ne C(f(a,b))$ as well.

Defining a sequence of binary terms $f^i$ by $f^0(x,y) = y$, $f^1 = f$, and
\[
f^{i+1}(x,y) = f(x,f^i(x,y)),
\]
we see by induction on $i$ that if $C(a) \ne C(b)$ then $C(a) \ne C(f^i(a,b))$. Letting $N \ge 1$ be such that $f^N = f^{2N}$, we see that $C(a) \ne C(f^N(a,b))$ and $f^N(a,f^N(a,b)) = f^N(a,b)$. Replacing $b$ by $f^N(a,b)$, we may assume without loss of generality that $f^N(a,b) = b$.

At this point, we will attempt to construct functions $f',g',f_3'$ such that
\[
g'(x,x,y) \approx g'(x,y,x) \approx g'(y,x,x) \approx f_3'(x,x,y) \approx f_3'(x,y,x) \approx f_3'(x,y,y) \approx f'(x,y)
\]
and $f'(a,b) = b$.

Define a sequence of ternary terms $g^i$ by $g^0(x,y,z) = z$, $g^1 = g$, and
\[
g^{i+1}(x,y,z) = g(x,y,g^i(x,y,z)).
\]
We now define $f',g',f_3'$ by
\[
f'(x,y) = f(f^N(x,y),f^{N-1}(y,f^N(x,y))),
\]
\begin{align*}
g'(x,y,z) &= g(f^{N-1}(x,g^{N-1}(f(y,z),f(z,y),g(x,y,z))),\\
&\qquad \qquad f^{N-1}(y,g^{N-1}(f(z,x),f(x,z),g(y,z,x))),f^{N-1}(z,g^{N-1}(f(x,y),f(y,x),g(z,x,y)))),
\end{align*}
and
\begin{align*}
f_3'(x,y,z) &= f_3(f^{N-1}(x,f_3(x,y,z)),\\
&\qquad \qquad f^{N-1}(y,f^{N-1}(x,f(f(x,y),f_3(x,y,z)))),f^{N-1}(z,f^{N-1}(x,f(f(x,z),f_3(x,y,z))))).
\end{align*}
We have the identity
\begin{align*}
g'(x,x,y) &\approx g(f^{N-1}(x,g^{N-1}(f(x,y),f(y,x),f(x,y))),\\
&\qquad \qquad f^{N-1}(x,g^{N-1}(f(y,x),f(x,y),f(x,y))),f^{N-1}(y,g^{N-1}(x,x,f(x,y))))\\
&\approx g(f^N(x,y),f^N(x,y),f^{N-1}(y,f^N(x,y)))\\
&\approx f'(x,y),
\end{align*}
and similarly $g'(x,y,x) \approx g'(y,x,x) \approx f'(x,y)$. We also have
\begin{align*}
f_3'(x,x,y) &\approx f_3(f^{N-1}(x,f(x,y)),\\
&\qquad \qquad f^{N-1}(x,f^{N-1}(x,f(x,f(x,y)))), f^{N-1}(y,f^{N-1}(x,f(f(x,y),f(x,y)))))\\
&\approx f_3(f^N(x,y), f^{2N}(x,y), f^{N-1}(y,f^N(x,y)))\\
&\approx f'(x,y),
\end{align*}
and similarly $f_3'(x,y,x) \approx f'(x,y)$, as well as
\begin{align*}
f_3'(x,y,y) &\approx f_3(f^{N-1}(x,f(x,y)),\\
&\qquad \qquad f^{N-1}(y,f^{N-1}(x,f(f(x,y),f(x,y)))), f^{N-1}(y,f^{N-1}(x,f(f(x,y),f(x,y)))))\\
&\approx f_3(f^N(x,y), f^{N-1}(y,f^N(x,y)), f^{N-1}(y,f^N(x,y)))\\
&\approx f'(x,y),
\end{align*}
so the terms $f', g', f_3'$ do indeed satisfy the identity
\[
g'(x,x,y) \approx g'(x,y,x) \approx g'(y,x,x) \approx f_3'(x,x,y) \approx f_3'(x,y,x) \approx f_3'(x,y,y) \approx f'(x,y).
\]
Additionally, from $f^N(a,b) = b$, we have
\[
f'(a,b) = f(f^N(a,b),f^{N-1}(b,f^N(a,b))) = f(b,f^{N-1}(b,b)) = b.
\]

Using $f', g', f_3'$ we will construct terms $f''$, $g''$ with
\begin{align*}
f''(x,y) &\approx g''(x,x,y) \approx g''(x,y,x) \approx g''(y,x,x)\\
&\approx f''(f''(x,y),f''(y,x))
\end{align*}
and with $C(f''(a,b)) = C(a)$, $C(f''(b,a)) = C(b)$. Then $g''$ generates a bounded width algebra by Corollary \ref{min}, so by minimality of $\Clo(g)$ we will have $\Clo(g) = \Clo(g'')$. Thus, since $\{f''(a,b),f''(b,a)\}$ will be closed under $g''$, it will also be closed under $g$, so $\{f''(a,b),f''(b,a)\}$ will be an edge of $\cG(\bA_f)$ which connects $C(a)$ and $C(b)$, giving us a contradiction.

In order to construct $f'', g''$, we define sequences $f'^i, g'^i$ by $f'^1 = f', g'^1 = g'$, and
\begin{align*}
f'^{i+1}(x,y) &= f'^i(f'(x,y),f'(y,x))\\
g'^{i+1}(x,y,z) &= g'^i(f_3'(x,y,z),f_3'(y,z,x),f_3'(z,x,y)).
\end{align*}
We then choose an even $K \ge 1$ such that $f'^K = f'^{2K}$ and take $f'' = f'^K, g'' = g'^K$.

It remains to show that $C(f''(a,b)) = C(a)$ and $C(f''(b,a)) = C(b)$. We will prove by induction on $i$ that, considering $\bS$ as an undirected graph on $\bA_f$, $a$ is connected to $f'^i(a,b)$ by a sequence of $i$ edges of $\bS$ such that the first edge of the sequence is $(a,b)$, from which it follows that $a$ is connected to $f'^i(b,a)$ by a sequence of $i+1$ edges of $\bS$ since $(f'^i(a,b),f'^i(b,a)) \in \bS$.

For $i = 1$, this follows from $(a,b) = (a,f'(a,b)) \in \bS$ and $(f'(a,b),f'(b,a)) = (b,f'(b,a)) \in \bS$. For the induction step, let $u_0, u_1, ..., u_i$ be such that $u_0 = a, u_1 = b, u_i = f'^i(a,b)$ with $(u_j,u_{j+1}) \in \bS$ for $0 \le j < i$, and set $u_{i+1} = f'^i(b,a)$ so that $(u_i,u_{i+1}) \in \bS$ as well. Now for $1 \le j \le i+1$, set $v_j = f'(u_{j-1},u_j)$, and set $v_0 = a$. Then $(v_0, v_1) = (a, f'(a,b)) = (a,b) \in \bS$, and for $1 \le j \le i$ we have
\[
\begin{bmatrix} v_j\\ v_{j+1}\end{bmatrix} = f'\left(\begin{bmatrix} u_{j-1}\\ u_j\end{bmatrix}, \begin{bmatrix} u_j\\ u_{j+1}\end{bmatrix}\right) \in \bS,
\]
and $v_{i+1} = f'(u_i,u_{i+1}) = f'(f'^i(a,b),f'^i(b,a)) = f'^{i+1}(a,b)$. Thus $v_0, ..., v_{i+1}$ is a path through $i+1$ edges of $\bS$ connecting $a$ to $f'^{i+1}(a,b)$ with $v_1 = b$, and the induction is complete.

By $\bS \subseteq \bigcup_{C \in \mathcal{C}} C \times \iota(C)$, we have
\[
C(f'^i(a,b)) = \iota^i(C(a)) = \begin{cases}C(a) & i \equiv 0 \pmod{2},\\ C(b) & i \equiv 1 \pmod{2},\end{cases}
\]
for each $i$, so $C(f''(a,b)) = C(f'^K(a,b)) = C(a)$, $C(f''(b,a)) = \iota(C(f''(a,b))) = C(b)$, and we are done.
\end{proof}

\section{Pseudovariety of minimal bounded width clones}\label{s-pseudovariety}

\begin{defn}\label{mindef} For each $k \ge 1$, let $\cG_k$ be the set of pairs $f,g$ where $g:\{1,...,k\}^3\rightarrow \{1,...,k\}$ is an idempotent weak majority function satisfying
\[
g(x,x,y) \approx g(x,y,x) \approx g(y,x,x) \approx f(x,y),
\]
\[
f(f(x,y),f(y,x)) \approx f(x,y),
\]
and
\[
g(g(x,y,z),g(y,z,x),g(z,x,y)) \approx g(x,y,z).
\]
Define a quasiorder $\preceq$ on $\cG_k$  by $g' \preceq g$ if $g' \in \Clo(g)$. Also, define an action of $S_2\times S_k$ on $\cG_k$ by having the nontrivial element of $S_2$ take $g\in\cG$ to $\tilde{g}$ given by
\[
\tilde{g}(x,y,z) = g(x,z,y)
\]
and by having $\sigma\in S_k$ take $g$ to $\sigma g$ given by $(\sigma g)(x,y,z) = \sigma(g(\sigma^{-1}(x),\sigma^{-1}(y),\sigma^{-1}(z)))$.
\end{defn}

\begin{prop} The clones on $\{1,...,k\}$ of bounded width which are minimal with respect to containment are in bijection with the minimal equivalence classes of the quasiorder $\preceq$ on $\cG_k$. In particular, the number of such clones is at most $|\cG_k|$, which is in turn trivially bounded by $k^{k^3}$.
\end{prop}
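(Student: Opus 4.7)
The plan is to use Corollary \ref{min} as the bridge between bounded width clones on $\{1,\ldots,k\}$ and elements of $\cG_k$, and then observe that containment of clones exactly mirrors the quasiorder $\preceq$.

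First I would note the basic correspondence. Given any bounded width clone $\Clo$ on $\{1,\ldots,k\}$, we apply Corollary \ref{min} to the locally finite variety generated by $(\{1,\ldots,k\},\Clo)$: it supplies an idempotent ternary term $g$ (together with $f(x,y) = g(x,x,y)$) satisfying exactly the identities listed in Definition \ref{mindef}, so $(f,g) \in \cG_k$, and $g \in \Clo$. Conversely every $g \in \cG_k$ gives a bounded width clone $\Clo(g)$ on $\{1,\ldots,k\}$, again by Corollary \ref{min}. Thus the assignment $g \mapsto \Clo(g)$ is a well-defined map from $\cG_k$ into the bounded width clones on $\{1,\ldots,k\}$.

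Next I would check that this map factors through the equivalence relation associated to $\preceq$ (that is, $g \sim g'$ iff $g \preceq g'$ and $g' \preceq g$) and that the induced map on equivalence classes is injective. This is immediate: $\Clo(g) = \Clo(g')$ iff $g \in \Clo(g')$ and $g' \in \Clo(g)$, which is precisely $g \sim g'$. Then I would show that $[g]$ is $\preceq$-minimal if and only if $\Clo(g)$ is minimal among bounded width clones. For the forward implication, suppose $[g]$ is minimal and $\Clo' \subseteq \Clo(g)$ is any bounded width clone on $\{1,\ldots,k\}$; applying Corollary \ref{min} to the variety generated by $(\{1,\ldots,k\},\Clo')$ yields some $g' \in \Clo' \cap \cG_k$, giving $g' \preceq g$, hence $g \sim g'$ by minimality, and therefore $\Clo(g) = \Clo(g') \subseteq \Clo' \subseteq \Clo(g)$. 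For the reverse implication, if $\Clo(g)$ is minimal bounded width and $g' \in \cG_k$ satisfies $g' \preceq g$, then $\Clo(g') \subseteq \Clo(g)$ is itself a bounded width clone, so minimality forces $\Clo(g') = \Clo(g)$ and hence $g \sim g'$. Combining these two directions gives the claimed bijection.

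Finally, the cardinality bound is immediate: an element of $\cG_k$ is completely determined by its ternary component $g$ (since $f$ is defined by $f(x,y) = g(x,x,y)$), and there are at most $k^{k^3}$ functions $\{1,\ldots,k\}^3 \to \{1,\ldots,k\}$. The number of minimal $\preceq$-equivalence classes is at most $|\cG_k| \le k^{k^3}$, bounding the number of minimal bounded width clones.

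There is no serious obstacle here; the only thing to be careful about is the direction of Corollary \ref{min} that produces a witness $g \in \cG_k$ \emph{inside} an arbitrary bounded width clone, which is what lets us identify minimal bounded width clones with principal clones generated by elements of $\cG_k$. Once that is in hand the proposition is essentially bookkeeping. The $S_2 \times S_k$ action mentioned in Definition \ref{mindef} plays no role here; it will presumably be used later to cut down the search for minimal clones up to relabeling.
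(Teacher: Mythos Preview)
Your argument is correct and is exactly the intended one: the paper states this proposition without proof, treating it as an immediate consequence of Corollary \ref{min}, and you have correctly identified that corollary as the key input and carried out the bookkeeping carefully. The only thing worth noting is that you should also observe surjectivity of the map onto minimal bounded width clones (every minimal bounded width clone $\Clo$ contains some $g\in\cG_k$ by Corollary \ref{min}, hence equals $\Clo(g)$, and then your reverse implication shows $[g]$ is $\preceq$-minimal), but this is implicit in what you wrote.
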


Straightforward computer calculations give the values $|\cG_3| = 2,\!728$ and $|\cG_4| = 8,\!124,\!251,\!747,\!605$.

\begin{defn}  Say that $g \in \cG_k$ is \emph{minimal} if it is in a minimal equivalence class of the quasiorder $\preceq$, say that an algebra $\bA$ is a minimal bounded width algebra if it is isomorphic to an algebra which is term equivalent to $(\{1,...,|\bA|\},g)$ for some minimal $g \in \cG_{|\bA|}$, and say that a variety $\cV$ is a minimal bounded width variety if every finite algebra $\bA \in \cV$ is minimal.
\end{defn}

The proof of the next result is based on an argument of Bulatov, specifically, the proof of Case 2.2 of Theorem 5 of \cite{colored-graph}.

\begin{thm}\label{subalg} Let $\cV$ be a locally finite idempotent bounded width variety such that every algebra in $\cV$ is connected through two element semilattice and two element majority subalgebras, and let $\bA \in \cV$.

Suppose there is a term $m$ and a subset $S \subseteq \bA$ which is closed under $m$, such that $(S,m)$ is a bounded width algebra. Let $\cV'$ be the reduct of $\cV$ consisting of all terms $t$ of $\cV$ such that $S$ is closed under $t$ and such that $t|_S \in \Clo(m|_S)$. Then $\cV'$ also has bounded width.
\end{thm}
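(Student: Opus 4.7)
By Corollary~\ref{min}, to show $\cV'$ has bounded width it suffices to produce in $\cV'$ an idempotent weak majority $g$ satisfying the iterability identity $g(g(x,x,y), g(x,x,y), g(x,y,y)) \approx g(x,x,y)$ and the cyclic identity $g(g(x,y,z), g(y,z,x), g(z,x,y)) \approx g(x,y,z)$. My plan is to construct such a $g$ by first fixing its restriction to $S$ and then extending using the bounded width of $\cV$. Since $(S,m)$ has bounded width, Corollary~\ref{min} applied inside $\Clo(m|_S)$ supplies a ternary $g_S \in \Clo(m|_S)$ satisfying exactly these identities on $S$; any term $g$ of $\cV$ whose restriction to $S$ equals $g_S$ will automatically lie in $\cV'$.

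To find such a $g$, I would set up a $pq$ instance $(\fB, \mathbf{S})$ in the style of the proof of Theorem~\ref{intersect}. The variable set of $\fB$ consists of an infinite ``generic'' set providing the pigeonhole room, together with finitely many additional vertices pinned to elements of $S$ by unary constraints; the relational signature contains the intersecting-family relations from the proof of Theorem~\ref{intersect}, now interpreted on $A$ rather than on $\{x,y\}$, together with singleton unary relations enforcing the values dictated by $g_S$ at the pinned vertices. A solution is thereby forced to realize $g_S$ on the $S^3$-block of a candidate ternary operation, while on the generic side the pigeonhole argument will deliver a family of operations $h_\cF$ of $\cV$ satisfying the intersecting-family identities.

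The crux is verifying $pq$ consistency of this combined instance. Closed patterns supported on generic vertices are handled exactly as in Theorem~\ref{intersect}; closed patterns supported on pinned $S$-vertices close up because $g_S$ is already a weak majority of the bounded width algebra $(S,m)$, so all required realizations exist inside $\Clo(m|_S)$; mixed closed patterns, crossing between generic and pinned vertices, are the main obstacle. To handle them I would invoke the hypothesis that every algebra in $\cV$ is connected through two-element semilattice and majority subalgebras, which, together with the Semilattice Preparation Lemma~\ref{prepare} and the partial semilattice machinery of Section~\ref{s-partial}---in the coherence style of Bulatov's argument in \cite{colored-graph}---lets one deform any mixed closed pattern into a concatenation of a purely generic pattern and a purely pinned pattern, each of which is already known to close up. Once $pq$ consistency is established, Proposition~\ref{gen} produces a solution; the pigeonhole argument of Theorem~\ref{intersect} then yields the operations $h_\cF$, whose restrictions to $S$ lie in $\Clo(m|_S)$ because of the pinning. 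Setting $g := h_\cM$ for the majority family $\cM$ and applying the iteration tricks from the end of Corollary~\ref{min} within $\cV'$ yields the required term, and Corollary~\ref{min} delivers bounded width of $\cV'$.
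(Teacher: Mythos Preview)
Your reduction via Corollary~\ref{min} is sound in principle: producing a single term $g$ of $\cV$ which both satisfies the weak-majority identities and has $g|_S \in \Clo(m|_S)$ would indeed finish the proof. The difficulty is that your proposed instance conflates two different levels of constraint. The intersecting-family relations of Theorem~\ref{intersect} live on the two-element set $\{x,y\}$ and are used, after closure in $\cF_\cV(x,y)$, to certify \emph{identities}; your pinning constraints live on $\bA$ and concern \emph{specific values} of a term at points of $S^3$. A single $pq$ instance has one target structure, and you never say what it is: ``interpreted on $A$'' does not tell me what $\cF^{\mathbf S}$ becomes, nor how a solution would simultaneously encode an element of $\cF_\cV(x,y)$ (for identities) and elements of $\bA$ (for pinning). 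Without this, the instance is not defined.

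Even granting a workable setup, the step you flag as ``the main obstacle'' is the actual content of the theorem, and your handling of it is not an argument. Saying that the connectivity hypothesis lets you ``deform any mixed closed pattern into a concatenation of a purely generic pattern and a purely pinned pattern'' has no meaning at the level of $pq$-consistency: patterns are combinatorial walks in $\fB$, and the connectivity hypothesis concerns two-element subalgebras inside algebras of $\cV$. Nothing in Lemma~\ref{prepare} or Section~\ref{s-partial} converts one into the other. This is exactly where the theorem is hard, and you have not supplied an idea for it.

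The paper proceeds completely differently and avoids constructing a witnessing term altogether. It argues by contradiction on a minimal subalgebra $\bB' = \Sg_{\bF'}\{a,b\}$ of a reduct $\bF'$ whose two-element-subalgebra graph $\cG(\bB')$ is disconnected. The key move is to form the congruence $\theta$ on $\bB'$ generated by all pairs $(p(c,d),q(c,d))$ with $p|_S = q|_S$ in $\Clo(m|_S)$; the connectivity hypothesis on $\cV$ is used not to deform CSP patterns but to show that every generating pair of $\theta$ lies inside a $\cV$-subalgebra contained in $\bB'$, hence inside a single component of $\cG(\bB')$. This forces $\bB'/\theta$ nontrivial while $(\bB'/\theta,m)$ is bounded width (it satisfies the two-variable identities of $(S,m)$). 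A second congruence $\theta'$ with affine quotient exists because $\bB'$ itself fails bounded width (via Theorem~\ref{connect}); since $\theta\vee\theta'$ is full, the two congruences together connect $\cG(\bB')$, a contradiction. The role of connectivity here is structural (controlling which pairs can be linked inside $\bB'$), not combinatorial, and this is the idea your proposal is missing.
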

\begin{proof} For any algebra $\bF \in \cV$, let $\bF'$ be the reduct of $\bF$ having the same underlying set and having as operations all operations of $\cV'$. For any algebra $\bF$, define a graph $\cG(\bF)$ on $\bF$ by connecting two vertices $a,b$ of $\bF$ whenever there is a term $t$ of $\bF$ such that $\{a,b\}$ is closed under $t$ and such that $(\{a,b\},t)$ has bounded width. It's enough to show that for every finite $\bF \in \cV$ and every subalgebra $\bB' \le \bF'$ we have $\cG(\bB')$ connected, since every algebra in $\cV'$ is a quotient of a subalgebra of the reduct $\bF'$ of some algebra $\bF$ of $\cV$, and since no affine algebra can have a two element bounded width algebra as a subalgebra. We will prove this by induction on the size of $\bB'$, which we may assume to be generated by two elements $a,b \in \bF$.

First we will show that every two element subalgebra of $\bF$ is in $\cG(\bF')$. We may as well suppose that $|\bF| = 2$, so that $\bF$ is either a semilattice or a majority algebra. Let $p > |S|$ be prime. By one of the main results of \cite{cyclic}, there is a $p$-ary term $c \in \Clo(m)$ such that $c|_S$ is a cyclic term. Let $u$ be a $p$-ary term of $\cV$ such that its restriction to $\bF$ is a cyclic term. Setting
\[
u'(x_1, ..., x_p) \approx u(c(x_1, ..., x_p), c(x_2, ..., x_p, x_1), ..., c(x_p, x_1, ..., x_{p-1})),
\]
we see that $u'|_S = c|_S$, so $u'$ is a term of $\cV'$. Additionally, the restriction of $u'$ to $\bF$ is a cyclic term, hence it is either a semilattice operation or a near-unanimity operation, and we see that $\bF'$ is term equivalent to $\bF$.

Now let $a,b \in \bF$, and suppose that $\bB' = \Sg_{\bF'}\{a,b\}$ has $\cG(\bB')$ disconnected, but that every proper subalgebra $\mathbb{C}'$ of $\bB'$ has $\cG(\mathbb{C}')$ connected. Let $\gS$ be the collection of ordered pairs $(p_i,q_i)$ of binary terms in the clone of $m$ such that $p_i|_S = q_i|_S$. Set
\[
D_\gS = \{(p_i(c,d),q_i(c,d)) \mid c,d \in \bB', (p_i, q_i) \in \gS\},
\]
and let $\theta$ be the congruence of $\bB' = \Sg_{\bF'}\{a,b\}$ generated by $D_\gS$. Letting
\[
T_\gS = \{(t(a,b,p_i(c,d)),t(a,b,q_i(c,d))) \mid c,d \in \bB', (p_i,q_i) \in \gS, t \text{ a term of } \cV'\}
\]
and considering $T_{\gS}$ as a graph on $\bB'$, we see that $\theta$ is the transitive closure of $T_{\gS}$, since $T_\gS$ contains the image of $D_{\gS}$ under all unary polynomials of $\bB'$.

Let $(t(a,b,p_i(c,d)),t(a,b,q_i(c,d)) \in T_{\gS}$. Let $r$ be any binary term of $\cV$, and define a $4$-ary term $r'$ by
\[
r'(x,y,z,w) = r(t(x,y,p_i(z,w)), t(x,y,q_i(z,w))).
\]
We clearly have $r'(x,y,z,w)|_S = t(x,y,p_i(z,w))|_S$, so $r' \in \cV'$, and thus
\[
r(t(a,b,p_i(c,d)),t(a,b,q_i(c,d))) = r'(a,b,c,d) \in \bB'
\]
Since $r$ was an arbitrary binary term of $\cV$ we have
\[
\Sg_{\bF}\{t(a,b,p_i(c,d)),t(a,b,q_i(c,d))\} \subseteq \bB'
\]
so, by our assumption that every algebra of $\cV$ was connected through two element subalgebras and the fact that every two element subalgebra of $\bF$ is in $\cG(\bF')$, we see that $t(a,b,p_i(c,d)),t(a,b,q_i(c,d))$ are in the same connected component of $\cG(\bB')$. Since $\cG(\bB')$ was assumed to be disconnected, we see that $T_\gS$ is disconnected and therefore $\bB'/\theta$ has at least two elements. By the choice of $\bB'$, every congruence class of $\theta$ is a proper subalgebra of $\bB'$, and is therefore connected in $\cG(\bB')$.

By the choice of $\theta$, $p_i$ agrees with $q_i$ on $\bB'/\theta$ whenever $(p_i,q_i) \in \gS$, so the reduct $(\bB'/\theta,m)$ is in the variety generated by $(S,m)$ (we only need to consider identities involving two variables since $\bB'$ is generated by two elements) and therefore has bounded width. Since by assumption (and Theorem \ref{connect}) $\bB'$ does not have bounded width and every proper subalgebra of $\bB'$ does have bounded width, there must be a congruence $\theta'$ such that $\bB'/\theta'$ is a nontrivial affine algebra, and by the choice of $\bB'$ every congruence class of $\theta'$ is connected in $\cG(\bB')$. Since it is both affine and bounded width, $\bB'/(\theta\vee \theta')$ must be a trivial algebra, so the transitive closure of $\theta \cup \theta'$ is $\bB'$, and we see that in fact $\cG(\bB')$ is connected.
\end{proof}

An immediate consequence of Theorem \ref{subalg} is that every subalgebra and every quotient of a minimal bounded width algebra is also a minimal bounded width algebra. It's easy to see that this also holds for powers, so in fact every minimal bounded width algebra generates a minimal bounded width variety. The same is not true for products, however: as we will see, up to the action of $S_2\times S_3$ there are two minimal elements $g \in \cG_3$ with $g \ne \tilde{g}$, and for either one of these the product $(\{1,2,3\},g)\times (\{1,2,3\}, \tilde{g})$ has $(\{1,2,3\},g)^2$ as a reduct and is thus not a minimal bounded width algebra.

\begin{thm}\label{minvar} There exists an idempotent pseudovariety $\cV_{mbw}$ with basic operations $f,g$ satisfying the identities in Definition \ref{mindef} such that every finite algebra in $\cV_{mbw}$ is a minimal bounded width algebra, and such that every finite core bounded width algebra has a reduct which is term equivalent to an algebra in $\cV_{mbw}$.
\end{thm}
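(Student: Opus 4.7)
The plan is to define $\cV_{mbw}$ as the class of all finite algebras $(A, f, g)$ satisfying the set $\Sigma$ of identities in the signature $\{f,g\}$ that hold in every finite minimal bounded width algebra (equipped with some choice of $f, g$ witnessing Definition~\ref{mindef}). Since $\cV_{mbw}$ is defined by identities, it is automatically closed under $H$, $S$, and finite products, and $\Sigma$ contains the identities of Definition~\ref{mindef}, so every member of $\cV_{mbw}$ has basic operations of the required form.

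To verify that every finite core bounded width algebra $\bA$ has a reduct term equivalent to a member of $\cV_{mbw}$, I would first apply Corollary~\ref{min} to produce terms $f, g \in \Clo(\bA)$ witnessing Definition~\ref{mindef}. The poset of bounded width subclones of $\Clo(\bA)$ is finite and nonempty, so it has a minimal element $\cC$. Re-running the iteration from the proof of Corollary~\ref{min} inside $\cC$ yields $g' \in \cC$ satisfying the identities of Definition~\ref{mindef}, and the minimality of $\cC$ forces $\Clo(g') = \cC$. Setting $f'(x,y) := g'(x,x,y)$, the reduct $(\bA, f', g')$ is a minimal bounded width algebra, hence satisfies every identity of $\Sigma$ and therefore lies in $\cV_{mbw}$.

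The harder direction is to show that every finite $(\bC, f, g) \in \cV_{mbw}$ is itself a minimal bounded width algebra. Bounded width is immediate from Corollary~\ref{min} since the identities of Definition~\ref{mindef} lie in $\Sigma$; the content is the minimality of $\Clo(g^\bC)$. I would argue by contradiction: assume there is $g'' \in \Clo(g^\bC) \cap \cG_{|\bC|}$ with $\Clo(g'')\subsetneq\Clo(g^\bC)$, and write $g'' = t(f^\bC, g^\bC)$ for some term $t$ in $\{f,g\}$. Then there must be a separating identity $u(f,g) \approx v(f,g)$ which holds when $g$ is reinterpreted as $g''$ but fails in $(\bC, f, g)$ itself. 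To reach a contradiction it suffices to show this identity already belongs to $\Sigma$, i.e., holds in every finite minimal bounded width algebra. The transfer from a pointwise minimality witness on $\bC$ to a universal identity is the role of Theorem~\ref{subalg}, applied with $S = \bC$ and $m = g''$, in combination with the coherence developed in Theorem~\ref{connect} and the cyclic-iteration operator $\gamma$ from the proof of Corollary~\ref{min}.

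The main obstacle is precisely this last transfer step. The remark following the statement of Theorem~\ref{subalg} explicitly observes that products of distinct minimal bounded width algebras can fail to be minimal, so $\Sigma$ must be forced to exclude such products from $\cV_{mbw}$; equivalently, a non-minimality witness in $\bC$ must be shown to descend to an identity satisfied by \emph{every} minimal algebra. Making this descent rigorous requires leveraging Theorem~\ref{subalg} together with the connectivity furnished by Theorem~\ref{connect}, and this is where the bulk of the technical work will lie.
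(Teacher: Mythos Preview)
Your proposal has a genuine gap, and it is exactly at the point you yourself flag as ``the main obstacle.'' You define $\cV_{mbw}$ as the finite models of the set $\Sigma$ of identities holding in every finite minimal bounded width algebra, equipped with \emph{some} choice of $(f,g)$. But the class of finite models of $\Sigma$ is closed under finite products, so every finite product of your generating algebras lies in $\cV_{mbw}$. The paper's remark after Theorem~\ref{subalg} shows this already fails: for the three-element algebra $\bA_3$ (or $\bA_{11}$) one has $\tilde g \in \Clo(g)$, so both $(\{1,2,3\},g)$ and $(\{1,2,3\},\tilde g)$ are minimal bounded width algebras in the signature $\{f,g\}$, yet their product has $(\{1,2,3\},g)^2$ as a proper bounded width reduct and hence is not minimal. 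If your choices of $(f,g)$ include both (or, more generally, any incoherent pair across two clones), your $\cV_{mbw}$ contains a non-minimal algebra. Nothing in your proposal explains how to make the choices so that this cannot happen; you simply assert that Theorem~\ref{subalg} will handle the ``transfer,'' but Theorem~\ref{subalg} goes in the wrong direction for that purpose---it produces a bounded width reduct of one variety extending a prescribed clone on a subset, not an identity valid across all minimal algebras simultaneously.

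The paper resolves this by an entirely different construction: it does \emph{not} take $\Sigma$ to be the common theory of independently chosen generators. Instead it enumerates all finite $(f,g)$-algebras $\bA_1,\bA_2,\ldots$ satisfying Definition~\ref{mindef}, and uses Theorem~\ref{subalg} inductively to pick a term $g_i$ on $\bA_1\times\cdots\times\bA_i$ which is minimal there \emph{and} lies in $\Clo(g_{i-1})$ on $\bA_1\times\cdots\times\bA_{i-1}$. A K\H{o}nig-lemma/diagonal argument then extracts a single coherent sequence $g^1,g^2,\ldots$ with $g^i = g_j|_{\bA_i}$ for infinitely many $j$, and $\cV_{mbw}$ is defined as the pseudovariety generated by the $(\bA_i,g^i)$. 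Minimality of every finite member then follows because any such member lies in $HSP(\bA_1\times\cdots\times\bA_j,g_j)$ for some $j$, and that product was \emph{constructed} to be minimal. The coherent-choice step is the missing idea in your approach, and it is not recoverable from your $\Sigma$-based definition.
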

\begin{proof} Fix an enumeration $\bA_1, \bA_2, ...$ of the collection of all finite algebras (up to isomorphism) in the variety with basic operations $f,g$ satisfying the identities in Definition \ref{mindef}. For each $i \ge 1$, let $g_i$ be a ternary term of $\bA_1\times \cdots \times \bA_i$ such that when restricted to $\bA_1\times \cdots \times \bA_{i-1}$ it is in the clone generated by $g_{i-1}$, and such that the reduct of $\bA_1\times \cdots \times \bA_i$ with basic operation $g_i$ is a minimal bounded width algebra - that such a term $g_i$ exists follows from Theorem \ref{subalg}. Since there are only finitely many possible ternary terms $g_i|_{\bA_1}$ on $\bA_1$, one such must occur infinitely many times - call this term $g^1$. Similarly, we inductively define a sequence of ternary terms $g^i$ such that for each $i$, there are infinitely many $j > i$ with $g_j|_{\bA_k} = g^k$ for all $k \le i$. Finally, we let $\cV_{mbw}$ be the pseudovariety generated by the algebras $\bA_i' = (A_i, f^i, g^i)$, where each $A_i$ is the underlying set of $\bA_i$, and $f^i$ is given by $f^i(x,y) = g^i(x,x,y)$.

From the construction of $\cV_{mbw}$, it is clear that every finite core bounded width algebra has a reduct which is term equivalent to an algebra in $\cV_{mbw}$. Now suppose that $\bA$ is a finite algebra in $\cV_{mbw}$. Then $\bA$ is in the variety generated by finitely many algebras $(\bA_i,g^i)$, so there is some $j$ such that $\bA$ is in the variety generated by the minimal bounded width algebra $(\bA_1\times\cdots\times\bA_j,g_j)$, so by Theorem \ref{subalg} $\bA$ must be a minimal bounded width algebra as well.
\end{proof}

\section{A connectivity result of Bulatov}\label{s-connectivity}

We need a result proved implicitly by Bulatov in \cite{bulatov-bounded}. Note that by Theorem \ref{intersect}, a pair $a,b$ in a minimal bounded width algebra forms a semilattice subalgebra directed from $a$ to $b$ if and only if we have $f(a,b) = b$, since in this case the function $t(x,y) \approx f(x,f(x,y))$ has $t(a,b) = f(a,f(a,b)) = f(a,b) = b$ and $t(b,a) = f(b,f(b,a)) = f(f(a,b),f(b,a)) = f(a,b) = b$. Thus, by Theorem \ref{subalg}, a pair $a,b$ with $f(a,b) = b$ is the same as what Bulatov calls a ``thin semilattice edge'' or a ``thin red edge'' in \cite{colored-graph} and \cite{bulatov-bounded}.

From the above discussion, we see that reachability of $y$ from $x$ is the same as the existence of a (directed) thin red path from $x$ to $y$ in Bulatov's colored graph attached to $\bA$, and that being maximal in $\bA$ is the same as being contained in what Bulatov calls a ``maximal strongly connected component'' of $\bA$. The key result we need from Bulatov's work is his ``yellow connectivity property''.


\begin{prop}[Yellow Connectivity Property]\label{yellow} If $\bA$ is a minimal bounded width algebra and $A,B$ are upwards closed subsets of $\bA$, then there are $a \in A$ and $b \in B$ such that $\{a,b\}$ is a majority subalgebra of $\bA$.
\end{prop}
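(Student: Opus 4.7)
The plan is to use Theorem~\ref{connect} to produce a path of $g$-closed pairs between any chosen $a \in A$ and $b \in B$, and to locate a majority edge along this path by exploiting the upward closure of $A$ and $B$. Theorem~\ref{connect} yields binary terms $p_0, \ldots, p_n \in \Clo(f)$ with $p_0(x,y) \approx x$ and $p_n(x,y) \approx y$ such that each consecutive pair $\{p_i(x,y), p_{i+1}(x,y)\}$ is closed under $g$. As recalled in the paragraph preceding the proposition, any two-element subset of a minimal bounded width algebra that is closed under $g$ is either a majority subalgebra or a semilattice subalgebra (directed one way or the other in the $\to_s$ sense), the three surviving cases being distinguished by the values of $f$ on the pair, with the fourth logical possibility ruled out by the identity $f(f(x,y),f(y,x)) \approx f(x,y)$.

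Assume first that $A \cap B = \emptyset$, since otherwise any singleton from the intersection is trivially a majority subalgebra. Given $a \in A$, $b \in B$, I would consider the path $x_i = p_i(a,b)$ and let $k$ be the largest index with $x_k \in A$, so that $x_{k+1} \notin A$. The key propagation principle is that upward closure forces $A$-membership along forward semilattice edges (if $x_k \to_s x_{k+1}$ and $x_k \in A$, then $x_{k+1} \in A$), and symmetrically forces $B$-membership along backward semilattice edges (if $x_{k+1} \to_s x_k$ and $x_{k+1} \in B$, then $x_k \in B$). Hence the edge $\{x_k, x_{k+1}\}$ cannot be a forward semilattice edge; and if it is a backward semilattice edge then $x_{k+1} \notin B$, by disjointness of $A$ and $B$. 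In the favorable case where $x_{k+1} \in B$, the edge must therefore be a majority edge, and we are done.

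The main obstacle is the ``neutral territory'' case, where $x_{k+1}$ lies in $\bA \setminus (A \cup B)$, since there the above analysis does not directly close. My plan to handle this is twofold. First, I would replace $a, b$ by maximal representatives chosen from maximal strongly connected components contained within $A$ and $B$ (such components exist by finiteness and upward closure), in order to restrict the possible types of boundary edges along the path; if every semilattice edge out of $A$ is automatically forward after this normalization, the path argument finishes. Failing that, I would follow Bulatov's strategy from \cite{bulatov-bounded} and pass to the subalgebra $\bS = \Sg_{\bA^2}(A \times B) \le \bA^2$, applying Theorem~\ref{strong-binary} and Corollary~\ref{simple-strong} to derive structural constraints on $\bS$ that are incompatible with the absence of a majority pair between $A$ and $B$. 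This second route is the more technical and is what I expect to carry the argument through the neutral case, and I would expect the refinement proved in Appendix~\ref{a-yellow} to rely on a similar analysis of $\bS$ combined with the partial semilattice machinery of Section~\ref{s-partial}.
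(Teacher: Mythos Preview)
Your opening move is the right one and is in fact what the paper's full argument in Appendix~\ref{a-yellow} ultimately rests on: Theorem~\ref{connect} gives a path of two-element $g$-closed subalgebras from $a$ to $b$, and each such subalgebra is either a semilattice or a majority edge. You also correctly identify the real obstacle, the ``neutral territory'' where the path leaves $A$ without entering $B$.

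However, your two proposed fixes for that obstacle do not work, and this is where the proposal has a genuine gap. Replacing $a,b$ by maximal representatives does nothing to control the intermediate points $p_i(a,b)$: these are arbitrary binary terms applied to $a,b$ and can easily be non-maximal, so a backward semilattice edge $x_{k+1}\rightarrow x_k$ with $x_{k+1}\notin A\cup B$ is still perfectly possible. Your second route, passing to $\Sg_{\bA^2}(A\times B)$, is not the object the argument needs; the relevant subalgebra is $\Sg_{\bB^2}\{(a,b),(b,a)\}$ for $\bB=\Sg_{\bA_f}\{a,b\}$, and the work is in analyzing its linking congruence, not in a single application of Theorem~\ref{strong-binary} or Corollary~\ref{simple-strong}.

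The paper's actual proof (Appendix~\ref{a-yellow}) is an induction on $|\bB|$ that reduces to a ``good pair'' $\bA,\bB$ generated by $a,b$ with $(a,b)$ maximal in $\Sg_{\bB^2}\{(a,b),(b,a)\}$, and then goes through a chain of eight lemmas. These handle, separately, the cases where $\bB$ has a congruence with a majority-acting quotient (Lemma~\ref{al1}), where some $\{e\}\times\bB$ sits inside $\Sg_{\bB^2}\{(a,b),(b,a)\}$ (Lemma~\ref{al3}), where that relation is linked (Lemma~\ref{al5}), and where $\bB/\theta$ has non-maximal elements (Lemmas~\ref{al7},~\ref{al8}); each case eventually feeds into Lemma~\ref{maj-triple} to produce $(a_2,a_2,a_2)\in\RR$ and hence a majority subalgebra via Theorem~\ref{subalg}. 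The path from Theorem~\ref{connect} reappears only at the very end, once the inductive machinery guarantees that adjacent $p_i/\theta$ are yellow connected. In short, the neutral-territory case is not dispatched by a single structural observation but by a substantial case analysis that your proposal does not anticipate.
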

\begin{proof} This follows from Proposition 6 of \cite{bulatov-bounded}, since every minimal bounded width algebra is necessarily what Bulatov calls a conglomerate algebra and since every upwards closed set contains a maximal strongly connected component. For the sake of completeness we give a proof of a slight refinement of the yellow connectivity property, based on Bulatov's argument, in Appendix \ref{a-yellow}.
\end{proof}

\begin{lem}\label{maj-triple} Suppose that $\bA$ is generated by $a,b \in \bA$, and let $\RR$ be a subalgebra of $\bA^3$ which contains $\Sg_{\bA^3}\{(a,a,b),(a,b,a),(b,a,a)\}$. Let $U,V,W$ be any three maximal strongly connected components of $\bA$. If $\RR \cap (U\times V\times W) \ne \emptyset$, then $U \times V \times W \subseteq \RR$.
\end{lem}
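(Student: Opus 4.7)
Set $\RR' = \RR \cap (U \times V \times W)$. I would first observe that $\RR'$ is closed under the partial semilattice operation $s$: if $(u_i,v_i,w_i) \in \RR'$, then the coordinates of $s((u_1,v_1,w_1),(u_2,v_2,w_2))$ are reachable from $u_1,v_1,w_1$ respectively, hence remain in the upwards closed sets $U,V,W$. The target is then to apply Corollary \ref{triple} to $\RR' \subseteq U \times V \times W$, for which it suffices to show that all three pairwise projections of $\RR'$ are the full rectangles $U \times V$, $U \times W$, $V \times W$. Since the generating set $\{(a,a,b),(a,b,a),(b,a,a)\}$ is $S_3$-symmetric, the three cases are equivalent under permutation of coordinates and relabeling of $U,V,W$, so I would just do $\pi_{12}(\RR') = U \times V$.

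I would split this in two steps. \textbf{Step 1:} show $\pi_{12}(\RR) \supseteq U \times V$. The algebra $\pi_{12}(\RR) \le \bA^2$ is subdirect because $\{a,b\}$ generates $\bA$ and lies in each coordinate projection; moreover, the tuples $(a,a),(b,a) \in \pi_{12}(\RR)$ make $a$ and $b$ linked in $\pi_{12}(\RR)$. The linking congruence on $\bA$ is a congruence of $\bA$ containing $(a,b)$, hence equals $1_{\bA}$ since $\bA = \Sg\{a,b\}$. Thus $\pi_{12}(\RR)$ is linked, and Theorem \ref{strong-binary}(c) applies to give $U \times V \subseteq \pi_{12}(\RR)$.

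\textbf{Step 2:} show $T := \pi_{12}(\RR') = U \times V$. The set $T$ is nonempty by hypothesis. I would argue that $T$ is upwards closed inside the maximal strongly connected component $U \times V$ of $\bA^2$ (which is strongly connected by the final assertion of Theorem \ref{strong-binary}): given $(u,v) \in T$ with witness $w \in W$ and given $(u,v) \to (u',v')$ in $U \times V$, Step 1 produces $w'' \in \bA$ with $(u',v',w'') \in \RR$, and then $s((u,v,w),(u',v',w'')) = (u',v',s(w,w''))$ lies in $\RR$ with $s(w,w'') \in W$ by upwards closedness of $W$, exhibiting $(u',v') \in T$. A nonempty upwards closed subset of a strongly connected set is the whole set, so $T = U \times V$.

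Applying the same argument to the other two pairs of coordinates yields the remaining equalities $\pi_{13}(\RR') = U \times W$ and $\pi_{23}(\RR') = V \times W$, and Corollary \ref{triple} finishes the job. The place where the specific hypothesis on $\RR$ is really used is the linking step (Step 1); once that is in hand, everything else is a mechanical application of the binary and ternary machinery already developed in Section \ref{s-partial}. I expect no serious obstacle beyond keeping track of the distinction between $\pi_{12}(\RR)$ (which Step 1 handles) and $\pi_{12}(\RR')$ (which requires the upwards closure argument of Step 2 to extract a $W$-witness).
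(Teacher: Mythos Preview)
Your proof is correct and follows essentially the same route as the paper: establish $U\times V\subseteq\pi_{12}\RR$ via linkedness and Theorem~\ref{strong-binary}(c), lift this to $\pi_{12}(\RR')=U\times V$ by an upwards-closure argument, and finish with Corollary~\ref{triple}. The only cosmetic differences are that the paper obtains linkedness by directly noting $\{a\}\times\bA\subseteq\pi_{12}\RR$ (from $(a,a),(a,b)\in\pi_{12}\RR$ and $\Sg\{a,b\}=\bA$), and replaces your explicit Step~2 computation with a one-line citation of Theorem~\ref{strong-binary}(a) applied to $\RR\le_{sd}\pi_{12}\RR\times\bA$; your argument for Step~2 is exactly the proof of that part of the theorem.
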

\begin{proof} By the definition of $\RR$ and the assumption $\bA = \Sg\{a,b\}$, we see that $\{a\}\times \bA \subseteq \pi_{1,2}\RR$. Thus by Theorem \ref{strong-binary}(c) we have $U\times V \subseteq \pi_{1,2}\RR$. Let $R = \RR\cap (U\times V\times W)$. Considering $\RR$ as a subdirect product of $\pi_{1,2}\RR$ and $\bA$, we conclude from Theorem \ref{strong-binary}(a) that $\RR\cap (U\times V\times W)$ is subdirect in $(U\times V)\times W$. Thus $\pi_{1,2} R = U\times V$, and similarly $\pi_{1,3}R = U\times W$, $\pi_{2,3}R = V\times W$. Applying Corollary \ref{triple}, we have $R = U\times V\times W$.
\end{proof}

\begin{lem}\label{max-maj} If $\bA$ is a minimal bounded width algebra and $a,b \in \bA$ are such that $a$ is a maximal element of $\Sg_{\bA}\{a,b\}$, then $(a,a,a) \in \Sg_{\bA^3}\{(a,a,b),(a,b,a),(b,a,a)\}$.
\end{lem}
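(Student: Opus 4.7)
The plan is to find a tuple $(u,v,w) \in \RR \cap (U \times U \times U)$, where $\RR := \Sg_{\bA^3}\{(a,a,b),(a,b,a),(b,a,a)\}$ and $U$ denotes the maximal strongly connected component of $\Sg\{a,b\}$ containing $a$; once such a tuple is exhibited, Lemma \ref{maj-triple} applied with $U=V=W$ forces $U\times U\times U \subseteq \RR$, and in particular $(a,a,a) \in \RR$ since $a \in U$. After reducing to $\bA = \Sg\{a,b\}$, I would apply the partial semilattice higher-arity term $s_3$ of Proposition \ref{higher-semilattice} coordinatewise to the three generators to get $(e_1,e_1,e_2) \in \RR$, where $e_1 := s(a,b)$ and $e_2 := s(b,a)$; the $S_3$-invariance of $\RR$ (coming from the generator set being a full $S_3$-orbit) then also yields $(e_1,e_2,e_1)$ and $(e_2,e_1,e_1)$ in $\RR$. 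Since $a \to_s e_1$ and $U$ is upwards closed, $e_1 \in U$, so if $e_1 = e_2$ the tuple $(e_1,e_1,e_1) \in \RR\cap U^3$ finishes the argument.

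I would then proceed by induction on $|\Sg\{a,b\}|$. In the base case $|\Sg\{a,b\}| \le 2$, maximality of $a$ forces $\{a,b\}$ to be either a two-element majority subalgebra or a semilattice directed from $b$ to $a$; in both cases the appropriate operation applied coordinatewise to the three generators yields $(a,a,a) \in \RR$ directly. In the inductive step, if $e_1 \ne e_2$ but $\Sg\{e_1,e_2\} \subsetneq \Sg\{a,b\}$, the inductive hypothesis applied to the pair $(e_1,e_2)$ — using that $e_1$ remains maximal in $\Sg\{e_1,e_2\}$ since any reachability edge in a subalgebra lifts to one in $\bA$ and $U$ is upwards closed — produces $(e_1,e_1,e_1) \in \Sg_{\bA^3}\{(e_1,e_1,e_2),(e_1,e_2,e_1),(e_2,e_1,e_1)\} \subseteq \RR$.

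The hard part will be the residual case where $e_1 \ne e_2$ yet $\Sg\{e_1,e_2\} = \Sg\{a,b\}$, so that the proposed reduction does not strictly shrink the algebra. Here I would iterate the replacement $(e_1,e_2) \mapsto (s(e_1,e_2), s(e_2,e_1))$; since $e_1 \to_s s(e_1,e_2)$ keeps the new first coordinate in $U$, and the iteration stabilizes by finiteness, one eventually either collapses the two coordinates (closing the argument) or arrives at a stable pair on which $s$ acts as first projection. In the latter case I would invoke the Yellow Connectivity Property (Proposition \ref{yellow}), applied to $U$ and the upward closure of the other coordinate, to produce a two-element majority subalgebra connecting back to $U$, and then combine that majority operation with the tuples already in $\RR$ via Theorem \ref{strong-binary} to propagate the structure into the required tuple in $\RR \cap (U \times U \times U)$.
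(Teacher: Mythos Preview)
Your endgame is right — find a tuple of $\RR$ in $U\times U\times U$ and apply Lemma~\ref{maj-triple} — and your ``hard case'' sketch (Yellow Connectivity to produce a majority pair, then push into $U^3$) is exactly the paper's argument. But the paper applies it \emph{immediately}, without the detour through $(e_1,e_2)$, the induction, or the $s$-iteration: starting from $(a,a,b)\in\RR$, a single chase using Theorem~\ref{strong-binary}(a) (reachability from $b$ to a maximal component $B$) together with Lemma~\ref{maj-triple} already yields $A\times A\times B\subseteq\RR$. Then the Yellow Connectivity Property, applied to the upwards closed sets $A$ and $B$, gives $a'\in A$, $b'\in B$ with $\{a',b'\}$ a majority subalgebra; since all three permutations of $(a',a',b')$ lie in $\RR$, applying $g$ produces $(a',a',a')\in\RR\cap A^3$, and one more use of Lemma~\ref{maj-triple} finishes. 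Your iteration from $(a,b)$ to a stable pair $(c,d)$ only to then invoke YCP and chase from $(c,c,d)$ gains nothing over chasing directly from $(a,a,b)$.

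There is also a genuine (if fixable) gap in your inductive step: the justification that ``$e_1$ remains maximal in $\Sg\{e_1,e_2\}$'' is not sufficient. Your argument shows only that anything reachable from $e_1$ inside $\Sg\{e_1,e_2\}$ lies in $U$; it does not show that such elements can reach $e_1$ \emph{within} $\Sg\{e_1,e_2\}$, since the strong connectivity of $U$ in $\bA$ may use paths through elements outside $\Sg\{e_1,e_2\}$. You would need to pass to a maximal $e_1'$ reachable from $e_1$ in the subalgebra and then argue that the relevant triples with $e_1'$ still lie in $\RR$ — which again amounts to the Theorem~\ref{strong-binary}/Lemma~\ref{maj-triple} chase you deferred to the hard case. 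In short: drop the induction and iteration, and run your hard-case argument from $(a,a,b)$ at the outset.
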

\begin{proof} We may assume without loss of generality that $\bA = \Sg\{a,b\}$. Let $A$ be a maximal strongly connected component of $\bA$ which contains $a$, and set $\RR = \Sg_{\bA^3}\{(a,a,b),(a,b,a),(b,a,a)\}$.

Letting $B$ be a maximal strongly connected component which is reachable from $b$, we see from Theorem \ref{strong-binary}(a) and Lemma \ref{maj-triple} that $A\times A \times B \subseteq \RR$. By the yellow connectivity property, there are $a' \in A, b' \in B$ such that $\{a',b'\}$ is a majority subalgebra. Then since $(a',a',b') \in A\times A\times B \subseteq \RR$, we have
\[
\begin{bmatrix} a'\\ a'\\ a'\end{bmatrix} = g\left(\begin{bmatrix} a'\\ a'\\ b'\end{bmatrix}, \begin{bmatrix} a'\\ b'\\ a'\end{bmatrix}, \begin{bmatrix} b'\\ a'\\ a'\end{bmatrix}\right) \in \RR \cap (A\times A\times A).
\]
Now we can apply Lemma \ref{maj-triple} to see that we have $A\times A\times A \subseteq \RR$. Thus $(a,a,a) \in \RR$.
\end{proof}

\begin{thm}\label{maj-crit} If $a,b$ are distinct elements of a minimal bounded width algebra $\bA$ such that $(a,b)$ is a maximal element of $\Sg_{\bA^2}\{(a,b),(b,a)\}$, then $\{a,b\}$ is a majority subalgebra of $\bA$.
\end{thm}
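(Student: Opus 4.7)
The plan is to apply Lemma \ref{max-maj} to the subalgebra $\bS := \Sg_{\bA^2}\{(a,b),(b,a)\} \le \bA^2$ rather than to $\bA$ itself. First I would observe that $\bS$ is a minimal bounded width algebra: by the immediate consequences of Theorem \ref{subalg} noted in the paper, minimal bounded width algebras are closed under taking powers and subalgebras, so $\bA^2$ is minimal bounded width and hence so is $\bS \le \bA^2$. Thus Lemma \ref{max-maj} is available inside $\bS$.

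The hypothesis of the theorem is exactly that $(a,b)$ is maximal in $\Sg_{\bS}\{(a,b),(b,a)\} = \bS$, so Lemma \ref{max-maj} applied inside $\bS$ to the ordered pair $((a,b),(b,a))$ yields
\[
((a,b),(a,b),(a,b)) \in \Sg_{\bS^3}\bigl\{((a,b),(a,b),(b,a)),\,((a,b),(b,a),(a,b)),\,((b,a),(a,b),(a,b))\bigr\}.
\]
This membership produces a ternary term $t$ of $\bA$ which, applied coordinatewise to the three listed generators of $\bS^3$, returns the constant triple $((a,b),(a,b),(a,b))$.

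To finish, I would read off what this means one pair-coordinate at a time. Projecting to the first coordinate of each pair gives the identities $t(a,a,b) = t(a,b,a) = t(b,a,a) = a$, and projecting to the second coordinate gives $t(b,b,a) = t(b,a,b) = t(a,b,b) = b$. Consequently $\{a,b\}$ is closed under $t$ and $t|_{\{a,b\}}$ is a majority operation, so $\{a,b\}$ is a majority subalgebra of $\bA$.

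This is essentially a one-step reduction to Lemma \ref{max-maj} together with the subalgebra/power closure of the minimal bounded width class, so I do not anticipate a serious obstacle; the only conceptual point worth flagging is that Lemma \ref{max-maj} should be invoked inside $\bS$ rather than $\bA$, since that is where the given maximality hypothesis lives.
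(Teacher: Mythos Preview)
Your approach is essentially identical to the paper's: both apply Lemma~\ref{max-maj} inside $\bS = \Sg_{\bA^2}\{(a,b),(b,a)\}$ to produce a ternary term $t$ acting as majority on $\{a,b\}$. The one gap is your final sentence: from ``$\{a,b\}$ is closed under $t$ and $t|_{\{a,b\}}$ is majority'' you cannot conclude directly that $\{a,b\}$ is a majority \emph{subalgebra} of $\bA$, i.e.\ that $\{a,b\}$ is closed under the basic operations $f,g$ with $g$ acting as majority. This is exactly where the paper invokes Theorem~\ref{subalg}: since $(\{a,b\},t)$ has bounded width and $\bA$ is minimal, the reduct of $\bA$ consisting of terms preserving $\{a,b\}$ and restricting into $\Clo(t|_{\{a,b\}})$ must already be all of $\Clo(g)$, forcing $g$ itself to act as majority on $\{a,b\}$. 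Add that citation and your argument is complete.
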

\begin{proof} By Theorem \ref{subalg}, we just need to show that
\[
\begin{bmatrix} a & b\\a & b\\a & b\end{bmatrix} \in \Sg_{\bA^6}\left\{\begin{bmatrix} a & b\\a & b\\b & a\end{bmatrix}, \begin{bmatrix} a & b\\b & a\\a & b\end{bmatrix}, \begin{bmatrix} b & a\\a & b\\a & b\end{bmatrix}\right\}.
\]
This follows from the previous lemma applied to $\Sg_{\bA^2}\{(a,b),(b,a)\}$.
\end{proof}

\begin{cor}\label{reach} In a minimal bounded width algebra $\bA = (A,f,g)$, the functions $f,g$ can always be chosen such that for all $x,y$, $f(x,y)$ is reachable from $x$.
\end{cor}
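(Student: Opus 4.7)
The plan is to modify $f$ so that $f(x, f(x,y)) \approx f(x,y)$ becomes an identity; by the remark preceding Proposition \ref{yellow} (which says $\{a,b\}$ is a semilattice subalgebra directed from $a$ to $b$ iff $f(a,b) = b$), this will make $\{x, f(x,y)\}$ a semilattice edge directed from $x$ to $f(x,y)$, yielding one-step reachability of $f(x,y)$ from $x$.

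To construct such an $f'$, first define $f^{(1)} = f$ and $f^{(i+1)}(x,y) = f(x, f^{(i)}(x,y))$, and take $N$ large enough that $f^{(N)} = f^{(2N)}$ (this exists by finiteness of $\bA$). Then $f^{(N)}(x, f^{(N)}(x,y)) = f^{(2N)}(x,y) = f^{(N)}(x,y)$, so the reachability identity holds for $f^{(N)}$. The catch is that we must also preserve $f'(f'(x,y), f'(y,x)) \approx f'(x,y)$, which is required by Definition \ref{mindef}. For this, observe that the original identity makes the map $\phi: (x,y) \mapsto (f(x,y), f(y,x))$ on $\bA^2$ idempotent. One can then take a joint stabilization, iterating the pair-valued map $(x,y) \mapsto (f^{(N)}(x,y), f^{(N)}(y,x))$ to an idempotent power, and define $f'(x,y)$ as the first coordinate; this preserves the reachability identity and recovers the symmetric identity.

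For the compatible $g'$: since $f' \in \Clo(g)$ and $\Clo(g)$ is a minimal bounded width clone, the construction in the proof of Theorem \ref{intersect} applied with our new $f'$ in place of $f$ produces a weak majority $g' \in \Clo(g)$ with $g'(x,x,y) \approx g'(x,y,x) \approx g'(y,x,x) \approx f'(x,y)$, and a final cyclic iteration as in the last paragraph of the proof of Corollary \ref{min} arranges the identity $g'(g'(x,y,z), g'(y,z,x), g'(z,x,y)) \approx g'(x,y,z)$. By Theorem \ref{subalg} and the minimality hypothesis, the resulting reduct with basic operations $(f', g')$ generates the same minimal bounded width clone as $g$.

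The main obstacle is managing the two iterations in tandem: the second-argument iteration giving the reachability identity $f'(x, f'(x,y)) \approx f'(x,y)$ must not destroy the symmetric identity $f'(f'(x,y), f'(y,x)) \approx f'(x,y)$. The idempotence of $\phi$ for the original $f$ is the critical input that allows a joint iteration to preserve both identities simultaneously, and the final verification that a compatible weak majority $g'$ exists relies on the flexibility guaranteed by minimality together with the constructions of Theorem \ref{intersect} and Corollary \ref{min}.
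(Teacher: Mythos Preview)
There is a genuine gap in the central step. Your ``joint stabilization'' does recover the symmetric identity $f'(f'(x,y),f'(y,x)) \approx f'(x,y)$, but it does \emph{not} preserve the reachability identity $f'(x,f'(x,y)) \approx f'(x,y)$. After even one step of iterating $\psi:(x,y)\mapsto (f^{(N)}(x,y),f^{(N)}(y,x))$, the first argument of the outer $f^{(N)}$ is no longer $x$ but $f^{(N)}(x,y)$ (or $f^{(N)}(y,x)$), so the identity $f^{(N)}(x,f^{(N)}(x,\cdot)) \approx f^{(N)}(x,\cdot)$ no longer applies. There is no reason the first coordinate of $\psi^M(x,y)$ should absorb $x$ on the left. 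Obtaining both identities at once by elementary iteration would come close to proving Conjecture~\ref{conj-ws}, which the paper explicitly leaves open; the paper even remarks after Conjecture~\ref{conj-ws} that, if true, it would yield the Yellow Connectivity Property directly. Your proposal would bypass exactly the hard structural input the paper needs.

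A secondary issue: Theorem~\ref{intersect} constructs $f$ and all the $h_{\cF}$ simultaneously from a CSP solution; it does not let you fix an $f'$ in advance and produce a matching weak majority $g'$ with $g'(x,x,y)\approx f'(x,y)$.

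The paper's proof proceeds quite differently. It works in the free algebra $\cF_{\bA}(x,y)$ and builds a chain $x = p_0 \rightarrow p_1 \rightarrow \cdots \rightarrow p_k$ along genuine semilattice edges (so reachability of $p_k(x,y)$ from $x$ is guaranteed by construction), stopping when $p_k(x,y)$ is maximal in $\Sg\{p_k(x,y),p_k(y,x)\}$. At that point Theorem~\ref{maj-crit} --- which rests on Lemma~\ref{max-maj} and ultimately on Bulatov's Yellow Connectivity Property (Proposition~\ref{yellow}) --- forces $\{p_k(x,y),p_k(y,x)\}$ to be a majority subalgebra, giving $f(p_k(x,y),p_k(y,x)) \approx p_k(x,y)$ for free. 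The matching $g'$ is then built explicitly from the partial semilattice pieces $q_i'$ via the ternary terms $r_i$, rather than by invoking Theorem~\ref{intersect}. In short, the symmetric identity is obtained structurally via yellow connectivity, not by iteration, and that structural input is precisely what your argument is missing.
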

\begin{proof} Choose a partial semilattice term $s \in \Clo(g)$ which is adapted to $\bA$. Let $\cF_{\bA}(x,y)$ be the free algebra generated by $\bA$ on $x,y$. By Theorem \ref{subalg} $\cF_{\bA}(x,y)$ will also be a minimal bounded width algebra.

Let $p_0(x,y), ..., p_k(x,y)$ be elements of $\cF_{\bA}(x,y)$ such that $p_0(x,y) = x$, $p_{i+1}(x,y) \in \Sg_{\cF_{\bA}(x,y)}\{p_i(x,y), p_i(y,x)\}$, $p_i(x,y) \rightarrow p_{i+1}(x,y)$, and such that $p_k(x,y)$ is maximal in $\Sg_{\cF_{\bA}(x,y)}\{p_k(x,y),p_k(y,x)\}$. By Theorem \ref{maj-crit} we see that $\{p_k(x,y), p_k(y,x)\}$ is a majority algebra, so
\[
f(p_k(x,y),p_k(y,x)) \approx p_k(x,y).
\]
Let $q_i(x,y)$ be such that $p_i(x,y) = q_i(p_{i-1}(x,y),p_{i-1}(y,x))$, and set $q_i'(x,y) = s(x,q_i(x,y))$, so that $p_i(x,y) \approx q_i'(p_{i-1}(x,y),p_{i-1}(y,x))$ and $q_i'$ is a partial semilattice term. Now we inductively define terms $r_0(x,y,z), ..., r_k(x,y,z)$ by $r_0(x,y,z) = x$, and
\[
r_{i+1}(x,y,z) = q_i'(q_i'(r_i(x,y,z),r_i(y,z,x)),q_i'(r_i(x,y,z),r_i(z,x,y))).
\]
Then one sees, by induction on $i$, that
\[
r_i(x,x,y) \approx r_i(x,y,x) \approx r_i(x,y,y) \approx p_i(x,y).
\]
Now we set $f'(x,y) = p_k(x,y)$ and $g'(x,y,z) = g(r_k(x,y,z),r_k(y,z,x),r_k(z,x,y))$. These satisfy the identities
\[
f'(x,y) \approx g'(x,x,y) \approx g'(x,y,x) \approx g'(y,x,x) \approx f'(f'(x,y),f'(y,x)),
\]
and $f'(x,y)$ is reachable from $x$ by construction.
\end{proof}

\begin{rem} If we use the stronger version of the Yellow Connectivity Property proved in Appendix \ref{a-yellow}, we can show that we may choose $f$ such that not only is $f(x,y)$ reachable from $x$ in $\Sg\{x,y\}$, in fact it is reachable in the set of elements which may be generated from $x,y$ using only the function $f$ (and not $g$). This refinement is useful for classifying small minimal bounded width algebras by hand.
\end{rem}

\begin{cor} If a minimal bounded width algebra has no semilattice subalgebra, then it is a majority algebra.
\end{cor}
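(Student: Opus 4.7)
The plan is to unpack Corollary \ref{reach} together with the remark that opens Section \ref{s-connectivity}, which identifies the $\rightarrow_s$-reachability relation with the existence of ``thin red paths'' through pairs $a,b$ satisfying $f(a,b) = b$. Concretely, a minimal bounded width algebra with terms $f,g$ as in Corollary \ref{min} has the property that, for any distinct pair $a \ne b$ in $\bA$, the equality $f(a,b) = b$ forces $\{a,b\}$ to be a two-element semilattice subalgebra (directed from $a$ to $b$). So the hypothesis ``no semilattice subalgebra'' translates into the statement that $f(a,b) \ne b$ for all $a \ne b$.

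The main step is then the following. First invoke Corollary \ref{reach} to replace $f,g$ by a pair in $\Clo(\bA)$ (still satisfying the identities of Corollary \ref{min}) such that, for every pair $x,y \in \bA$, the element $f(x,y)$ is reachable from $x$. By the translation above, reachability of $f(x,y)$ from $x$ means there is a chain
\[
x = a_0,\ a_1,\ \dots,\ a_k = f(x,y)
\]
with $f(a_i,a_{i+1}) = a_{i+1}$ for each $i$. Every step with $a_i \ne a_{i+1}$ would exhibit the forbidden two-element semilattice subalgebra $\{a_i,a_{i+1}\}$, so under our hypothesis we must have $a_i = a_{i+1}$ throughout. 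Hence $f(x,y) = x$ for every $x,y \in \bA$, and the identities
\[
g(x,x,y) \approx g(x,y,x) \approx g(y,x,x) \approx f(x,y) \approx x
\]
show that $g$ is a majority term on $\bA$.

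There is essentially no obstacle here because all the substantive work is already packaged into earlier results. The only subtlety worth noting is that Corollary \ref{reach} produces a potentially different pair $(f',g')$ from the one we started with, but since minimal bounded width algebras are closed under reducts that preserve the Definition \ref{mindef} identities (Theorem \ref{subalg}), the argument remains internal to $\bA$; in particular, the hypothesis that $\bA$ has no semilattice subalgebra refers to the underlying algebra and is invariant under this replacement, because a two-element semilattice subalgebra is detected by any partial semilattice term in $\Clo(\bA)$.
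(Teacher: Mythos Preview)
Your proof is correct and follows exactly the intended route: the paper states this corollary without proof immediately after Corollary \ref{reach}, and your argument is precisely the implicit one---choose $f,g$ as in Corollary \ref{reach}, observe via the discussion at the start of Section \ref{s-connectivity} that each nontrivial reachability step $a_i \rightarrow a_{i+1}$ yields a two-element semilattice subalgebra, and conclude $f(x,y) \approx x$ so that $g$ is a majority term. Your closing remark about the invariance of the ``no semilattice subalgebra'' hypothesis under passing to the new $(f',g')$ is the right point to flag, and it is justified by Theorem \ref{subalg} as you indicate.
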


\begin{cor} Every minimal bounded width algebra has a partial semilattice term $s(x,y)$ such that there is a sequence of terms $p_0(x,y), ..., p_k(x,y)$ with $p_0(x,y) \approx y$, $p_k(x,y) \approx s(x,y)$, and for each $i$ either $p_i(x,y) \rightarrow p_{i+1}(x,y)$ or $\{p_i(x,y), p_{i+1}(x,y)\}$ is a majority algebra.
\end{cor}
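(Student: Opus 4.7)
The plan is to deduce this from Corollary \ref{reach} by iteratively assembling a path whose steps are ``forward'' semilattice arrows or two-element majority subalgebras. First I would fix $f, g$ as provided by Corollary \ref{reach}, so that for every pair of binary terms $\alpha(x,y), \beta(x,y)$ the element $f(\alpha, \beta)$ is reachable from $\alpha$ via a chain of forward semilattice arrows among binary terms in $x, y$, and moreover $\{f(\alpha, \beta), f(\beta, \alpha)\}$ is a majority subalgebra; the latter holds because in the construction of $f$ in the proof of Corollary \ref{reach} the element $p_k(x,y)$ is maximal in $\Sg\{p_k(x,y), p_k(y,x)\}$, to which Theorem \ref{maj-crit} applies. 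The first key observation is then that $f(\alpha, \beta)$ is always reachable \emph{from $\beta$} along a sub-path consisting of forward arrows followed by a single majority edge: instantiate Corollary \ref{reach} with the substitution $(x, y) \mapsto (\beta, \alpha)$ to reach $f(\beta, \alpha)$ from $\beta$, then cross the majority edge $\{f(\beta, \alpha), f(\alpha, \beta)\}$.

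Next, I would apply the Semilattice Iteration Lemma \ref{semi-iter} to $f$ to produce a partial semilattice term $s \in \Clo(f)$. Recall that $s$ arises as $s(x,y) = u^N(x, y)$ for sufficiently large $N$, where $u(x, y) = f^M(x, f^M(y, x))$ and $f^i$ denotes the iteration $f^{i+1}(x,y) = f(x, f^i(x, y))$ with $f^0(x,y) = y$. The path from $y$ to $s(x, y)$ is then built in stages: first use Corollary \ref{reach} directly (no majority edges needed) to go from $y$ to $f^M(y, x)$; then iterate the key observation $M$ times to reach $u(x, y) = f^M(x, f^M(y, x))$ from $f^M(y, x)$, each iteration contributing a block of forward arrows capped by one majority edge; and finally repeat the analogous pattern $N$ times to pass from $u^i(x,y)$ to $u^{i+1}(x,y)$. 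Concatenating all these sub-paths yields the required sequence $y = p_0, p_1, \dots, p_k = s(x, y)$ in which each consecutive pair is either a forward semilattice arrow or a two-element majority subalgebra.

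I expect the main obstacle to be the bookkeeping for the iterated applications of the key observation: one must verify that each intermediate term produced by Corollary \ref{reach} and by the construction in Lemma \ref{semi-iter} is genuinely expressible as a binary term in $x, y$ (so the $p_i$'s are bona fide terms of $\cV$), and that the semilattice arrows from Corollary \ref{reach} remain valid after the substitutions made during each iteration. Once this bookkeeping is in place, the fact that $s$ is a partial semilattice is immediate from Lemma \ref{semi-iter}, and the structure of the path follows directly from the construction.
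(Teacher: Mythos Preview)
Your proposal is correct and follows essentially the same route as the paper: pick $f$ as in Corollary \ref{reach}, use the majority edge $\{f(\alpha,\beta),f(\beta,\alpha)\}$ to cross between the two reachability chains, and apply the Semilattice Iteration Lemma \ref{semi-iter} to get $s$. The only cosmetic difference is that the paper packages the path-building as a single structural induction showing that \emph{every} nontrivial binary term in $\Clo(f)$ admits such a path from each of its inputs, whereas you trace explicitly through the particular iterates $f^M$, $u$, $u^N$ appearing in the proof of Lemma \ref{semi-iter}; the inductive formulation saves the bookkeeping you flag as the main obstacle, but the content is the same.
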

\begin{proof} Choose an $f$ such that $f(x,y)$ is reachable from $x$, and note that $f(y,x)$ is reachable from $y$ and is connected to $f(x,y)$ by a majority algebra. Now apply the Semilattice Iteration Lemma \ref{semi-iter} to $f$, and check by induction that every nontrivial binary operation in $\Clo(f)$ has a path from both of its inputs which travels up along semilattice edges and across along majority edges.
\end{proof}

\section{Spirals}\label{spiral}

Suppose that $\bA = (A,f,g)$ is a minimal bounded width algebra with no two element majority subalgebra. Since $\{f(x,y), f(y,x)\}$ is always a majority subalgebra, this immediately implies that we have $f(x,y) \approx f(y,x)$ in $\bA$. Let $\bA_f = (A,f)$ be the reduct obtained by dropping $g$. By Theorem \ref{connect}, there exist binary terms $p_0, ..., p_n$ of $\bA_f$ such that for any $x,y \in \bA_f$ we have $p_0(x,y) = x, p_n(x,y) = y$, and for each $i$, the set $\{p_i(x,y),p_{i+1}(x,y)\}$ is a subalgebra of $\bA$. Since $\bA$ has no two element majority subalgebras, this must in fact be a semilattice subalgebra, so it is also a semilattice subalgebra of $\bA_f$. Thus any quotient of $\Sg_{\bA_f}\{x,y\}$ which does not identify $x$ and $y$ must contain a two element semilattice, and so $\bA_f$ has bounded width. Since $\bA$ was assumed minimal, we have proved the following claim.

\begin{prop} If $\bA = (A,f,g)$ is a minimal bounded width algebra with no majority subalgebra, then $g \in \Clo(f)$ and $f(x,y) \approx f(y,x)$.
\end{prop}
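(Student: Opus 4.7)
The plan is to prove the two conclusions of the proposition in turn, both via the same trick: since there are no two-element majority subalgebras, any two-element subalgebra of $\bA$ which admits a ternary idempotent weak near-unanimity operation must actually be a semilattice subalgebra.

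First, to establish commutativity $f(x,y) \approx f(y,x)$, I would use the identity $f(f(x,y),f(y,x)) \approx f(x,y)$ from Corollary \ref{min} together with the observation recorded in the remark following Theorem \ref{intersect}: whenever $f(x,y) \ne f(y,x)$, the pair $\{f(x,y),f(y,x)\}$ is closed under $g$, and $g$ restricted to this pair acts as the majority operation. Since $\bA$ has no two-element majority subalgebra by hypothesis, it must be that $f(x,y) = f(y,x)$ for all $x,y \in \bA$.

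Next, I would show that the reduct $\bA_f = (A,f)$ already has bounded width; then by minimality of $\bA$ we would have $\Clo(f) = \Clo(f,g)$, giving $g \in \Clo(f)$. To verify that $\bA_f$ has bounded width, I would invoke Theorem \ref{connect}, which (since $\Clo(g)$ is minimal) supplies a sequence of binary terms $p_0,\ldots,p_n \in \Clo(f)$ with $p_0(x,y) \approx x$, $p_n(x,y) \approx y$, and each consecutive pair $\{p_i(x,y),p_{i+1}(x,y)\} \subseteq \cF_\cV(x,y)$ closed under $g$. By the same no-majority reasoning as in the first paragraph, each such two-element subalgebra is forced to be a semilattice subalgebra of $\bA$, hence of $\bA_f$.

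To convert this into bounded width of $\bA_f$, I would use characterization (9) from the opening proposition: it suffices to show that no nontrivial quotient of any subalgebra of $\bA_f$ is affine. Since bounded width is preserved under taking subalgebras and quotients and since $\bA_f$ is generated by its two-generated subalgebras, it is enough to check this for subalgebras of the form $\Sg_{\bA_f}\{x,y\}$. In any nontrivial quotient of such a subalgebra, the images of $x$ and $y$ are distinct, so somewhere along the sequence $p_0(x,y),\ldots,p_n(x,y)$ two consecutive images remain distinct, yielding a two-element semilattice subalgebra of the quotient; this rules out the quotient being affine. The main point requiring care is simply to observe that all the $p_i$ live in $\Clo(f)$ (so this whole argument takes place inside $\bA_f$ rather than $\bA$), which is exactly the content of Theorem \ref{connect}.
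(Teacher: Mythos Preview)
Your proposal is correct and follows essentially the same route as the paper: commutativity of $f$ from the fact that $\{f(x,y),f(y,x)\}$ would otherwise be a two-element majority subalgebra, then Theorem \ref{connect} to get the chain $p_0,\ldots,p_n\in\Clo(f)$ of two-element subalgebras, which by the no-majority hypothesis must all be semilattices, forcing every nontrivial two-generated quotient of $\bA_f$ to contain a semilattice and hence ruling out affine quotients. The paper states this argument in the paragraph immediately preceding the proposition, only more tersely (it jumps straight from ``contains a two-element semilattice'' to ``has bounded width'' without spelling out characterization (9) or the reduction to two-generated subalgebras as you do).
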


Thus we will disregard $g$ in this section, and focus solely on $f$. The motivating example for this section is the algebra depicted in Figure \ref{spiral-six}.

\begin{figure}
\begin{tabular}{p{4.0cm} p{3.4cm}}
\begin{minipage}{\linewidth} \includegraphics[]{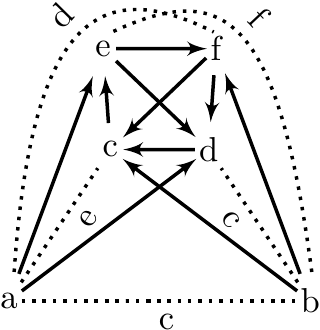} \end{minipage} & \begin{tabular}{c | c c c c c c} $f$ & a & b & c & d & e & f\\ \hline a & a & c & e & d & e & d\\ b & c & b & c & c & f & f\\ c & e & c & c & c & e & c\\ d & d & c & c & d & d & d\\ e & e & f & e & d & e & f\\ f & d & f & c & d & f & f \end{tabular}
\end{tabular}

\caption{A minimal spiral which is not a 2-semilattice.}\label{spiral-six}
\end{figure}

\begin{lem} Suppose $\bA$ is a minimal bounded width algebra with no majority subalgebra, and that $f$ is chosen as in Corollary \ref{reach}. Let $S$ be the set of all maximal elements of $\bA$. Then for any $x\in \bA$ and any $y \in S$, $y$ is reachable from $x$.

In addition, $S$ is a subalgebra of $\bA$, and for any $x \in \bA \setminus S$ the set $S\cup \{x\}$ is a subalgebra of $\bA$ which has a congruence $\theta$ given by the partition $\{S, \{x\}\}$ such that the quotient $(S\cup \{x\})/\theta$ is a semilattice directed from $\{x\}$ to $S$.
\end{lem}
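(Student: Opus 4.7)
The plan is to combine the symmetry $f(x,y) \approx f(y,x)$ (from the preceding proposition) with the reachability property from Corollary~\ref{reach} that $f(x,y)$ is reachable from $x$. Together these give that $f(x,y)$ is reachable from both $x$ and $y$. Coupling this with the standing fact that any maximal strongly connected component $Y$ is both upwards closed and internally strongly connected---so $Y$ equals the set of elements reachable from any one of its members---is enough to prove all three assertions.

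For the first claim, I would fix $x \in \bA$ and $y \in S$, and let $Y$ be a maximal strongly connected component containing $y$. By symmetry and Corollary~\ref{reach}, $f(x,y) = f(y,x)$ is reachable from $y$, so by upwards closure of $Y$ we have $f(x,y) \in Y$. Since $Y$ is strongly connected, $y$ is in turn reachable from $f(x,y)$ through a path inside $Y$. Concatenating with the path from $x$ to $f(x,y)$ supplied by Corollary~\ref{reach} shows that $y$ is reachable from $x$.

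For the remaining claims, if $x, y \in S$ and $X$ is the maximal strongly connected component of $x$, then $f(x,y)$ is reachable from $x$ and hence lies in $X \subseteq S$, so $S$ is closed under $f$. For $x \in \bA \setminus S$ and $y \in S$, the analogous argument applied to $y$ (using symmetry) gives $f(x,y) = f(y,x) \in S$, so together with idempotence $f(x,x) = x$ the set $S \cup \{x\}$ is closed under $f$; since $g \in \Clo(f)$ by the proposition above, it is a subalgebra of $\bA$. The partition $\theta = \{S, \{x\}\}$ is preserved by $f$ since the only input pair that can output $x$ is $(x,x)$, so $\theta$ is a congruence; the induced two-element quotient satisfies $f([S],[S]) = f([S],[x]) = f([x],[S]) = [S]$ and $f([x],[x]) = [x]$, which is a semilattice directed from $[x]$ to $[S]$.

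No step here is a genuine obstacle: the lemma is really a bookkeeping consequence of Corollary~\ref{reach} combined with the basic theory of maximal strongly connected components developed in Section~\ref{s-partial}.
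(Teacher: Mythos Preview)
Your proof is correct and follows essentially the same route as the paper: both use commutativity of $f$ together with Corollary~\ref{reach} to conclude that $f(x,y)$ is reachable from each of $x$ and $y$, then invoke upwards closure and strong connectivity of maximal components. You supply slightly more detail than the paper does---explicitly citing $g \in \Clo(f)$ for the subalgebra claim and spelling out the congruence check---but the underlying argument is identical.
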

\begin{proof} Since $f$ is commutative, for any $x,y \in \bA$ we see that $f(x,y) = f(y,x)$ is reachable from both $x$ and $y$. Thus, if $y \in S$ then we see from the definition of maximality that $y$ is reachable from $f(x,y)$, which is reachable from $x$, so $y$ is reachable from $x$. Since anything which is reachable from an element of $S$ is necessarily in $S$, we see that for any $x \in \bA$ and any $y \in S$ we have $f(x,y) = f(y,x) \in S$.
\end{proof}

\begin{lem} Suppose $\bA$ is a minimal bounded width algebra with no majority subalgebra which is generated by two elements $a,b \in \bA$. Let $S$ be the set of all maximal elements of $\bA$. Then $\bA = S \cup \{a,b\}$.
\end{lem}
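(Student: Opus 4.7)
The strategy is to show that $S\cup\{a,b\}$ is itself a subalgebra of $\bA$: since it contains both generators, $\bA=\Sg\{a,b\}\subseteq S\cup\{a,b\}\subseteq \bA$ would give equality. By the preceding lemma $S$ is a subalgebra, and $f(x,s)=f(s,x)\in S$ whenever $s\in S$ and $x\in\{a,b\}\setminus S$ (this is the content of the semilattice quotient on $S\cup\{x\}$). The only closure check not handled is therefore $f(a,b)\in S\cup\{a,b\}$.

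To verify this, I would introduce the equivalence relation $\sim$ on $\bA$ whose classes are $S$ together with the singletons $\{x\}$ for $x\in\bA\setminus S$. The preceding lemma makes $\sim$ a congruence (any $f$-value involving an element of $S$ lands in $S$, by idempotence of the remaining diagonal pairs). By Theorem \ref{subalg} the quotient $\bA/\!\sim$ is again a minimal bounded width algebra; it is generated by $[a],[b]$ and has $[S]$ as an absorbing element. The no-majority hypothesis also passes to $\bA/\!\sim$: any majority subalgebra disjoint from $[S]$ lifts immediately to a majority subalgebra of $\bA$, while a majority pair $\{[S],[y]\}$ in $\bA/\!\sim$ gives, via Lemma \ref{max-maj} applied to the semilattice $\{s,y\}\subseteq\bA$ (in which $s$ is maximal) together with an iteration over $S$ to promote the resulting ``three-quarters'' majority identities into genuine majority identities at some fixed $s^*\in S$, a majority subalgebra $\{s^*,y\}$ of $\bA$, contradicting the hypothesis.

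I would then induct on $|\bA|$. When $|S|\ge 2$, we have $|\bA/\!\sim|<|\bA|$ and the inductive hypothesis applied to $\bA/\!\sim$ yields $\bA/\!\sim=\{[S],[a],[b]\}$, whence $\bA\setminus S\subseteq\{a,b\}$ as claimed.

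The base case $|S|=1$ must be handled directly. Writing $S=\{s\}$ (so $s$ is absorbing), we may assume $a,b\ne s$ (else $\Sg\{a,b\}$ is at most $\{s,a\}$ or $\{s,b\}$ and the conclusion is immediate). Consider $c:=f(a,b)$. If $c\in\{a,b\}$, commutativity makes $\{a,b\}$ a semilattice subalgebra, forcing $\bA=\{a,b\}$ in contradiction with $s\in\bA\setminus\{a,b\}$; if $c=s$, the set $\{a,b,s\}$ is already $f$-closed and gives $\bA=\{a,b,s\}$. The remaining possibility $c\notin\{a,b,s\}$ is excluded as follows. By Corollary \ref{reach} there is an $f$-semilattice path $a=y_0\to y_1\to\cdots\to y_k=c$ within $\Sg_f\{a,b\}$; no $y_i$ can equal $b$ (else $f(a,b)=b$) nor be reached by way of $s$ (since $s$ is absorbing and cannot be left), and any other intermediate vertex $y_i$ is eliminated by applying strong induction to the proper subalgebra $\Sg\{a,y_i\}$ (which must then itself be of the form $S'\cup\{a,y_i\}$, and a short case analysis shows $y_i$ cannot occur on the minimal path). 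Hence $k=1$, i.e.\ $f(a,c)=c$, and by commutativity $f(b,c)=c$ as well. Then $\{a,b,c\}$ is closed under $f$, so $\bA=\Sg\{a,b\}\subseteq\{a,b,c\}$, contradicting $s\in\bA$. The main obstacle will be the clean elimination of intermediate vertices on this reachability path, which is precisely where the minimality of the clone must be invoked essentially.
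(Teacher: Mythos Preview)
Your congruence $\sim$ is sound and the inductive step works when $|S|\ge 2$: the quotient $\bA/\!\sim$ is minimal bounded width by Theorem~\ref{subalg}, inherits commutative $f$ (hence has no majority subalgebra --- your detour through Lemma~\ref{max-maj} is unnecessary), has $\{[S]\}$ as its sole maximal class, and is strictly smaller, so induction gives $\bA/\!\sim=\{[S],[a],[b]\}$ and hence $\bA=S\cup\{a,b\}$.

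The genuine gap is the case $|S|=1$. Your reachability-path argument there is not an argument: knowing by induction that $\Sg\{a,y_i\}=S'\cup\{a,y_i\}$ places no constraint on whether $y_i$ can sit on a semilattice path from $a$ to $c=f(a,b)$ in the ambient $\bA$, and the promised ``short case analysis'' is never carried out. Minimality of the clone is doing essential work here, as witnessed by the (non-minimal) spiral on $\{a,b,c,s\}$ with $f(a,b)=c$, $f(a,c)=f(b,c)=s$ and $s$ absorbing: it has $|S|=1$, four elements, and fails the conclusion; it is excluded only because $f'(x,y)=f(x,f(x,y))$ generates a strictly smaller bounded-width clone. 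You have not shown where minimality enters your path-shortening.

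The paper avoids this difficulty entirely. Assuming $a,b\notin S$, it applies Theorem~\ref{connect} inside the subalgebra $S\cup\{a\}$ (minimal by Theorem~\ref{subalg}) to produce a single semilattice edge from $a$ to some $p(a,b)\in S$. It then \emph{rechooses} $f$ by rerunning the construction of Corollary~\ref{reach} starting from the partial semilattice $p'(x,y)=s(x,p(x,y))$; this forces $f(a,b)\in\Sg\{s(p'(a,b),p'(b,a)),\,s(p'(b,a),p'(a,b))\}\subseteq S$, the last containment using that $p'(a,b)\in S$ and that $S\cup\{p'(b,a)\}$ absorbs into $S$. This gives closure of $S\cup\{a,b\}$ in one stroke, uniformly in $|S|$.
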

\begin{proof} If $S$ contains $a$ or $b$, then we are done by the previous Lemma. Otherwise, since $S\cup\{a\}$ is a minimal bounded width algebra as well (by Theorem \ref{subalg}), we may apply Theorem \ref{connect} to see that there is an element $p(a,b) \in S$ such that $\{a,p(a,b)\}$ is a semilattice. Letting $s$ be a partial semilattice term adapted to $\bA$ and setting $p'(x,y) = s(x,p(x,y))$, we get $p'(a,b) = p(a,b)$ and $\{x,p'(x,y)\}$ is a semilattice for all $x,y \in \bA$.

Then we may use the proof of Corollary \ref{reach}, taking $p_1(x,y) = p'(x,y)$ and $p_2(x,y) = s(p'(x,y),p'(y,x))$, to see that we may choose $f$ such that $f(x,y) \in \Sg_{\cF_{\bA}(x,y)}\{p_2(x,y), p_2(y,x)\}$, so that in particular we have $f(a,b) \in \Sg_{\bA}\{s(p'(a,b),p'(b,a)),s(p'(b,a),p'(a,b))\}$. Since $p'(a,b) \in S$ and $S \cup \{p'(b,a)\}$ has the properties described in the previous lemma, we see that $s(p'(a,b),p'(b,a))$ and $s(p'(b,a),p'(a,b))$ are in $S$, and so $f(a,b) \in S$. Thus $S\cup\{a,b\}$ is a subalgebra of $\bA$, and so it must be equal to all of $\bA$.
\end{proof}

\begin{thm}\label{recursive} Suppose $\bA$ is a minimal bounded width algebra with no majority subalgebra and which is generated by two elements $a,b \in \bA$. Then either $\bA$ is a two element semilattice, or on letting $S$ be the set of maximal elements of $\bA$ we have $\bA = S \cup \{a,b\}$ with $S \cap \{a,b\} = \emptyset$.

In the second case, $\bA$ has a congruence $\theta$ corresponding to the partition $\{S, \{a\}, \{b\}\}$ such that $\bA/\theta$ is isomorphic to the free semilattice on the generators $\{a\}, \{b\}$.
\end{thm}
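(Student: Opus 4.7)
The plan is to apply the preceding lemma to get $\bA = S \cup \{a,b\}$, and then to organize the argument around the value of $f(a,b) \in \bA$. The first dichotomy is easy: if $f(a,b) \in \{a,b\}$, then by commutativity and idempotence $\{a,b\}$ is closed under $f$ and hence under $g \in \Clo(f)$, so $\bA = \Sg\{a,b\} = \{a,b\}$, which --- being a two-element algebra with a commutative idempotent operation --- is the two-element semilattice, landing us in the first case.

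Suppose instead $f(a,b) \notin \{a,b\}$, so that $f(a,b) \in S$. I would first establish $a, b \notin S$. For contradiction suppose $a \in S$, so $\bA = S \cup \{b\}$. Applying Theorem \ref{maj-crit} to $\bS = \Sg_{\bA^2}\{(a,b),(b,a)\}$ rules out $(a,b)$ being maximal in $\bS$, since $\bA$ has no two-element majority subalgebra. Iterating upward in $\bS$ and exploiting the coordinate-swap symmetry of $\bS$ together with repeated applications of Theorem \ref{maj-crit}, I would argue that $(a,b)$ reaches some diagonal element $(e,e) \in \bS$ via $\rightarrow_s$-arrows, hence $a, b \rightarrow_s^* e$ in $\bA$; since $a \in S$ lies in the unique maximal strongly connected component, $e$ lies there too, so $e \in S$. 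If $e = a$, then extracting a binary term $t$ witnessing $(a,a) \in \bS$ and applying the Semilattice Preparation Lemma makes $\{a,b\}$ a semilattice subalgebra, forcing $\bA = \{a,b\}$ and contradicting $f(a,b) \notin \{a,b\}$. If $e \ne a$, the congruence $\{S, \{b\}\}$ on $\bA = S \cup \{b\}$ from the first lemma of this section together with Theorem \ref{subalg} applied to the proper subalgebra $\Sg\{a,e\} \subseteq S$ should yield a contradiction with $b \in \bA = \Sg\{a,b\}$ but $b \notin S$ --- this is the step I expect to require the most care.

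With $a, b \notin S$ in hand, verifying that $\theta = \{S,\{a\},\{b\}\}$ is a congruence is routine. Since $g \in \Clo(f)$, only $f$ needs checking: idempotence gives $f(a,a)$ and $f(b,b)$; $S$ being a subalgebra handles $f(s,s')$ for $s,s' \in S$; the congruences $\{S,\{a\}\}$ on $S \cup \{a\}$ and $\{S,\{b\}\}$ on $S \cup \{b\}$ from the first lemma handle $f(a,s)$ and $f(b,s)$ for $s \in S$; and $f(a,b) \in S$ by the case assumption. On $\bA/\theta$ the induced operation sends $(\bar x, \bar y)$ with $\bar x \ne \bar y$ to $\bar S$ together with idempotent self-joins, and a direct check of associativity on this three-element set identifies $\bA/\theta$ with the free semilattice on $\{\bar a, \bar b\}$ in which $\bar S = \bar a \vee \bar b$.
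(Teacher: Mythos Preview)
Your overall plan matches the paper's, and both the opening dichotomy on $f(a,b)$ and the final congruence check are fine. The gap is in deriving a contradiction from $a \in S$. The iteration via Theorem~\ref{maj-crit} is not well-posed: that theorem applies when $(c,d)$ is maximal in $\Sg_{\bA^2}\{(c,d),(d,c)\}$, and after moving up inside $\bS$ you have no control over maximality in this \emph{new} subalgebra. (You can skip the iteration anyway: since $f$ is commutative here, $(f(a,b),f(a,b))$ is already a diagonal element of $\bS$.) More seriously, the subcase $e \ne a$ is where all the content lies, and your sketch does not work there: having $(e,e) \in \bS$ with $e \in S \setminus \{a\}$ and knowing $\Sg\{a,e\} \subseteq S$ does not force $(a,a)$ or $(b,b)$ into $\bS$, and Theorem~\ref{subalg} is about reducts having bounded width, which is not what is at issue.

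The paper instead splits on whether $S$ lies in a single linked component of $\bS \le_{sd} \bA \times \bA$. If it does, Theorem~\ref{strong-binary}(c) gives $S \times S \subseteq \bS$, so $(a,a) \in \bS$ (as $a \in S$), whence $\{a,b\}$ is a two-element semilattice and $\bA = \{a,b\}$, a contradiction. If it does not, one quotients by the linking congruence and, by induction on $|\bA|$, reduces to the case where $\bS$ is the graph of an automorphism swapping $a$ and $b$; then both $a,b \in S$, $\bA$ is strongly connected, and Lemma~\ref{maj-triple} gives $(a,a,a) \in \Sg_{\bA^3}\{(a,a,b),(a,b,a),(b,a,a)\}$, producing a majority term on $\{a,b\}$, again a contradiction. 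This second branch is exactly your unhandled subcase.
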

\begin{proof} By the previous lemmas, we just need to show that if $\bA$ is not a two element semilattice then $S \cap \{a,b\} = \emptyset$. Suppose for contradiction that $b \in S$. We split into two cases, based on whether $S$ is contained in a linked component of the relation $\RR = \Sg_{\bA^2}\{(a,b),(b,a)\}$.

If $S$ is contained in a linked component of $\RR$, then by Theorem \ref{strong-binary}(c) we have $S\times S \subseteq \RR$, so in particular $(b,b) \in \RR$. This implies that there is a term $t$ such that $t(a,b) = t(b,a) = b$, so $\{a,b\}$ is a two element semilattice, which is a contradiction.

If $S$ is not contained in a linked component of $\RR$, then let $\theta$ be the linking congruence of $\RR$ considered as a congruence on $\bA$, i.e. $\theta = \pi_1(\ker \pi_1 \vee \ker \pi_2)$. In $\bA/\theta$, we have $|S/\theta| > 1$ and $b/\theta \in S/\theta$, so we may assume without loss of generality that $\theta$ is trivial, that is, that $\RR$ is the graph of an automorphism which swaps $a$ and $b$. Then we have $\{a,b\} \subseteq S$, and $\bA$ is strongly connected. By Lemma \ref{maj-triple} we then have $(a,a,a) \in \Sg_{\bA^3}\{(a,a,b),(a,b,a),(b,a,a)\}$, so there is a ternary term which acts as the majority operation on $\{a,b\}$, and so by Theorem \ref{subalg} we see that $\{a,b\}$ is a majority subalgebra of $\bA$, a contradiction.
\end{proof}

\begin{defn} An algebra $\bA = (A,f)$ is a \emph{spiral} if $f$ is a commutative idempotent binary operation and every subalgebra of $\bA$ which is generated by two elements either has size two or has a surjective homomorphism to the free semilattice on two generators. $\bA$ is a \emph{weak spiral} if $f$ is a commutative idempotent binary operation such that for every $x,y \in \bA$, the sequence $f(x,y), f(x,f(x,y)), f(x,f(x,f(x,y))), ...$ is eventually constant.
\end{defn}

\begin{cor} If $\bA$ is a minimal bounded width algebra with no majority subalgebra, then $\bA$ is a spiral. Every spiral is a weak spiral.
\end{cor}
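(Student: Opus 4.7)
For the first implication, I would invoke Theorem \ref{recursive} on two-generated subalgebras. The proposition at the head of this section already yields $g \in \Clo(f)$ and $f(x,y) \approx f(y,x)$, so $f$ is commutative and idempotent. Given any two-generated subalgebra $\bB = \Sg_{\bA}\{a,b\}$, Theorem \ref{subalg} shows $\bB$ is itself a minimal bounded width algebra; since $\bA$ has no majority subalgebra, neither does $\bB$. Theorem \ref{recursive} applied to $\bB$ then forces $\bB$ to be either a two-element semilattice or to admit a congruence whose quotient is the free semilattice on the generators $\{a\}, \{b\}$; the latter is exactly a surjective homomorphism $\bB \twoheadrightarrow \bS$, where $\bS$ is the free semilattice on two generators. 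This is the defining property of a spiral.

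For the second implication, let $\bA$ be a (finite) spiral, fix $x,y \in \bA$, and set $x_0 = y$, $x_{n+1} = f(x,x_n)$. Every $x_n$ lies in $\bB := \Sg_{\bA}\{x,y\}$, so by finiteness the sequence is eventually periodic with some minimal period $p \ge 1$; I aim to show $p = 1$. If $|\bB| \le 2$ this is immediate by direct inspection using commutativity and idempotence. Otherwise choose a surjective $\phi: \bB \twoheadrightarrow \bS$; since $x, y$ generate $\bB$ and the only two-element subset of $\bS$ that generates all of $\bS$ is the pair of free generators, $\phi(x)$ and $\phi(y)$ must be these free generators, and consequently $\phi(x_n)$ is the top element of $\bS$ for every $n \ge 1$.

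Now fix $n$ in the periodic tail and examine $\bC := \Sg_{\bA}\{x, x_n\}$. By the spiral property either $|\bC| = 2$ or there is a surjective $\psi : \bC \twoheadrightarrow \bS$. In the first case $\bC = \{x, x_n\}$ is a two-element semilattice, and since $\phi|_{\bC}$ carries $x, x_n$ to distinct elements of $\bS$ lying in the chain $\phi(x) \le \phi(x_n)$, the semilattice direction in $\bC$ must be $x \to x_n$, forcing $x_{n+1} = x_n$ as desired. In the second case, the same generator argument applied to $\psi$ compels $\psi(x), \psi(x_n)$ to be the two free generators of $\bS$, whence an easy induction gives $\psi(x_{n+k})$ equal to the top of $\bS$ for all $k \ge 1$; in particular $\psi(x_{n+p}) \ne \psi(x_n)$, contradicting $x_{n+p} = x_n$. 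The only subtle point is the forced labeling of $\psi(x), \psi(x_n)$ under surjectivity onto $\bS$; once that is in hand, the periodicity argument closes out the proof.
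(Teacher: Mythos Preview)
Your argument is correct and rests on the same key observation as the paper: the surjection to the free semilattice separates $x_n$ from $x_{n+1}$ whenever $\Sg\{x,x_n\}$ has more than two elements. The paper packages this as the statement that $y_i \notin \Sg\{x,y_{i+1}\}$, so $|\Sg\{x,y_i\}|$ is strictly decreasing until it reaches size at most two, while you reach the same endpoint via an eventual-periodicity argument; these are close variants of one another.
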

\begin{proof} The first statement follows directly from Theorem \ref{recursive}, so we just need to show that every spiral is a weak spiral. Let $y_0 = y, y_1 = f(x,y)$, $y_{i+1} = f(x,y_i)$. If $f(x,y_i) \ne y_i$ for all $i$, then by Theorem \ref{recursive} we see that $y_i \not\in \Sg_{\bA}\{x,f(x,y_i)\} = \Sg_{\bA}\{x,y_{i+1}\}$, so the size of $\Sg_{\bA}\{x,y_i\}$ is a strictly decreasing function of $i$ which always takes positive integer values, a contradiction.
\end{proof}

\begin{prop} Every weak spiral has bounded width.
\end{prop}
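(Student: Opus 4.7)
The plan is to invoke the equivalence of items (1) and (9) of Proposition~1: a finite idempotent algebra has bounded width if and only if no nontrivial quotient of any subalgebra of it is affine. So it suffices to prove (a) the class of weak spirals is closed under subalgebras and quotients, and (b) no nontrivial finite affine algebra is a weak spiral. Combined, (a) and (b) give that any affine quotient of a subalgebra of a weak spiral is again a weak spiral, hence trivial, and Proposition~1 then yields bounded width.

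Part (a) is routine. Commutativity and idempotence of $f$ are equations, so they pass to subalgebras and quotients. The eventual constancy of $f(x,y), f(x,f(x,y)), \ldots$ is preserved as well: in a subalgebra the iterates are computed identically, and in a quotient the $n$th iterate is the class of the $n$th iterate computed in the original algebra, so stabilization carries over.

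The heart of the proof is (b). Suppose, for contradiction, that $\bA$ is a nontrivial finite affine algebra bearing a commutative idempotent binary term $f$. After identifying $\bA$ (with a suitable choice of origin) with a module $M$ over some ring $R$, every idempotent binary term of $\bA$ takes the form $f(x,y) = ax + by$ with $a + b = 1$ in $R$; commutativity forces $a$ and $b$ to act identically on $M$, so $2a$ acts as the identity, $a$ acts as $\tfrac{1}{2}$, and $2$ is invertible on $M$. A direct induction gives
\[
f^n(x,y) = x + \frac{y-x}{2^n}.
\]
Eventual constancy of this sequence gives $(y-x)/2^{n+1} = (y-x)/2^n$ for large $n$, which after multiplying by $2^{n+1}$ yields $y = x$. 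Since $\bA$ is nontrivial some pair $x \ne y$ exists, contradicting the weak spiral hypothesis. This affine computation is the only mildly nontrivial step in the whole argument; everything else is formal bookkeeping using Proposition~1 and the closure properties of weak spirals.
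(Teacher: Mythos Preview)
Your proof is correct and follows essentially the same route as the paper's: both reduce to showing that no nontrivial affine algebra can be a weak spiral (the paper phrases this as showing the identities $f(x,x)\approx x$, $f(x,y)\approx f(y,x)$, $f_k\approx f_{k+1}$ fail in any nontrivial module), and both carry out the same module computation, obtaining $2^n f_n(x,y)\approx (2^n-1)x+y$ and deriving $x\approx y$ from stabilization. Your version is slightly more explicit about invoking item~(9) of Proposition~1 and about the closure of weak spirals under $\mathrm{H}$ and $\mathrm{S}$, whereas the paper leaves this implicit (identities automatically pass to subalgebras and quotients), but the substance is identical.
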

\begin{proof} Define a sequence of terms $f_i(x,y)$ by $f_0(x,y) = y, f_1(x,y) = f(x,y)$, and $f_{i+1}(x,y) = f(x,f_i(x,y))$. We need to show that the identities $f(x,x) \approx x$, $f(x,y) \approx f(y,x)$, and $f_k(x,y) \approx f_{k+1}(x,y)$ can't simultaneously be satisfied in a nontrivial affine algebra.

Suppose for a contradiction that they do hold in some affine algebra, with $f(x,y) \approx \alpha x + \beta y$. From $f(x,x) \approx x$, we see that $(\alpha + \beta) x \approx x$, and from $f(x,y) \approx f(y,x)$ we see that $(\alpha - \beta)x \approx (\alpha - \beta)y$. Thus $2\alpha x \approx 2\beta x \approx x$, and by induction on $n$ we see that $2^nf_n(x,y) \approx (2^n-1)x + y$. Then by multiplying both sides of $f_k(x,y) \approx f_{k+1}(x,y)$ by $2^{k+1}$, we see that $2(2^k-1)x + 2y \approx (2^{k+1}-1)x + y$, so $y \approx x$ and our affine algebra is in fact trivial.
\end{proof}

\begin{cor} Every nontrivial reduct of a spiral has bounded width. In particular, if $\bA$ is a minimal spiral and $t$ is any term of $\bA$ which is not a projection, then $f \in \Clo(t)$.
\end{cor}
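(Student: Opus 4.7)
The plan is to extract a partial semilattice $s \in \Clo(t)$ that is not a projection, and then to show by a short identity calculation that no nontrivial quotient of any subalgebra of $(A,s)$ is affine. By item (9) of the opening proposition this gives $(A,s)$ bounded width, and since $\Clo(s) \subseteq \Clo(t)$ every $pq$ instance for $(A,t)$ is already a $pq$ instance for $(A,s)$ and hence has a solution, so $(A,t)$ has bounded width as well. The ``in particular'' clause then follows from minimality: the chain $\Clo(s) \subseteq \Clo(t) \subseteq \Clo(f)$ combined with bounded width of $\Clo(t)$ forces $\Clo(t) = \Clo(f)$ when $\Clo(f)$ is minimal bounded width, so $f \in \Clo(t)$.

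To produce $s$, I would fix any substitution $\rho : \{1,\dots,n\} \to \{x,y\}$ using both values and set $u(x,y) := t(\rho(1),\dots,\rho(n)) \in \Clo(t)$. On any two-element semilattice subalgebra $\{p,q\} \subseteq \bA$ with $f(p,q) = q$, every term of $\Clo(f)$ restricts to the join of its essential variables, so after discarding any dummy arguments of $t$ we get $u(p,q) = u(q,p) = q$. Applying the Semilattice Iteration Lemma~\ref{semi-iter} to $u$ then produces a partial semilattice $s \in \Clo(u) \subseteq \Clo(t)$ with $s(p,q) = s(q,p) = q$, which shows $s \ne \pi_1, \pi_2$. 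The existence of such a two-element semilattice subalgebra in any spiral of size at least two is what uses the spiral hypothesis: either $\Sg_\bA\{a,b\}$ already has size two for some $a \ne b$ (and is thus a two-element semilattice), or picking any $a \ne b$ and iterating $c_{k+1} := f(a,c_k)$ from $c_0 = b$ stabilizes at some $c_K$ with $f(a,c_K) = c_K$ by the weak spiral property from the previous corollary, and the semilattice quotient of $\Sg_\bA\{a,b\}$ shows $c_K \ne a$.

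For the affine calculation, suppose for contradiction that some nontrivial quotient $\bB$ of a subalgebra of $(A,s)$ is affine, so that $s^{\bB}(x,y) = ax + by$ over some ring $R$ with $a + b = 1$. Expanding $s(x,s(x,y)) \approx s(x,y)$ gives $ab = 0$ and $b^2 = b$, while expanding $s(s(x,y),x) \approx s(x,y)$ gives $a^2 + b = a$ and $ab = b$. Comparing $ab = 0$ with $ab = b$ forces $b = 0$ and hence $a = 1$, so $s^\bB$ is the first projection. But then the clone of $\bB$ consists only of projections, which is not term-equivalent to any nontrivial module, contradicting affineness of $\bB$. Hence $(A,s)$ satisfies item (9) of the opening proposition and therefore has bounded width.

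The main obstacle I anticipate is the first step: making sure the partial semilattice produced is actually non-projection. This is where the spiral property (rather than just commutativity and idempotency of $f$) enters in an essential way, through the guaranteed existence of a two-element semilattice subalgebra on which $u$ is forced to act as the join. After this step the rest is a short identity calculation together with the equivalence (1) $\Leftrightarrow$ (9) from the opening proposition and the routine fact that bounded width is preserved under enlargement of the clone of operations.
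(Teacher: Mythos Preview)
Your argument has a genuine gap at the step where you invoke item (9) of the opening proposition. Your ring calculation correctly shows that in any module the partial semilattice identities force $s=\pi_1$, hence no nontrivial \emph{module} can occur in $\mathrm{HS}((A,s))$. But bounded width also requires ruling out quotients of subalgebras on which every operation becomes a projection (type $\mathbf{1}$ behavior, i.e.\ failure of the Taylor condition), and the partial semilattice identities do not do that: $\pi_1$ itself satisfies them. Concretely, the algebra $(\{0,1,2\},s)$ with $s$ idempotent, $s(0,1)=s(1,0)=1$, and $s$ acting as first projection on $\{0,2\}$ and on $\{1,2\}$ is a nontrivial partial semilattice with no module in $\mathrm{HS}$, yet its subalgebra $\{0,2\}$ has trivial clone, so it is not of bounded width. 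Your argument after extracting $s$ uses only the partial semilattice identities, so it cannot distinguish $(A,s)$ from this example. (Item (9) as literally stated is only equivalent to bounded width once Taylor is already known; compare how the paper's proof that weak spirals have bounded width uses commutativity of $f$ to force $f\ne\pi_1,\pi_2$ in every quotient, handling type $\mathbf{1}$ alongside the module calculation.)

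The paper's proof closes exactly this gap by using the spiral structure rather than just the identities of $s$. It takes the binary minor $p$ of $t$ (which, like your $u$, acts as the join on \emph{every} two-element semilattice subalgebra of $\bA$) and proves by induction, using the decomposition $\Sg_\bA\{x,y\}=S\cup\{x,y\}$ from the definition of a spiral, that every $p$-closed subset of $A$ is connected through two-element semilattice edges of $\bA$. Hence every nontrivial quotient of every $p$-subalgebra inherits a two-element semilattice on which $p$ is not a projection, simultaneously excluding type $\mathbf{1}$ and type $\mathbf{2}$. Your extraction of a well-behaved binary operation from $t$ is the right first move, but to finish you need this connectivity argument (or some other use of the spiral hypothesis beyond the existence of a single semilattice edge) rather than the identity calculation alone.
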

\begin{proof} Since $t$ is a nontrivial element of $\Clo(f)$, upon restricting $t$ and $f$ to any two element semilattice contained in $\bA$ we see that there must be some two variable minor $p(x,y)$ of $t$ which acts as the semilattice operation on any two element semilattice of $\bA$.

We just need to show that for every subset $P$ of $\bA$ which is closed under $p$, the graph on $P$ with edges corresponding to two element semilattices of $\bA$ is connected. So suppose that $x,y \in P$ are not connected via a chain of two element semilattices, and that $P$ is minimal such that such a pair $x,y$ exist. Since $\{x,y\}$ is not itself a two element semilattice, we see from the definition of a spiral that we can write $\Sg_{\bA}\{x,y\} = S \cup \{x,y\}$ for some $S$ which is closed under $f$, $S \cap \{x,y\} = \emptyset$, and that $p(x,y) \in S$ since $p$ acts as the semilattice operation on the three element quotient of $S \cup \{x,y\}$. Thus $P \cap (S\cup\{x\})$ is also closed under $p$, contains $p(x,y)$, and does not contain $y$, so by the minimality of $P$ it contains a chain of two element semilattices connecting $x$ to $p(x,y)$. Similarly $P \cap (S\cup\{y\})$ contains a chain of two element semilattices connecting $p(x,y)$ to $y$, so $x,y$ are connected by a chain of two element semilattices contained in $P$.
\end{proof}

\begin{cor} A bounded width algebra $\bA$ has no nontrivial proper reducts if and only if $\bA$ is either a minimal majority algebra or a minimal spiral.
\end{cor}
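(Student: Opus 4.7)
I would deduce this corollary directly from the earlier classification theorem stating that every minimal clone with a Taylor term is affine, majority, or spiral, combined with the bounded-width facts already established for spirals.

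For the $(\Rightarrow)$ direction, suppose $\bA$ has bounded width and no nontrivial proper reducts. Then $\Clo(\bA)$ is a minimal clone in the strict sense (every subclone is either $\Clo(\bA)$ itself or the clone of projections). Bounded width supplies a Taylor term — for instance the ternary $g$ provided by Corollary \ref{min} — so the earlier classification applies and places $\bA$ in one of three families: affine, majority, or spiral. Property (9) of the opening proposition rules out the affine option, since no nontrivial quotient of any subalgebra of a bounded width algebra is affine. Thus $\bA$ is a minimal majority algebra or a minimal spiral.

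For the $(\Leftarrow)$ direction, first observe that both classes automatically have bounded width: a majority term is itself a weak near-unanimity of arity three, and for spirals this is the proposition in Section \ref{spiral} stating that every weak spiral is bounded width. For a minimal spiral, the statement then follows immediately from the corollary just before this one — \emph{every nontrivial reduct of a spiral has bounded width} — combined with minimality as a bounded width algebra: a proper nontrivial reduct would be a strictly smaller bounded width clone, contradicting minimality. For a minimal majority algebra, minimality among bounded width algebras forces $\Clo(\bA)=\Clo(m)$ for the majority term $m$, so I would give the analogous argument that any proper nontrivial reduct of $\Clo(m)$ is again bounded width (since an idempotent reduct containing any majority-like operation retains bounded width), and once again minimality forbids a proper nontrivial reduct.

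The main friction I expect is on the backward direction for the majority case — checking that every nontrivial reduct of a minimal bounded width majority algebra is still bounded width, so that minimality can be invoked. For spirals this is a clean previous result; for majority I would either argue directly in the style of the spiral corollary, or (alternatively) note that a nontrivial reduct that failed to be bounded width would, by the classification of minimal Taylor clones applied inside $\Clo(\bA)$, have to contain an affine subclone, which is incompatible with property (9) of the opening proposition.
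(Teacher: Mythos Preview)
Your forward direction via the introduction's classification of minimal Taylor clones is correct and is a clean high-level shortcut; since that theorem already rests on Theorem~\ref{recursive}, there is no circularity. The paper argues the majority case more directly (a two-element majority subalgebra forces $f$ to restrict to a projection there, so $\Clo(f)$ fails bounded width and minimality of the clone makes $f$ itself a projection, whence $g$ is majority), but your route is equally valid. Your backward direction for spirals also matches the paper.

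The genuine gap is the backward direction for majority. Your Approach~B does not work as stated, for two reasons. First, a minimal subclone of $\Clo(t)$ need not be Taylor at all: by Rosenberg's classification it may be generated by a semiprojection (or by a binary idempotent operation that is a projection on every two-element subset), and then the minimal-Taylor-clone theorem says nothing. Second, even in the affine case, property~(9) concerns affine \emph{quotients of subalgebras} of $\bA$, not affine \emph{reducts} sitting inside $\Clo(\bA)$; nothing in~(9) forbids the underlying set from carrying an unrelated module structure whose Mal'cev term happens to lie in $\Clo(g)$. (That case can in fact be excluded by restricting the Mal'cev term to a two-element majority subalgebra and noting that minority is not in the two-element majority clone --- but that is not the argument you gave, and it still leaves the semiprojection case open.) The paper handles the majority backward direction by a short induction on term complexity: if $t\in\Clo(g)$ satisfies $t(x,y,\dots,y)\approx x$, write $t=g(t_1,t_2,t_3)$; on two-element subsets at least two of the $t_i$ satisfy the same identity, hence are first projection by induction, so $t=g(\pi_1,\pi_1,t_3)\approx\pi_1$. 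This kills semiprojections (and nontrivial binary terms), so every nontrivial reduct of a majority algebra contains a majority term and is therefore bounded width, after which minimality finishes the argument.
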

\begin{proof} If $\bA$ has no majority subalgebra then this follows from the results of this section. Otherwise, if $\bA$ has a majority subalgebra $\{a,b\}$ then the restriction of $f$ to $\{a,b\}$ is first projection, so $\Clo(f)$ does not have bounded width and is therefore a proper reduct. Thus $f$ must be first projection, and $\bA$ is a minimal majority algebra.

It remains to show that any nontrivial reduct of a majority algebra is also a majority algebra. Suppose that $t$ is a nontrivial term of the majority algebra $\bA$ with basic operation $g$. The restriction of $t$ to any two element subset of $\bA$ is then necessarily either a projection or a near-unanimity operation, and in the second case there is a majority term in the clone of $t$. Thus we just need to show that $t$ can't be a semiprojection.

Suppose for a contradiction that $t(x,y,y,...,y) \approx x$ but $t$ is not first projection. Since $t$ is not a basic operation of $\bA$ or a projection, we can write $t = g(t_1, t_2, t_3)$, with $t_1, t_2, t_3$ defined by shorter expressions than $t$. Since every two element subset of $\bA$ is a majority subalgebra, we have $t_i(x,y,y,...,y) \in \{x,y\}$ for $i = 1,2,3$, so since $g$ is a majority operation at least two of the $t_i$s have $t_i(x,y,y,...,y) \approx x$, and we may suppose without loss of generality that these are $t_1, t_2$. By induction, we see that $t_1,t_2$ are first projection, so $t \approx g(\pi_1,\pi_1,t_3) \approx \pi_1$, and we see that $t$ is first projection as well.
\end{proof}

\begin{thm}\label{spiral-ternary} If $\bA$ is a minimal spiral, then for any nontrivial binary terms $p(x,y), q(x,y), r(x,y)$, $\bA$ has a term $w$ such that
\[
w(x,x,y) \approx p(x,y),\;\;\; w(x,y,x) \approx q(x,y),\;\;\; w(y,x,x) \approx r(x,y).
\]
\end{thm}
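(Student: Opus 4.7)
The plan is to reformulate the statement as a subalgebra membership problem and then use the spiral structure theorem together with Bulatov's yellow connectivity. Let $\bF := \cF_{\bA}(x,y)$ denote the free algebra on two generators in the variety of $\bA$, and set
\[
T := \Sg_{\bF^3}\{(x,x,y),(x,y,x),(y,x,x)\}.
\]
A ternary term $w$ as required exists if and only if $(p(x,y), q(x,y), r(x,y)) \in T$, since every element of $T$ has the form $(w(x,x,y), w(x,y,x), w(y,x,x))$ for some ternary term $w$.

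First I would verify that $\bF$ is itself a minimal spiral. It is finite and lies in the pseudovariety of minimal bounded width algebras (Theorem \ref{minvar}), while as $\bA$ has no two-element majority subalgebra, this property is inherited by any subalgebra of any power of $\bA$, and in particular by $\bF \le \bA^{|\bA|^2}$. Theorem \ref{recursive} then gives the decomposition $\bF = M \cup \{x, y\}$, where $M$ is the set of maximal elements of $\bF$ (a subalgebra, disjoint from $\{x, y\}$ unless $\bF$ is the two-element semilattice, in which case the claim is handled trivially by the WNU). Nontriviality of $p, q, r$ forces $p(x,y), q(x,y), r(x,y) \in M$, so the problem reduces to showing $M \times M \times M \subseteq T$.

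The main technical point, and where I expect the main obstacle, is showing that $M$ is a \emph{single} maximal strongly connected component of $\bF$. Two distinct max SCCs of $\bF$ would constitute two disjoint upwards-closed subsets, but Proposition \ref{yellow} (yellow connectivity) applied to these would produce elements $a, b$ in them with $\{a, b\}$ a two-element majority subalgebra of $\bF$. Since $\bF$ has no such subalgebras, this forces $a = b$, contradicting disjointness; hence $M$ is the unique max SCC of $\bF$. Extracting ``$M$ is a single max SCC'' from yellow connectivity in the spiral regime is the conceptual heart of the argument.

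Finally, by Corollary \ref{min} there is a weak near-unanimity term $g$ with $g(x,x,y) \approx g(x,y,x) \approx g(y,x,x) \approx f(x,y) =: m \in M$, so
\[
(m, m, m) = g((x,x,y),(x,y,x),(y,x,x)) \in T \cap M^3.
\]
Lemma \ref{maj-triple}, applied with $\bF$ generated by $\{x, y\}$ and $\RR = T$, then upgrades this single intersection point to the inclusion $M \times M \times M \subseteq T$, and the desired ternary $w$ follows from $(p(x,y), q(x,y), r(x,y)) \in M^3 \subseteq T$.
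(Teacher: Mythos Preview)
Your proof is correct and follows essentially the same route as the paper: pass to the free algebra $\bF=\cF_{\bA}(x,y)$, use the spiral structure theorem to place all nontrivial binary terms in the set $S$ of maximal elements, and then apply Lemma~\ref{maj-triple} to the subalgebra generated by $(x,x,y),(x,y,x),(y,x,x)$ once you know it meets $S^3$. The one unnecessary detour is your appeal to yellow connectivity to show $M$ is a single strongly connected component: the lemma immediately preceding Theorem~\ref{recursive} already shows that every maximal element of a minimal spiral is reachable from every element (since $f(x,y)=f(y,x)$ is reachable from both $x$ and $y$), so $M$ is strongly connected without invoking Proposition~\ref{yellow}.
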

\begin{proof} Let $\bF = \cF_{\bA}(x,y)$ be the free algebra on two generators in the variety generated by $\bA$. By Theorem \ref{subalg}, we see that $\bF$ is also minimal. By Theorem \ref{recursive}, we can write $\bF = S\cup\{x,y\}$, where $S$ is the maximal strongly connected component of $\bF$, and the partition $\{S,\{x\},\{y\}\}$ of $\bF$ defines a congruence such that the quotient is the free semilattice on two generators. In particular, we must have $f(x,y), p(x,y), q(x,y), r(x,y) \in S$. Let $\RR = \Sg_{\bF^3}\{(x,x,y),(x,y,x),(y,x,x)\}$, then by Lemma \ref{maj-triple} and the fact that $\RR\cap (S\times S\times S) \ne \emptyset$ we have $S\times S\times S \subseteq \RR$.
\end{proof}

Note that the definition of a spiral makes sense for infinite algebras as well.

\begin{thm} An arbitrary product of spirals is a (possibly infinite) spiral, and any subalgebra of a spiral is a spiral. If $\bA$ is a (possibly infinite) spiral and $\theta$ is a congruence of $\bA$ such that the intersection of every class of $\theta$ with every finitely generated subalgebra of $\bA$ is finite, then $\bA/\theta$ is also a spiral.
\end{thm}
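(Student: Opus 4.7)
The three assertions are handled separately; the first two are direct verifications of the defining property, while the quotient case is the main point and uses the finiteness hypothesis essentially. For subalgebras: every 2-generated subalgebra of $\bB \le \bA$ is already a 2-generated subalgebra of $\bA$, so the defining property transfers verbatim. For a product $\bA = \prod_{i\in I}\bA_i$ and $\bB = \Sg_\bA\{a,b\}$: if some coordinate satisfies $|\Sg_{\bA_i}\{a_i,b_i\}| \ge 3$, then composing the given surjection $\Sg_{\bA_i}\{a_i,b_i\} \twoheadrightarrow F_2$ onto the free semilattice $F_2$ on two generators with the projection $\pi_i$ yields the required surjection $\bB \twoheadrightarrow F_2$ sending $a,b$ to the two generators. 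Otherwise every $\Sg_{\bA_i}\{a_i,b_i\}$ has size at most $2$, which forces every non-projection binary term to agree coordinatewise with $f$ at $(a,b)$; thus $\bB = \{a, b, f(a,b)\}$ is either a two-element semilattice or a copy of $F_2$.

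For the quotient case I use the following reformulation of the spiral property, verified directly from the definition: a commutative idempotent algebra $\bA$ is a spiral if and only if for every pair $a,b$ and every non-projection binary term $w$, $w(a,b) \in \{a,b\}$ implies $f(a,b) \in \{a,b\}$. Passed to $\bA/\theta$, the task becomes: whenever $w(a,b) \sim_\theta a$ (the case $\sim_\theta b$ being symmetric) for some non-projection $w$, then $f(a,b) \sim_\theta a$ or $\sim_\theta b$. Fix $a,b,w$ and set $\bB = \Sg_\bA\{a,b\}$, so $a/\theta \cap \bB$ is finite by hypothesis. The case $|\bB| \le 2$ is trivial; otherwise $\bB$ surjects onto $F_2$ with kernel classes $\{a\}, \{b\}, C$ and $w(a,b) \in C$. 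Iterate $y_0 := w(a,b)$ and $y_{i+1} := f(a, y_i)$; each $y_i$ lies in both $C$ and $a/\theta$, so the sequence stays in the finite set $a/\theta \cap \bB$ and is eventually periodic. The spiral property of $\Sg_\bA\{a, y_N\}$, applied to a cycle element $y_N$, excludes cycles of length $\ge 2$: a surjection onto $F_2$ with $a \mapsto x, y_N \mapsto y$ would push every later iterate into the preimage of $f(x,y)$, blocking the return to $y_N$. Hence the sequence stabilizes at $y_N$ with $f(a, y_N) = y_N$, making $\{a, y_N\}$ a two-element semilattice in $\bA$ with $y_N \sim_\theta a$ and $y_N \in C$.

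To close, examine $\Sg_\bA\{b, y_N\}$. If it has size at most two, then $f(b, y_N) \in \{b, y_N\}$, and since $f(b, y_N) \sim_\theta f(b, a) = f(a,b)$ we conclude immediately that $f(a,b) \sim_\theta b$ or $f(a,b) \sim_\theta y_N \sim_\theta a$, as required. The chief technical obstacle is the remaining case $|\Sg_\bA\{b, y_N\}| \ge 3$: here one must rerun the stabilization argument with the roles of $a$ and $b$ interchanged, iterating $f(b, \cdot)$ from a carefully chosen seed in $b/\theta \cap \bB$ (constructed from $y_N$, $f(a,b)$, and the $F_2$-structure of $\bB$) in order to produce a second two-element semilattice $\{b, y_M\}$ in $\bA$ with $y_M \sim_\theta b$. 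Combining the two semilattice edges $\{a, y_N\}$ and $\{b, y_M\}$ with the $F_2$-congruence $\{a\}, \{b\}, C$ on $\bB$ then forces $f(a,b)$ into the $\theta$-class of $a$ or of $b$, completing the proof. The delicate point is verifying that the symmetric seed can always be found -- the natural candidate $f(b, y_N)$ sits in $c_1/\theta$ rather than in $b/\theta$, so one must first iterate once more (using the spiral property of an intermediate 2-generated subalgebra) to land in $b/\theta$ before the stabilization argument can be applied.
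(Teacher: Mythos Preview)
Your treatment of products and subalgebras is correct and more explicit than the paper's one-line dismissal. Your reformulation of the spiral condition --- a commutative idempotent $(A,f)$ is a spiral iff for every $a,b$ and every non-projection binary term $w$, $w(a,b)\in\{a,b\}$ forces $f(a,b)\in\{a,b\}$ --- is also valid, and the stabilization producing $y_N\in C$ with $y_N\sim_\theta a$ and $f(a,y_N)=y_N$ is fine.

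The gap is the case $|\Sg_\bA\{b,y_N\}|\ge 3$. You need a seed in $b/\theta\cap C$ to run the symmetric iteration, but the hypothesis only supplies an element of $a/\theta\cap C$; nothing in the setup produces one in $b/\theta$. Worse, even granting a hypothetical $y_M\sim_\theta b$ with $f(b,y_M)=y_M$, your ``combining'' step is not justified: from $f(a,b)\sim_\theta f(y_N,y_M)$ you would still need $f(y_N,y_M)\in\{y_N,y_M\}$, i.e.\ $|\Sg_\bA\{y_N,y_M\}|\le 2$, and there is no reason for that.

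The paper sidesteps all of this with a minimality trick: choose the representative $y\in y/\theta$ so that $|\Sg_\bA\{x,y\}\cap(y/\theta)|$ is minimal (this is precisely where the finiteness hypothesis enters). Then if $a,b\in\Sg_\bA\{x,y\}$ with $a/\theta\ne y/\theta$ but $f(a,b)\in y/\theta$, the element $f(a,b)$ is another representative of $y/\theta$, and $\Sg_\bA\{x,f(a,b)\}\subseteq\Sg_\bA\{x,y\}\setminus\{y\}$ since the latter is a subalgebra by the spiral property of $\bA$; minimality then forces $y\in\Sg_\bA\{x,f(a,b)\}$, a contradiction. Translated into your framework: if you pick the representative $a$ of $\bar a$ to minimize $|\Sg_\bA\{a,b\}\cap(a/\theta)|$, then already $w(a,b)\in (a/\theta)\cap C$ (no iteration needed) gives a strictly smaller value at the representative $w(a,b)$, since $\Sg_\bA\{w(a,b),b\}\subseteq\{b\}\cup C$ misses $a$. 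Hence $w(a,b)\sim_\theta a$ with $w$ non-projection is incompatible with $|\Sg_\bA\{a,b\}|>2$, and the conclusion follows immediately.
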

\begin{proof} The first two claims follow directly from the definition of a spiral, so we only have to prove the claim about quotients. Let $x/\theta,y/\theta \in \bA/\theta$, and suppose without loss of generality that $y\in y/\theta$ is chosen such that $\Sg_{\bA}\{x,y\} \cap (y/\theta)$ is minimal. Suppose that $\Sg_{\bA/\theta}\{x/\theta,y/\theta\}$ has more than two elements, so that in particular $\Sg_{\bA}\{x,y\}$ also has more than two elements and thus maps to the free semilattice on two generators. We need to show that if $a/\theta, b/\theta \in \Sg_{\bA/\theta}\{x/\theta,y/\theta\}$ with $a/\theta \ne y/\theta$, then $f(a,b)/\theta \ne y/\theta$. We may assume without loss of generality that $a,b \in \Sg_{\bA}\{x,y\}$. Suppose for a contradiction that $f(a,b) \in y/\theta$, then by the choice of $y$ we have $y \in \Sg_{\bA}\{x,f(a,b)\}$. But this is a contradiction, since $f(a,b) \ne y$ and $(\Sg_{\bA}\{x,y\})\setminus\{y\}$ is a subalgebra of $\bA$.
\end{proof}

This shows that the collection of finite spirals forms a pseudovariety. The next result shows that it does not form a variety.

\begin{prop} For any odd $p$, the variety generated by the collection of finite spirals contains the affine algebra $(\ZZ/p,f)$ given by $f(a,b) = \frac{a+b}{2}$.
\end{prop}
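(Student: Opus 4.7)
The plan is to realize $(\ZZ/p,f)$ as a homomorphic image of the free commutative idempotent algebra $\bF$ on two generators $x, y$, and then show that $\bF$ itself lies in the variety generated by finite spirals. The evaluation map $\varphi\colon \bF \to (\ZZ/p, f)$ with $x \mapsto 0$, $y \mapsto 1$ extends uniquely to a homomorphism because $(\ZZ/p,f)$ is commutative and idempotent. It is surjective since $2$ is a unit modulo the odd prime $p$, so the dyadic rationals $\mathbb{Z}[1/2]$ cover all of $\ZZ/p$; hence $(\ZZ/p,f)$ is a quotient of $\bF$, reducing the problem to showing $\bF \in \mathrm{HSP}(\text{finite spirals})$.

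I would first verify that $\bF$ is itself an infinite spiral: for any $u, v \in \bF$ with $|\Sg\{u,v\}| \ge 3$, the ``collapse'' map $\Sg\{u,v\} \to \{u,v,*\}$ sending $u \mapsto u$, $v \mapsto v$, and every other element to $*$ is a homomorphism onto the free three-element semilattice on $\{u,v\}$. The key verification is that in $\bF$ no ``accidental'' identity $f(a,b) = u$ arises when $(a,b) \ne (u,u)$: since $\bF$ is free commutative idempotent, the only identifications are forced by commutativity and idempotence, and neither can produce such a collision.

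The main work is showing $\bF$ is residually finite in the class of spirals: for any distinct $u, v \in \bF$ one must produce a finite spiral $\bA_{u,v}$ and a homomorphism $\bF \to \bA_{u,v}$ separating $u$ from $v$. I would build a family $\bG_n$ of finite spirals inductively via Theorem \ref{recursive}: start with $\bG_1$ the three-element free semilattice on $\{a_1,b_1\}$, and define $\bG_{n+1}$ by adjoining two new generators $a_{n+1}, b_{n+1}$ to $\bG_n$, extending $f$ so that $\bG_n$ is the maximal strongly connected component of $\bG_{n+1}$ and the quotient collapsing $\bG_n$ is the free semilattice on $\{a_{n+1}, b_{n+1}\}$. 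The cross operations $f(a_{n+1}, z), f(b_{n+1}, z)$ for $z \in \bG_n$ are chosen—possibly in many parallel copies at each stage—to realize all the depth-$(n{+}1)$ combinatorics of $\bF$ faithfully.

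The principal obstacle is ensuring that the family $\{\bG_n\}$ collectively separates every pair of distinct terms in $\bF$, not merely pairs of the form $f(x, f(x, \ldots, y))$ arising from iterated ``$f(x,\cdot)$'' applications; this requires checking at each inductive step that every new c.i.-distinction between terms of depth $\le n+1$ is witnessed in some product of the constructions. Once this residual finiteness is established, $\bF$ embeds subdirectly into $\prod_\alpha \bG_{n_\alpha}$, so $\bF \in \mathrm{SP}(\text{finite spirals}) \subseteq \mathrm{HSP}(\text{finite spirals})$; composing with $\varphi$ places $(\ZZ/p, f) \in \mathrm{HSP}(\text{finite spirals})$, completing the proof.
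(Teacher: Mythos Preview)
Your approach is genuinely different from the paper's and far more ambitious, but it has a real gap. The paper's proof is a three-line direct construction: for each $n$, set $\bA_n = (\{1,\dots,n\},f)$ with $f(x,y) = \min\bigl(\tfrac{p+1}{2}(x+y),\,n\bigr)$ for $x\neq y$; these are finite spirals (since $f(x,y) > x,y$ whenever $x\neq y$ are both below $n$, and $x\rightarrow n$ always), their product contains $\bA_{\NN} = (\NN,f)$ with $f(x,y)=\tfrac{p+1}{2}(x+y)$, and reduction mod $p$ is a surjective homomorphism $\bA_{\NN}\to(\ZZ/p,f)$. No free algebras, no residual finiteness.

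Your route instead asks for $\bF \in \mathrm{HSP}(\text{finite spirals})$, which by Birkhoff is equivalent to showing that \emph{no} two-variable identity beyond commutativity and idempotence holds in every finite spiral. That is strictly stronger than the proposition, and you do not prove it: the inductive construction of $\bG_n$ is a sketch with the key step (``choose the cross operations $f(a_{n+1},z)$ to realize all depth-$(n{+}1)$ combinatorics faithfully'') left entirely unspecified. You correctly flag this as the principal obstacle, but flagging it is not resolving it. Separately, your verification that $\bF$ itself is a spiral is incomplete: for \emph{arbitrary} $u,v\in\bF$ (not just the free generators) you must rule out $f(a,b)=u$ with $a,b\in\Sg\{u,v\}$ and $\{a,b\}\neq\{u\}$, which is a statement about the word problem in free commutative idempotent magmas and does not follow from the one-sentence remark about ``accidental identities.'' If you want to salvage the argument, the paper's explicit $\bA_n$ already give you, for each $p$, a separating family without touching $\bF$ at all.
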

\begin{proof} For each $n \ge 1$, we define a finite spiral $\bA_n = (\{1,...,n\},f)$ by
\[
f(x,y) = \begin{cases}x & x = y\\ \min(\frac{p+1}{2}(x+y),n) & x \ne y\end{cases}.
\]
To see that this is a spiral, note that for $x\ne y$ with $x,y < n$, we have $f(x,y) > x,y$, and for any $x \in \{1, ..., n\}$ we have $x \rightarrow n$. Inside the product of the $\bA_n$s, we have an isomorphic copy of the infinite spiral $\bA_{\NN} = (\NN,f)$ given by
\[
f(x,y) = \begin{cases}x & x = y\\ \frac{p+1}{2}(x+y) & x \ne y\end{cases}.
\]
Now we look mod $p$.
\end{proof}

Since the collection of finite spirals forms a pseudovariety, it should be defined by a sequence of identities such that all but finitely many of the identities hold in any finite spiral. The next result gives such a collection of identities.

\begin{thm} For any nontrivial term $p(x,y)$ built out of $f(x,y)$, let $k_p$ be the number of times $f$ occurs in the definition of $p$. Define from $p$ a sequence of terms $p_0(x,y) = y, p_1(x,y) = p(x,y),$ and $p_{i+1}(x,y) = p(x,p_i(x,y))$. Then in any finite spiral $\bA$, the identities
\[
f(x,p_k(x,y)) \approx p_k(x,y)
\]
with $p$ a binary term built out of $f$ and $k \ge |\bA|-1$ hold in $\bA$. Conversely, if $\bA = (A,f)$ with $f$ a commutative idempotent binary operation such that all but finitely many of the identities $f(x,p_k(x,y)) \approx p_k(x,y)$ with $p$ a nontrivial binary term built out of $f$ and $k \ge k_p$ hold, then $\bA$ is a spiral.
\end{thm}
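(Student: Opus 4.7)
The plan is to prove the two directions separately: the forward implication by induction on the size of the two-generated subalgebra, and the converse by contrapositive, manufacturing an infinite family of failing pairs $(p, k)$ from a failure of the spiral property.

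For the forward direction, I would prove the stronger claim that for any $a, y$ in a finite spiral $\bA$ and any nontrivial binary term $p$, $f(a, p_k(a, y)) = p_k(a, y)$ whenever $k \ge |\Sg\{a, y\}| - 1$, by induction on $n := |\Sg\{a, y\}|$. The cases $n = 1$ (where $a = y$) and $n = 2$ (where $\{a, y\}$ is a two-element semilattice, forcing $p(a, y) \in \{a, y\}$ since $p$ is built from $f$) are immediate from idempotence. For $n \ge 3$, the spiral property supplies a surjective homomorphism $\phi$ from $\Sg\{a, y\}$ onto the free semilattice on two generators sending $a, y$ to the two generators; this partitions the subalgebra into three classes $A, B, S$ (preimages of the two generators and their join). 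Since $p$ is nontrivial, $\phi(p(a, y))$ equals the join, so $p(a, y) \in S$. The crucial observation is that $\Sg\{a, p(a, y)\} \subseteq A \cup S$ misses the class $B$, which contains $y$, and so is a proper subalgebra of $\Sg\{a, y\}$. Applying the inductive hypothesis to the pair $(a, p(a, y))$ with the same term $p$, combined with the shift identity $p_{k'}(a, p(a, y)) = p_{k'+1}(a, y)$, yields the identity at $k = k'+1 \ge |\Sg\{a, p(a, y)\}| \le n - 1$. Specialising to $y = b$ and using $|\Sg\{a, b\}| \le |\bA|$ gives the stated bound.

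For the converse I argue the contrapositive. Suppose $\bA$ is not a spiral, so there exist $a, b$ with $|\Sg\{a, b\}| > 2$ admitting no surjective homomorphism to the free semilattice on two generators. Observe that if $f(a, b) \in \{a, b\}$ then $\{a, b\}$ would be closed under $f$, forcing $|\Sg\{a, b\}| \le 2$; hence $f(a, b) \notin \{a, b\}$. Consider the natural candidate $\phi$ on $\Sg\{a, b\}$ sending $a, b$ to the two generators and every other element to their join. A short case analysis shows that $\phi$ is a homomorphism if and only if no nontrivial binary term $s$ satisfies $s(a, b) \in \{a, b\}$; since $\phi$ must fail, such a nontrivial $s$ exists. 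The key point in that case analysis is that the assumption $f(a, b) \notin \{a, b\}$ rules out trivial reshufflings, ensuring the witness $s$ genuinely uses both variables.

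Given such an $s$, I construct the required bad $p$ by splitting on the value of $s(a, b)$. If $s(a, b) = b$, take $p := s$; an easy induction gives $p_k(a, b) = b$ for all $k \ge 1$, while $f(a, p_k(a, b)) = f(a, b) \ne b$, so the identity fails at $(a, b)$ for every $k \ge 1$. If $s(a, b) = a$, take $p(x, y) := s(y, x)$, which is still nontrivial with $k_p = k_s$, and evaluate at $(b, a)$: then $p(b, a) = s(a, b) = a$, so $p_k(b, a) = a$ for all $k \ge 1$, while $f(b, a) = f(a, b) \ne a$ provides the failure. Either way a single nontrivial $p$ produces failures at all $k \ge 1$, hence infinitely many failures at $k \ge k_p$, contradicting the hypothesis. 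The main obstacle I anticipate is the bookkeeping in the converse that extracts a genuinely \emph{nontrivial} $s$, together with the small trick of replacing $s(x, y)$ by $s(y, x)$ and shifting the test pair from $(a, b)$ to $(b, a)$ to handle the case $s(a, b) = a$ symmetrically to $s(a, b) = b$.
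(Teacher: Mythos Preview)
Your argument is correct in both directions.  The converse is essentially the paper's argument in different clothing: the paper phrases the failure of the spiral property as ``there exist $c,d \in \Sg\{a,b\}$ with $c \ne b$ and $f(c,d) = b$ but $a \not\rightarrow b$,'' which is exactly your ``some nontrivial $s$ has $s(a,b)=b$ while $f(a,b)\ne b$'' after writing $s(x,y)=f(q(x,y),r(x,y))$ with $q(a,b)=c$, $r(a,b)=d$; from there both of you iterate $p$ to see $p_k(a,b)=b$ for all $k$ and read off infinitely many failing identities.  Your swap $p(x,y)=s(y,x)$ with the test point $(b,a)$ for the case $s(a,b)=a$ is just the ``similarly'' that the paper's symmetric formulation hides.

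The forward direction is where you genuinely diverge.  The paper argues by pigeonhole on the sequence $p_0(a,b),p_1(a,b),\dots,p_{|\bA|}(a,b)$ to find $p_i(a,b)=p_j(a,b)$, observes that $p_{j-i}(a,p_i(a,b))=p_i(a,b)$ with $p_{j-i}$ nontrivial, and then invokes the spiral property once to conclude $a\to p_i(a,b)$, whence the sequence has already stabilised by step $i\le |\bA|-1$.  You instead run an induction on $|\Sg\{a,y\}|$, using the free-semilattice quotient at each step to show $\Sg\{a,p(a,y)\}$ is strictly smaller (it misses the whole class of $y$), and then feed the shift identity $p_{k'}(a,p(a,y))=p_{k'+1}(a,y)$ into the inductive hypothesis.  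Both routes give the same bound $k\ge |\Sg\{a,b\}|-1\le |\bA|-1$; the paper's is a couple of lines shorter, while yours makes the monotone descent $|\Sg\{a,y\}|>|\Sg\{a,p(a,y)\}|>\cdots$ explicit, which is arguably more informative about \emph{why} the sequence stabilises.  One small point worth stating when you write it up: the surjective homomorphism guaranteed by the spiral definition automatically sends the two generators $a,y$ to the two free generators (since their images must generate the free semilattice), so your partition $A,B,S$ really is the one you want.
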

\begin{proof} Let $\bA$ be a finite spiral, and let $a,b \in \bA$. By the Pigeonhole Principle, there are $0 \le i < j \le |\bA|$ with $p_i(a,b) = p_j(a,b)$. Since $p_i(a,b) = p_j(a,b) = p_{j-i}(a,p_i(a,b))$ and $p_{j-i}$ is nontrivial, by the definition of a spiral we must have $a \rightarrow p_i(a,b)$, so for all $k \ge i$ we have $p_k(a,b) = p_i(a,b)$ and $f(a,p_i(a,b)) = p_i(a,b)$.

Now suppose that $\bA = (A,f)$ with $f$ a commutative idempotent binary operation such that all but finitely many of the identities $f(x,p_k(x,y)) \approx p_k(x,y)$ with $p$ a nontrivial binary term built out of $f$ and $k \ge k_p$ hold. In order to show that $\bA$ is a spiral, we need to show that for any $a,b \in \bA$, if there exist $c,d \in \Sg_{\bA}\{a,b\}$ with $c \ne b$ and $f(c,d) = b$, then $a \rightarrow b$. Note that since $c \ne b$ there is a nontrivial binary term $p(x,y)$ with $f(c,d) = p(a,b)$. From $p(a,b) = b$, we see that for all $k \ge 0$ we have $p_k(a,b) = b$, so taking $k$ sufficiently large we see that $f(a,b) = f(a,p_k(a,b)) = p_k(a,b) = b$, so $a \rightarrow b$.
\end{proof}

\section{Bounded width algebras of size three}\label{s-three}

We will draw a doodle for each bounded width algebra $\bA$ of size three as follows. We start by drawing a vertex for each element of $\bA$, for $a,b \in \bA$ we draw a directed edge from $a$ to $b$ if $a \rightarrow b$, and we draw a solid undirected edge connecting $a$ to $b$ if $\{a,b\}$ is a majority algebra. If $\{a,b\}$ is not a subalgebra of $\bA$, then we draw a dashed line connecting $a$ to $b$, and we record the values of $f(a,b), f(b,a)$ next to the dashed line (if $f(a,b) = f(b,a)$, then we only write their common value once). Finally, if $\bA$ has underlying set $\{a,b,c\}$, we draw a dashed circle around the set of elements $d \in \bA$ such that
\[
(d,d) \in \Sg_{\bA^2}\{(a,b),(b,c),(c,a)\}.
\]

Throughout this section, we will also fix the following notation for maps $\alpha, \beta, \gamma:\bA^3 \rightarrow \bA^3$:
\begin{align*}
\alpha(x,y,z) &= (f(x,y),f(y,z),f(z,x)),\\
\beta(x,y,z) &= (f(x,z),f(y,x),f(z,y)),\\
\gamma(x,y,z) &= (g(x,y,z),g(y,z,x),g(z,x,y)).
\end{align*}
Note that if $\bA\in\cV_{mbw}$ has underlying set $\{a,b,c\}$, then $g$ is completely determined by $f$ and $\gamma(a,b,c), \gamma(a,c,b)$, and that $f$ is completely determined by $\alpha(a,b,c)$ and $\beta(a,b,c)$. Figure \ref{doodles} is a summary of the main classification result proved in this section.

\begin{figure}
\begin{tabular}{p{1.5cm} | p{2.5cm} | p{3.2cm} | p{1.7cm} | p{2cm}}
Doodle & $f$ & $\gamma$ & Aut & Quotients \\ \hline
\begin{minipage}{\linewidth} \includegraphics[]{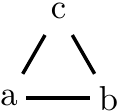} \end{minipage} & \begin{tabular}{c | c c c} $f$ & a & b & c\\ \hline a & a & a & a \\ b & b & b & b \\ c & c & c & c\end{tabular} & \vspace*{-15pt} $\gamma(a,b,c) = (a,b,c)$ $\gamma(b,a,c) = (b,a,c)$ & $S_3$ & Simple \\ \hline
\begin{minipage}{\linewidth} \includegraphics[]{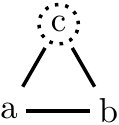} \end{minipage} & \begin{tabular}{c | c c c} $f$ & a & b & c\\ \hline a & a & a & a \\ b & b & b & b \\ c & c & c & c\end{tabular} & \vspace*{-15pt} $\gamma(a,b,c) = (c,c,c)$ $\gamma(b,a,c) = (c,c,c)$ & $\{1, (a\ b)\}$ & \vspace*{-15pt} $\{a\} \textbf{---} \{b,c\}$ $\{b\} \textbf{---} \{a,c\}$ \\ \hline
\begin{minipage}{\linewidth} \includegraphics[]{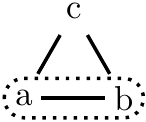} \end{minipage} & \begin{tabular}{c | c c c} $f$ & a & b & c\\ \hline a & a & a & a \\ b & b & b & b \\ c & c & c & c\end{tabular} & \vspace*{-15pt} $\gamma(a,b,c) = (a,a,a)$ $\gamma(b,a,c) = (b,b,b)$ & $\{1, (a\ b)\}$ & $\{c\} \textbf{---} \{a,b\}$ \\ \hline
\begin{minipage}{\linewidth} \includegraphics[]{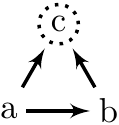} \end{minipage} & \begin{tabular}{c | c c c} $f$ & a & b & c\\ \hline a & a & b & c \\ b & b & b & c \\ c & c & c & c\end{tabular} & \vspace*{-15pt} $\gamma(a,b,c) = (c,c,c)$ $\gamma(b,a,c) = (c,c,c)$ & $1$ & \vspace*{-15pt} $\{a\}{\rightarrow}\{b,c\}$ $\{a,b\}{\rightarrow}\{c\}$ \\ \hline
\begin{minipage}{\linewidth} \includegraphics[]{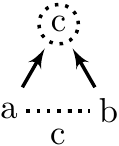} \end{minipage} & \begin{tabular}{c | c c c} $f$ & a & b & c\\ \hline a & a & c & c \\ b & c & b & c \\ c & c & c & c\end{tabular} & \vspace*{-15pt} $\gamma(a,b,c) = (c,c,c)$ $\gamma(b,a,c) = (c,c,c)$ & $\{1, (a\ b)\}$ &  \vspace*{-20pt} $\{a\}{\rightarrow}\{b,c\}$ $\{b\}{\rightarrow}\{a,c\}$ $\{a,b\}{\rightarrow}\{c\}$ \\ \hline
\begin{minipage}{\linewidth} \includegraphics[]{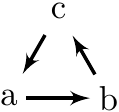} \end{minipage} & \begin{tabular}{c | c c c} $f$ & a & b & c\\ \hline a & a & b & a \\ b & b & b & c \\ c & a & c & c\end{tabular} & \vspace*{-15pt} $\gamma(a,b,c) = (a,b,c)$ $\gamma(b,a,c) = (b,a,c)$ & $A_3$ & Simple \\ \hline
\begin{minipage}{\linewidth} \includegraphics[]{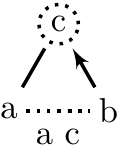} \end{minipage} & \begin{tabular}{c | c c c} $f$ & a & b & c\\ \hline a & a & a & a \\ b & c & b & c \\ c & c & c & c\end{tabular} & \vspace*{-15pt} $\gamma(a,b,c) = (c,c,c)$ $\gamma(b,a,c) = (c,c,c)$ & $1$ & \vspace*{-15pt} $\{a\}\textbf{---}\{b,c\}$ $\{b\}{\rightarrow}\{a,c\}$ \\ \hline
\begin{minipage}{\linewidth} \includegraphics[]{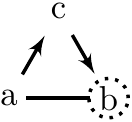} \end{minipage} & \begin{tabular}{c | c c c} $f$ & a & b & c\\ \hline a & a & a & c \\ b & b & b & b \\ c & c & b & c\end{tabular} & \vspace*{-15pt} $\gamma(a,b,c) = (b,b,b)$ $\gamma(b,a,c) = (b,b,b)$ & $1$ & Simple \\ \hline
\begin{minipage}{\linewidth} \includegraphics[]{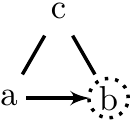} \end{minipage} & \begin{tabular}{c | c c c} $f$ & a & b & c\\ \hline a & a & b & a \\ b & b & b & b \\ c & c & c & c\end{tabular} & \vspace*{-15pt} $\gamma(a,b,c) = (b,b,b)$ $\gamma(b,a,c) = (b,b,b)$ & $1$ & $\{c\}\textbf{---}\{a,b\}$ \\ \hline
\begin{minipage}{\linewidth} \includegraphics[]{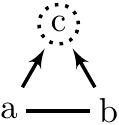} \end{minipage} & \begin{tabular}{c | c c c} $f$ & a & b & c\\ \hline a & a & a & c \\ b & b & b & c \\ c & c & c & c\end{tabular} & \vspace*{-15pt} $\gamma(a,b,c) = (c,c,c)$ $\gamma(b,a,c) = (c,c,c)$ & $\{1, (a\ b)\}$ & $\{a,b\}{\rightarrow}\{c\}$ \\ \hline
\begin{minipage}{\linewidth} \includegraphics[]{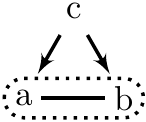} \end{minipage} & \begin{tabular}{c | c c c} $f$ & a & b & c\\ \hline a & a & a & a \\ b & b & b & b \\ c & a & b & c\end{tabular} & \vspace*{-15pt} $\gamma(a,b,c) = (a,a,a)$ $\gamma(b,a,c) = (b,b,b)$ & $\{1, (a\ b)\}$ & $\{c\}{\rightarrow}\{a,b\}$ \\ \hline
\end{tabular}

\caption{The eleven minimal bounded width algebras of size three, up to isomorphism and term equivalence.}\label{doodles}
\end{figure}

\begin{lem}\label{cycle} If $\cV$ is a locally finite idempotent variety of bounded width, then $\cV$ has terms $f,g$ as in Definition \ref{mindef} such that for every three element algebra $\bA$ in $\cV$ we have
\[
f^{\bA}(g^{\bA},\tilde{g}^{\bA}) = g^{\bA}.
\]
Furthermore, either $g^{\bA}$ is cyclic, or $A_3 \subseteq \Aut(g^{\bA})$ and $g^{\bA}(a,b,c) = a$ whenever $a,b,c$ are distinct elements of $\bA$.
\end{lem}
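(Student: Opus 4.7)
The plan is to modify $f$ and $g$ by compositions inside the finite free algebra $\cF_{\cV}(x,y,z)$, and then verify the dichotomy on each three-element algebra by direct case analysis. I would start from terms $f,g$ produced by Corollary \ref{min}, which satisfy the three identities of Definition \ref{mindef}. Consider the operator $\Psi(h)(x,y,z) = f(h(x,y,z), h(x,z,y))$ on $\cF_{\cV}(x,y,z)$; the identity $f(f(u,v),f(v,u)) \approx f(u,v)$ makes $\Psi$ idempotent and preserves weak majority, so replacing $g$ by $\Psi(g)$ secures $f(g,\tilde g)\approx g$. This step may break the cyclic identity, which is then restored by the $\gamma$-stabilization from the proof of Corollary \ref{min}. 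Because both operators eventually become idempotent on the finite set $\cF_{\cV}(x,y,z)$, alternating them finitely many times produces a term $g$ satisfying all identities of Definition \ref{mindef} together with $f(g,\tilde g)\approx g$.

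Next I would fix a three-element algebra $\bA=\{a,b,c\}\in\cV$ and set $g_i = g^{\bA}$ evaluated on the $i$-th cyclic shift of $(a,b,c)$ for $i\in\{1,2,3\}$. If $g_1=g_2=g_3$, the same reasoning on the anti-cyclic orbit gives $g^{\bA}$ cyclic, which is the first alternative. If exactly two of the $g_i$ coincide, say $g_1=g_3\ne g_2$, then the cyclic identity at $(a,b,c)$ reads $g^{\bA}(g_1,g_2,g_1)=g_1$, which by weak majority becomes $f^{\bA}(g_1,g_2)=g_1$; its cyclic shift yields $f^{\bA}(g_1,g_2)=g_2$, contradicting $g_1\ne g_2$. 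The remaining two-value cases are analogous, so $\{g_1,g_2,g_3\}=\{a,b,c\}$ and $(g_1,g_2,g_3)$ is a permutation $\pi$ of $(a,b,c)$. Applying the cyclic identity to all three shifts simultaneously rules out $3$-cycles (they force two equal values of $g^{\bA}$ on the same triple), leaving $\pi$ equal to the identity or to a transposition.

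In the identity case, $g^{\bA}(a,b,c)=a$ on this orbit, and repeating the analysis at each of the six distinct triples gives $g^{\bA}(a,b,c)=a$ on every distinct triple; the cyclic identity then upgrades this to $A_3\subseteq\Aut(g^{\bA})$. The transposition cases are eliminated by a further substitution of the form $g\leftarrow g(x,g(x,y,z),g(x,z,y))$: whenever $(g_1,g_2,g_3)$ is a transposition, direct computation on each distinct triple of $\bA$ shows that the inner pair of $g^{\bA}$-values collapses under weak majority to the first projection, so the substituted term restricts on $\bA$ to the first projection on every distinct triple. One then re-runs the stabilization of Step~1 on this modified $g$ to recover all identities of Definition \ref{mindef}.

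The hardest point of the argument is controlling the interaction between the three operators used above: the idempotent $\Psi$, the $\gamma$-stabilization, and the transposition-killing substitution. These operators do not commute in general, so one must invoke the finiteness of $\cF_{\cV}(x,y,z)$ several times to argue that their alternating iteration stabilizes at a single term $g$ simultaneously enjoying all three identities of Definition \ref{mindef}, the identity $f(g,\tilde g)\approx g$, and the dichotomy on every three-element algebra of $\cV$. A secondary subtlety is checking that the transposition-killing substitution does not destroy the dichotomy on three-element algebras where $g^{\bA}$ is already cyclic or already in the identity-permutation case; this is verified by running through the explicit form of the substitution on each type of algebra listed in the forthcoming classification.
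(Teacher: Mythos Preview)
Your overall strategy differs from the paper's in a crucial structural way. The paper does not iterate at all: it simply \emph{chooses} $g$ (among all terms satisfying Definition \ref{mindef}) to maximize the number of triples $(a,b,c)$ in three-element algebras on which $g$ is cyclic and satisfies $f(g,\tilde g)=g$, and among those choices minimizes the image of $\gamma$. All the work then consists in showing that any violation of the dichotomy would allow an explicit replacement of $g$ that strictly improves one of these criteria, contradicting extremality. The transposition case $\gamma(a,b,c)=(a,c,b)$, for instance, is ruled out not by a substitution that kills it but by exhibiting a term $t$ (either $f(g,\tilde g)$ or $g(g(x,y,z),x,y)$, depending on whether $\{b,c\}$ is a majority subalgebra) whose $\gamma$-image has size at most two, contradicting image-minimality.

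Your iterative approach has a real gap precisely where you flag it. Alternating two idempotent self-maps of a finite set need not converge: take $S=\{a,b\}$ with $\Psi$ constant at $a$ and $\Gamma$ constant at $b$. So ``invoking finiteness several times'' is not enough; you would need a monotone quantity that strictly improves at each step, and you have not identified one. (It is true that both $\Psi$ and $\gamma$-stabilization preserve the set of already-good triples, so the count of good triples is nondecreasing; but once that count stabilizes you still need a reason the two operators commute on the remaining triples, and you give none.) A second concrete problem: your transposition-killing substitution $g\mapsto g(x,g(x,y,z),g(x,z,y))$ sends $g(x,x,y)$ to $f(x,f(x,y))$, which is not $f(x,y)$ under the identities of Definition \ref{mindef} alone, so the new $g$ is no longer weak-majority with the same $f$. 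Finally, your proposed verification that this substitution preserves the dichotomy on algebras where it already holds appeals to ``the forthcoming classification'' of Theorem \ref{class-3}, but that classification uses Lemma \ref{cycle} as input, so this is circular.
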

\begin{proof} Since $\cV$ is locally finite, it has finitely many terms of arity $3$. Choose a term $g$ as in Definition \ref{mindef} such that a maximal number of triples $(a,b,c)$ in the finitely many isomorphism classes of three element algebras have $g(a,b,c) = g(b,c,a)$, $g(a,c,b) = g(b,a,c)$ and $f(g(a,b,c),g(a,c,b)) = g(a,b,c)$, $f(g(a,c,b),g(a,b,c)) = g(a,c,b)$.

Among such terms, choose a $g$ such that the image of the map $\gamma: (a,b,c) \mapsto (g(a,b,c),g(b,c,a),g(c,a,b))$ is minimal. Note that if we have $g(a,b,c) = g(b,c,a)$ and $g(a,c,b) = g(b,a,c)$ then we must also have $f(g(a,b,c),g(a,c,b)) = g(a,b,c)$, $f(g(a,c,b),g(a,b,c)) = g(a,c,b)$, since otherwise we may replace $g$ by $g'(x,y,z) = f(g(x,y,z),g(x,z,y))$.

Now suppose $t\in\Clo(g)$ is any term, and $a,b,c$ are distinct elements of a three element algebra, such that $|\{t(a,b,c), t(b,c,a), t(c,a,b)\}| \le 2$. Setting
\[
g'(x,y,z) = t(g(x,y,z),g(y,z,x),g(z,x,y))
\]
and replacing $g$ with
\[
g''(x,y,z) = g'(g'(x,y,z),g'(y,z,x),g'(z,x,y))
\]
shows that in order for the image of the map $\gamma$ to be minimal, we must have $(a,b,c)$ not in the image of $\gamma$.

In particular, if $a,b,c$ are distinct elements of a three element algebra then we may not have $\gamma(a,b,c) = (a,c,b)$, as otherwise $\gamma(a,c,b) = (a,c,b)$ and one of $t = f(g,\tilde{g})$ or $t = g(g(x,y,z),x,y)$ contradicts the above (take $t = f(g,\tilde{g})$ if $\{b,c\}$ is not a majority subalgebra, take $t = g(g(x,y,z),x,y)$ if $\{b,c\}$ is a majority subalgebra).

Defining maps $\alpha, \beta$ by $\alpha(a,b,c) = (f(a,b),f(b,c),f(c,a))$ and $\beta(a,b,c) = (f(a,c),f(b,a),f(c,b))$, we see that if $a,b,c$ are distinct elements of a three element algebra and $\gamma(a,b,c) = (a,b,c)$, then we must also have $\gamma(a,c,b) = (a,c,b)$ and $\alpha(a,b,c),\beta(a,b,c) \in \{(a,b,c),(b,c,a),(c,a,b)\}$, from which we see that $g$ commutes with cyclic permutations of $\{a,b,c\}$.
\end{proof}

\begin{lem}\label{f-arrow} If $\bA \in \cV_{mbw}$ and $a,b \in \bA$ have $f(a,b) = b$, then $a \rightarrow b$.
\end{lem}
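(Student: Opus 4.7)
The plan is to exhibit a partial semilattice operation $s$, built from $f$, satisfying $s(a,b) = b$, which is exactly the definition of $a \rightarrow b$. The whole argument is essentially a one-line application of the Semilattice Iteration Lemma, once we identify the right binary term to feed into it.

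First, I would construct a binary term $t(x,y) \in \Clo(f)$ whose restriction to $\{a,b\}$ is the semilattice operation with $b$ on top, i.e., $t(a,b) = t(b,a) = b$. The natural candidate, foreshadowed in the remark that opens Section \ref{s-connectivity}, is
\[
t(x,y) = f(x, f(x,y)).
\]
Using the hypothesis $f(a,b) = b$, direct substitution gives $t(a,b) = f(a, f(a,b)) = f(a,b) = b$. For the other value I would rewrite the first argument as $b = f(a,b)$, obtaining $t(b,a) = f(b, f(b,a)) = f(f(a,b), f(b,a))$, and then apply the identity $f(f(x,y), f(y,x)) \approx f(x,y)$ from Definition \ref{mindef} to collapse this to $f(a,b) = b$.

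Second, I would apply the Semilattice Iteration Lemma (Lemma \ref{semi-iter}) to $t$. That lemma produces a partial semilattice $s \in \Clo(t) \subseteq \Clo(f)$, and its explicit construction guarantees that for any pair $a,b$ satisfying $t(a,b) = t(b,a) = b$ one also has $s(a,b) = s(b,a) = b$. In particular $s(a,b) = b$, so $a \rightarrow_s b$, which is the desired conclusion.

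I do not foresee any real obstacle: the proof consists of a two-line computation using the linear identity $f(f(x,y), f(y,x)) \approx f(x,y)$ baked into the definition of $\cV_{mbw}$, followed by an off-the-shelf invocation of Lemma \ref{semi-iter}. The only subtlety worth double-checking is that the $s$ produced by the iteration lemma lies in $\Clo(t)$ (so in particular in $\Clo(f)$) and genuinely qualifies as a partial semilattice witnessing $\rightarrow$ in the sense of the definition preceding the statement.
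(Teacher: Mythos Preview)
Your proposal is correct and follows essentially the same approach as the paper: both define $t(x,y) = f(x,f(x,y))$ and carry out the identical two-line computation showing $t(a,b) = t(b,a) = b$. The only difference is in the final step: the paper invokes Lemma~\ref{prepare} (using that $\cV_{mbw}$ was built so that its algebras are prepared), whereas you invoke Lemma~\ref{semi-iter} to manufacture a partial semilattice $s$ directly from $t$; both citations close the argument equally well, and yours is arguably more self-contained.
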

\begin{proof} Let $t(x,y) = f(x,f(x,y))$. Then
\[
t(a,b) = f(a,f(a,b)) = f(a,b) = b
\]
and
\[
t(b,a) = f(b,f(b,a)) = f(f(a,b),f(b,a)) = f(a,b) = b,
\]
so by Lemma \ref{prepare} we have $a \rightarrow b$.
\end{proof}

\begin{lem}\label{ac-impossible} If $\bA \in \cV_{mbw}$ and $a,b,c \in \bA$ have $f(a,b) = a, f(b,a) = c,$ and at least one of $f(b,c), f(f(b,c),c), f(f(f(b,c),c),c), ...$ is equal to $b$, then $\{a,b\}$ is a majority algebra.
\end{lem}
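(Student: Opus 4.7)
My plan is to reduce the claim to showing $c = b$. The identity $f(f(x,y),f(y,x)) \approx f(x,y)$ gives $f(a,c) = a$ and $f(c,a) = c$, so $f$ acts as first projection on $\{a,c\}$; by Theorem \ref{intersect}, the pair $\{a,c\} = \{f(a,b),f(b,a)\}$ is then closed under all $h_{\cF}$, and in particular $g$ acts on it as majority, making $\{a,c\}$ a two element majority subalgebra. For $\{a,b\}$ to even be a subalgebra we must have $c = f(b,a) \in \{a,b\}$; the case $c = a$ is incompatible with the hypothesis, since then $u(x) := f(x,c) = f(x,a)$ would fix $a$ and send $b$ to $a$, so no iterate of $u$ applied to $b$ could return to $b$. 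The remaining possibility is $c = b$, in which case $f$ is first projection on $\{a,b\}$ and Theorem \ref{intersect} again yields that $\{a,b\}$ is a majority subalgebra.

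To derive $c = b$ I would argue by contradiction, assuming $c \notin \{a,b\}$, and work inside $\bB := \Sg_{\bA}\{a,b,c\}$. Let $\theta$ be the congruence of $\bB$ generated by $(a,c)$. Since $c/\theta = a/\theta$, we compute
\[
u(b/\theta) = f(b/\theta, c/\theta) = f(b/\theta, a/\theta) = f(b,a)/\theta = c/\theta = a/\theta,
\]
so $u^k(b/\theta) = a/\theta$ for every $k \geq 1$. Projecting the hypothesis $u^n(b) = b$ to $\bB/\theta$ gives $b/\theta = a/\theta$, so the pair $(a,b)$ lies in the congruence generated by $(a,c)$.

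The hard step will be to convert $(a,b) \in \theta$ into a contradiction using that $\{a,c\}$ is a majority subalgebra but $b \notin \{a,c\}$. My plan is to unfold a Mal'cev chain witnessing $(a,b) \in \theta$ into a family of elements of $\RR := \Sg_{\bB^2}\{(a,b),(b,a)\}$, and combine these with the orbit $(a,b),(a,z_1),\dots,(a,z_{n-1}),(a,b)$ produced inside $\RR$ by iterating the polynomial $X \mapsto f(X,(a,c))$ (where $z_i$ is the $i$-th iterate of $u$ applied to $b$). Using the partial semilattice $s$ adapted to $\bA$ together with the majority structure on $\{a,c\}$ via $g$, the goal is to show that $(a,b)$ lies in a maximal strongly connected component of $\RR$; Theorem \ref{maj-crit} would then force $\{a,b\}$ to be a majority subalgebra, yielding $c \in \{a,b\}$ and contradicting our assumption. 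The main obstacle is precisely this last step of controlling the congruence chain well enough to establish maximality of $(a,b)$ in $\RR$.
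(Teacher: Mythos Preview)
Your reduction to showing $c = b$ is correct, as is your congruence computation that $(a,b)$ lies in the congruence of $\bB$ generated by $(a,c)$. However, the proposal has a genuine gap at exactly the point you flag as ``the main obstacle'': you never establish that $(a,b)$ is a maximal element of $\RR = \Sg_{\bA^2}\{(a,b),(b,a)\}$, and there is no evident way to extract this from the congruence information. Knowing that $a,b,c$ collapse in $\bB/\theta$ says nothing about the partial-semilattice order on $\RR$. The cycle $(a,b), (a,z_1), \ldots, (a,b)$ you produce by iterating $X \mapsto f(X,(a,c))$ is not a cycle of semilattice arrows $\rightarrow_s$, so it does not witness maximality either, and unfolding a Mal'cev chain for $(a,b) \in \theta$ gives unary polynomials rather than elements of $\RR$. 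As written, the argument stops at a point where the remaining work is essentially the whole lemma.

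The paper takes a completely different, constructive route: it builds an explicit term $g' \in \Clo(g)$ acting as majority on $\{a,b\}$, and then invokes Theorem~\ref{subalg}. The hypothesis $f^{k+2}(b,c) = b$ is used directly in the construction. One sets $h^1(x,y,z) = g(f(x,y),f(y,x),g(x,y,z))$ and $h^{i+1} = f(h^i,g)$, so that $h^i(x,x,y) \approx f^i(x,f(x,y))$ while $h^i(x,y,x) \approx h^i(y,x,x) \approx f(x,y)$; then one forms $i(x,y,z) = g(h^k(x,y,z), h^k(z,x,y), f^k(x,g(x,y,z)))$ and $g'(x,y,z) = g(i(x,y,z),i(y,z,x),i(z,x,y))$. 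A short computation gives $g'(x,x,y) \approx g'(x,y,x) \approx g'(y,x,x) \approx f'(x,y)$ for an explicit $f'$, and one checks $f'(a,b) = a$ and $f'(b,a) = b$ using only $f(a,b)=a$, $f(b,a)=c$, the fact that $\{a,c\}=\{f(a,b),f(b,a)\}$ is a majority pair, and $f^{k+2}(b,c)=b$. No congruence or maximality argument is needed.
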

\begin{proof} By Theorem \ref{subalg}, we just have to find a ternary function $g' \in \Clo(g)$ which acts as the majority operation on $\{a,b\}$. Define $f^i(x,y)$ by $f^0(x,y) = x, f^1(x,y) = f(x,y)$, $f^{i+1}(x,y) = f(f^i(x,y),y)$, and choose $k \ge 1$ such that $f^{k+2}(b,c) = b$. Define $h^i(x,y,z)$ by
\[
h^1(x,y,z) = g(f(x,y),f(y,x),g(x,y,z))
\]
and
\[
h^{i+1}(x,y,z) = f(h^i(x,y,z),g(x,y,z))
\]
for $i \ge 1$.
Then by induction on $i$, we see that
\[
h^i(x,x,y) \approx f^i(x,f(x,y))
\]
and
\[
h^i(x,y,x) \approx h^i(y,x,x) \approx f(x,y).
\]
Upon setting
\[
i(x,y,z) = g(h^k(x,y,z),h^k(z,x,y),f^k(x,g(x,y,z))),
\]
we see that
\begin{align*}
i(x,x,y) &\approx g(h^k(x,x,y),h^k(y,x,x),f^k(x,f(x,y)))\\
&\approx g(f^k(x,f(x,y)),f(x,y),f^k(x,f(x,y)))\\
&\approx f^{k+1}(x,f(x,y)),\\
i(x,y,x) &\approx g(h^k(x,y,x),h^k(x,x,y),f^k(x,f(x,y)))\\
&\approx g(f(x,y),f^k(x,f(x,y)),f^k(x,f(x,y)))\\
&\approx f^{k+1}(x,f(x,y)),\\
i(y,x,x) &\approx g(h^k(y,x,x),h^k(x,y,x),f^k(y,f(x,y)))\\
&\approx g(f(x,y),f(x,y),f^k(y,f(x,y)))\\
&\approx f(f(x,y),f^k(y,f(x,y))).
\end{align*}
Now we set
\[
g'(x,y,z) = g(i(x,y,z),i(y,z,x),i(z,x,y))
\]
and
\[
f'(x,y) = f(f^{k+1}(x,f(x,y)),f(f(x,y),f^k(y,f(x,y))),
\]
so that
\[
g'(x,x,y) \approx g'(x,y,x) \approx g'(y,x,x) \approx f'(x,y).
\]
We just need to check that $f'(a,b) = a$ and $f'(b,a) = b$. We have
\begin{align*}
f'(a,b) &= f(f^{k+1}(a,f(a,b)),f(f(a,b),f^k(b,f(a,b)))\\
&= f(f^{k+1}(a,a),f(a,f^k(b,a)))\\
&= f(a,f(a,f^{k-1}(f(b,a),a)))\\
&= f(a,f(a,f^{k-1}(c,a))) = a
\end{align*}
since $\{a,c\} = \{f(a,b),f(b,a)\}$ is a majority algebra, and
\begin{align*}
f'(b,a) &= f(f^{k+1}(b,f(b,a)),f(f(b,a),f^k(a,f(b,a)))\\
&= f(f^{k+1}(b,c),f(c,f^k(a,c)))\\
&= f(f^{k+1}(b,c),c)\\
&= f^{k+2}(b,c) = b.\qedhere
\end{align*}
\end{proof}

\begin{lem}\label{subdirect-crit} If $\bA \in \cV_{mbw}$ and $a,b,c$ are distinct elements of $\bA$ such that $\{a,c\}$ is a majority subalgebra, $b \rightarrow c$, and $(a,c) \in \Sg_{\bA^2}\{(a,b),(b,a)\}$, then $\{a,b,c\}$ is a subalgebra of $\bA$ which is isomorphic to the subdirect product of a two element majority algebra and a two element semilattice.
\end{lem}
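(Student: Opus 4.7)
The plan is to pin down the tables of $f$ and $g$ restricted to $\{a,b,c\}$ and show they match those of the three-element subdirect product of a two-element majority algebra $M$ (on $\{a,c\}$) and a two-element semilattice $S$ (with $b$ projecting to the bottom and both $a,c$ to the top), under the identification $a \leftrightarrow (0,1)$, $b \leftrightarrow (1,0)$, $c \leftrightarrow (1,1)$. Once these tables are verified, closure of $\{a,b,c\}$ under all basic operations of $\bA$ follows automatically.

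I would begin by extracting a binary term $t$ witnessing $(a,c) \in \Sg_{\bA^2}\{(a,b),(b,a)\}$, so $t(a,b) = a$ and $t(b,a) = c$, and then normalizing $t$ by $t \mapsto f(t(x,y), t(y,x))$: the identity $f(f(x,y),f(y,x)) \approx f(x,y)$ together with $\{a,c\}$ being a majority pair guarantees this change does not affect the values at $(a,b)$ and $(b,a)$, and yields $f(t,\tilde t) \approx t$. The hypothesis $b \to c$ combined with the partial semilattice $s$ adapted to $f$ (Lemma \ref{semi-iter}) gives chains $(a,b) \to^* (a,c)$ and $(b,a) \to^* (c,a)$ inside $\RR := \Sg_{\bA^2}\{(a,b),(b,a)\}$; and since $\{a,c\}$ is a majority of $\bA$, the pair $\{(a,c),(c,a)\}$ is a majority subalgebra of $\bA^2$ lying inside $\RR$.

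The key step is then to compute $f(a,b) = a$ and $f(b,a) = c$. By Corollary \ref{reach} I may assume $f(x,y)$ is reachable from $x$ in $\bA$, so $(f(a,b), f(b,a))$ is reachable from $(a,b)$ in $\bA^2$ and lies in $\RR$. The pair $\{f(a,b), f(b,a)\}$ is a majority subalgebra of $\bA$, hence $\{(f(a,b),f(b,a)), (f(b,a),f(a,b))\}$ is a majority subalgebra of $\bA^2$ contained in $\RR$, and its two elements are $\to$-maximal by Theorem \ref{maj-crit} applied to the subalgebra they generate. Using the yellow connectivity property (Proposition \ref{yellow}) within $\Sg\{a,b\}$ together with the two-generated structure of $\RR$, this majority pair should be forced to coincide with $\{(a,c),(c,a)\}$, giving $(f(a,b),f(b,a)) = (a,c)$. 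The remaining entries $f(b,c) = f(c,b) = c$ follow from analogous analysis of $\Sg_{\bA^2}\{(b,c),(c,b)\}$ using the semilattice edge $b \to c$, after which $\{b,c\}$ is verified to be a semilattice subalgebra by Lemma \ref{prepare}.

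Finally, the $g$-table on $\{a,b,c\}$ is determined on triples with a repeated input by $g(x,x,y) \approx g(x,y,x) \approx g(y,x,x) \approx f(x,y)$, so it remains to pin down $g(a,b,c)$ and its cyclic images. For this I would apply Lemma \ref{maj-triple} to $\Sg_{\bA^3}\{(a,a,b),(a,b,a),(b,a,a)\}$, combined with Lemma \ref{max-maj} inside $\Sg\{a,b\}$ to produce diagonal triples, and verify that $(g(a,b,c), g(b,c,a), g(c,a,b))$ equals $(c,c,c)$ in accordance with the subdirect product. The main obstacle I foresee is the identification $(f(a,b), f(b,a)) = (a,c)$, since a priori the majority subalgebra $\{f(a,b), f(b,a)\}$ could be distinct from $\{a,c\}$; ruling this out rigorously requires combining $\to$-reachability, Theorem \ref{maj-crit}, and the connectivity results of Section \ref{s-connectivity} inside the two-generated minimal subalgebra $\Sg\{a,b\}$.
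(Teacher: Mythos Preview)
Your approach is fundamentally different from the paper's, and the step you yourself flag as the main obstacle is a genuine gap.

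You try to show that the \emph{given} operations $f,g$ of $\bA$ already take the desired values on $\{a,b,c\}$, in particular that $(f(a,b),f(b,a)) = (a,c)$. But nothing in the hypotheses places $f(a,b)$ or $f(b,a)$ inside $\{a,b,c\}$ at all; a priori they could lie anywhere in $\bA$. Even granting Corollary~\ref{reach} so that $f(a,b)$ is reachable from $a$, the argument that the majority pair $\{f(a,b),f(b,a)\}$ must coincide with $\{a,c\}$ does not go through: yellow connectivity and Theorem~\ref{maj-crit} tell you such pairs exist and are maximal, but do not make them unique, and $\Sg\{a,b\}$ may well contain other maximal majority pairs outside $\{a,b,c\}$. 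The same problem recurs when you try to pin down $g(a,b,c)$: Lemmas~\ref{maj-triple} and~\ref{max-maj} produce diagonal elements in certain subpowers, but do not compute $g$ at a specific triple, nor do they show $\{a,b,c\}$ is closed under $g$.

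The paper sidesteps all of this. It never computes $f$ or $g$ on $\{a,b,c\}$. Instead it builds an \emph{auxiliary} term $f'(x,y) = t(t(x,y),x)$ from the witness $t$ and checks by hand that $f'$ has the correct table on $\{a,b,c\}$: on the majority subalgebra $\{a,c\}$ the term $t$ restricts to a projection, so $t(t(x,y),x)$ is automatically first projection there; and since $t$ is nontrivial it restricts to the semilattice operation on $\{b,c\}$, giving $f'(b,a) = t(t(b,a),b) = t(c,b) = c$. It then sets $g'(x,y,z) = g(\alpha'(\alpha'(x,y,z)))$ with $\alpha'$ built from $f'$, computes that $\alpha'\circ\alpha'$ lands every triple from $\{a,b,c\}$ in the diagonal or in $\{a,c\}^3$, so $g'$ is determined and matches the subdirect product. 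Finally Theorem~\ref{subalg}, together with minimality of $\bA$, forces $\{a,b,c\}$ to be a genuine subalgebra of $\bA$ term equivalent to $(\{a,b,c\},f',g')$. The missing idea in your attempt is exactly this: construct a new term with controlled behaviour rather than trying to constrain the given one.
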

\begin{proof} Since $(a,c) \in \Sg_{\bA^2}\{(a,b),(b,a)\}$, there is a term $t$ such that $t(a,b) = a$ and $t(b,a) = c$. Let $f'(x,y) = t(t(x,y),x)$. Since $t$ either acts as first projection or second projection on $\{a,c\}$, $f'$ acts as the first projection on $\{a,c\}$, that is, $f'(a,c) = a$ and $f'(c,a) = c$. Furthermore, we have
\[
f'(a,b) = t(t(a,b),a) = t(a,a) = a
\]
and
\[
f'(b,a) = t(t(b,a),b) = t(c,b) = c
\]
since $b\rightarrow c$. Thus $\{a,b,c\}$ is closed under $f'$, and the restriction of $f'$ to $\{a,b,c\}$ agrees with the $f$ for the unique subdirect product of a two element majority algebra and a two element semilattice with size three. We define $\alpha':\bA^3 \rightarrow \bA^3$ by
\[
\alpha'(x,y,z) = (f'(x,y),f'(y,z),f'(z,x)),
\]
and define $g'$ by
\[
g'(x,y,z) = g(\alpha'(\alpha'(x,y,z))).
\]
We have
\begin{align*}
\alpha'(a,b,c) &= (a,c,c),\\
\alpha'(a,c,b) &= (a,c,c),\\
\alpha'(a,b,b) &= (a,b,c),\\
\alpha'(a,a,b) &= (a,a,c),
\end{align*}
so for any $x,y,z \in \{a,b,c\}$, $\alpha'(\alpha'(x,y,z))$ is either diagonal or a cyclic permutation of $(a,a,c)$ or $(a,c,c)$. The value of $g$ is known on any such triple (since $g$ is idempotent and $\{a,c\}$ is a majority algebra), and one easily checks that the restriction of $g'$ to $\{a,b,c\}$ agrees with the $g$ for the subdirect product of a two element majority algebra and a two element semilattice with size three. Now we apply Theorem \ref{subalg}.
\end{proof}

\begin{lem}\label{free-semi-crit} If $\bA \in \cV_{mbw}$ and $a,b,c$ are distinct elements of $\bA$ such that $\{a,c\}$ and $\{b,c\}$ are subalgebras of $\bA$ and $(c,c) \in \Sg_{\bA^2}\{(a,b),(b,a)\}$, then $\{a,b,c\}$ is a subalgebra of $\bA$ which is isomorphic to the free semilattice on the two generators $a,b$.
\end{lem}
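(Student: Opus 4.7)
My plan is to apply Theorem \ref{subalg} by constructing a binary term $m \in \Clo(\bA)$ such that $\{a,b,c\}$ is closed under $m$ and $m|_{\{a,b,c\}}$ generates the free-semilattice clone on $\{a,b\}$ with join $c$. Then Theorem \ref{subalg} together with the minimality of $\bA$ in $\cV_{mbw}$ will force the reduct $\cV'$ consisting of all terms of $\cV_{mbw}$ that preserve $\{a,b,c\}$ and restrict to $\Clo(m|_{\{a,b,c\}})$ to equal all of $\cV_{mbw}$; hence $\{a,b,c\}$ will be a subalgebra of $\bA$ whose induced clone is exactly the free-semilattice clone, which is what the lemma claims.

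First I extract from the hypothesis $(c,c) \in \Sg_{\bA^2}\{(a,b),(b,a)\}$ a binary term $t$ of $\bA$ with $t(a,b) = t(b,a) = c$, and observe that $\{a,b\}$ cannot itself be a subalgebra of $\bA$ (otherwise $c = t(a,b) \in \{a,b\}$, contradicting distinctness). By the Semilattice Iteration Lemma \ref{semi-iter} fix a partial semilattice term $s$ adapted to $\bA$; since $\bA$ is prepared by Lemma \ref{prepare}, $s$ acts as the semilattice on every two-element semilattice subalgebra.

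The crucial structural step is to show that $\{a,c\}$ is the semilattice directed $a \to c$ and $\{b,c\}$ is the semilattice directed $b \to c$. Each two-element subalgebra of $\bA$ must be either a majority or a directed semilattice, and each ``wrong'' combination of directions will be eliminated by computing inside $\RR := \Sg_{\bA^2}\{(a,b),(b,a)\}$. For example, if $\{a,c\}$ is $c \to a$ and $\{b,c\}$ is $b \to c$, then $s((c,c),(a,b)) = (s(c,a),s(c,b)) = (a,c) \in \RR$, symmetrically $(c,a) \in \RR$, whence $s((a,c),(c,a)) = (a,a) \in \RR$; but then Lemma \ref{prepare} would force $\{a,b\}$ to be a subalgebra, a contradiction. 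The sub-cases in which one or both of $\{a,c\},\{b,c\}$ is a majority will be handled by first producing $(a,c)$ (or $(b,c)$) in $\RR$ via a suitable choice of term, then invoking Lemma \ref{subdirect-crit} to identify $\{a,b,c\}$ as the three-element subdirect product of a two-element majority with a two-element semilattice, and finally verifying by direct computation inside that concrete algebra that $(c,c) \notin \Sg\{(a,b),(b,a)\}$, contradicting the hypothesis.

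Given the correct directions, I will set $m(x,y) := s(x, s(y, t(x,y)))$ and check directly that $m|_{\{a,b,c\}}$ is the free-semilattice join: for example $m(a,b) = s(a, s(b,c)) = s(a,c) = c$ and symmetrically $m(b,a) = c$; for $m(a,c)$ we have $t(a,c) \in \{a,c\}$ with either possible value giving $s(c,t(a,c)) = c$ and then $s(a,c) = c$, so $m(a,c) = c$, and the remaining off-diagonal pairs are handled analogously, while $m(x,x) = x$ follows from idempotence. Applying Theorem \ref{subalg} to $m$ and $S = \{a,b,c\}$ together with the minimality of $\bA$ then completes the proof. The main obstacle is the structural step, particularly the sub-cases involving a two-element majority subalgebra, where the contradiction must be routed through Lemma \ref{subdirect-crit} and an explicit verification in the resulting three-element subdirect product.
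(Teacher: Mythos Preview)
Your overall strategy matches the paper's: first pin down the directions $a\rightarrow c$ and $b\rightarrow c$, then exhibit a term on $\{a,b,c\}$ that is the free-semilattice join and invoke Theorem~\ref{subalg}. The final step with $m(x,y)=s(x,s(y,t(x,y)))$ is fine (the paper simply quotes Theorem~\ref{subalg} once $a\rightarrow c$, $b\rightarrow c$ are known), and several of the ``wrong direction'' sub-cases do fall to computations inside $\RR=\Sg_{\bA^2}\{(a,b),(b,a)\}$ exactly as you indicate.

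However, two sub-cases in your structural step are genuinely not covered by the tools you name. First, if \emph{both} $\{a,c\}$ and $\{b,c\}$ are majority subalgebras, Lemma~\ref{subdirect-crit} is inapplicable: its hypothesis requires $b\rightarrow c$. The paper handles this case by building an explicit ternary term $h(x,y,z)=g(x,t(x,y),t(x,z))$ and then $g'(x,y,z)=g(h(x,y,z),h(y,z,x),h(z,x,y))$, checking that $g'$ acts as majority on $\{a,b\}$, and applying Theorem~\ref{subalg} to conclude $\{a,b\}$ is a majority subalgebra, which contradicts $(c,c)\in\RR$. Second, if both semilattice edges point \emph{away} from $c$ (i.e.\ $c\rightarrow a$ and $c\rightarrow b$), then applying $s$ or $f$ to $(c,c)$ and $(a,b)$ in either order only returns $(a,b)$, so the ``compute in $\RR$'' plan stalls. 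The paper again constructs a ternary term, this time $g'(x,y,z)=g(t(x,y),t(y,z),t(z,x))$, and verifies $g'(a,a,b)=g(a,c,c)=a$ etc., forcing $\{a,b\}$ to be a majority subalgebra. (A smaller issue: when $\{a,c\}$ is majority and $c\rightarrow b$, Lemma~\ref{subdirect-crit} again does not apply; the paper instead computes $(b,c),(c,b)\in\RR$ and then $(b,b)\in\RR$.) The missing idea in your outline is precisely this: in the sub-cases where $c$ behaves like a ``sink'' under both two-element subalgebras---either as the common majority partner or as the source of both semilattice arrows---one must manufacture a majority term on $\{a,b\}$ rather than push elements around inside $\RR$ or invoke Lemma~\ref{subdirect-crit}.
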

\begin{proof} Suppose first, for a contradiction, that $\{a,c\}$ is a majority algebra. If $b \rightarrow c$, then
\[
\begin{bmatrix} a\\ c\end{bmatrix} = f\left(\begin{bmatrix} a\\ b\end{bmatrix}, \begin{bmatrix} c\\ c\end{bmatrix}\right) \in \Sg_{\bA^2}\left\{\begin{bmatrix} a\\ b\end{bmatrix}, \begin{bmatrix} b\\ a\end{bmatrix}\right\},
\]
so by Lemma \ref{subdirect-crit} we see that $\{a,b,c\}$ is isomorphic to the subdirect product of a two element majority algebra and a two element semilattice, but then $(c,c) \not\in \Sg_{\bA^2}\{(a,b),(b,a)\} = \{(a,b),(b,a),(a,c),(c,a)\}$, a contradiction.

If $c \rightarrow b$, then
\[
\begin{bmatrix} b\\ c\end{bmatrix} = f\left(\begin{bmatrix} c\\ c\end{bmatrix}, \begin{bmatrix} b\\ a\end{bmatrix}\right) \in \Sg_{\bA^2}\left\{\begin{bmatrix} a\\ b\end{bmatrix}, \begin{bmatrix} b\\ a\end{bmatrix}\right\},
\]
and similarly $(c,b) \in \Sg_{\bA^2}\{(a,b),(b,a)\}$, so
\[
\begin{bmatrix} b\\ b\end{bmatrix} = f\left(\begin{bmatrix} b\\ c\end{bmatrix}, \begin{bmatrix} c\\ b\end{bmatrix}\right) \in \Sg_{\bA^2}\left\{\begin{bmatrix} a\\ b\end{bmatrix}, \begin{bmatrix} b\\ a\end{bmatrix}\right\},
\]
so by Lemma \ref{prepare} $a \rightarrow b$, but then $(c,c) \not\in \Sg_{\bA^2}\{(a,b),(b,a)\} = \{(a,b),(b,a),(b,b)\}$, a contradiction.

Thus if $\{a,c\}$ is a majority algebra, then we must also have $\{b,c\}$ a majority algebra. From $(c,c) \in \Sg_{\bA^2}\{(a,b),(b,a)\}$, there must be some binary term $t$ such that $t(a,b) = t(b,a) = c$, and we may assume without loss of generality that $t$ acts as first projection on any majority subalgebra of $\bA$. Define terms $h,g'$ by
\[
h(x,y,z) = g(x,t(x,y),t(x,z))
\]
and
\[
g'(x,y,z) = g(h(x,y,z),h(y,z,x),h(z,x,y)).
\]
Then
\begin{align*}
h(a,a,b) &= g(a,a,c) = a,\\
h(a,b,a) &= g(a,c,a) = a,\\
h(b,a,a) &= g(b,c,c) = c,
\end{align*}
so
\[
g'(a,a,b) = g(a,a,c) = a,
\]
and similarly $g'(a,b,a) = g'(b,a,a) = a$. By interchanging the roles of $a$ and $b$ in this argument, we see that $g'$ acts as the majority operation on $\{a,b\}$, so by Theorem \ref{subalg} $\{a,b\}$ is a majority algebra, so $(c,c)\not\in \Sg_{\bA^2}\{(a,b),(b,a)\} = \{(a,b),(b,a)\}$, a contradiction.

We have shown that neither of $\{a,c\}, \{b,c\}$ can be a majority algebra, so they must both be semilattice subalgebras. If $a \rightarrow c \rightarrow b$, then $(c,b) = f((a,b),(c,c)) \in \Sg_{\bA^2}\{(a,b),(b,a)\}$, so $(b,b) = f((b,c),(c,b)) \in \Sg_{\bA^2}\{(a,b),(b,a)\}$, so $a\rightarrow b$, a contradiction. Similarly we can't have $b \rightarrow c \rightarrow a$.

Let $t$ be a binary term such that $t(a,b) = t(b,a) = c$. Suppose that $c \rightarrow a$, $c \rightarrow b$, and define $g'$ by
\[
g'(x,y,z) = g(t(x,y),t(y,z),t(z,x)).
\]
Then
\[
g'(a,a,b) = g(a,c,c) = a,
\]
and similarly $g'(a,b,a) = g'(b,a,a) = a$ as well as $g'(a,b,b) = g'(b,a,b) = g'(b,b,a) = b$, so by Theorem \ref{subalg} $\{a,b\}$ is a majority algebra, a contradiction.

Thus we must have $a \rightarrow c$ and $b \rightarrow c$, so $(\{a,b,c\},t)$ is the free semilattice on the generators $a,b$, and we are done by Theorem \ref{subalg}.
\end{proof}

\begin{thm}\label{f-closed} If $\bA \in \cV_{mbw}$, $\{a,b,c\} \subseteq \bA$ is closed under $f$, and not all three of $\{a,b\}, \{b,c\}, \{a,c\}$ are majority subalgebras of $\bA$, then $\{a,b,c\}$ is a subalgebra of $\bA$ and the restriction of $g$ to $\{a,b,c\}$ is determined by the restriction of $f$ to $\{a,b,c\}$.
\end{thm}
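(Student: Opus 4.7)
The overall plan is a case analysis on the $f$-structure of $\{a,b,c\}$, driven by the critical lemmas of this section. First I would apply Theorem~\ref{connect} to the $f$-closed subset $\{a,b,c\} \subseteq \bA$: the theorem (via its proof, which handles arbitrary $f$-closed subsets) says that the graph on $\{a,b,c\}$ whose edges are the $g$-closed pairs is connected, so at least two of the three pairs are $g$-closed two-element subalgebras of $\bA$. Since the only two-element algebras in $\cV_{mbw}$ are the two-element semilattice and the two-element majority algebra, and by hypothesis not all three pairs can be majority subalgebras, at least one of the $g$-closed pairs — say $\{a,b\}$, after relabelling — is a two-element semilattice. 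We may then arrange $f(a,b) = b$, so that $a \rightarrow b$ by Lemma~\ref{f-arrow}.

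Next I would classify the third element $c$ relative to the semilattice edge $a \rightarrow b$ according to the values of $f(a,c), f(c,a), f(b,c), f(c,b)$. In each resulting subcase, the target is to apply one of the two identification lemmas: Lemma~\ref{subdirect-crit} (which identifies $\{a,b,c\}$ as the subdirect product of a two-element majority and a two-element semilattice, unique up to isomorphism) or Lemma~\ref{free-semi-crit} (which identifies $\{a,b,c\}$ as the free semilattice on two generators). Both of these lemmas already deliver the full conclusion: $\{a,b,c\}$ is a subalgebra and $g|_{\{a,b,c\}}$ is pinned down by the corresponding row of Figure~\ref{doodles}. The technical heart is verifying their hypotheses, which amounts to showing that a specific tuple such as $(a,c)$ or $(c,c)$ lies in $\Sg_{\bA^2}\{(a,b),(b,a)\}$; one constructs an explicit $f$-composite realizing the tuple, using $a \rightarrow b$ and the $f$-values on $c$. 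Where such a construction seems to fail, Lemma~\ref{ac-impossible} is the right tool to derive a contradiction by forcing $\{a,b\}$ to be a majority subalgebra, contrary to our assumption.

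The residual situations — all three pairs being semilattices, or $f|_{\{a,b,c\}}$ being first projection — are handled directly. When all three pairs are semilattice subalgebras, the tournament $\rightarrow_f$ on $\{a,b,c\}$ is one of a small number of shapes (a $3$-cycle, a chain, a common sink, or a common source), each matching exactly one row of Figure~\ref{doodles}; for each, I would exhibit a candidate $g'$ built from the basic operations whose restriction to $\{a,b,c\}$ matches the figure, and invoke Theorem~\ref{subalg} to verify closure. In the first-projection-$f$ case, the hypothesis excludes the dual discriminator and cyclic discriminator rows of Figure~\ref{doodles} (which require all three pairs to be majority subalgebras), leaving only the median; after using Lemma~\ref{cycle} to arrange $g$ to be cyclic on $\{a,b,c\}$, one shows that $g(a,b,c)$ must be the unique element forming majority pairs with both others. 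The main obstacle is preventing the case analysis from proliferating; the unifying viewpoint is that $f|_{\{a,b,c\}}$ together with the majority-pair data singles out exactly one row of Figure~\ref{doodles}, from which $g|_{\{a,b,c\}}$ is read off.
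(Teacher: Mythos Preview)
Your opening reduction contains a gap. From ``at least two of the three pairs are $g$-closed'' (via Theorem~\ref{connect}) together with ``not all three are majority'' you cannot conclude that some $g$-closed pair is a semilattice: it could be that exactly two pairs are subalgebras, both of them majority, while the third pair is not a subalgebra at all. The paper sidesteps this by splitting instead on whether all three pairs are subalgebras or only two are. When only two are --- say $\{a,c\}$ and $\{b,c\}$ are subalgebras and $\{a,b\}$ is not --- one analyses $f(a,b),f(b,a)\in\{a,b,c\}$ directly: if both equal $c$ then $(c,c)=f\big((a,b),(b,a)\big)\in\Sg_{\bA^2}\{(a,b),(b,a)\}$ and Lemma~\ref{free-semi-crit} applies; if they differ, Lemma~\ref{f-arrow} and Lemma~\ref{ac-impossible} force $f(a,b)=a$, $f(b,a)=c$, $b\rightarrow c$, and Lemma~\ref{subdirect-crit} finishes. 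The ``two majority, one non-subalgebra'' configuration is thus eliminated \emph{inside} these lemmas, not by the counting argument you give before invoking them.

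Two further points. First, your residual ``$f$ is first projection'' case is vacuous: first projection on every pair means every pair is a majority subalgebra, which the hypothesis excludes; the median row of Figure~\ref{doodles} is excluded for the same reason, so there is nothing to do there. Second, the paper avoids your row-by-row construction of candidate $g'$'s via two devices you are missing. For the case where all three pairs are subalgebras of mixed types, it takes $g'(x,y,z)=g(\beta(\alpha(x,y,z)))$ with $\alpha(x,y,z)=(f(x,y),f(y,z),f(z,x))$ and $\beta(x,y,z)=(f(x,z),f(y,x),f(z,y))$; a short check shows that on $\{a,b,c\}$ the composite $\beta\circ\alpha$ always lands on a triple with a repeated coordinate, so $g'$ is determined by $f$ alone and Theorem~\ref{subalg} applies uniformly. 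For the uniqueness of $g$ given $f$, rather than matching figure rows, the paper notes that if $(\{a,b,c\},f,g)$ and $(\{a,b,c\},f,g')$ were both in $\cV_{mbw}$ then the diagonal $\{(a,a),(b,b),(c,c)\}$ in their product is $f$-closed with the same pair-types, hence (by the closure half of the theorem) a subalgebra, hence the graph of an isomorphism --- forcing $g=g'$.
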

\begin{proof} First we show that the fact that $g$ is determined by $f$ follows from the fact that $\{a,b,c\}$ is necessarily a subalgebra. If there were two different algebras $(\{a,b,c\},f,g)$ and $(\{a,b,c\},f,g')$ in $\cV_{mbw}$, then the subset $\{(a,a),(b,b),(c,c)\}$ of their product would be closed under $f$ and not all three of $\{(a,a),(b,b)\}, \{(b,b),(c,c)\}, \{(a,a),(c,c)\}$ would be majority subalgebras, so it would be a subalgebra of the product. Thus it would be the graph of an isomorphism, and we would have to have $g = g'$.

By Theorem \ref{connect}, at least two of $\{a,b\}, \{b,c\}, \{a,c\}$ are subalgebras of $\bA$. If all three are semilattice subalgebras, then $(\{a,b,c\},f)$ is a 2-semilattice and we are done by Theorem \ref{subalg}.

If all three are subalgebras but not all three have the same type, then we may assume without loss of generality that $a\rightarrow b$ and that $f(b,c) = b$. Then since $\alpha(a,b,c) = (b,b,f(c,a))$ has two of its coordinates equal and $\alpha(a,c,b) = (f(a,c),f(c,b),b)$ either has two of its coordinates equal or is equal to $(a,c,b)$, we see that $\beta(\alpha(x,y,z))$ always has two of its coordinates equal for any $x,y,z \in \{a,b,c\}$. Thus, if we set $g'(x,y,z) = g(\beta(\alpha(x,y,z)))$, then for $x,y \in \{a,b,c\}$ we have
\[
g'(x,x,y) = g'(x,y,x) = g'(y,x,x) = f(x,y)
\]
and $\{a,b,c\}$ is closed under $g'$, so we are done by Theorem \ref{subalg}.

Now suppose that $\{a,b\}$ is not a subalgebra of $\bA$, in which case $\{a,c\}, \{b,c\}$ must be subalgebras by Theorem \ref{connect}. If $f(a,b) = f(b,a) = c$, then by Lemma \ref{free-semi-crit} $\{a,b,c\}$ is a subalgebra of $\bA$ which is isomorphic to a free semilattice on the generators $a,b$.

If $f(a,b) \ne f(b,a)$, then one of them must be $c$ and we may assume without loss of generality that the other one is $a$, so $\{a,c\}$ is a majority subalgebra of $\bA$. If $f(b,a) = a$, then by Theorem \ref{f-arrow} we have $b\rightarrow a$, contradicting the assumption that $\{a,b\}$ was not a subalgebra. Thus we must have $f(a,b) = a, f(b,a) = c$. If $f(b,c) = b$, then by Lemma \ref{ac-impossible} we have $\{a,b\}$ a majority algebra, which is again a contradiction. Since $\{b,c\}$ is a subalgebra and $f(b,c) \ne b$ we must have $b \rightarrow c$, and by Lemma \ref{subdirect-crit} $\{a,b,c\}$ is a subalgebra of $\bA$ which is isomorphic to a subdirect product of a two element majority algebra and a two element semilattice.
\end{proof}

\begin{thm}\label{class-3} We may choose the operation $g$ of $\cV_{mbw}$ such that every three element algebra of $\cV_{mbw}$ is isomorphic to one of the eleven algebras shown in Figure \ref{doodles}.
\end{thm}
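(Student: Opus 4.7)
The plan is to split the analysis according to whether all three 2-element subsets of a given three-element algebra $\bA = \{a,b,c\}$ in $\cV_{mbw}$ are majority subalgebras. Since $\bA$ itself is finite and trivially closed under $f$, Theorem \ref{f-closed} applies the moment we know that not all three pairs are majority, and it immediately tells us that $g$ is determined by $f$ on $\bA$. Hence the work is: (i) classify all possible $f$-tables compatible with $\cV_{mbw}$ when some pair is not majority, and (ii) separately classify $g$ when all three pairs are majority (in which case $f$ is forced to be first projection on every pair).

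For case (i), for each unordered pair $\{x,y\}$ in $\bA$ the table value $(f(x,y),f(y,x))$ is one of: a majority pair $(x,y)$, a semilattice pair $(y,y)$ or $(x,x)$, or a pair in which at least one value is the third element. By Theorem \ref{connect} at least two of the three pairs must be 2-element subalgebras, so one branches on how many pairs are subalgebras (two or three) and, within each branch, on the semilattice/majority types and on the directions of the semilattices. The Lemmas \ref{f-arrow}, \ref{ac-impossible}, \ref{subdirect-crit}, and \ref{free-semi-crit} together with Theorem \ref{f-closed} dispose of inconsistent cases: Lemma \ref{f-arrow} forces $f(x,y)=y \Rightarrow x \to y$; Lemma \ref{ac-impossible} excludes certain mixed configurations by producing a forbidden two-element majority; Lemma \ref{subdirect-crit} identifies the subdirect-product doodle; and Lemma \ref{free-semi-crit} identifies the free-semilattice doodle. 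What remain are exactly the eight non-majority doodles in Figure \ref{doodles}, namely total semilattice, free semilattice, spinner, subdirect, simple-no-automorphisms, to-majority, to-semilattice, and slippery. For each surviving $f$-table, one uses the formulas constructed in the proof of Theorem \ref{f-closed} to write down an explicit $g \in \Clo(f,g)$ with $g|_\bA$ matching the $\gamma$-column of Figure \ref{doodles}, confirming that the reduct $(\bA,f,g)$ genuinely lives in $\cV_{mbw}$.

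For case (ii), $f$ is first projection on every pair and we must classify $g$. Apply Lemma \ref{cycle} to choose $g$ such that on $\bA$ either $g$ is cyclic or $A_3 \subseteq \Aut(g)$ with $g(a,b,c) = a$ for distinct $a,b,c$. In the $A_3$-invariant case, $g$ acts as first projection on each permutation of $\{a,b,c\}$, yielding the dual discriminator. In the cyclic case, $g$ is constant on each of the two three-cycles of $\{a,b,c\}$; up to the action of $S_2 \times S_3$ there are exactly two sub-cases, according to whether the two cyclic orbits receive the same value (giving the median) or distinct values (giving the cyclic discriminator). This exhausts the three majority doodles.

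The main obstacle is the bookkeeping in case (i): verifying that every candidate $f$-table either collapses via one of the lemmas above or corresponds to a unique isomorphism class in the figure, and simultaneously confirming that each surviving table supports a $g$ compatible with the global choice from Lemma \ref{cycle}. The subtlety is that a single choice of global $g$ in $\cV_{mbw}$ must restrict consistently to every three-element algebra at once; this is handled by first fixing $g$ as in Lemma \ref{cycle}, then invoking the uniqueness statement in Theorem \ref{f-closed} so that the restriction $g|_\bA$ in case (i) is never over-determined. Once the enumeration is carried out, it matches Figure \ref{doodles} entry for entry, completing the classification.
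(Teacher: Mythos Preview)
Your overall architecture---split on whether all three pairs are majority, invoke Theorem \ref{f-closed} in case (i), and use Lemma \ref{cycle} in case (ii)---matches the paper, and your enumeration of the eight non-majority $f$-tables is correct. But there is a genuine gap in the ``global consistency'' step, and it is precisely where the paper does its hardest work.

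The problem is the pair of algebras $\bA_3$ (cyclic discriminator) and $\bA_{11}$ (slippery), for which $\bA \not\cong \tilde{\bA}$. Lemma \ref{cycle} only guarantees that $g$ is cyclic on these algebras; it does \emph{not} decide between $g(a,b,c)=a$ and $g(a,b,c)=b$. Theorem \ref{f-closed} tells you that within a fixed $\cV_{mbw}$ the restriction $g|_{\bA}$ is determined by $f$, but it does not tell you \emph{which} of the two candidate $g$'s the pseudovariety selects---and $\tilde{\bA}_{11}$ is not isomorphic to anything in Figure \ref{doodles}. So your sentence ``this is handled by first fixing $g$ as in Lemma \ref{cycle}, then invoking the uniqueness statement in Theorem \ref{f-closed}'' does not close the argument: those two results together still leave a binary choice on each of $\bA_3$ and $\bA_{11}$, and a global swap $g\leftrightarrow\tilde{g}$ fixes one only at the risk of breaking the other.

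The paper resolves this in two further steps you are missing. First, it shows that whenever $\bA\not\cong\tilde{\bA}$, the product $\bA\times\tilde{\bA}$ has $\bA^2$ as a proper reduct, so by Theorem \ref{subalg} at most one of $\bA,\tilde{\bA}$ can lie in $\cV_{mbw}$. Second---and this is the delicate part---it constructs an explicit replacement term $g'$ (built from $g$ and the auxiliary term $f_3=h_{\mathcal{D}_3}$) that simultaneously (a) satisfies the identities of Definition \ref{mindef}, (b) restricts to the $\bA_3$ table on the cyclic-discriminator $f$-type, and (c) restricts to the $\bA_{11}$ table on the slippery $f$-type. Without this construction you cannot conclude that a \emph{single} choice of global $g$ realizes the figure on every three-element algebra at once.
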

\begin{proof} Let $\bA_1, ..., \bA_{11}$ be the algebras given in the corresponding rows of Figure \ref{doodles}. We start by choosing $f,g,f_3$ as in Corollary \ref{min}, and then we modify $g$ to make it satisfy the conclusion of Lemma \ref{cycle} (note that in doing so, we do not change the common value of $g(x,x,y) \approx g(x,y,x) \approx g(y,x,x)$). First we will show that on every three element algebra $\bA = (A,f,g)$ either $\bA$ or $\tilde{\bA} = (A,f,\tilde{g})$ is isomorphic with one of $\bA_1, ..., \bA_{11}$.

If $\Aut(\bA)$ contains a three-cycle, then $\gamma(a,b,c) = (a,b,c), \gamma(a,c,b) = (a,c,b)$, and every two element subset of $\bA$ must be a subalgebra, so $\bA$ is either the three element dual discriminator algebra $\bA_1$ or the strongly connected 2-semilattice $\bA_6$. If $\bA$ is not conservative, then by the proof of Theorem \ref{f-closed} $\bA$ is either isomorphic to $\bA_5$, the free semilattice on two generators, or $\bA_7$, the subdirect product of a two element majority algebra and a two element semilattice.

If $\bA$ is conservative and is not a majority algebra or the strongly connected 2-semilattice, then by the proof of Theorem \ref{f-closed} the restriction of $g$ to $\bA$ is in the clone of the restriction of $g(\beta(\alpha(x,y,z)))$ to $\bA$, and the reduct corresponding to $g(\beta(\alpha(x,y,z)))$ is isomorphic to one of $\bA_4, \bA_8, \bA_9, \bA_{10}, \bA_{11}$. We need to show that in this case, either $g$ or $\tilde{g}$ agrees with $g(\beta(\alpha(x,y,z)))$. To see this, note that by Lemma \ref{cycle} the restriction of $g$ to $\bA$ is cyclic, so we must have $\gamma(a,b,c),\gamma(a,c,b)$ in the intersection of the diagonal of $\bA^2$ with $\Sg_{\bA^2}\{(a,b),(b,c),(c,a)\}$, that is, in the circled set of vertices of the corresponding doodle. By examining the possibilities, we see that in each case the circled set of vertices either has size $1$, in which case $g, \tilde{g},$ and $g(\beta(\alpha(x,y,z)))$ all agree on $\bA$, or else there is an automorphism $\iota$ of $\bA$ with order two which interchanges the two circled vertices. Then since $\gamma$ is cyclic we must have
\[
\gamma(a,c,b) = \iota(\gamma(\iota(a),\iota(c),\iota(b))) = \iota(\gamma(a,b,c)),
\]
so $g$ and $\tilde{g}$ do not agree with each other on $\bA$, and exactly one of them gives a reduct isomorphic with $\bA_{11}$.

Finally we have the case where $\bA$ is a majority algebra with $g$ cyclic. There are two cases: either $\gamma(a,b,c) = \gamma(a,c,b)$, or $\gamma(a,b,c) \ne \gamma(a,c,b)$. In the first case, $\bA$ is the median algebra $\bA_2$. In the second case, letting $\{\gamma(a,b,c),\gamma(a,c,b)\} = \{a,b\}$, we see that either $\bA$ or $\tilde{\bA}$ is isomorphic to $\bA_3$.

Next we show that when $\bA$ is a minimal bounded width algebra of size three with $\bA$ not isomorphic to $\tilde{\bA}$, then $\bA^2$ is term equivalent to a proper reduct of $\bA\times \tilde{\bA}$, so that by Theorem \ref{subalg} at most one of $\bA, \tilde{\bA}$ can be an algebra of the pseudovariety $\cV_{mbw}$. If $\bA$ or $\tilde{\bA}$ is isomorphic to $\bA_{11}$ then this is easy: the reduct of $\bA_{11}\times \tilde{\bA}_{11}$ given by $g'(x,y,z) = g(\alpha(x,y,z))$ is term equivalent with $\bA_{11}^2$. By replacing $g$ with $\tilde{g}$ if necessary, we may assume without loss of generality that $\bA_{11} \in \cV_{mbw}$ and $\tilde{\bA}_{11} \not\in \cV_{mbw}$.

One may easily check that in any algebra $\bA$, if $a$ is such that $f(a,b) = a$ for all $b \in \bA$ then we must have $f_3(a,b,c) = a$ for any $b,c \in \bA$. In particular, $f_3$ agrees with first projection in any majority algebra. In $\bA_{11}$, this gives us
\begin{align*}
f_3^{\bA_{11}}(a,b,c) &= a,\\
f_3^{\bA_{11}}(b,c,a) &= b,
\end{align*}
and we have
\[
f_3^{\bA_{11}}(c,a,b) \in \{a,b\},
\]
since $\bA_{11}$ has a quotient isomorphic to a semilattice directed from $c$ to $\{a,b\}$. By possibly replacing $f_3(x,y,z)$ with $f_3(x,z,y)$, we may assume without loss of generality that $f_3^{\bA_{11}}(c,a,b) = a$.

In order to prove that $\bA_3^2$ is term equivalent to a reduct of $\bA_3\times \tilde{\bA}_3$, we define $h$ by
\[
h(x,y,z) = g(f_3(x,y,z),g(x,y,z),g(x,z,y))
\]
and then define $g'$ by
\[
g'(x,y,z) = g(h(x,y,z),h(y,z,x),h(z,x,y)).
\]
Then
\[
g'^{\bA_3}(a,b,c) = g^{\bA_3}(g^{\bA_3}(a,a,b),g^{\bA_3}(b,a,b),g^{\bA_3}(c,a,b)) = g^{\bA_3}(a,b,a) = a
\]
and
\[
g'^{\tilde{\bA}_3}(a,b,c) = g^{\tilde{\bA}_3}(g^{\tilde{\bA}_3}(a,b,a),g^{\tilde{\bA}_3}(b,b,a),g^{\tilde{\bA}_3}(c,b,a)) = g^{\tilde{\bA}_3}(a,b,a) = a,
\]
and from the definition of $g'$ it is clear that $g'^{\bA}$ is cyclic whenever $g^{\bA}$ is cyclic, so this gives us the desired reduct. Additionally, we have
\[
g'^{\bA_{11}}(a,b,c) = g^{\bA_{11}}(g^{\bA_{11}}(a,a,b),g^{\bA_{11}}(b,a,b),g^{\bA_{11}}(a,a,b)) = g^{\bA_{11}}(a,b,a) = a,
\]
so $g'$ agrees with $g$ on $\bA_{11}$.

Moreover, since
\[
h(x,x,y) \approx h(x,y,x) \approx g(f(x,y),f(x,y),f(x,y)) \approx f(x,y)
\]
and
\[
h(y,x,x) \approx g(f(y,x),f(x,y),f(x,y)) \approx f(x,y),
\]
the function $g'$ given above has the additional property that
\[
g'(x,x,y) \approx g'(x,y,x) \approx g'(y,x,x) \approx f(x,y),
\]
so we may replace $g$ with $g'$ in $\cV_{mbw}$.
\end{proof}

\begin{cor} The number of minimal conservative elements of $\cG_k$, up to term equivalence, is
\[
\sum_{E\subseteq \binom{\{1, ..., k\}}{2}} 7^{\Delta(E)}2^{\binom{k}{2}-|E|} = 7^{\binom{k}{3}}(1+o(1)),
\]
where $\Delta(E)$ is the number of triangles in the graph $(\{1,...,k\},E)$.
\end{cor}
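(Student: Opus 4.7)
The plan is to reduce the enumeration to local choices on two- and three-element subsets, using the classification in Theorem \ref{class-3}, the restriction principle of Theorem \ref{subalg}, and the rigidity of $g$ on non-all-majority triangles from Theorem \ref{f-closed}. The local data one reads off a minimal conservative $(f,g) \in \cG_k$ are: for each pair $\{i,j\}$, whether it is a majority or (oriented) semilattice subalgebra; and for each triple $\{i,j,\ell\}$, which minimal three-element clone it carries.

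First I would observe that conservativity forces every two-element subset to be a subalgebra, and by the classification of two-element minimal bounded width algebras each such pair is either a majority algebra (no further data) or a semilattice in one of two orientations. Letting $E \subseteq \binom{\{1,\dots,k\}}{2}$ be the set of pairs carrying the majority structure accounts for the factor $2^{\binom{k}{2}-|E|}$. Next, by Theorem \ref{subalg} every three-element subalgebra is itself minimal and conservative, so by Theorem \ref{class-3} it must be one of the nine conservative types in Figure \ref{doodles} (types 5 and 7 are excluded because their $\{a,b\}$ is not closed under $f$). For a triangle that is not all-majority, Theorem \ref{f-closed} shows $g|_{\{i,j,\ell\}}$ is determined by $f|_{\{i,j,\ell\}}$, so no additional factor arises. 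For an all-majority triangle the restriction must be a dual discriminator, a median, or a cyclic discriminator; the orbit-stabilizer formula $|S_3|/|\Aut|$ gives $1+3+3=7$ distinct labeled minimal clones per such triangle, which is the $7^{\Delta(E)}$ factor.

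The main technical point is the gluing: given any coherent assignment of local two- and three-element data, I would define $f$ and $g$ pointwise on $\{1,\dots,k\}$. Because $g(x,y,z) \in \{x,y,z\}$ by conservativity, each identity of Definition \ref{mindef} involves at most three distinct values and is therefore verified inside a three-element subalgebra, where it holds by hypothesis. Minimality reduces similarly: in a conservative algebra every $n$-ary term operation is determined by its restrictions to subsets of size at most $n$, so a proper reduct of $\Clo(f,g)$ would induce a proper reduct on some triangle, contradicting the minimality of the chosen three-element clone there. Conversely, by Theorem \ref{subalg} every minimal conservative element of $\cG_k$ restricts to such local data, and distinct local assignments give non-term-equivalent clones because their three-element restrictions already differ. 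Summing the product formula over $E$ yields the stated identity.

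For the asymptotic estimate, the term $E = \binom{\{1,\dots,k\}}{2}$ contributes $7^{\binom{k}{3}}$. For $E$ missing $m \ge 1$ edges, a short inclusion-exclusion on triangles destroyed gives $\Delta(E) \le \binom{k}{3} - m(k-2) + \binom{m}{2}$, so the total contribution from such $E$ is at most $\binom{\binom{k}{2}}{m} 2^m 7^{\binom{k}{3} - m(k-2) + \binom{m}{2}} = 7^{\binom{k}{3}} \cdot O(k^{2m} \, 2^m \, 7^{-m(k-2)+\binom{m}{2}})$; summing over $m \ge 1$ is $o(1)$ for $k$ large, giving the $(1+o(1))$ factor. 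The step I expect to be most subtle is verifying that the local-to-global construction actually produces a \emph{minimal} clone rather than some accidentally smaller one hidden by cross-triangle coincidences; conservativity is essential here, since it forces every term operation to be determined by its action on small subsets, ruling out any global collapse not already visible on a triangle.
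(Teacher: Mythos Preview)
The paper states this corollary without proof, so your outline is presumably the intended argument, and most of it is correct: the parametrization by $E$, the factor $2^{\binom{k}{2}-|E|}$ for semilattice orientations, the orbit--stabilizer count $1+3+3=7$ for the all-majority triangles, the appeal to Theorem~\ref{f-closed} to pin down $g$ on the remaining triangles, and the observation that the identities of Definition~\ref{mindef} involve at most three values and are therefore verifiable triangle by triangle.

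There is, however, a genuine gap in the step you flag as ``most subtle.'' You argue that a proper bounded width reduct of the glued clone would induce a proper reduct on some triangle, citing the fact that a conservative $n$-ary operation is determined by its restrictions to $n$-element subsets. That fact is about single operations, not clones: it does \emph{not} rule out a proper subclone $\mathcal{C}' \subsetneq \Clo(g)$ with $\mathcal{C}'|_S = \Clo(g)|_S$ for every three-element $S$. Knowing $g|_S \in \Clo(g'|_S)$ for each $S$ gives you, for each $S$, some term $t_S$ of $g'$ with $t_S|_S = g|_S$; what you need is a \emph{single} term $t$ doing this simultaneously for all $S$. Equivalently, for the bijection you need injectivity of the map from minimal conservative clones to local data, and you have only argued the trivial direction.

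The fix is to glue not arbitrary generators of the local clones but the \emph{canonical} operations supplied by Theorem~\ref{class-3}. That theorem chooses the operation $g$ of $\cV_{mbw}$ so that on every three-element subalgebra it is literally one of the operations in Figure~\ref{doodles}; Theorem~\ref{f-closed} shows this operation is uniquely determined by the local clone on a non-all-majority triangle, and the last part of the proof of Theorem~\ref{class-3} (together with the routine uniqueness of the majority operation in the dual-discriminator and median clones, and the explicit choice made there between $\bA_3$ and $\tilde{\bA}_3$) handles the all-majority case. Hence any minimal conservative clone, once realized in $\cV_{mbw}$, carries a $g$ whose restriction to each triangle is determined by the local clone alone; by conservativity this determines $g$ globally, giving the missing injectivity and simultaneously showing your glued $g$ is already minimal.

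A minor second point: your bound $\Delta(E) \le \binom{k}{3} - m(k-2) + \binom{m}{2}$ becomes vacuous once $m \ge 2(k-2)$, so the tail is not controlled as written. The cleaner estimate $\Delta(E) \le \binom{k}{3} - m(k-2)/3$ (each of the $m(k-2)$ incidences between a missing edge and a triangle is counted at most three times per killed triangle) holds for all $m$ and gives $\sum_{m\ge 1}\binom{\binom{k}{2}}{m}2^m 7^{-m(k-2)/3} \le \exp\bigl(k^2 \cdot 7^{-(k-2)/3}\bigr)-1 = o(1)$ directly.
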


\begin{defn} We define the variety $\cV_{mcbw}$ to be the variety with one basic operation $g$, which is generated by idempotent conservative bounded width algebras $\bA = (A,g^{\bA})$ such that on every three element subset $S \subseteq A$, $(S,g^\bA\!\!\mid_S)$ is isomorphic to one of the nine conservative examples in Figure \ref{doodles}.
\end{defn}

The variety $\cV_{mcbw}$ is easily seen to be locally finite. One may compute by hand that $|\cF_{\cV_{mcbw}}(x,y)| = 4$, and by computer that $|\cF_{\cV_{mcbw}}(x,y,z)| = 2547$. A few of the two and three variable identities satisfied in $\cV_{mcbw}$ are as follows:
\begin{align*}
f(x,y) &\approx f(x,f(x,y))\\
&\approx f(x,f(y,x))\\
&\approx f(f(x,y),x)\\
&\approx f(f(x,y),y)\\
&\approx g(x,y,f(x,y))\\
&\approx g(x,f(x,y),f(y,x)),\\
g(x,y,z) &\approx g(x,f(y,z),z)\\
&\approx g(x,f(y,z),f(z,x)),\\
f(f(x,y),z) &\approx g(z,x,f(x,y)),\\
f(f(x,y),f(x,z)) &\approx g(x,f(x,y),f(x,z)).
\end{align*}

In the next section, we will give two examples of minimal elements of $\cG_4$ for which the identity $f(x,f(y,x)) \approx f(x,y)$ can not be satisfied. Note that our basic six element spiral is an example of a minimal element of $\cG_6$ for which none of the identities listed above which only involve $f$ can be satisfied.

\section{Examples generated by two elements}\label{s-two-gen}

We start by giving a classification of minimal bounded width algebras of size four which are generated by two elements. The classification is summarized in Figure \ref{four}.

\begin{lem} Let $\bA = (\{a,b,c,d\},f,g)$ be a minimal bounded width algebra with $f$ chosen as in Corollary \ref{reach}, and suppose that $\bA$ is generated by $a$ and $b$. Then $f(a,b),f(b,a) \in \{c,d\}$.
\end{lem}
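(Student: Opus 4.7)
The plan is to argue by contradiction, showing that $f(a,b) \in \{a,b\}$ (the case $f(b,a) \in \{a,b\}$ being symmetric) collapses $\Sg\{a,b\}$ below four elements. If $f(a,b) = b$, Lemma \ref{f-arrow} gives $a \rightarrow b$, so preparation makes $\{a,b\}$ a semilattice subalgebra and $\Sg\{a,b\} = \{a,b\}$. If $f(a,b) = a$ and $f(b,a) \in \{a,b\}$, then either $f(b,a) = a$ (yielding $b \rightarrow a$) or $f(b,a) = b$ (making $\{a,b\} = \{f(a,b), f(b,a)\}$ a majority subalgebra); in both cases $\Sg\{a,b\} = \{a,b\}$. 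Each of these contradicts $|\bA|=4$, so after possibly swapping $c$ and $d$, I may assume $f(a,b) = a$ and $f(b,a) = c$ with $\{a,c\}$ a majority subalgebra.

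Next, I invoke Corollary \ref{reach} to get a chain $b = b_0 \rightarrow b_1 \rightarrow \cdots \rightarrow b_k = c$ of two-element semilattice subalgebras in $\bA$. The case $b_1 = a$ is the earlier collapse. The case $b_1 = c$ makes $\{b,c\}$ a semilattice $b \rightarrow c$, so $\{a,b,c\}$ is closed under $f$ with $\{a,b\}$ not a subalgebra; Theorem \ref{f-closed} then makes $\{a,b,c\}$ a three-element subalgebra containing $\Sg\{a,b\}$, contradicting $|\bA|=4$. This forces $b_1 = d$, giving the semilattice $\{b,d\}$ with $b \rightarrow d$. A parallel analysis of $b_2$ rules out $b_2 = a$: we would have $d \rightarrow a$, and then every candidate $b_3 \in \{b,c,d\}$ triggers a contradiction (a semilattice on $\{a,b\}$ incompatible with $f(a,b) = a$, a semilattice--majority clash on $\{a,c\}$, or the arrow reversal $a = d$ forced by the partial-semilattice identity $s(s(x,y), x) \approx s(x,y)$). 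So $b_2 = c$, yielding the chain $b \rightarrow d \rightarrow c$ together with the semilattice $\{c,d\}$ directed by $d \rightarrow c$.

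In this configuration, $c$ is a sink of the reachability relation on $\bA$: none of $\{a,c\}$, $\{b,c\}$, $\{c,d\}$ admits an arrow out of $c$, where $\{b,c\}$ is not even a subalgebra (otherwise the $b_1 = c$ case would apply). Since $\Sg\{a,b\} = \bA$ has four elements, $\{a,b,c\}$ is not a subalgebra, and Theorem \ref{f-closed} then forces $\{a,b,c\}$ not to be closed under $f$. The only undetermined $f$-values being $f(b,c)$ and $f(c,b)$, at least one of them equals $d$. If $f(c,b) = d$, Corollary \ref{reach} demands that $d$ be reachable from $c$, contradicting the sink property. Otherwise $f(b,c) = d$, and the same reachability constraint forces $f(c,b) = c$ (the only value reachable from $c$); but the identity $f(f(x,y), f(y,x)) \approx f(x,y)$ applied at $(b,c)$ then yields $f(d,c) = d$, whereas the semilattice $\{c,d\}$ with $d \rightarrow c$ forces $f(d,c) = c$, a final contradiction.

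The main obstacle is not any one deep step but the chain-analysis bookkeeping in the second paragraph, in particular the $b_2 = a$ subcase where I have to trace every continuation $b_3$ and pin it to one of the three recurring obstructions (semilattice collapse, majority--semilattice incompatibility, or arrow reversal via a partial-semilattice axiom). Once the chain is reduced to $b \rightarrow d \rightarrow c$, the finish is a short combination of Theorem \ref{f-closed}, Corollary \ref{reach}, and the identity $f(f(x,y), f(y,x)) \approx f(x,y)$.
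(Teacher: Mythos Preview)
Your argument is correct and arrives at the same intermediate configuration as the paper ($f(a,b)=a$, $f(b,a)=c$, $\{a,c\}$ majority, $b\rightarrow d\rightarrow c$, and at least one of $f(b,c),f(c,b)$ equal to $d$), but the endgame is genuinely different. The paper splits on whether $f(b,c)=f(c,b)=d$, invoking Lemma~\ref{free-semi-crit} on $\{b,c,d\}$ to kill that case, and otherwise observes that $\{f(b,c),f(c,b)\}$ is a two-element majority subalgebra containing $d$, which can only be $\{a,d\}$ since $\{b,d\}$ and $\{c,d\}$ are semilattices; this forces $a$ to be reachable from $b$ or $c$, while $f(b,a)=f(c,a)=c$ and $f(d,a)=d$ show nothing in $\{b,c,d\}$ points to $a$. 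Your route is more elementary: you show $c$ is a sink, so Corollary~\ref{reach} forces $f(c,b)=c$, hence $f(b,c)=d$, and then the identity $f(f(x,y),f(y,x))\approx f(x,y)$ gives $f(d,c)=d$, contradicting $d\rightarrow c$. This avoids Lemma~\ref{free-semi-crit} entirely at the cost of a slightly longer reachability case analysis earlier on.

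One small imprecision: when you assert that $\{b,c\}$ is not a subalgebra ``otherwise the $b_1=c$ case would apply,'' that case only covered $b\rightarrow c$, not $c\rightarrow b$ or a majority structure on $\{b,c\}$. The correct justification is the one you give in the very next sentence: any subalgebra structure on $\{b,c\}$ makes $\{a,b,c\}$ closed under $f$, and then Theorem~\ref{f-closed} (applicable since $\{a,b\}$ is not a majority subalgebra) yields the three-element subalgebra contradiction. Reordering those two sentences would make the logic cleaner.
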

\begin{proof} Suppose for contradiction that $f(a,b) \in \{a,b\}$. If $f(a,b) = b$ then $a \rightarrow b$, contradicting the assumption that $\bA$ is generated by $a,b$. Similarly, we have $f(b,a) \ne a$, so we may assume without loss of generality that $f(a,b) = a$ and $f(b,a) = c$. In this case, $\{a,c\}$ is a majority algebra, so $f(a,c) = a, f(c,a) = c$. By Theorem \ref{f-closed}, if $f(b,c)$ and $f(c,b)$ are both in $\{a,b,c\}$ then $\{a,b,c\}$ is a proper subalgebra of $\bA$ containing $a,b$, contradicting the assumption that $a,b$ generate $\bA$. Thus at least one of $f(b,c), f(c,b)$ must be $d$. In particular, $\{b,c\}$ is not a subalgebra of $\bA$.

By our choice of $f$, $c = f(b,a)$ is reachable from $b$. The only way that this is possible is if we have $b \rightarrow d \rightarrow c$. If $f(b,c) = f(c,b) = d$, then $\{b,c,d\}$ is closed under $f$, and in fact by Lemma \ref{free-semi-crit} this case is impossible. Thus $\{f(b,c),f(c,b)\}$ is a two element majority algebra which contains $d$, so it must be $\{a,d\}$. But then since $a$ is either $f(b,c)$ or $f(c,b)$, $a$ must be reachable from either $b$ or $c$. From $f(b,a) = f(c,a) = c$ and $f(d,a) = d$ we see that $a$ isn't reachable from any of $b,c,d$, a contradiction.
\end{proof}

\begin{lem} Let $\bA = (\{a,b,c,d\},f,g)$ be a minimal bounded width algebra with $f$ chosen as in Corollary \ref{reach}, and suppose that $\bA$ is generated by $a$ and $b$. Then $f(a,b) \ne f(b,a)$.
\end{lem}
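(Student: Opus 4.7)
We argue by contradiction. Assume $f(a,b)=f(b,a)$; by the previous lemma this common value lies in $\{c,d\}$, and after relabeling we may take it to be $c$. Set $R:=\Sg_{\bA^2}\{(a,b),(b,a)\}$; applying $f$ to the two generators of $R$ gives $(c,c)=f((a,b),(b,a))\in R$.

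The first reduction is easy. If $\{a,c\}$ and $\{b,c\}$ were both subalgebras of $\bA$, then Lemma \ref{free-semi-crit} (with the witness $(c,c)\in R$) would make $\{a,b,c\}$ a subalgebra of $\bA$ isomorphic to the free semilattice on two generators, contradicting $|\Sg\{a,b\}|=|\bA|=4$. So, after possibly swapping the roles of $a$ and $b$, we may assume $\{a,c\}$ is not a subalgebra of $\bA$.

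Next we pin down the semilattice edges incident to $a$. By the strengthened form of Corollary \ref{reach} recorded in the Remark following it, we may assume $c=f(a,b)$ is reachable from $a$ by a path of semilattice edges lying inside $\Sg_{(\bA,f)}\{a,b\}$. If this $f$-subalgebra equalled $\{a,b,c\}$, then $\{a,b,c\}$ would be $f$-closed, and since $\{a,b\}$ is not a subalgebra, Theorem \ref{f-closed} would promote $\{a,b,c\}$ to a subalgebra of $\bA$ --- contradicting $\Sg\{a,b\}=\bA$. Hence $\Sg_{(\bA,f)}\{a,b\}=\bA$. The first edge of the reachability path out of $a$ can be neither $a\to c$ (since $\{a,c\}$ is not a semilattice edge) nor $a\to b$ (since $\{a,b\}$ is not a subalgebra), so it must be $a\to d$; thus $\{a,d\}$ is a semilattice subalgebra with $a\to d$. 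The analogous argument for the reachability of $c=f(b,a)$ from $b$ shows that either $\{b,c\}$ is a semilattice subalgebra with $b\to c$, or $\{b,d\}$ is a semilattice subalgebra with $b\to d$.

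The remaining argument is a finite case analysis over these two subcases, together with the possible values of $f(a,c)$, $f(c,a)$, $f(c,d)$, $f(d,c)$, $f(b,d)$, $f(d,b)$ compatible with the above. In each case we propagate $(c,c)\in R$ through $f$ and $g$ to derive further elements of $R$, and aim at one of three contradictions: (i) a diagonal pair $(e,e)\in R$ with $e\in\{a,b\}$, giving a term $t$ with $t(a,b)=t(b,a)=e$, whereupon preparedness (Lemma \ref{prepare}) makes $\{a,b\}$ a semilattice subalgebra, contradicting $\Sg\{a,b\}=\bA$; (ii) $(d,d)\in R$ together with $\{a,d\}$ and $\{b,d\}$ both being subalgebras, so that Lemma \ref{free-semi-crit} produces $\{a,b,d\}\ne\bA$ as a free-semilattice subalgebra of $\bA$; or (iii) a configuration falling under the hypotheses of Lemma \ref{subdirect-crit}, Lemma \ref{ac-impossible}, or Theorem \ref{maj-crit}, each of which identifies a proper subalgebra of $\bA$ containing both $a$ and $b$. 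The main obstacle is the bookkeeping inside (i)--(iii): once the edge $a\to d$ is fixed and the identity $f(f(x,y),f(y,x))\approx f(x,y)$ is used alongside $f(a,b)=f(b,a)=c$, only a handful of consistent values remain for the entries of $f$ on $\{a,b,c,d\}^2$, and each must be routed into one of the three contradiction patterns above.
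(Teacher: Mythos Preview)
Your setup is sound and the three contradiction targets (i)--(iii) you name are indeed the ones that arise, but what you have written is a plan rather than a proof: the ``finite case analysis'' you defer is exactly where the work lies, and you have not shown that every branch lands in one of (i)--(iii). Two remarks on the setup itself. First, the detour through the Remark after Corollary~\ref{reach} and the $f$-subalgebra $\Sg_{(\bA,f)}\{a,b\}$ is unnecessary: once you know $\{a,b\}$ and $\{a,c\}$ are not subalgebras, any reachability path from $a$ to $c$ in $\bA$ (as guaranteed by Corollary~\ref{reach} alone) must begin with $a\to d$. Second, your opening split on whether $\{a,c\},\{b,c\}$ are \emph{subalgebras} is coarser than what is actually needed, since a two-element subalgebra can be a semilattice in either direction or a majority pair, and these behave very differently under the propagation you describe; your list of ``remaining unknown values of $f$'' is also not quite right (in the subcase $b\to d$ you still need to track $f(b,c),f(c,b)$).

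The paper organizes the same contradiction argument around the finer trichotomy on which of $a\to c$, $b\to c$ hold. If both, $\{a,b,c\}$ is $f$-closed and Theorem~\ref{f-closed} finishes. If neither, then $a\to d$, $b\to d$, $d\to c$ are forced, Theorem~\ref{f-closed} pushes one of $f(a,c),f(c,a)$ to $d$, and Lemma~\ref{free-semi-crit} applied to $\{a,c,d\}$ yields the contradiction. The substantive case is ``exactly one'', say $a\to c$: one argues $\{f(b,c),f(c,b)\}$ must contain $d$, rules out $\{a,d\}$ by reachability, and in the subcase $\{c,d\}$ uses Lemma~\ref{subdirect-crit} to pin $f$ down completely, after which an explicit four-step computation in $R$ produces $(a,a)$ --- your target (i). There is one further subcase, $f(b,c)=f(c,b)=d$, where the paper observes that $f$ becomes commutative, hence $\bA$ is a minimal spiral, and invokes Theorem~\ref{recursive} to force a contradiction with the structure of the maximal strongly connected component. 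That tool is not on your list; the branch can in fact be rerouted through your pattern (ii) (one checks $(d,d)\in R$ and applies Lemma~\ref{free-semi-crit} to $\{a,b,d\}$), but you have not said so. Until each branch is actually walked, the argument is incomplete.
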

\begin{proof} Suppose for contradiction that $f(a,b) = f(b,a) = c$. By our choice of $f$, $c$ must be reachable from both $a$ and $b$. If we have $a \rightarrow c$ and $b \rightarrow c$, then by Theorem \ref{f-closed} $\{a,b,c\}$ is a subalgebra of $\bA$, a contradiction.

Suppose now that neither of $a \rightarrow c$ nor $b \rightarrow c$ holds. Then in order for $c$ to be reachable from $a$ and $b$, we must have $a\rightarrow d$, $b \rightarrow d$, and $d \rightarrow c$. By Theorem \ref{f-closed}, at least one of $f(a,c),f(c,a), f(b,c),f(c,b)$ must be $d$, and we may suppose without loss of generality that either $f(a,c)$ or $f(c,a)$ is $d$. Then $\{f(a,c),f(c,a)\}$ is a majority subalgebra containing $d$, and since every two element algebra containing $d$ is a semilattice, we must have $f(a,c) = f(c,a) = d$. However, this together with $a \rightarrow d \rightarrow c$ contradicts Lemma \ref{free-semi-crit}.

Thus exactly one of $a \rightarrow c$, $b\rightarrow c$ holds, and we may assume without loss of generality that $a \rightarrow c$. In order for $c$ to be reachable from $b$, we must then have $b \rightarrow d$ and at least one of $d \rightarrow a, d\rightarrow c$. Also, by Theorem \ref{f-closed}, at least one of $f(b,c),f(c,b)$ must be $d$. If $f(b,c) = f(c,b) = d$, then $d$ is reachable from $c$, so $c \rightarrow d$ and $b\rightarrow d \rightarrow a \rightarrow c$, and we see in this case that $f$ is commutative, so $\bA$ is a minimal spiral and so by Theorem \ref{recursive} $\{c,d\}$ has to be strongly connected, which is impossible. Thus $\{f(b,c),f(c,b)\}$ must be a majority algebra containing $d$. If $\{f(b,c),f(c,b)\} = \{a,d\}$, then $a$ is reachable from one of $b,c$, which is impossible.

Thus $\{f(b,c),f(c,b)\} = \{c,d\}$, and by Lemma \ref{subdirect-crit} $\{b,c,d\}$ is a subalgebra of $\bA$ which is isomorphic to the subdirect product of a two element majority algebra and a two element semilattice. Also, since $c$ is reachable from $b$, we have $b \rightarrow d \rightarrow a \rightarrow c$. At this point we have completely determined $f$. In order to get a contradiction, we will show that $(a,a) \in \Sg_{\bA^2}\{(a,b),(b,a)\}$. We have $(c,c) = f((a,b),(b,a))$, $(c,d) = f((a,b),(c,c))$, $(d,a) = f((b,a),(c,d))$, and similarly $(a,d) \in \Sg_{\bA^2}\{(a,b),(b,a)\}$. But then $(a,a) = f((a,d),(d,a)) \in \Sg_{\bA^2}\{(a,b),(b,a)\}$, so $b \rightarrow a$, a contradiction.
\end{proof}

\begin{lem} Let $\bA = (\{a,b,c,d\},f,g)$ be a minimal bounded width algebra with $f$ chosen as in Corollary \ref{reach}, and suppose that $f(a,b) = c, f(b,a) = d$. Then $a \rightarrow c$ and $b \rightarrow d$.
\end{lem}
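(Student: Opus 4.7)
The plan is to assume $a \not\to c$ for a contradiction, force a rigid directed cycle on $\bA$, and then derive a contradiction by applying Theorem~\ref{maj-crit} to $\Sg_{\bA^2}\{(a,b),(b,a)\}$. The statement is symmetric under the swap $a \leftrightarrow b$, $c \leftrightarrow d$ (which preserves $f(a,b) = c$ and $f(b,a) = d$), so proving $a \to c$ also yields $b \to d$.

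First I would note that $f(f(x,y),f(y,x)) \approx f(x,y)$ gives $f(c,d) = c$ and $f(d,c) = d$, making $\{c,d\}$ a majority subalgebra and in particular $d \not\to c$. Corollary~\ref{reach} says $c$ is reachable from $a$, so I would walk a minimal $\to$-chain $a \to \cdots \to c$ backwards. The exclusions $d \not\to c$, $a \not\to c$ (assumption), $a \not\to b$ (since $f(a,b) = c \ne b$), together with the propagating consequences $c \not\to b$, $b \not\to d$, $c \not\to d$, pin the chain down to exactly $a \to d \to b \to c$. A second application of Corollary~\ref{reach}, this time to $d = f(b,a)$, then forces $c \to a$ as the only way for $d$ to become reachable from $b$, yielding a strongly connected 4-cycle $a \to d \to b \to c \to a$ in $\bA$.

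The main step is then to work in $R = \Sg_{\bA^2}\{(a,b),(b,a)\}$. Applying $f$ to the generators puts $(c,d),(d,c) \in R$, and the four semilattice edges of the cycle in $\bA$ assemble into a $\to$-cycle
\[
(a,b) \to (d,c) \to (b,a) \to (c,d) \to (a,b)
\]
inside $R$, so these four tuples form one strongly connected set. To prove this is a maximal strongly connected component of $R$, I would need to rule out every ``new'' one-step $\to$-successor at each of the four corners, for example $(a,c)$ and $(d,b)$ out of $(a,b)$, and the analogous candidates at the other three corners.

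I expect this exclusion to be the hard part, but a short combinatorial argument should suffice. The idea is that a new candidate in $R$, combined with one $f$-application against a generator, puts another ``new'' tuple in $R$, and one or two further $f$-applications produce a diagonal tuple $(a,a)$ or $(b,b)$ in $R$. For instance, $(a,c) \in R$ yields $(c,a) = f((a,c),(b,a))$ and then $(a,a) = f((a,c),(c,a))$; $(d,b) \in R$ yields $(b,d) = f((d,b),(b,a))$ and then $(b,b) = f((d,b),(b,d))$; and the remaining exclusions chain through intermediate diagonals $(c,c)$ or $(d,d)$ before landing on $(a,a)$ or $(b,b)$. Any $(a,a)$ or $(b,b)$ in $R$ would supply a binary term $t$ with $t(a,b) = t(b,a) \in \{a,b\}$, which by the preparation of $\bA$ (Lemma~\ref{prepare}) would force $\{a,b\}$ to be a semilattice subalgebra, contradicting $f(a,b) = c \notin \{a,b\}$. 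Once upwards closure is established, $(a,b)$ is maximal in $R$, and Theorem~\ref{maj-crit} forces $\{a,b\}$ to be a majority subalgebra, giving the required contradiction.
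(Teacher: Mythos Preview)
Your proposal is correct and ends at the same place as the paper (Theorem~\ref{maj-crit}), but the route through the middle is genuinely different. After deriving the same 4-cycle $a \to d \to b \to c \to a$, the paper does not check upwards closure in $R$ by hand. Instead it notes (via Theorem~\ref{f-closed}) that $g$ is already pinned down by $f$ on $\{a,c,d\}$ and $\{b,c,d\}$, and then that the term $g'(x,y,z) = g(\alpha(x,y,z))$ with $\alpha(x,y,z) = (f(x,y),f(y,z),f(z,x))$ is therefore completely determined by $f$; minimality of $\bA$ then forces $g \in \Clo(g')$. Since $f$ is visibly invariant under $(a\ b)(c\ d)$, so is $g$, i.e.\ $(a\ b)(c\ d) \in \Aut(\bA)$, and $R$ is then literally the graph $\{(a,b),(b,a),(c,d),(d,c)\}$ of that automorphism, which is strongly connected because $\bA$ is. Your approach trades the minimality/automorphism step and the appeal to Theorem~\ref{f-closed} for a direct combinatorial check inside $R$; the paper's route extracts more structure (an actual automorphism of $\bA$), while yours is more elementary and self-contained.

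Two minor points on the write-up. First, your ``propagating consequences'' $c \not\to b$ and $b \not\to d$ are really derived \emph{during} the chain (from $b \to c$ and $d \to b$), not prior input; the phrasing should reflect that order. Second, the eight out-of-cycle successors reduce to four by the coordinate-swap symmetry of $R$, and the cases $(d,a)$ and $(b,c)$ do need the extra hop through $(d,d)$ or $(c,c)$ that you allude to---e.g.\ $(d,a) \in R$ gives $(d,d) = f((a,b),(d,a))$, then $(d,b) = f((d,d),(a,b))$, and then your $(d,b)$ argument yields $(b,b)$; it is worth writing at least one of these out explicitly. Also, the justification for why a diagonal in $R$ is fatal is really Theorem~\ref{subalg} applied to a two-element semilattice (as in the discussion opening Section~\ref{s-connectivity}), rather than Lemma~\ref{prepare} directly.
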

\begin{proof} Suppose for a contradiction that we do not have $a \rightarrow c$. Then in order for $c = f(a,b)$ to be reachable from $f$, we must have $a \rightarrow d \rightarrow b \rightarrow c$. In this case we do not have $b \rightarrow d$, so in order for $d = f(b,a)$ to be reachable from $b$ we must have $b \rightarrow c \rightarrow a \rightarrow d$. This completely pins down $f$. Note that by Theorem \ref{f-closed}, the restriction of $g$ to $\{a,c,d\}$ and the restriction of $g$ to $\{b,c,d\}$ are determined by $f$.

Defining $\alpha(x,y,z) = (f(x,y),f(y,z),f(z,x))$, we have
\begin{align*}
\alpha(a,b,c) &= (c,c,a),\\
\alpha(a,c,b) &= (a,c,d),\\
\alpha(a,a,b) &= (a,c,d),
\end{align*}
so $g'(x,y,z) = g(\alpha(x,y,z))$ is completely determined by $f$. By minimality of $\bA$, we see that $g \in \Clo(g')$, so in particular $\bA$ has $(a\ b)(c\ d)$ as an automorphism. Thus, we have
\[
\Sg_{\bA^2}\{(a,b),(b,a)\} = \{(a,b),(b,a),(c,d),(d,c)\},
\]
which is isomorphic to $\bA$. Since $\bA$ is strongly connected, this shows that $(a,b)$ is a maximal element of $\Sg_{\bA^2}\{(a,b),(b,a)\}$, so by Theorem \ref{maj-crit} $\{a,b\}$ is a majority subalgebra of $\bA$, contradicting the assumption that $f(a,b) = c, f(b,a) = d$.
\end{proof}

\begin{figure}
\begin{tabular}{p{1.5cm} | p{3.0cm} | p{3.2cm} | p{2.2cm} | p{2.1cm}}
Doodle & $f$ & $\gamma$ & Aut & Quotients of Size 2 \\ \hline
\begin{minipage}{\linewidth} \includegraphics[]{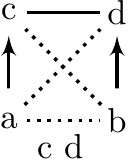} \end{minipage} & \begin{tabular}{c | c c c c} $f$ & a & b & c & d\\ \hline a & a & c & c & c \\ b & d & b & d & d \\ c & c & c & c & c \\ d & d & d & d & d \end{tabular} & \vspace*{-30pt} $\gamma(a,b,c) = (c,c,c)$ $\gamma(b,a,c) = (c,c,c)$ $\gamma(a,c,d) = (c,c,c)$ $\gamma(a,d,c) = (c,c,c)$ & $\{1, (a\ b)(c\ d)\}$ & \vspace*{-30pt} $\{a,b\} {\rightarrow} \{c,d\}$ $\{a\} {\rightarrow} \{b,c,d\}$ $\{b\} {\rightarrow} \{a,c,d\}$ $\{a,c\} \textbf{---} \{b,d\}$ \\ \hline
\begin{minipage}{\linewidth} \includegraphics[]{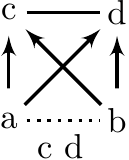} \end{minipage} & \begin{tabular}{c | c c c c} $f$ & a & b & c & d\\ \hline a & a & c & c & d \\ b & d & b & c & d \\ c & c & c & c & c \\ d & d & d & d & d \end{tabular} & \vspace*{-30pt} $\gamma(a,b,c) = (c,c,c)$ $\gamma(b,a,c) = (c,c,c)$ $\gamma(a,c,d) = (c,c,c)$ $\gamma(a,d,c) = (d,d,d)$ & $\{1, (a\ b)(c\ d)\}$ & \vspace*{-21pt} $\{a,b\} {\rightarrow} \{c,d\}$ $\{a\} {\rightarrow} \{b,c,d\}$ $\{b\} {\rightarrow} \{a,c,d\}$ \\ \hline
\begin{minipage}{\linewidth} \includegraphics[]{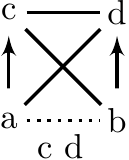} \end{minipage} & \begin{tabular}{c | c c c c} $f$ & a & b & c & d\\ \hline a & a & c & c & a \\ b & d & b & b & d \\ c & c & c & c & c \\ d & d & d & d & d \end{tabular} & \vspace*{-30pt} $\gamma(a,b,c) = (c,c,c)$ $\gamma(b,a,c) = (c,c,c)$ $\gamma(a,c,d) = (c,c,c)$ $\gamma(a,d,c) = (c,c,c)$ & $\{1, (a\ b)(c\ d)\}$ & $\{a,c\} \textbf{---} \{b,d\}$ \\ \hline
\end{tabular}

\caption{Minimal bounded width algebras of size four which are generated by two elements.}\label{four}
\end{figure}

\begin{thm}\label{class-4} Every minimal bounded width algebra $\bA$ of size four which is generated by two elements is term equivalent to an algebra isomorphic to one of the three examples in Figure \ref{four}, and conversely each algebra in Figure \ref{four} is a minimal bounded width algebra.
\end{thm}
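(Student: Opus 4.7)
The preceding three lemmas reduce the problem to the following setup: after choosing $f$ as in Corollary \ref{reach}, we may assume $\bA = \{a,b,c,d\}$ is generated by $a,b$ with $f(a,b) = c$, $f(b,a) = d$, $a \rightarrow c$, and $b \rightarrow d$. My plan has four steps: (i) pin down the products involving $c,d$, (ii) show that $\sigma = (a\ b)(c\ d)$ is an automorphism, (iii) case-analyze $(f(a,d), f(d,a))$, and (iv) verify each surviving candidate.

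First, the identity $f(f(x,y),f(y,x)) \approx f(x,y)$ from Corollary \ref{min} forces $f(c,d) = c$ and $f(d,c) = d$, so $\{c,d\}$ is a (first-projection) majority subalgebra. Second, the portion of $f$ already determined is $\sigma$-symmetric. I would promote $\sigma$ to an automorphism by an argument parallel to the last of the three preceding lemmas: the reduct $g'(x,y,z) = g(\alpha(\alpha(x,y,z)))$ is $\sigma$-equivariantly defined in terms of the known $f$-data, so minimality of $\Clo(f,g)$ forces $\Clo(g') = \Clo(g)$ and hence $g$ itself is $\sigma$-equivariant. Consequently $(f(b,c), f(c,b))$ is the $\sigma$-image of $(f(a,d), f(d,a))$, and the remaining freedom in $f$ lives in the latter pair.

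For the case analysis, Lemma \ref{ac-impossible} applied to $(a,d,b)$ or $(d,a,b)$ excludes the value $b$ from either coordinate of $(f(a,d), f(d,a))$, since it would covertly force $\{a,b\}$ to be a majority subalgebra, contradicting $f(a,b) = c$. Configurations that make $\bA$ strongly connected are ruled out by Lemma \ref{max-maj} and Theorem \ref{subalg}: maximality of $a$ in $\bA$ would yield $(a,a,a) \in \Sg_{\bA^3}\{(a,a,b),(a,b,a),(b,a,a)\}$, whose witness is a majority term on $\{a,b\}$, again contradicting $f(a,b) = c$. This eliminates the cyclic configuration $(f(a,d),f(d,a)) = (a,a)$, which would otherwise produce the directed cycle $a \rightarrow c \rightarrow b \rightarrow d \rightarrow a$. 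Mixed configurations with exactly one coordinate equal to $c$ are excluded by Theorem \ref{f-closed} applied to $\{a,c,d\}$, which is then a subalgebra that does not match any entry of Theorem \ref{class-3}. The three surviving configurations give: $(c,c)$, which via Lemma \ref{free-semi-crit} realizes $\{a,c,d\}$ as the free semilattice on $\{a,d\}$ and produces the ``free'' algebra; $(d,d)$, giving $a \rightarrow d$ and the ``up'' algebra; and $(a,d)$, giving $\{a,d\}$ a majority subalgebra and the ``across'' algebra. Theorem \ref{f-closed} then pins down $g$ on each three element subalgebra, and $\sigma$-equivariance together with a $\gamma$-iteration as in Corollary \ref{min} determine $g$ globally.

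Finally, I would verify that each of the three candidates is a minimal bounded width algebra. Bounded width is established by realizing each candidate as a subalgebra of a product of three element minimal bounded width algebras from Theorem \ref{class-3} (``across'' inside a two element majority times the three element free semilattice, ``up'' inside a two element semilattice times the three element free semilattice, and ``free'' inside a combination of both). Minimality follows from Theorem \ref{subalg}: every proper subalgebra and every proper quotient has size $\le 3$ and appears in Theorem \ref{class-3}, so no proper reduct of $\Clo(f,g)$ retains bounded width. The main obstacle will be the case analysis of step (iii); in particular, the application of Lemma \ref{max-maj} to rule out the cyclic configuration requires checking that the assumed data really do force $\bA$ to become strongly connected in the sense of the partial semilattice $s$, and separating this case cleanly from the ``across'' case where strong connectivity occurs but $\{a,d\}$ is a genuine majority pair.
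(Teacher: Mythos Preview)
Your outline has two genuine gaps. First, step (ii) is circular: you cannot prove that $\sigma = (a\ b)(c\ d)$ is an automorphism via $g' = g(\alpha(\alpha(\cdot)))$ before you know $f(a,d), f(d,a), f(b,c), f(c,b)$, because $\alpha$ uses exactly those values. The paper avoids this by first showing directly that $\{a,c,d\}$ and $\{b,c,d\}$ are each closed under $f$ (ruling out $a \in \{f(b,c),f(c,b)\}$ by observing that then $b,c$ would generate $\bA$ and the three preceding lemmas applied to the pair $b,c$ give a contradiction), and then using \emph{problem-level} symmetry (relabeling $a \leftrightarrow b$) rather than an algebra automorphism to cut the $4 \times 4 = 16$ cases down to $10$. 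Your case identification in step (iii) is also off: the ``free'' algebra has $(f(a,d),f(d,a)) = (c,d)$, not $(c,c)$; the configuration $(c,c)$ is in fact impossible since Lemma \ref{free-semi-crit} would force $d \rightarrow c$, contradicting that $\{c,d\}$ is a majority subalgebra.

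Second, and more seriously, your minimality verification in step (iv) is backwards. Theorem \ref{subalg} shows that subalgebras and quotients of a minimal bounded width algebra are minimal, not that an algebra whose subalgebras and quotients are minimal is itself minimal. To prove minimality you must show that every term $g'$ generating a bounded width reduct already has $g \in \Clo(g')$. For the ``across'' algebra this is the hardest part of the paper's proof: one must rule out a majority term on $\{a,b\}$ by exhibiting an explicit $g$-closed set $S \subseteq \bA_3^3$ containing $(a,a,b),(a,b,a),(b,a,a)$ but not $(a,a,a)$. Your proposal does not address this at all.
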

\begin{proof} Write $\bA = (\{a,b,c,d\},f,g)$, with $f$ chosen as in Corollary \ref{reach}, and suppose $\bA$ is generated by $a$ and $b$. By the previous results, we may assume without loss of generality that $f(a,b) = c, f(b,a) = d$, and that $a \rightarrow c, b \rightarrow d$.

Suppose first that $a \in \{f(b,c),f(c,b)\}$. Then $\bA$ is generated by $b$ and $c$, so by the previous results we have $\{f(b,c),f(c,b)\} = \{a,d\}$ and either $b \rightarrow a, c\rightarrow d$ or $b\rightarrow d,c\rightarrow a$, and both possibilities are clearly impossible. Thus, $\{b,c,d\}$ is closed under $f$, so by Theorem \ref{f-closed} $\{b,c,d\}$ is a three element subalgebra of $\bA$. Similarly, $\{a,c,d\}$ is also a three element subalgebra of $\bA$.

Since $\{b,c,d\}$ is a three element subalgebra of $\bA$ with $\{c,d\}$ a majority algebra and $b \rightarrow d$, we either have $b \rightarrow c, c\rightarrow b$, $\{b,c\}$ a majority algebra, or $f(b,c) = d, f(c,b) = c$. Similarly, there are four possible ways to assign values to $f(a,d), f(d,a)$, so up to symmetry there are just $\frac{4^2 + 4}{2} = 10$ possible functions $f$ to consider.

We briefly summarize why seven of these cases are not minimal bounded width algebras. If both $c \rightarrow b$ and $d \rightarrow a$, then $\bA$ is strongly connected and has an automorphism interchanging $a$ and $b$, so by Theorem \ref{maj-crit} $\{a,b\}$ is a majority algebra. If $f(b,c) = d, f(c,b) = c$, and $d \rightarrow a$, then $(a,a) \in \Sg_{\bA^2}\{(a,b),(b,a)\}$, so $b \rightarrow a$. If $f(b,c) = d, f(c,b) = c$, and $\{a,d\}$ is a majority algebra, then $(a,d) \in \Sg_{\bA^2}\{(a,b),(b,a)\}$, so by Lemma \ref{subdirect-crit} $\{a,b,d\}$ is a subdirect product of a two element majority algebra and a two element semilattice. In the four remaining cases which don't work, the reader may check that we have either $(c,c) \in \Sg_{\bA}\{(a,b),(b,a)\}$ with $\{b,c\}$ a subalgebra, or the analogous statement with $d$ instead of $c$, and we can apply Lemma \ref{free-semi-crit} to see that either $\{a,b,c\}$ or $\{a,b,d\}$ is a free semilattice on the generators $a,b$.

Now we check that the three algebras displayed in Figure \ref{four} are minimal bounded width algebras. Call them $\bA_1, \bA_2, \bA_3$. $\bA_1$ is isomorphic to $\cF_{\cV_{mcbw}}(x,y)$, the free algebra on two generators in the variety generated by conservative bounded width algebras, so it is minimal. Due to the semilattice quotients of $\bA_2$, we see that for any nontrivial binary term $p(x,y)$, we must have $p(a,b), p(b,a) \in \{c,d\}$, and because $\bA_2$ has the automorphism $(a\ b)(c\ d)$, we have $p(a,b) \ne p(b,a)$, so either $p(x,y) \approx f(x,y)$ or $p(x,y) \approx f(y,x)$. From here we easily see that for any terms $f',g'$ of a bounded width reduct of $\bA_2$, we have $f' \approx f$ and $g \approx g'(\alpha(x,y,z))$.

We are left with checking that $\bA_3$ is minimal. Suppose there are terms $f', g'$ of a bounded width reduct of $\bA_3$, with $f'$ chosen as in Corollary \ref{reach}. Since $(a\ b)(c\ d)$ is an automorphism of $\bA_3$, we either have $f'(a,b) = a, f'(b,a) = b$ or $f'(a,b) = c, f'(b,a) = d$ (we can't have $f'(a,b) = d$ since $d$ is not reachable from $a$). If $f'(a,b) = c, f'(b,a) = d$, then $f' \approx f$ and $g \approx g'(\alpha(x,y,z))$, so we only have to prove that there is no term $g'$ which acts as a majority operation on $\{a,b\}$. In other words, we must show that
\[
(a,a,a) \not\in \Sg_{\bA_3^3}\{(a,a,b),(a,b,a),(b,a,a)\}.
\]
Let $S$ be the set of triples $(u,v,w)$ such that at least two of $u,v,w$ are in $\{a,c\}$, and such that if none of $u,v,w$ is $c$ then $(u,v,w)$ is a cyclic permutation of $(a,a,b)$. We claim that $S$ is closed under $g$ (in fact, it turns out that $S = \Sg_{\bA_3^3}\{(a,a,b),(a,b,a),(b,a,a)\}$). The key observation needed to check this claim is that if $x,y,z \in \bA_3$ have at least two of $x,y,z$ in $\{a,c\}$ and at least one of $x,y,z$ equal to $c$, then $g(x,y,z) = c$. We leave the details to the reader.
\end{proof}

We also give two examples of larger algebras which are generated by two elements in Figure \ref{bigger}. For brevity, we only describe the function $f$ and the subalgebras which are generated by pairs of elements.

\begin{figure}
\begin{tabular}{p{3.9cm} p{6.1cm} p{3.6cm}}
\begin{minipage}{\linewidth} \includegraphics[]{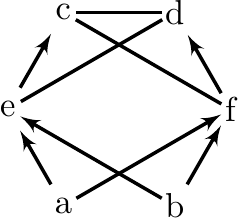} \end{minipage} & \begin{tabular}{c | c c c c c c} $f$ & a & b & c & d & e & f\\ \hline a & a & c & f & e & e & f\\ b & d & b & f & e & e & f\\ c & c & c & c & c & c & c\\ d & d & d & d & d & d & d\\ e & e & e & c & e & e & c\\ f & f & f & f & d & d & f \end{tabular} & \vspace*{-40pt} $\Sg\{a,c\} = \{a,c,f\}$, $\Sg\{a,d\} = \{a,d,e\}$, $\Sg\{b,c\} = \{b,c,f\}$, $\Sg\{b,d\} = \{b,d,e\}$, $\Sg\{e,f\} = \{c,d,e,f\}$\\ \\
\begin{minipage}{\linewidth} \includegraphics[]{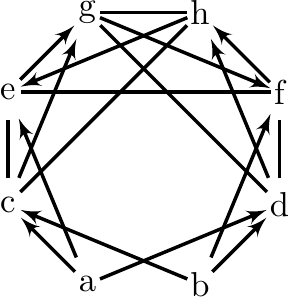} \end{minipage} & \begin{tabular}{c | c c c c c c c c} $f$ & a & b & c & d & e & f & g & h\\ \hline a & a & g & c & d & e & d & d & e \\ b & h & b & c & d & c & f & f & c \\ c & c & c & c & g & c & c & g & c \\ d & d & d & h & d & d & d & d & h \\ e & e & e & e & g & e & e & g & e \\ f & f & f & h & f & f & f & f & h \\ g & g & f & g & g & g & f & g & g \\ h & e & h & h & h & e & h & h & h \end{tabular} & \vspace*{-60pt} $\Sg\{a,f\} = \{a,d,f\}$, $\Sg\{a,g\} = \{a,d,g\}$, $\Sg\{a,h\} = \{a,e,h\}$, $\Sg\{b,e\} = \{b,c,e\}$, $\Sg\{b,g\} = \{b,f,g\}$, $\Sg\{b,h\} = \{b,c,h\}$, $\Sg\{c,f\} = \{c,f,h\}$, $\Sg\{d,e\} = \{d,e,g\}$, $\Sg\{c,d\} = \{c,d,g,h\}$
\end{tabular}

\caption{More examples.}\label{bigger}
\end{figure}

\section{Conjectures}\label{s-conj}

\begin{conj}\label{conj-ws} In every bounded width algebra, there are terms $w,s$ satisfying the identities
\[
w(x,x,y) \approx w(x,y,x) \approx w(y,x,x) \approx s(x,y)
\]
and
\[
s(x,s(x,y)) \approx s(s(x,y),x) \approx s(x,y).
\]
\end{conj}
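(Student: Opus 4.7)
The starting point is Corollary~\ref{min}, which gives an idempotent weak near-unanimity term $w$ together with its binary trace $s(x,y) := w(x,x,y) \approx w(x,y,x) \approx w(y,x,x)$, satisfying the single identity $s(s(x,y), s(y,x)) \approx s(x,y)$. The plan is to promote the pair $(w, s)$ to a pair $(w', s')$ in which $s'$ is a genuine partial semilattice, while retaining a weak near-unanimity witness of the same trace.

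The first move is to apply the Semilattice Iteration Lemma (Lemma~\ref{semi-iter}) to the binary term $s$ in the 2-generated free algebra $\cF := \cF_{\cV}(x,y)$, producing a partial semilattice $\sigma \in \Clo(s) \subseteq \Clo(w)$ with $\sigma(x, \sigma(x,y)) \approx \sigma(\sigma(x,y), x) \approx \sigma(x,y)$. The entire remaining task is to exhibit a ternary term $w'$ with $w'(x,x,y) \approx w'(x,y,x) \approx w'(y,x,x) \approx \sigma(x,y)$. Equivalently, setting $\mathbf{R} := \Sg_{\cF^3}\{(x,x,y),(x,y,x),(y,x,x)\}$, one must show $(\sigma(x,y), \sigma(x,y), \sigma(x,y)) \in \mathbf{R}$. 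Applying $w$ coordinatewise to the three generators of $\mathbf{R}$ at once gives $(s(x,y), s(x,y), s(x,y)) \in \mathbf{R}$, so the problem reduces to climbing from $s(x,y)$ to $\sigma(x,y)$ along the diagonal of $\mathbf{R}$.

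The useful observation here is that the ``trace set'' $\mathcal{T} := \{a \in \cF : (a,a,a) \in \mathbf{R}\}$ is a subalgebra of $\cF$: given $a_1, \dots, a_k \in \mathcal{T}$ with witnessing ternary terms $t_1, \dots, t_k$ (so $t_i(x,x,y) = t_i(x,y,x) = t_i(y,x,x) = a_i$), and any $k$-ary term $r$ of $\cV$, the composition $r(t_1, \dots, t_k)$ witnesses $r(a_1, \dots, a_k) \in \mathcal{T}$. Thus $\mathcal{T}$ contains $s(x,y)$ and is closed under all operations of $\cV$. The difficulty, however, is that the iterative construction of $\sigma$ out of $s$ in Lemma~\ref{semi-iter} repeatedly feeds back the generators $x,y$ of $\cF$ into $s(x,-)$ and $s(-,x)$, but $x$ and $y$ themselves are \emph{not} in $\mathcal{T}$; so one cannot simply replay Lemma~\ref{semi-iter} inside $\mathcal{T}$.

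This is the main obstacle, and the most promising way around it, in my view, is to design a tailored $pq$-instance along the lines of the proof of Theorem~\ref{intersect}, in which the 2-element structure $\mathbf{S}$ on $\{x,y\}$ carries not only the relations encoding maximal intersecting families (which force the existence of $w$) but also extra relations encoding the two partial semilattice identities directly as constraints on the unknown values $s(x,y), s(x, s(x,y)), s(s(x,y), x)$. A solution of such an enriched instance in $\cF$ would simultaneously produce $w'$ and $s' = \sigma$ satisfying the conjecture. The hard part --- and the main barrier to completing the proof --- is verifying that the enriched instance remains a $pq$-instance, so that Proposition~\ref{gen} applies. The naive constraints encoding partial semilatticeness introduce closed patterns whose two obvious realizations (``always $x$'' and ``always $y$'') fail to be connectable by the $pq$ condition, and engineering the correct auxiliary relations to restore $pq$-consistency is precisely where this conjecture appears to hide its difficulty.
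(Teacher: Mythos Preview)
The statement you are trying to prove is stated in the paper as a \emph{conjecture}, not a theorem, and the paper does \emph{not} provide a proof of it. There is therefore no ``paper's own proof'' to compare against. The paper only verifies the conjecture in two special cases: for majority algebras (trivially, taking $s$ to be first projection) and for minimal spirals (via Theorem~\ref{spiral-ternary}, which shows that any nontrivial binary term can be realized as the common trace of a ternary term). For general bounded width algebras the question is left open, and the paper remarks that a positive answer would immediately imply Bulatov's yellow connectivity property by an iteration argument.

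Your analysis of the obstruction is accurate. The reduction to showing $(\sigma(x,y),\sigma(x,y),\sigma(x,y)) \in \mathbf{R} = \Sg_{\cF^3}\{(x,x,y),(x,y,x),(y,x,x)\}$ is correct, and your observation that the ``trace set'' $\mathcal{T} = \{a \in \cF : (a,a,a) \in \mathbf{R}\}$ is a subalgebra containing $s(x,y)$ but not $x$ or $y$ is exactly the point at which the straightforward approach stalls: the Semilattice Iteration Lemma builds $\sigma$ by repeatedly feeding $x$ back into $s$, and there is no evident way to simulate this inside $\mathcal{T}$. Your proposed workaround of enriching the $pq$-instance from Theorem~\ref{intersect} with constraints encoding the partial semilattice identities is a natural idea, but as you yourself note, verifying $pq$-consistency of such an instance is precisely where the difficulty lies, and the paper gives no indication that this can be done. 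In short, you have correctly located the gap, but have not closed it --- nor has the paper.
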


By Theorem \ref{spiral-ternary}, Conjecture \ref{conj-ws} holds in every spiral, and Conjecture \ref{conj-ws} clearly holds in any majority algebra. Although it looks innocent, if true it would immediately imply Bulatov's yellow connectivity property (as well as Theorem \ref{intersect} and Corollary \ref{reach}) by an iteration argument similar to the end of the proof of Theorem \ref{intersect} (using the terms $s_n$ from Proposition \ref{higher-semilattice} in place of the $f_n$s).

\begin{conj}\label{conj-minority} If $\bA$ is a minimal bounded width algebra and $a, b\in \bA$ have $(b,b) \in \Sg_{\bA^2}\{(a,a),(a,b),(b,a)\}$, then $a\rightarrow b$.
\end{conj}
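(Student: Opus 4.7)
The plan is to prove the contrapositive: assuming $a \ne b$ and $a \not\rightarrow b$, I will show $(b,b) \notin \RR$, where $\RR := \Sg_{\bA^2}\{(a,a),(a,b),(b,a)\}$. By Theorem~\ref{subalg} I may pass to $\bC := \Sg_\bA\{a,b\}$ without loss. The swap $\sigma(x,y) = (y,x)$ is an automorphism of $\bC^2$ that fixes $(a,a)$ and transposes $(a,b)$ with $(b,a)$, so $\sigma$ preserves $\RR$; hence if $(b,b) \in \RR$ then $\RR$ contains the entire diagonal $\{(x,x) : x \in \bC\}$, which is already a substantial structural constraint I would record up front.

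The main case split is on whether $\{a,b\}$ is a subalgebra of $\bC$. If it is, then since $a \not\rightarrow b$ it is either a majority subalgebra or a semilattice directed $b \rightarrow a$, and the clone of term operations of $\bC$ restricted to $\{a,b\}$ is correspondingly (by Post's lattice) the clone of self-dual monotone Boolean functions or the clone of meets of nonempty subsets of the arguments. A direct check rules out any ternary $t$ with $t(a,a,b) = t(a,b,a) = b$ in each case: in the majority case, self-duality forces $t(b,a,b) = a$, which contradicts monotonicity against $t(a,a,b) = b$ in the first coordinate; in the semilattice case, the only candidate with $t(a,a,b) = b$ is the third projection $t(x,y,z) = z$, which fails $t(a,b,a) = b$. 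Thus $(b,b) \notin \RR$.

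If $\{a,b\}$ is not a subalgebra, the identity $f(f(x,y),f(y,x)) \approx f(x,y)$ forces $\{f(a,b), f(b,a)\}$ to be either a singleton $\{c\}$ with $c \notin \{a,b\}$ or a two-element majority subalgebra with at least one element outside $\{a,b\}$. In the symmetric subcase $f(a,b) = f(b,a) = c$, I would apply Lemma~\ref{free-semi-crit} to the relation $(c,c) = f((a,b),(b,a)) \in \Sg_{\bC^2}\{(a,b),(b,a)\}$ to identify $\{a,b,c\}$ as a free semilattice subalgebra on $\{a,b\}$; being a subalgebra it is closed under every term of $\bC$, so any hypothetical witness $t$ restricts to the free semilattice clone (joins over nonempty subsets), which the same direct calculation as in Case~1 rules out. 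The asymmetric subcase is handled analogously via Lemma~\ref{subdirect-crit} (with Lemma~\ref{ac-impossible} pinning down the orientation of the arrows), producing a three-element subdirect-product subalgebra containing $\{a,b\}$ and then quotienting by the congruence collapsing its majority side to reduce to the majority situation of Case~1.

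The main obstacle I expect is ensuring the needed three-element envelope containing $\{a,b\}$ always exists---this can fail when $\bC$ is strictly larger than three elements and the pairs $\{a,c\}$, $\{b,c\}$ are not themselves subalgebras. My plan for this is to induct on $|\bC|$ using the congruence $\theta$ of $\bC$ generated by the pairs $(a, f(a,b))$ and $(b, f(b,a))$: in $\bC/\theta$ the pair $\{a/\theta, b/\theta\}$ becomes a majority subalgebra (or collapses to a point), and Case~1 applies in the quotient. The delicate verification is that $\theta$ does not identify $a$ with $b$; for this I would combine Theorem~\ref{recursive} to handle the spiral portion of $\bC$ (where the free-semilattice quotient furnishes the desired separation) with the yellow connectivity property (Proposition~\ref{yellow}) and Lemma~\ref{max-maj} to control the interaction with any majority subalgebras of $\bC$.
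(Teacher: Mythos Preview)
This statement is Conjecture~\ref{conj-minority} in the paper and is \emph{not proved} there; the paper only remarks that it holds in majority algebras and in spirals, and lists it among open problems. So you are attempting to settle an open conjecture, and you should be suspicious of any argument that seems to go through without a genuinely new idea.

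Your Case~1 (when $\{a,b\}$ is already a subalgebra) is fine and is essentially the easy observation the paper alludes to. The trouble is entirely in Case~2. Your invocation of Lemma~\ref{free-semi-crit} is illegitimate: that lemma has as standing hypotheses that $\{a,c\}$ and $\{b,c\}$ are subalgebras of $\bA$, and nothing in your setup guarantees this when $|\bC|>3$. You acknowledge this and fall back on the congruence $\theta$ generated by $(a,f(a,b))$ and $(b,f(b,a))$, arguing that $\{a/\theta,b/\theta\}$ becomes a majority pair in the quotient. But in the very subcase you are trying to handle, $f(a,b)=f(b,a)=c$, this congruence is generated by $(a,c)$ and $(b,c)$, hence identifies $a$ with $b$ outright, and the reduction collapses. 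In the asymmetric subcase $f(a,b)=c$, $f(b,a)=d$ with $c\ne d$, you give no argument that $\theta$ separates $a$ from $b$; the appeal to Theorem~\ref{recursive}, Proposition~\ref{yellow}, and Lemma~\ref{max-maj} is a list of tools, not a proof, and none of those results controls the congruence generated by an arbitrary pair of semilattice-type identifications. Similarly, Lemma~\ref{subdirect-crit} requires $\{a,c\}$ to be a majority subalgebra and $b\rightarrow c$ as hypotheses, which you have not established.

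In short: the reduction to a three-element subalgebra containing $\{a,b\}$ is exactly the missing structural fact, and your proposed workaround via $\theta$ fails already in the first nontrivial case. The conjecture remains open.
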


Conjecture \ref{conj-minority} implies, in particular, that a minimal bounded width algebra $\bA$ has no \emph{minority pairs}, that is, pairs $a \ne b \in \bA$ such that there is a ternary term $p$ with
\[
p(a,a,b) = p(b,a,a) = b
\]
and
\[
p(a,b,b) = p(b,b,a) = a.
\]
Again, Conjecture \ref{conj-minority} holds in every majority algebra and every spiral.

\begin{conj}\label{conj-recursive} If $\bA$ is a minimal bounded width algebra and $a, b \in \bA$, then $(\Sg\{a,b\})\setminus\{a\}$ is a subalgebra of $\bA$.
\end{conj}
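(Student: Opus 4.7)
The plan is to proceed by induction on $|\Sg\{a,b\}|$, reducing to Theorem \ref{recursive} whenever possible. After replacing $\bA$ with $\Sg\{a,b\}$, assume $\bA$ is generated by $a,b$. If $\{a,b\}$ is itself a subalgebra, then $\bA = \{a,b\}$ and $\bA\setminus\{a\} = \{b\}$ is a subalgebra by idempotence. If $\{a,b\}$ is not a subalgebra but $\bA$ has no two-element majority subalgebra, then by Theorem \ref{recursive} the partition $\{S,\{a\},\{b\}\}$ (where $S$ is the set of maximal elements of $\bA$) is a congruence whose quotient is the free semilattice on $\{a\},\{b\}$; the complement $\bA\setminus\{a\} = S\cup\{b\}$ is the preimage of the subalgebra $\{\{b\},S\}$ of the quotient, hence a subalgebra. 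So the remaining (hybrid) case is: $\{a,b\}$ is not a subalgebra and $\bA$ does contain a two-element majority subalgebra.

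The central technical step in the hybrid case is the following Key Claim: no element $c \in \bA\setminus\{a\}$ satisfies $c \rightarrow a$. Granted the Key Claim, closure of $\bA\setminus\{a\}$ under $f$ is immediate: if $f(c_1,c_2) = a$ with $c_1,c_2 \in \bA\setminus\{a\}$, then (with $f$ chosen as in Corollary \ref{reach}) $a$ is reachable from $c_1$ in the partial semilattice order, producing some $c' \in \bA\setminus\{a\}$ with $c' \rightarrow a$, contradicting the claim. Closure under $g$ is handled separately: when two of the three inputs coincide, $g(x,x,y) \approx f(x,y)$ reduces to the $f$-case; for three distinct inputs $c_1,c_2,c_3 \in \bA\setminus\{a\}$ with $g(c_1,c_2,c_3) = a$, an analogous contradiction argument based on the subalgebra of $\bA^3$ generated by the cyclic shifts $(c_1,c_2,c_3),(c_2,c_3,c_1),(c_3,c_1,c_2)$, together with the yellow connectivity property (Proposition \ref{yellow}) and the cyclic form of $g$ from Corollary \ref{min}, should again produce a forbidden semilattice edge into $a$.

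To prove the Key Claim, I would work inside $\RR := \Sg_{\bA^2}\{(a,b),(b,a)\}$. Suppose $c \ne a$ and $c \rightarrow a$; write $c = u(a,b)$ and $c' = u(b,a)$ so that $(c,c') \in \RR$. Combining this with $(a,b),(b,a) \in \RR$ and the hypothesis $f(c,a) = f(a,c) = a$, componentwise application of $f$ gives $(a,f(c',b)) \in \RR$ and, by the flip symmetry of $\RR$, also $(f(c',b),a) \in \RR$. Iterating along a reachability path from $c$ to $a$ and making repeated use of Theorem \ref{strong-binary}(c) and Theorem \ref{maj-crit}, following the pattern of the proof of Theorem \ref{recursive}, I expect to eventually force $(a,a) \in \RR$; Lemma \ref{prepare} then forces $\{a,b\}$ to be a two-element semilattice subalgebra of $\bA$, contradicting the standing assumption.

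The main obstacle is the Key Claim. Unlike the pure spiral case, no natural congruence quotient is available: the ``across'' algebra in Figure \ref{four} already shows that the partition $\{\{a\},\{b\},\bA\setminus\{a,b\}\}$ need not be a congruence when majority subalgebras are present, so one cannot simply reduce to Theorem \ref{recursive} by passing to a quotient. The proof must therefore analyze $\RR$ directly, with a careful case split on whether $\RR$ is linked as a subdirect square and on the location of $a$ relative to the maximal strongly connected components of $\bA$, appealing essentially to Theorem \ref{subalg} to handle subreducts arising in the induction, and to the identity $f(f(x,y),f(y,x)) \approx f(x,y)$ from Corollary \ref{min}.
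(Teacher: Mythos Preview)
This statement is Conjecture~\ref{conj-recursive} in the paper, not a theorem: the paper offers no proof, only the remark that it ``is an analogue of Theorem~\ref{recursive} for general bounded width algebras'' and would be ``an enormous help to the classification.'' So there is nothing to compare your attempt against; the relevant question is whether your sketch actually closes the gap the author left open.

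It does not. You correctly isolate the hybrid case (where $\{a,b\}$ is not a subalgebra but some two-element majority subalgebra exists) as the only one not already covered by Theorem~\ref{recursive}, and your reduction of $f$-closure of $\bA\setminus\{a\}$ to the Key Claim ``no $c\ne a$ has $c\rightarrow a$'' via Corollary~\ref{reach} is sound. But the Key Claim itself is left unproved: you write that by ``iterating along a reachability path'' and ``repeated use of Theorem~\ref{strong-binary}(c) and Theorem~\ref{maj-crit}'' you ``expect to eventually force $(a,a)\in\RR$.'' That expectation is exactly the hard part. In the spiral case (Theorem~\ref{recursive}) the argument works because commutativity of $f$ forces every element to reach the maximal component, which gives strong control over $\RR=\Sg\{(a,b),(b,a)\}$; in the hybrid case $f$ need not be commutative, the maximal components can be disconnected, and the linkedness dichotomy for $\RR$ no longer cleanly separates into the two cases handled there. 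You have not indicated how to get traction on either branch.

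The $g$-closure argument for three distinct inputs is even more schematic: invoking ``the subalgebra of $\bA^3$ generated by the cyclic shifts,'' yellow connectivity, and the cyclic identity for $g$ does not by itself produce a semilattice edge into $a$, and you give no mechanism. Note that Lemma~\ref{maj-triple} and its relatives require the generators to be of the form $(a,a,b),(a,b,a),(b,a,a)$, not arbitrary cyclic shifts of a triple avoiding $a$. In short, your outline identifies the right reduction and the right obstacle, but the obstacle is precisely why the author records this as a conjecture rather than a theorem.
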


Conjecture \ref{conj-recursive} is an analogue of Theorem \ref{recursive} for general bounded width algebras. If true, it shows that any minimal bounded width algebra which is generated by two elements $a,b$ can be built out of the smaller minimal bounded width algebras $(\Sg\{a,b\})\setminus\{a\}$ and $(\Sg\{a,b\})\setminus\{b\}$, together with values for $f(a,b),f(b,a),$ and $g(x,y,z)$ where $\{a,b\} \subset \{x,y,z\}$. Since an algebra has bounded width if and only if every subalgebra which is generated by two elements has bounded width, this would be an enormous help to the classification of minimal bounded width algebras of small cardinality.

\begin{conj}\label{conj-invariant} A minimal bounded width algebra $\bA$ is determined (among minimal bounded width algebras) up to term equivalence by $\Inv_2(\bA)$, the collection of subalgebras of $\bA\times \bA$.
\end{conj}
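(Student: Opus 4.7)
The plan is to show that any two minimal bounded width algebras $\bA = (A, f, g)$ and $\bB = (A, f', g')$ on the same underlying set with $\Inv_2(\bA) = \Inv_2(\bB)$ satisfy $\Clo(\bA) = \Clo(\bB)$. Each of these clones is generated by its ternary operation (by Corollary \ref{min}), and $f', g'$ automatically preserve every relation in $\Inv_2(\bB) = \Inv_2(\bA)$, so the problem reduces to showing that for a minimal bounded width algebra every invariant is pp-definable from $\Inv_2$. By symmetry, it suffices to prove $f', g' \in \Clo(\bA)$.

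First I would catalog what $\Inv_2(\bA)$ records: for each unordered pair $\{a,b\}$ one reads off whether it is a semilattice subalgebra (with direction, since $a \rightarrow b$ iff $(b,b) \in \Sg_{\bA^2}\{(a,b),(b,a)\}$ by the preparation lemma), a majority subalgebra (by Theorem \ref{maj-crit}, iff $(a,b)$ is maximal in $\Sg_{\bA^2}\{(a,b),(b,a)\}$), or no subalgebra at all; in the last case the entire subdirect product $\Sg_{\bA^2}\{(a,b),(b,a)\}$ is available, constraining the pair $(f(a,b), f(b,a))$ to lie in one of its maximal strongly connected components. Thus $\bA$ and $\bB$ share the combinatorial doodle data used in Section \ref{s-three}, and in particular the graphs of two-element semilattice edges and two-element majority edges coincide.

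The main step is to prove $f' \in \Clo(\bA)$; once this is in hand, Theorem \ref{f-closed} expresses the restriction of $g'$ to any three-element subset closed under $f'$ that is not an all-majority triple as a term in $f'$, while the Baker-Pixley theorem handles the remaining all-majority triples by reconstructing $g'$ inside the majority subalgebra they generate from binary data alone. To obtain $f' \in \Clo(\bA)$, one would iteratively adjust $f$ by composing it with $g$ and with partial semilattice operations built via the Semilattice Iteration Lemma \ref{semi-iter}, in the spirit of the convergence argument at the end of the proof of Theorem \ref{connect}: first steer the pair $(f(a,b), f(b,a))$ into the correct maximal strongly connected component of $\Sg_{\bA^2}\{(a,b),(b,a)\}$, then onto the specific pair $(f'(a,b), f'(b,a))$, using that the latter pair is available in $\Inv_2(\bA)$ as a distinguished maximal element by Theorem \ref{maj-crit} and Corollary \ref{reach}.

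The hard part will be showing that this adjustment can be made uniformly in $(a,b)$ and actually converges to $f'$ rather than to some other candidate $f'' \in \Clo(\bA)$ that agrees with $f'$ only on the $\Inv_2$-visible information. This is a genuine rigidity statement for minimal bounded width algebras, and I expect a proof would have to go through Conjecture \ref{conj-recursive}: once one knows that $(\Sg\{a,b\}) \setminus \{a\}$ is a subalgebra for every pair $a, b$, an induction on $|A|$ can reduce the reconstruction of $f'(a,b)$ to the reconstructions on the proper subalgebras $\Sg\{a,b\} \setminus \{a\}$ and $\Sg\{a,b\} \setminus \{b\}$, with the final value recovered from the majority/semilattice-edge data on the ``top layer''. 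Without Conjecture \ref{conj-recursive} (or an equivalent rigidity input), the most one can expect to verify with the tools developed here is Conjecture \ref{conj-invariant} for minimal bounded width algebras of small size, by direct inspection of the classifications in Theorems \ref{class-3} and \ref{class-4}.
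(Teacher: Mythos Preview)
This statement is listed in the paper as a \emph{conjecture}, not a theorem; the paper does not prove it. The only evidence offered is the remark that it holds for majority algebras (since Baker--Pixley guarantees every subpower of a majority algebra is determined by its binary projections), together with the observation that $\Inv_2(\bA)$ already encodes all automorphisms, congruences, and the analogous data for quotients of subalgebras.

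Your proposal is therefore not to be compared against a proof in the paper---there is none. More to the point, what you have written is not a proof either, and you are candid about this: the ``hard part'' you identify, namely the uniform rigidity needed to force your iteratively adjusted $f$ to converge to $f'$ rather than to some other $f''$ with the same $\Inv_2$-visible footprint, is exactly the content of the conjecture, and you end by making your argument conditional on Conjecture~\ref{conj-recursive}, which is also open. So the proposal is an outline of a plausible attack, not a proof, and it should be presented as such. A couple of smaller points: your appeal to Theorem~\ref{f-closed} to recover $g'$ from $f'$ only gives that $g'$ is \emph{determined} by $f'$ on each non-majority triple, not that it is a term in $f'$ alone on that triple (the constructions in that proof use both $f$ and $g$), so even granting $f'\in\Clo(\bA)$ you still need a global argument to place $g'$ in $\Clo(\bA)$; and your ``iff'' reading of Theorem~\ref{maj-crit} needs the easy converse spelled out.
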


A general algebra $\bA$ is determined up to term equivalence by $\Inv(\bA)$, the collection of all subalgebras of powers of $\bA$. Since any subpower of a majority algebra is determined by its projections onto pairs of coordinates, Conjecture \ref{conj-invariant} holds for majority algebras. $\Inv_2(\bA)$ contains a large amount of the structural information of $\bA$: every automorphism of $\bA$ and every congruence of $\bA$ is a subalgebra of $\bA\times \bA$, and similarly we can read off the automorphism groups and congruence lattices of all quotients of subalgebras of $\bA$ from $\Inv_2(\bA)$. 

Since every minimal bounded width algebra has a ternary term as its basic operation, in order to describe $\Inv_2(\bA)$ one only has to describe the collection of subalgebras of $\bA\times \bA$ which are generated by at most three elements. Thus, if true Conjecture \ref{conj-invariant} would give an efficient method to test whether two minimal bounded width algebras are term equivalent.

\subsection*{Acknowledgements} The author was inspired to work on this problem by an arXiv paper of Jelena Jovanovi{\'{c}} \cite{jovanovic} which searched for small examples of digraphs with bounded width algebras of polymorphisms in order to make a conjecture about Mal'cev conditions satisfied in all bounded width algebras, as well as the later paper \cite{optimal-maltsev} which settled this conjecture while raising many new ones. The author would also like to thank Petar Markovi{\'{c}} for reaching out and making suggestions on the introduction, and Andrei Bulatov for giving his blessing to cannibalize many of the arguments and ideas from his manuscript \cite{bulatov-bounded}.

\bibliography{csp}
\bibliographystyle{plain}

\appendix

\section{Strengthening of the Yellow Connectivity Property}\label{a-yellow}

The argument in this section follows Bulatov's argument from \cite{bulatov-bounded}. Although the logical structure of the argument has been (violently) rearranged, the main ideas used to prove the intermediate lemmas can be found in Bulatov's work.

\begin{defn} Let $\bA_f = (A,f)$ be an idempotent algebra which has been prepared as in Lemma \ref{prepare}. We say that $\bA_f$ is \emph{yellow connected} if for any pair of upwards closed subsets $A,B$ of $\bA_f$, there are $a \in A$ and $b \in B$ such that $f$ acts as first projection on $\{a,b\}$, that is, $f(a,b) = a$ and $f(b,a) = b$. We say that a pair of maximal elements $a,b$ of $\bA_f$ are yellow connected if there are $a',b'$ in the strongly connected components of $a,b,$ respectively, such that $f$ acts as first projection on $\{a',b'\}$. We say that $\bA_f$ is \emph{hereditarily yellow connected} if every subalgebra of $\bA_f$ is yellow connected.
\end{defn}

We will show that if $\bA = (A,f,g)$ is a minimal bounded width algebra such that $f,g$ are chosen as in Theorem \ref{connect}, then every subalgebra $\bB$ of $\bA_f = (A,f)$ is yellow connected. We will prove this by induction on the size of $\bB$. We easily see that we can reduce to the case that $\bB$ is generated by two elements $a,b$ such that $(a,b)$ is a maximal element of $\Sg_{\bB^2}\{(a,b),(b,a)\}$.

\begin{defn} Say that $\bA, \bB$ are a \emph{good pair} with generators $a,b$ if
\begin{itemize}
\item $\bA = (A,f,g)$ is a minimal bounded width algebra such that $f,g$ are chosen as in Theorem \ref{connect},

\item $\bB = \Sg_{\bA_f}\{a,b\}$ where $\bA_f = (A,f)$,

\item $(a,b)$ is a maximal element of $\Sg_{\bB^2}\{(a,b),(b,a)\}$,

\item every proper subalgebra of $\bB$ is yellow connected, and

\item for any $(a',b')$ in the strongly connected component of $(a,b)$ in $\Sg_{\bB^2}\{(a,b),(b,a)\}$ we have $\Sg_{\bB^2}\{(a',b'),(b',a')\} = \Sg_{\bB^2}\{(a,b),(b,a)\}$.
\end{itemize}
\end{defn}

We will need the following easy result.

\begin{prop}\label{upwards-maj} Suppose that $R \subseteq A \times B$ is subdirect and closed under a partial semilattice operation $s$, and that $s$ acts as first projection on $A$. Then for any $a \in A$, the set $\pi_2(R \cap (\{a\}\times B))$ is upwards closed in $B$.
\end{prop}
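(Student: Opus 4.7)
The plan is a direct application of the partial semilattice operation applied coordinatewise, using subdirectness to produce a suitable witness.

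Fix $a \in A$ and suppose $b \in \pi_2(R \cap (\{a\}\times B))$, so that $(a,b) \in R$. Let $b' \in B$ with $b \rightarrow_s b'$, i.e.\ $s(b,b') = b'$. I need to show $(a,b') \in R$. First I would use the subdirectness of $R$ to produce some $a'' \in A$ with $(a'',b') \in R$. Then, since $R$ is closed under $s$, the pair
\[
s\!\left(\begin{bmatrix} a\\ b\end{bmatrix}, \begin{bmatrix} a''\\ b'\end{bmatrix}\right) = \begin{bmatrix} s(a,a'')\\ s(b,b')\end{bmatrix}
\]
lies in $R$. By hypothesis $s$ acts as first projection on $A$, so $s(a,a'') = a$, and by choice of $b'$ we have $s(b,b') = b'$, giving $(a,b') \in R$ as required.

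The argument is a one-liner; there is no real obstacle. The only thing worth noting is that one needs the right witness on the $A$-side: naively one might try to use $s((a,b),(a,b')) = (a, s(b,b'))$, but $(a,b')$ is precisely what we are trying to show is in $R$, so that would be circular. Subdirectness is what unblocks this, supplying some $(a'',b') \in R$ whose first coordinate is then collapsed back to $a$ by the first-projection hypothesis on $A$.
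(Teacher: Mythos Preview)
Your proof is correct and is essentially identical to the paper's own argument: both use subdirectness to find some $(a'',b') \in R$, then apply $s$ to $(a,b)$ and $(a'',b')$ and use the first-projection hypothesis on $A$ together with $s(b,b')=b'$ to conclude $(a,b') \in R$.
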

\begin{proof} Suppose that $(a,b) \in R$ and that $b' \in B$ with $b \rightarrow b'$. Since $R$ is subdirect, there is $a' \in A$ with $(a',b') \in R$. Then
\[
\begin{bmatrix} a\\ b'\end{bmatrix} = s\left(\begin{bmatrix} a\\ b\end{bmatrix}, \begin{bmatrix} a'\\ b'\end{bmatrix}\right) \in R.\qedhere
\]
\end{proof}


Recall Lemma \ref{maj-triple}, and note that its proof didn't use the yellow connectivity property.

\begin{replem}{maj-triple} Suppose that $\bB$ is generated by $a,b \in \bB$, and let $\RR$ be a subalgebra of $\bB^3$ which contains $\Sg_{\bB^3}\{(a,a,b),(a,b,a),(b,a,a)\}$. Let $U,V,W$ be any three maximal strongly connected components of $\bB$. If $\RR \cap (U\times V\times W) \ne \emptyset$, then $U \times V \times W \subseteq \RR$.
\end{replem}

\begin{prop}\label{ap1} Suppose that $\bB$ is a subproduct of hereditarily yellow connected algebras. Then $\bB$ is hereditarily yellow connected.
\end{prop}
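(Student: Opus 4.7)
The plan is to reduce to a subdirect product of two hereditarily yellow connected (HYC) factors, then construct a yellow edge using Proposition~\ref{upwards-maj}. By induction on the number of factors (viewing $\bB \le \bA_1 \times \prod_{i\ge 2} \bA_i$ and using the inductive hypothesis to see that $\prod_{i\ge 2} \bA_i$ is HYC), it suffices to treat two factors. Since HYC passes trivially to subalgebras, and since every subalgebra of $\bB \le \bA_1 \times \bA_2$ is a subdirect product of subalgebras of $\bA_1, \bA_2$ (themselves HYC), we reduce to showing: any $\bB \le_{sd} \bA_1 \times \bA_2$ with both factors HYC is yellow connected.

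Fix such a $\bB$ and take upwards closed $U, V \subseteq \bB$; by finiteness it suffices to find a yellow edge between two MSCCs $C_U \subseteq U$ and $C_V \subseteq V$ of $\bB$. Using subdirectness and the componentwise action of the partial semilattice $s$, one checks that the projections $C_U^i := \pi_i(C_U)$ and $C_V^i := \pi_i(C_V)$ are MSCCs of $\bA_i$. HYC of $\bA_1$ then yields a yellow pair $\{a_1, b_1\}$ with $a_1 \in C_U^1, b_1 \in C_V^1$; this makes $\{a_1, b_1\}$ a two-element majority subalgebra of $\bA_1$, on which $s$ acts as first projection.

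The main construction is a subalgebra $\bB'' \le \bB$ that isolates the fibers over $a_1$ and $b_1$. Set $F_U := \{x : (a_1, x) \in C_U\}$ and $F_V := \{y : (b_1, y) \in C_V\}$, both nonempty, and let $\bB''$ be the subalgebra of $\bB$ generated by $(\{a_1\} \times F_U) \cup (\{b_1\} \times F_V)$. Since $f$ is first projection on $\{a_1, b_1\}$, we have $\pi_1(\bB'') = \{a_1, b_1\}$, so $\bB'' \le_{sd} \{a_1, b_1\} \times \pi_2(\bB'')$. Using that $f$ may be chosen so that $f(x,y)$ is always reachable from $x$ (Corollary~\ref{reach}), an induction on term complexity shows that $\bB'' \cap (\{a_1\} \times \bA_2) = \{a_1\} \times F_U$ and similarly for $b_1$, so $\pi_2(\bB'') = F_U \cup F_V$ is a subalgebra of $\bA_2$.

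Proposition~\ref{upwards-maj} applied to $\bB''$ then yields that $F_U$ and $F_V$ are upwards closed in $\pi_2(\bB'')$. Since $\pi_2(\bB'') \le \bA_2$ is HYC, yellow connectivity applied to the upwards closed subsets $F_U, F_V$ produces a yellow pair $\{x, y\}$ in $\bA_2$ with $x \in F_U$ and $y \in F_V$. Then $\{(a_1, x), (b_1, y)\}$ is the desired yellow edge in $\bB$, lying in $C_U \times C_V$. The main obstacle is verifying the fiber description of $\bB''$: one must rule out that mixed $f$-expressions of $a_1$- and $b_1$-generators produce elements $(a_1, z) \in \bB''$ with $z \notin F_U$, which is where the Corollary~\ref{reach} reachability property (together with the upwards closure of $C_U$ in $\bB$) is used.
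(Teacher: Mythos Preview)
Your overall strategy—reduce to two factors, pick a yellow pair $\{a_1,b_1\}$ in the first factor, then find a yellow pair in the appropriate fibers using the second factor—is exactly the paper's. The problem is the detour through the generated subalgebra $\bB''$ and the appeal to Corollary~\ref{reach}. That corollary says $f$ \emph{can be chosen} so that $f(x,y)$ is reachable from $x$; it is not a property of the $f$ fixed in Appendix~\ref{a-yellow} (which is only chosen as in Theorem~\ref{connect}). Worse, the proof of Corollary~\ref{reach} goes through Theorem~\ref{maj-crit} and Lemma~\ref{max-maj}, which rely on the Yellow Connectivity Property that Appendix~\ref{a-yellow} is in the process of establishing, so invoking it here is circular. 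Without it your fiber description $\bB'' \cap (\{a_1\}\times\bA_2) = \{a_1\}\times F_U$ need not hold: $f((a_1,x_1),(b_1,y_1)) = (a_1, f(x_1,y_1))$, but nothing forces $(a_1, f(x_1,y_1))$ to land in the MSCC $C_U$.

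The fix is to skip $\bB''$ entirely. Take $R = \bB \cap (\{a_1,b_1\}\times\bA_2)$, which is already a subalgebra since $\{a_1,b_1\}$ is closed under $f$. Since $s\in\Clo(f)$ and $f$ is first projection on $\{a_1,b_1\}$, so is $s$, and Proposition~\ref{upwards-maj} gives that the full $a_1$- and $b_1$-fibers of $R$ are upwards closed in $\pi_2(R)$. Combining this with the upwards closure of $U,V$ in $\bB$ immediately shows that $U'' := \{c : (a_1,c)\in U\}$ and $V'' := \{c : (b_1,c)\in V\}$ are upwards closed in $\pi_2(R)$. Now HYC of $\bA_2$ applied to the subalgebra $\pi_2(R)$ produces the yellow pair you want—no reachability argument needed. (You also don't need to pass to MSCCs $C_U, C_V$; working directly with the upwards closed sets $U,V$ is cleaner and sufficient.)
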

\begin{proof} This follows from Proposition \ref{upwards-maj}.
\end{proof}

\begin{lem}\label{al1} Let $\bA, \bB$ be a good pair generated by $a,b$. Suppose that $\bB$ has a non-full congruence $\theta$ and that there is a ternary term $g'$ of $\bA$ such that $g'$ acts as majority on $\{a/\theta, b/\theta\}$ and such that $\Sg_{\bB^2}\{(a,b),(b,a)\}$ is closed under the binary terms $g'(x,x,y), g'(x,y,x), g'(y,x,x)$. Then $\{a,b\}$ is a majority subalgebra of $\bA$.
\end{lem}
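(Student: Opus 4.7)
The plan is to prove
\[
(a,a,a) \in \bS := \Sg_{\bA^3}\{(a,a,b),(a,b,a),(b,a,a)\},
\]
from which, via Theorem~\ref{subalg} applied to $S=\{a,b\}$ with $m$ the corresponding ternary term, we would conclude that $\{a,b\}$ is a majority subalgebra of $\bA$: any witnessing $g'' \in \Clo(\bA)$ with $g''(a,a,b)=g''(a,b,a)=g''(b,a,a)=a$ automatically acts as a majority on $\{a,b\}$ by swapping $a\leftrightarrow b$, and minimality of $\bA$ then forces $f$ to restrict to first projection on $\{a,b\}$.

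The first (easy) step is to exhibit a tuple in $\bS$ whose coordinates all lie in $a/\theta$. Applying $g'$ coordinatewise to the three generators of $\bS$ yields
\[
(d,e,h) := (g'(a,a,b),\,g'(a,b,a),\,g'(b,a,a)) \in \bS,
\]
and since $g'$ is a majority modulo $\theta$ on $\{a/\theta, b/\theta\}$, each of $d,e,h$ is congruent to $a$ modulo $\theta$. Applying the binary reduct $g'(x,x,y)$ coordinatewise to $(a,b),(b,a) \in \RR$ and using the closure hypothesis gives $(d,\,g'(b,b,a)) \in \RR \subseteq \bB^2$, so $d \in \bB$; the analogous arguments with $g'(x,y,x)$ and $g'(y,x,x)$ place $e$ and $h$ in $\bB$. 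Thus $(d,e,h) \in \bS \cap (a/\theta \cap \bB)^3$.

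The second step is to promote $(d,e,h)$ to the diagonal $(a,a,a) \in \bS$. Here I would exploit that $a/\theta \cap \bB$ is a \emph{proper} $f$-subalgebra of $\bB$ (since $\theta$ is non-full), and therefore hereditarily yellow connected by the good-pair hypothesis applied to subalgebras of $\bB$. Inside $a/\theta \cap \bB$ I traverse a chain of two-element semilattice and two-element majority subalgebras starting from each of $d,e,h$, realising each step within $\bS$ by applying the corresponding operations of $\bA$ coordinatewise to tuples already known to be in $\bS$, until I reach a tuple whose three coordinates all lie in the maximal strongly connected component $U \subseteq \bB$ containing $a$ (note $a$ is maximal in $\bB$: subdirectness of $\RR$ together with the maximality of $(a,b)$ in $\RR$ forces it). Carrying out the same argument on the $b$-side (using the pairs $(g'(b,b,a), g'(b,a,b), g'(a,b,b))$ obtained from $\RR$ by the other binary reducts) shows that the pairwise projections of $\bS$ onto $U \times U$ are full, and then Lemma~\ref{maj-triple} applied to $\bB$ gives $U \times U \times U \subseteq \bS$; in particular $(a,a,a) \in \bS$.

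The main obstacle will be the second step: keeping the intermediate tuples inside $\bS$ rather than in the larger $\bA$-subalgebra generated by $(d,e,h)$, and synchronising the three coordinates so that the tuple landing in $U \times U \times U$ is found, rather than the a priori easier tuple in $U \times V \times W$ for three distinct maximal strongly connected components of $\bB$. The good-pair regeneration property that $\Sg_{\bB^2}\{(a',b'),(b',a')\} = \RR$ for every $(a',b')$ in the maximal strongly connected component of $(a,b)$ is what allows one to recycle the pairs coming from binary reducts of $g'$ into the required pairwise-full projections, and this is the place where the proof will have to work the hardest.
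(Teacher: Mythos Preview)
Your first step is fine, but the plan has a structural gap: showing $(a,a,a)\in\Sg_{\bA^3}\{(a,a,b),(a,b,a),(b,a,a)\}$ only produces a term $g''$ with $g''(a,a,b)=g''(a,b,a)=g''(b,a,a)=a$; it says nothing about $g''(b,b,a),g''(b,a,b),g''(a,b,b)$. Your sentence ``automatically acts as a majority on $\{a,b\}$ by swapping $a\leftrightarrow b$'' is unjustified---there is no automorphism of $\bA$ interchanging $a$ and $b$ in general, and minimality of $\bA$ does not supply one. Concretely, $g''$ could act as the semilattice toward $a$ on $\{a,b\}$, and then $\{a,b\}$ is a semilattice subalgebra rather than a majority one. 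The paper avoids this by working one level up: set $\bB_2=\Sg_{\bB^2}\{(a,b),(b,a)\}$, $a_2=(a,b)$, $b_2=(b,a)$, and show $(a_2,a_2,a_2)\in\RR$ for the $\bA^6$-subalgebra $\RR=\Sg_{\bA^6}\{(a_2,a_2,b_2),(a_2,b_2,a_2),(b_2,a_2,a_2)\}\cap\bB_2^3$. A witnessing term then sends both $(a,a,b)$-type and $(b,b,a)$-type triples to the majority simultaneously, and Theorem~\ref{subalg} applies. (This is exactly the passage from Lemma~\ref{max-maj} to Theorem~\ref{maj-crit}.)

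Your second step is also too loose. Once in $\bB_2$, the element $g'(a_2,a_2,b_2),g'(a_2,b_2,a_2),g'(b_2,a_2,a_2)$ lands in $(a_2/\theta)^3$, but its three coordinates lie in possibly different maximal strongly connected components $F_1,F_2,F_3$ of $\bB_2$. The paper does not ``traverse chains'' coordinate by coordinate; instead it alternates Lemma~\ref{maj-triple} (to get full boxes $F_1\times F_2\times F_3\subseteq\RR$) with yellow connectivity of the proper subalgebra $a_2/\theta\le\bB_2$ (to find a majority pair $\{a',f_1\}$ between the $a_2$-component and $F_1$), then uses $g'$---after arranging that it acts as majority or first projection on every two-element majority subalgebra---to collapse $F_1\times F_2\times F_3$ down to $A\times C\times D$, then $A\times A\times E$, then $A^3$, where $A$ is the component of $a_2$. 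The closure of $\bB_2$ under the binary reducts of $g'$ is what guarantees $\RR$ is preserved at each application of $g'$; your ``$b$-side'' computation and ``pairwise-full projections'' do not substitute for this reduction.
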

\begin{proof} By reordering the inputs to $g'$, we may assume without loss of generality that $g'$ acts on two element majority subalgebras of $\bA$ as either majority or first projection. Let $\bB_2 = \Sg_{\bB^2}\{(a,b),(b,a)\}$, let $a_2 = (a,b)$, and let $b_2 = (b,a)$, and extend $\theta$ to $\bB_2$ in the natural way. Let $\RR$ be
\[
\Sg_{\bA^6}\left\{\begin{bmatrix} a & b\\a & b\\b & a\end{bmatrix}, \begin{bmatrix} a & b\\b & a\\a & b\end{bmatrix}, \begin{bmatrix} b & a\\a & b\\a & b\end{bmatrix}\right\} \cap \bB_2^3,
\]
so that
\[
\Sg_{\bB_2^3}\left\{\begin{bmatrix} a_2\\ a_2\\ b_2\end{bmatrix}, \begin{bmatrix} a_2\\ b_2\\ a_2\end{bmatrix}, \begin{bmatrix} b_2\\ a_2\\ a_2\end{bmatrix}\right\} \subseteq \RR
\]
and $\RR$ is closed under $f$ and the binary terms $g'(x,x,y), g'(x,y,x), g'(y,x,x)$. Then
\[
g'\left(\begin{bmatrix} a_2\\ a_2\\ b_2\end{bmatrix}, \begin{bmatrix} a_2\\ b_2\\ a_2\end{bmatrix}, \begin{bmatrix} b_2\\ a_2\\ a_2\end{bmatrix}\right) \in \RR \cap (a_2/\theta)^3.
\]
Then by Lemma \ref{maj-triple}, there are maximal strongly connected components $F_1, F_2, F_3$ of $\bB_2$ which are reachable from the elements $g'(a_2,a_2,b_2), g'(a_2,b_2,a_2), g'(b_2,a_2,a_2)$ of $a_2/\theta$ such that $F_1\times F_2\times F_3 \subseteq \RR$. Note that since $a_2$ is maximal in $\bB_2$, we must have $F_i \cap (a_2/\theta) \ne \emptyset$ for each $i$. Since $a_2/\theta \subseteq (a/\theta)\times(b/\theta)$ is a yellow connected algebra, there is some $(f_1,f_2,f_3) \in F_1\times F_2\times F_3 \cap (a_2/\theta)^3 \subseteq \RR \cap (a_2/\theta)^3$ and some $a'$ in the strongly connected component of $\bB_2$ containing $a_2$ such that $\{a',f_1\}$ is a majority subalgebra of $\bA^2$.

Since $\RR$ contains $\Sg_{\bB_2^3}\{(a_2,a_2,b_2), (a_2,b_2,a_2)\}$, there are some maximal elements $u,v \in b_2/\theta$ such that $(a_2,f_2,u) \in \RR$ and $(a_2,v,f_3) \in \RR$. By Lemma \ref{maj-triple} we have $(a',f_2,u), (a',v,f_3) \in \RR$ as well. Thus,
\[
\begin{bmatrix} a'\\ f_2'\\ f_3'\end{bmatrix} = g'\left(\begin{bmatrix} a'\\ f_2\\ u\end{bmatrix}, \begin{bmatrix} a'\\ v\\ f_3\end{bmatrix}, \begin{bmatrix} f_1\\ f_2\\ f_3\end{bmatrix}\right) \in \RR \cap (a_2/\theta)^3.
\]
Letting $A$ be the strongly connected component of $\bB_2$ which contains $a_2$, we see that there are strongly connected components $C,D$ of $\bB_2$ such that $C$ is reachable from $f_2'$ and $D$ is reachable from $f_3'$, with $\RR \cap A\times C\times D \ne \emptyset$. Note that $A,C,D$ must have nontrivial intersection with $a_2/\theta$ (since $a_2$ was assumed maximal). By Lemma \ref{maj-triple}, we have $A\times C\times D \subseteq \RR$.

Since $a_2/\theta$ is yellow connected, there are $a'' \in A \cap (a_2/\theta)$ and $c \in C \cap (a_2/\theta)$ such that $\{a'',c\}$ is a majority subalgebra of $\bA^2$. Letting $d$ be any element of $D\cap (a_2/\theta)$, we have $(a'',c,d), (c,a'',d) \in \RR \cap (a_2/\theta)^3$. From $(a_2,a_2,b_2) \in \RR$ and Lemma \ref{maj-triple}, there is some $w \in b_2/\theta$ with $(a'',a'',w) \in \RR$. So we have
\[
\begin{bmatrix} a''\\ a''\\ d'\end{bmatrix} = g'\left(\begin{bmatrix} a''\\ a''\\ w\end{bmatrix}, \begin{bmatrix} a''\\ c\\ d\end{bmatrix}, \begin{bmatrix} c\\ a''\\ d\end{bmatrix}\right) \in \RR \cap (a_2/\theta)^3.
\]
Applying Lemma \ref{maj-triple} again, we have $A\times A\times E \subseteq \RR$ for $E$ some maximal strongly connected component of $\bB_2$ which is reachable from $d'$.

Since $a_2/\theta$ is hereditarily yellow connected, there are $a''' \in A$ and $e \in E$ with $\{a''',e\}$ a majority subalgebra of $\bA^2$. Since $A\times A\times E \subseteq \RR$, we have
\[
\begin{bmatrix} a'''\\ a'''\\ a'''\end{bmatrix} = g\left(\begin{bmatrix} a'''\\ a'''\\ e\end{bmatrix}, \begin{bmatrix} a'''\\ e\\ a'''\end{bmatrix}, \begin{bmatrix} e\\ a'''\\ a'''\end{bmatrix}\right) \in \RR \cap A^3.
\]
Applying Lemma \ref{maj-triple} one last time, we see that $(a_2,a_2,a_2) \in A\times A\times A \subseteq \RR$, so by Theorem \ref{subalg} $\{a,b\}$ is a majority subalgebra of $\bA$.
\end{proof}

\begin{cor}\label{ac1} Suppose that $\bA,\bB$ is a good pair, and that $\bB$ has a nontrivial quotient $\bB'$ such that $\bB'$ is yellow connected. Then $\bB$ is yellow connected.
\end{cor}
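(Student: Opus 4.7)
The plan is to combine the inductive hypothesis that proper subalgebras of $\bB$ are yellow connected with Lemma \ref{al1}, by lifting the majority pair supplied by yellow connectivity of $\bB' = \bB/\theta$ to a pair of generators for which Lemma \ref{al1} can be invoked.

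Given nonempty upwards closed subsets $U, V \subseteq \bB$, first I would project modulo $\theta$: the images $U/\theta, V/\theta$ are nonempty upwards closed subsets of $\bB'$, so yellow connectivity of $\bB'$ produces classes $\bar u \in U/\theta$, $\bar v \in V/\theta$ with $\{\bar u, \bar v\}$ a majority subalgebra of $\bB'$. Pick any representatives $u_0 \in \bar u \cap U$, $v_0 \in \bar v \cap V$ and form $\bS_0 = \Sg_{\bB^2}\{(u_0, v_0), (v_0, u_0)\}$. Since $f$ acts as first projection on $\{\bar u, \bar v\}$ in $\bB'$, and since every partial semilattice term on a two element majority subalgebra also acts as first projection there, a short computation shows that every pair reachable from $(u_0, v_0)$ inside $\bS_0$ lies in $\bar u \times \bar v$.

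Next I would refine $(u_0, v_0)$ in two steps. Going up to a maximal element $(u_*, v_*)$ of $\bS_0$, upwards closure of $U, V$ together with the previous observation yields $u_* \in \bar u \cap U$ and $v_* \in \bar v \cap V$. Inside the strongly connected component of $(u_*, v_*)$ in $\bS_0$ I would then choose $(u^*, v^*)$ minimizing the cardinality of $\Sg_{\bB^2}\{(u^*, v^*), (v^*, u^*)\}$. Still $u^* \in \bar u \cap U$ and $v^* \in \bar v \cap V$, and the minimization guarantees that for every pair $(u', v')$ in the strongly connected component of $(u^*, v^*)$ inside $\Sg_{\bB^2}\{(u^*, v^*), (v^*, u^*)\}$ one has $\Sg_{\bB^2}\{(u', v'), (v', u')\} = \Sg_{\bB^2}\{(u^*, v^*), (v^*, u^*)\}$, which is the last clause of the good pair definition at the new generators.

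The finish splits on whether $\bC = \Sg_\bB\{u^*, v^*\}$ equals $\bB$. If $\bC \subsetneq \bB$, then $\bC$ is a proper subalgebra of $\bB$, hence yellow connected by hypothesis, and applying its yellow connectivity to the nonempty upwards closed subsets $U \cap \bC \ni u^*$ and $V \cap \bC \ni v^*$ produces the desired majority pair. If instead $\bC = \bB$, then $(\bA, \bB)$ is a good pair generated by $u^*, v^*$ (the remaining clauses are inherited), and I would apply Lemma \ref{al1} with $g' = g$: this $g'$ acts as majority on $\{u^*/\theta, v^*/\theta\} = \{\bar u, \bar v\}$, and $\Sg_{\bB^2}\{(u^*, v^*), (v^*, u^*)\}$ is trivially closed under $g(x,x,y) \approx g(x,y,x) \approx g(y,x,x) \approx f(x,y)$. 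The conclusion is that $\{u^*, v^*\}$ is a majority subalgebra of $\bA$, and in particular $f$ acts as first projection on the pair $(u^*, v^*) \in U \times V$. The main obstacle is assembling the good pair axioms at the lifted generators; the last clause of the definition does not come for free from yellow connectivity of $\bB'$, and this is what forces the second refinement (minimizing $|\Sg_{\bB^2}\{(u^*, v^*), (v^*, u^*)\}|$) on top of the natural first step of going up to a maximum in $\bS_0$.
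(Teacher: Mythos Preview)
Your approach is the paper's: project the two upwards closed sets to $\bB' = \bB/\theta$, use yellow connectivity there to get a first-projection pair $\{\bar u,\bar v\}$, lift to representatives, and then either fall back on yellow connectivity of a proper subalgebra (when $\Sg_\bB\{u^*,v^*\}\ne\bB$) or invoke Lemma~\ref{al1} with $g'=g$. The paper's version is much terser and simply asserts that Lemma~\ref{al1} applies; you are correct that the good pair axioms at the lifted generators are not automatic and deserve attention.

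There is one gap in your second refinement. Minimizing $\lvert\Sg_{\bB^2}\{(u^*,v^*),(v^*,u^*)\}\rvert$ over the maximal strongly connected component $M$ of $\bS_0$ does secure the last good pair clause, but it does \emph{not} secure the third clause, that $(u^*,v^*)$ is maximal in $\bS^* := \Sg_{\bB^2}\{(u^*,v^*),(v^*,u^*)\}$. Maximality in $\bS_0$ does not descend to the smaller algebra $\bS^*$: a return path from some $(c,d)\in\bS^*$ back to $(u^*,v^*)$ inside $\bS_0$ may pass through $\bS_0\setminus\bS^*$. So ``the remaining clauses are inherited'' is not quite right. The fix is one more step: after minimizing, pass to a maximal element $(u^{**},v^{**})$ of $\bS^*$ reachable from $(u^*,v^*)$. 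This element still lies in $M$ (being reachable from $(u^*,v^*)$ inside $\bS^*\subseteq\bS_0$), so by your minimality choice $\Sg_{\bB^2}\{(u^{**},v^{**}),(v^{**},u^{**})\}=\bS^*$; hence $(u^{**},v^{**})$ is maximal in its own generated subalgebra, the last clause holds by the same minimality argument, and the containments $u^{**}\in\bar u\cap U$, $v^{**}\in\bar v\cap V$ are preserved by your earlier observation about partial semilattice terms on $\{\bar u,\bar v\}$. With this adjustment Lemma~\ref{al1} applies as you intended.
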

\begin{proof} Let $A,B$ be maximal strongly connected components of $\bB$, and let $\theta$ be the congruence such that $\bB' = \bB/\theta$. Then there are $a \in A, b \in B$ with $\{a/\theta, b/\theta\}$ a set subalgebra of $\bB'$. If $a/\theta = b/\theta$, then we are done since $a/\theta$ is a proper subalgebra of $\bB$. Otherwise $\Sg_{\bB}\{a,b\}$ has a surjective homomorphism to a two element set, so we can apply Lemma \ref{al1} with $g' = g$ to see that $a,b$ are yellow connected.
\end{proof}

\begin{lem}\label{al2} Suppose $\bA, \bB$ are a good pair generated by $a,b$. Let $(c,d) \in \Sg_{\bB^2}\{(a,b),(b,a)\}$ be such that the restriction of $f$ to $\{(a,b),(c,d)\}$ is first projection.

Suppose there is some $(a',b')$ which is reachable from $(a,b)$ and some maximal $(c',d')$ which is reachable from $(c,d)$ in $\Sg_{\bB^2}\{(a,b),(b,a)\}$ such that the restriction of $f$ to $\{(a',b'),(d',c')\}$ is first projection. Then $\{a,b\}$ is a majority subalgebra of $\bA$.
\end{lem}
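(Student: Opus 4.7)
The strategy is to reduce to Lemma~\ref{al1}: I must produce a non-full congruence $\theta$ on $\bB$ and a ternary term $g'$ of $\bA$ such that $g'$ acts as majority on $\{a/\theta,b/\theta\}$ and whose binary minors $g'(x,x,y),g'(x,y,x),g'(y,x,x)$ preserve $\bB_2:=\Sg_{\bB^2}\{(a,b),(b,a)\}$. The cleanest way to arrange the latter is to insist that the three binary minors of $g'$ be honest $f$-terms; the basic weak majority $g$ of $\cV_{mbw}$ already has this feature, and I will mimic it.

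To set up, I fix binary $f$-terms $r_1,r_2,r_3$ realizing $(a',b'),(c,d),(d',c')$ respectively as $(r_i(a,b),r_i(b,a))$, noting the deliberate coordinate swap in $r_3$; the two majority-pair hypotheses then say that $\{(a,b),(c,d)\}$ and $\{(a',b'),(d',c')\}$ are $g$-majority subalgebras of $\bA^2$. Preparation of $\bA$ together with $(a,b)\to(a',b')$ gives semilattice pairs $\{a,a'\},\{b,b'\}$ in $\bA$; the given chains $(c,d)\to^{*}(c',d')$ and $(d,c)\to^{*}(d',c')$ in $\bB_2$ yield semilattice chains $c\to^{*}c'$ and $d\to^{*}d'$ in $\bA$ on each coordinate. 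Let $\theta$ be the $\bB$-congruence generated by all these semilattice edges, so that $a\equiv a'$, $b\equiv b'$, $c\equiv c'$, $d\equiv d'$ modulo $\theta$. The critical claim is that $\theta$ is non-full, i.e.\ $a\not\equiv b\pmod{\theta}$: any collapsing witness would have to be lifted, via Proposition~\ref{upwards-maj} and the good-pair description $\bB_2=\Sg_{\bB^2}\{(a',b'),(b',a')\}$, to produce a pair in $\bB_2$ strictly above $(a,b)$ in the partial-semilattice order or to identify $(a,b)$ with $(b,a)$, contradicting the maximality of $(a,b)$ in $\bB_2$.

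The term $g'$ is then designed so that on each of the input patterns $(a,a,b),(a,b,a),(b,a,a)$, its three inner $r$-subterms realize (up to cyclic permutation) the configuration ``two copies of $(a',b')$ and one copy of $(d',c')$'' in $\bA^2$; the outer application of $g$ then collapses this to $(a',b')\equiv(a,b)\pmod{\theta}$ by the lifted majority identity, and in particular the first coordinate is $\equiv a\pmod{\theta}$. The coordinate twist in $r_3$ is exactly what is needed to make the second coordinate satisfy $g(b',b',c')=b'$ rather than an uncontrolled $g(b',b',d')$. A concrete template is a suitable symmetrization of
\[
g'(x,y,z)=g\bigl(r_1(x,y),\,r_1(x,z),\,r_3(y,z)\bigr),
\]
possibly composed with the partial-semilattice term $s$ from Lemma~\ref{semi-iter} so that its three binary minors flatten into pure $f$-terms; this forces $\bB_2$-preservation automatically. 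The main obstacle is the simultaneous verification that the nine evaluations of the outer $g$ really do land in the prescribed configurations, that the binary minors reduce to $f$-terms, and that $\theta$ is non-full, all of which depend essentially on the coordinate twist in the hypothesis; the base majority pair $\{(a,b),(c,d)\}$ is what lets us shuffle coordinates to achieve the right alignment of the three inner arguments. With these checks in hand, Lemma~\ref{al1} delivers the conclusion that $\{a,b\}$ is a majority subalgebra of $\bA$.
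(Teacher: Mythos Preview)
Your reduction to Lemma~\ref{al1} is a genuinely different route from the paper's, and it has real gaps. The paper never constructs a congruence or a new term $g'$; it works directly inside the ternary relation $\RR\subseteq\bB_2^3$ from the proof of Lemma~\ref{al1}. Writing $a_2=(a,b)$, $b_2=(b,a)$, $c_2=(c,d)$, $d_2=(d,c)$, $a_2'=(a',b')$, $d_2'=(d',c')$, one picks a binary term $p$ with $p(a_2,b_2)=c_2$ and observes $(a_2,c_2,d_2)=p\big((a_2,a_2,b_2),(a_2,b_2,a_2)\big)\in\RR$ and similarly $(c_2,a_2,d_2)\in\RR$. The first hypothesis makes $\{a_2,c_2\}$ (and hence $\{b_2,d_2\}$) a $g$-majority pair, so
\[
(a_2,a_2,d_2)=g\big((a_2,a_2,b_2),(a_2,c_2,d_2),(c_2,a_2,d_2)\big)\in\RR.
\]
Lemma~\ref{maj-triple} then gives $(a_2',a_2',d_2')\in\RR$. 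The second hypothesis makes $\{a_2',d_2'\}$ a $g$-majority pair, so applying $g$ to the three cyclic shifts yields $(a_2',a_2',a_2')\in\RR$; one more application of Lemma~\ref{maj-triple} gives $(a_2,a_2,a_2)\in\RR$, and Theorem~\ref{subalg} finishes.

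In your argument, two steps are not established. First, the non-fullness of $\theta$: you generate a congruence on $\bB$ from the semilattice edges $a\!\to\!a'$, $b\!\to\!b'$, $c\!\to^*\!c'$, $d\!\to^*\!d'$, and assert that a collapse of $a$ with $b$ would ``lift'' to contradict maximality of $(a,b)$ in $\bB_2$. But $\theta$ lives on $\bB$, not $\bB_2$, and nothing prevents $c,d,c',d'$ from already lying in $a/\theta$ or $b/\theta$; no mechanism is given for transporting a $\bB$-congruence collapse back into $\bB_2$. Second, your $g'$ is only a template, and you yourself flag the nine-evaluation check as ``the main obstacle'' without carrying it out. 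Even for the template $g'(x,y,z)=g(r_1(x,y),r_1(x,z),r_3(y,z))$ one gets $g'(a_2,a_2,b_2)=g(a_2,(a',b'),(d',c'))$, which is not ``two copies of $(a',b')$ and one of $(d',c')$''; the suggested post-composition with $s$ is not shown to repair this while keeping the binary minors pure $f$-terms. The paper's direct computation in $\RR$ sidesteps both difficulties entirely.
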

\begin{proof} As before, let $\bB_2 = \Sg_{\bB^2}\{(a,b),(b,a)\}$, let $a_2 = (a,b)$, let $b_2 = (b,a)$, and define $\RR$ as in the last lemma. Also, set $c_2 = (c,d), d_2 = (d,c)$ and $a_2' = (a',b'), d_2' = (d',c')$. Letting $p$ be a binary term of $\bB$ with $p(a_2,b_2) = c_2$, we see that
\[
\begin{bmatrix} a_2\\ c_2\\ d_2\end{bmatrix} = p\left(\begin{bmatrix} a_2\\ a_2\\ b_2\end{bmatrix}, \begin{bmatrix} a_2\\ b_2\\ a_2\end{bmatrix}\right) \in \RR.
\]
Thus, we have
\[
\begin{bmatrix} a_2\\ a_2\\ d_2\end{bmatrix} = g\left(\begin{bmatrix} a_2\\ a_2\\ b_2\end{bmatrix}, \begin{bmatrix} a_2\\ c_2\\ d_2\end{bmatrix}, \begin{bmatrix} c_2\\ a_2\\ d_2\end{bmatrix}\right) \in \RR.
\]
Applying Lemma \ref{maj-triple}, we see that $(a_2',a_2',d_2') \in \RR$. Applying $g$, we see that $(a_2',a_2',a_2') \in \RR$, and then applying Lemma \ref{maj-triple} again, we have $(a_2,a_2,a_2) \in \RR$, so by Theorem \ref{subalg} $\{a,b\}$ is a majority subalgebra of $\bA$.
\end{proof}

\begin{lem}\label{al3} Suppose that $\bA,\bB$ is a good pair generated by $a,b$. Suppose that there is some $e \in \bB$ such that $\{e\}\times \bB \subseteq \Sg_{\bB^2}\{(a,b),(b,a)\}$. Then $a,b$ are yellow connected.
\end{lem}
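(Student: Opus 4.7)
The hypothesis $\{e\}\times\bB\subseteq \RR_2:=\Sg_{\bB^2}\{(a,b),(b,a)\}$ is very strong. Since the two generators of $\RR_2$ are interchanged by the coordinate swap, $\RR_2$ is swap-invariant, so $\bB\times\{e\}\subseteq\RR_2$ as well. Moreover, for every $x\in\bB$ there is a binary term $t_x$ of $\bA$ with $t_x(a,b)=e$ and $t_x(b,a)=x$; I will make particular use of $t_a$ and $t_b$. My aim is to show that $\{a,b\}$ is a majority subalgebra of $\bA$; this is enough, since then $a,b$ themselves form a majority pair and are yellow connected. By Theorem \ref{subalg}, it suffices to verify the relational containment
\[
\begin{bmatrix} a & b\\a & b\\a & b\end{bmatrix} \in \mathcal{R}:=\Sg_{\bA^6}\left\{\begin{bmatrix} a & b\\a & b\\b & a\end{bmatrix},\begin{bmatrix} a & b\\b & a\\a & b\end{bmatrix},\begin{bmatrix} b & a\\a & b\\a & b\end{bmatrix}\right\}.
\]

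I view $\mathcal{R}$ as a ternary subalgebra of $\RR_2^3$; writing $A_0:=(a,b)$ and $B_0:=(b,a)$, the three generators of $\mathcal{R}$ become $(A_0,A_0,B_0)$, $(A_0,B_0,A_0)$, $(B_0,A_0,A_0)$, and the target is $(A_0,A_0,A_0)$. The subalgebra $\RR_2\subseteq \bB^2$ is itself a minimal bounded width algebra (by Theorem \ref{subalg}) generated by the two elements $A_0$ and $B_0$. Applying Lemma \ref{maj-triple} to $\RR_2$ in the role of the ambient two-generated algebra, it suffices to produce one element of $\mathcal{R}$ lying in $\mathcal{A}\times\mathcal{A}\times\mathcal{A}$, where $\mathcal{A}$ is the maximal strongly connected component of $A_0$ in $\RR_2$. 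The candidate is the constant triple $(E,E,E)$ with $E:=f(A_0,(e,e))=(f(a,e),f(b,e))\in\RR_2$. The choice of $f$ from Corollary \ref{reach} guarantees that $f(a,e)$ is reachable from $a$ and $f(b,e)$ from $b$, so coordinate-wise $A_0\rightarrow E$ in $\RR_2$, whence $E\in\mathcal{A}$ since $\mathcal{A}$ is upward-closed.

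The bulk of the argument is the explicit construction of a ternary term $T$ of $\bA$ such that $T(A_0,A_0,B_0)=T(A_0,B_0,A_0)=T(B_0,A_0,A_0)=E$, which in terms of $\bB$ amounts to finding a ternary $T$ with $T(a,a,b)=T(a,b,a)=T(b,a,a)=f(a,e)$ and $T(b,b,a)=T(b,a,b)=T(a,b,b)=f(b,e)$. I would produce $T$ by taking the two ``one-sided'' terms
\[
T_a(x,y,z):=g(t_a(x,y),t_a(y,z),t_a(z,x)),\qquad T_b(x,y,z):=g(t_b(x,y),t_b(y,z),t_b(z,x)),
\]
which satisfy $T_a(a,a,b)=T_a(a,b,a)=T_a(b,a,a)=f(a,e)$ and $T_b(b,b,a)=T_b(b,a,b)=T_b(a,b,b)=f(b,e)$ respectively by the identity $g(x,x,y)\approx g(x,y,x)\approx g(y,x,x)\approx f(x,y)$, and combining them using $f$, $g$, and the cyclic identity $g(g(x,y,z),g(y,z,x),g(z,x,y))\approx g(x,y,z)$ from Corollary \ref{min} to force the ``cross-term'' values to collapse onto the correct $A$- and $B$-side values.

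The main obstacle is precisely this combining step: the outputs of $T_a$ on the ``one $a$'' triples and of $T_b$ on the ``one $b$'' triples are a priori arbitrary elements involving the three distinct entries $a,b,e$, and must be reconciled through further applications of $g$ and iteration so that only the clean contributions $g((e,a,a))=g((a,e,a))=g((a,a,e))=f(a,e)$ on the $a$-side and $g((e,b,b))=g((b,e,b))=g((b,b,e))=f(b,e)$ on the $b$-side remain. Once the construction is carried out, $(E,E,E)\in\mathcal{R}\cap\mathcal{A}^3$ is in hand, and Lemma \ref{maj-triple} applied inside $\RR_2$ gives $\mathcal{A}^3\subseteq\mathcal{R}$, so in particular $(A_0,A_0,A_0)\in\mathcal{R}$; Theorem \ref{subalg} then concludes that $\{a,b\}$ is a majority subalgebra of $\bA$, whence $a$ and $b$ are yellow connected.
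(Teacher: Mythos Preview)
Your approach has two genuine gaps.

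First, you invoke Corollary~\ref{reach} to conclude that $f(a,e)$ is reachable from $a$ and $f(b,e)$ from $b$. But the definition of \emph{good pair} only requires $f,g$ to be chosen as in Theorem~\ref{connect}, not as in Corollary~\ref{reach}; and more seriously, the proof of Corollary~\ref{reach} passes through Lemma~\ref{max-maj}, which uses the Yellow Connectivity Property. Since Lemma~\ref{al3} is part of the appendix that \emph{establishes} (a refinement of) that property, invoking Corollary~\ref{reach} here is circular. Without it you have no reason for $E=f(A_0,(e,e))$ to lie in the strongly connected component $\mathcal{A}$ of $A_0$, so even if you could produce $(E,E,E)\in\mathcal{R}$ you could not conclude $\mathcal{A}^3\subseteq\mathcal{R}$ via Lemma~\ref{maj-triple}.

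Second, the ``combining step'' you flag as the main obstacle is genuinely the entire content of the argument, and you do not carry it out. The values $T_a(b,b,a)=g(b,a,e)$, $T_a(b,a,b)=g(a,e,b)$, $T_a(a,b,b)=g(e,b,a)$ are three-variable $g$-values with no a priori relation to $f(b,e)$, and the cyclic identity on $g$ from Corollary~\ref{min} does not collapse them to a prescribed target. There is no evident general mechanism to splice $T_a$ and $T_b$ into a single term with the six required values; you are essentially asking for a term acting as a specific symmetric weak near-unanimity on $\{a,b\}$ with a specified output, and nothing short of the full machinery you are trying to avoid is known to provide that.

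The paper proceeds differently, and the difference is not cosmetic: it exploits the \emph{induction hypothesis} built into the good-pair definition (every proper subalgebra of $\bB$ is yellow connected), which your argument ignores entirely. After replacing $e$ by a maximal element (Theorem~\ref{strong-binary}(b)), one checks that $\Sg_{\bB}\{a,e\}$ and $\Sg_{\bB}\{b,e\}$ are proper (else $(b,b)$ or $(a,a)$ would lie in $\RR_2$, contradicting maximality of $(a,b)$). Hence their product is hereditarily yellow connected by induction, so there exist $(a',b')$ in the component of $(a,b)$ and $(e',e'')$ in the component of $(e,e)$ with $f$ acting as first projection on $\{(a',b'),(e',e'')\}$. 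One then applies Lemma~\ref{al2} with $(c,d)=(e',e'')$ and $(c',d')=(e'',e')$ to conclude that $\{a,b\}$ is a majority subalgebra. The induction hypothesis is doing real work here that your direct term construction attempts to replace without a substitute.
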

\begin{proof} By Theorem \ref{strong-binary}(b) we may assume without loss of generality that $e$ is maximal in $\bB$.

If $b \in \Sg_{\bB}\{a,e\}$, then $(b,b) \in \Sg_{\bB^2}\{(a,b),(b,a)\}$, so $a \rightarrow b$, which contradicts the assumption that $(a,b)$ is maximal. Thus we must have $\Sg_{\bB}\{a,e\} \ne \bB$, and similarly $\Sg_{\bB}\{b,e\} \ne \bB$, so $\Sg_{\bB}\{a,e\}\times \Sg_{\bB}\{b,e\}$ is hereditarily yellow connected.

Thus there is $(e', e'')$ in the strongly connected component of $(e,e)$ and $(a',b')$ in the strongly connected component of $(a,b)$ such that $f$ acts on $\{(a',b'),(e',e'')\}$ as first projection. Now we can apply Lemma \ref{al2} with $(c,d) = (e',e'')$ and $(c',d') = (e'',e')$ to finish.
\end{proof}


\begin{lem}\label{al4} Let $\bA,\bB$ be a good pair generated by $a,b$. Suppose that there is some $e \in \bB$ and some $b'$ in the strongly connected component of $b$ such that $\{b',e\}$ is a majority subalgebra of $\bA$, and such that for all $d \in \bB$ at least one of $(b',d), (e,d)$ is an element of $\Sg_{\bB^2}\{(a,b),(b,a)\}$. Then $a,b$ are yellow connected.
\end{lem}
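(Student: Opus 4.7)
My plan would be to reduce to Lemma \ref{al3} by analysing, for $\RR = \Sg_{\bB^2}\{(a,b),(b,a)\}$, the slice sets
\[
D_{b'} = \{d \in \bB : (b',d) \in \RR\}, \qquad D_e = \{d \in \bB : (e,d) \in \RR\}.
\]
Because $\{b',e\}$ is a majority subalgebra of $\bA$ we have $f(b',e) = b'$ and $f(e,b') = e$, which ensures that $D_{b'}$ and $D_e$ are $f$-closed subalgebras of $\bB$; they are nonempty by the subdirectness of $\RR$, and upward-closed in $\bB$ by Proposition \ref{upwards-maj}. The hypothesis of the lemma is precisely that $D_{b'}\cup D_e = \bB$. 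If either $D_{b'}$ or $D_e$ equals $\bB$, then Lemma \ref{al3}, applied with $b'$ or $e$ in place of the element called $e$ there, finishes the proof.

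So assume both $D_{b'}$ and $D_e$ are proper. Since $\bB = \Sg_\bB\{a,b\}$ has no proper subalgebra containing both generators, neither $D_{b'}$ nor $D_e$ contains both $a$ and $b$. Combined with $D_{b'}\cup D_e = \bB$, and with the fact that upward-closedness together with strong connectedness of the maximal components $A, B$ of $a, b$ in $\bB$ forces each of $A, B$ to lie wholly in whichever $D$ contains its generator, we are reduced to two cases. \emph{Case 1}: $b\in D_{b'}$ and $a\in D_e$. Then $b'\in B\subseteq D_{b'}$ gives $(b',b')\in\RR$, and I would combine this with the good-pair property and the maximality of $(a,b)$ in $\RR$ to force the maximal strongly connected components of $a$ and $b$ in $\bB$ to coincide, trivialising yellow connectedness. \emph{Case 2}: $a\in D_{b'}$ and $b\in D_e$. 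From $b'\in B\subseteq D_e$ comes $(e,b')\in\RR$; letting $t$ be a binary $f$-term with $t(a,b)=e$ and $t(b,a)=b'$, componentwise application to $(b,a),(a,b)\in\RR$ also gives $(b',e)\in\RR$, so $e\in D_{b'}$. Then both $\Sg_\bB\{a,e\}\subseteq D_{b'}$ and $\Sg_\bB\{b,b'\}\subseteq D_e$ are proper subalgebras of $\bB$ (missing $b$ and $a$ respectively, by the case assumption).

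To finish Case 2 I would construct a nontrivial congruence $\theta$ on $\bB$ with $f(a,b)\,\theta\,a$ and $f(b,a)\,\theta\,b$ but $a\not\theta b$, and then apply Lemma \ref{al1} with $g'=g$. The closure condition on $\RR$ demanded by Lemma \ref{al1} is automatic because each of the three binary traces $g(x,x,y), g(x,y,x), g(y,x,x)$ coincides with $f(x,y)\in\Clo(f)$ and $\RR$ is $f$-closed. This would yield that $\{a,b\}$ is a majority subalgebra of $\bA$, providing the needed yellow edge.

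The principal obstacle is the construction of $\theta$ in Case 2. The rigidity needed to identify $f(a,b)$ with $a$ and $f(b,a)$ with $b$ compatibly with the $\bB$-structure while keeping $a$ and $b$ in distinct $\theta$-classes should come from the hereditary yellow connectivity of the proper subalgebras $\Sg_\bB\{a,e\}, \Sg_\bB\{b,b'\}, D_{b'}$, and $D_e$ guaranteed by the good-pair hypothesis, together with the rigidity provided by the fact that $b'$ and $e$ lie on opposite sides of the $D_{b'}/D_e$ split yet are linked by a majority subalgebra of $\bA$.
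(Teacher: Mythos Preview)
Your opening moves match the paper: the sets $D_{b'}$ and $D_e$ are what the paper calls the slices of $\bE = \RR\cap(\{b',e\}\times\bB)$, the reduction to both being proper is exactly the paper's appeal to Lemma \ref{al3}, and your Case 1 ($b\in D_{b'}$) is ruled out by the same mechanism the paper uses (from $(b',b')\in\RR$ one gets $\{b'\}\times B\subseteq\RR$ by Proposition \ref{upwards-maj}, then $(b,b)\in\RR$ by Theorem \ref{strong-binary}(b), hence $a\rightarrow b$).

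The real gap is Case 2. Your plan is to manufacture a congruence $\theta$ on $\bB$ with $f(a,b)\,\theta\,a$, $f(b,a)\,\theta\,b$, and $a\not\theta b$, then invoke Lemma \ref{al1}. But since $\bB=\Sg_\bB\{a,b\}$, any such $\theta$ forces $\bB/\theta=\{a/\theta,b/\theta\}$ to be a two-element quotient on which $f$ is first projection---i.e.\ a homomorphism from $\bB$ onto a two-element majority $f$-algebra. This is not something you can build from the hereditary yellow connectivity of $D_{b'},D_e,\Sg\{a,e\},\Sg\{b,b'\}$; it is a global structural hypothesis on $\bB$ that may simply fail. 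The paper treats precisely this dichotomy: it first disposes of the case where such a homomorphism exists by a one-line appeal to Lemma \ref{al1}, and then works under the assumption that it does \emph{not}, which makes $\bE$ linked as a subdirect product of $\{b',e\}$ and $\bB$.

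The substance of the paper's proof is what happens in that linked case, and your proposal has no analogue of it. From linkedness the paper extracts a common fork $d$ (maximal, with both $(b',d)$ and $(e,d)$ in $\RR$), then uses that $\{b'\}\times(A\cup D)$, $\{e\}\times(B\cup D)$, and $\{b'\}\times(A\cup E)$ all sit inside $\RR$---so $A\cup D$, $B\cup D$, $A\cup E$ generate proper subalgebras of $\bB$---and applies the inductive hereditary yellow connectivity to each of these three proper subalgebras to produce majority pairs $\{a',d'\}$, $\{b'',d''\}$, $\{a'',e'\}$. These are then packaged into the hypotheses of Lemma \ref{al2}, which finishes. None of this is replaced by your congruence argument, because when the congruence does not exist the argument never starts.
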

\begin{proof} Let $\bE = \Sg_{\bB^2}\{(a,b),(b,a)\} \cap (\{b',e\}\times \bB)$. By Lemma \ref{al1}, we may assume that $\bB$ has no homomorphism to $\{b',e\}$, so $\bE$ is linked when considered as a subdirect product of $\{b',e\}$ and $\bB$.

By Lemma \ref{al3}, we may assume that $\{e\}\times \bB \not\subseteq \bE$. Let $A$ be the strongly connected component of $a$ and let $B$ be the strongly connected component of $b$. By Proposition \ref{upwards-maj}, if $(b',b') \in \bE$, then $\{b'\}\times B \subseteq \bE$, and by Theorem \ref{strong-binary}(b) this implies $(b,b) \in \Sg_{\bB^2}\{(a,b),(b,a)\}$, so $a \rightarrow b$, a contradiction.

Thus we must have $(e,b') \in \bE$. By Proposition \ref{upwards-maj}, we have $\{e\} \times B \subseteq \bE$. By Theorem \ref{strong-binary}(c) we have $A\times B \subseteq \Sg_{\bB^2}\{(a,b),(b,a)\}$.

Since $\bE$ is linked, there is some $d \in \bB$ such that both $(e,d)$ and $(b',d)$ are in $\bE$. By Proposition \ref{upwards-maj}, we may assume without loss of generality that $d$ is a maximal element of $\bB$. Let $D$ be the strongly connected component containing $d$, and let $E$ be a maximal strongly connected component which is reachable from $e$. By Proposition \ref{upwards-maj}, $\{e\}\times D \subseteq \bE$, so $E\times D \subseteq \Sg_{\bB^2}\{(a,b),(b,a)\}$ by Theorem \ref{strong-binary}.

Since $\{b'\}\times (A \cup D) \subseteq \Sg_{\bB^2}\{(a,b),(b,a)\}$, $A\cup D$ generate a proper subalgebra of $\bB$, so there are $a'\in A$ and $d' \in D$ with $\{a',d'\}$ a majority subalgebra of $\bA$. Similarly, since $\{e\} \times (B\cup D) \subseteq \Sg_{\bB^2}\{(a,b),(b,a)\}$, there are $b'' \in B, d'' \in D$ with $\{b'',d''\}$ a majority subalgebra of $\bA$. Also, by Proposition \ref{upwards-maj}, $\{b'\} \times E \subseteq \bE$, so $\{b'\} \times (A\cup E) \subseteq \Sg_{\bB^2}\{(a,b),(b,a)\}$, so there are $a'' \in A, e' \in E$ with $\{a'', e'\}$ a majority subalgebra of $\bA$.

Thus $f$ acts as first projection on $\{(a',b'),(d',e)\}$ and $\{(b'',a''),(d'',e')\}$, and $(d'',e')$ is a maximal element of $\Sg_{\bB^2}\{(a,b),(b,a)\}$ which is reachable from $(d',e)$. To finish, we apply Lemma \ref{al2}.
\end{proof}

\begin{lem}\label{al5} Let $\bA, \bB$ be a good pair generated by $a,b$. If $\Sg_{\bB^2}\{(a,b),(b,a)\}$ is linked, then $a,b$ are yellow connected.
\end{lem}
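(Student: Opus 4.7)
The plan would be to reduce to the case that $\bB$ is simple and strongly connected, and then to finish by applying Corollary \ref{simple-strong}(b) followed by Lemma \ref{al3}.

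First I would reduce to $\bB$ simple. If $\bB$ has a nontrivial proper congruence $\theta$, then $\bB/\theta$ is a strictly smaller minimal bounded width algebra, hence yellow connected by the inductive hypothesis on $|\bB|$; Corollary \ref{ac1} then gives that $a$ and $b$ are yellow connected. So I would assume $\bB$ is simple. Let $A_0$ and $A_1$ be the maximal strongly connected components of $\bB$ containing $a$ and $b$ respectively; these exist because the maximality of $(a,b)$ in $\RR$ forces both $a$ and $b$ to be maximal in $\bB$.

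Next I would show $A_0 = A_1$. Using a choice of $f$ for which $f(x,y)$ is always reachable from $x$ (Corollary \ref{reach}), combined with the upwards closedness of each maximal strongly connected component, each $A_i$ is closed under $f$ and moreover $f(x,y) \in A_0$ whenever $x \in A_0$, regardless of $y$. Hence $A_0 \cup A_1$ is a subalgebra of $\bB$ containing both generators, forcing $\bB = A_0 \cup A_1$, and the partition $\{A_0, A_1\}$ is then a congruence of $\bB$. If $A_0 \ne A_1$, simplicity forces this congruence to be trivial, giving $|A_0| = |A_1| = 1$ and $\bB = \{a,b\}$. A direct inspection of the possibilities for $f$ on a two element set satisfying $f(f(x,y),f(y,x)) \approx f(x,y)$ together with the maximality of $(a,b)$ in $\RR$ would show that $\{a,b\}$ must be a majority subalgebra with $f$ acting as first projection; but then $\RR = \{(a,b),(b,a)\}$ is a disjoint union of two edges in the bipartite graph on $\bB \sqcup \bB$, so $\RR$ is not linked --- contradicting our hypothesis. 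Therefore $A_0 = A_1$, and $\bB = A_0$ is strongly connected.

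Finally, since $\bB$ is simple and generated by the maximal strongly connected component $A_0$, and $\RR$ is linked (so it cannot be the graph of a homomorphism $\bB \to \bB$, whose graph would be a perfect matching with $|\bB|$ connected components), Corollary \ref{simple-strong}(b) applied to $\RR \le_{sd} \bB \times \bB$ would produce some $e \in \bB$ with $\{e\} \times \bB \subseteq \RR$. Applying Lemma \ref{al3} would then conclude that $a$ and $b$ are yellow connected. The main obstacle would be verifying that the partition $\{A_0, A_1\}$ is a congruence when $A_0 \ne A_1$: this requires each maximal strongly connected component of $\bB$ to be closed under $f$, which most cleanly follows from the careful choice of $f$ via Corollary \ref{reach}.
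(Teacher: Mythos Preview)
Your reduction in Step~1 does not go through. The inductive hypothesis built into the good pair definition is that every proper \emph{subalgebra} of $\bB$ (as a subalgebra of $\bA_f$) is yellow connected; it says nothing about quotients. A quotient $\bB/\theta$ need not embed as a subalgebra of $\bA_f$, so you cannot invoke the inductive hypothesis to conclude that $\bB/\theta$ is yellow connected, and hence Corollary~\ref{ac1} is unavailable. (Note also that $\bB$ is only an $f$-algebra: it is a subalgebra of $\bA_f$, not of $\bA$, so calling $\bB/\theta$ a ``minimal bounded width algebra'' is not justified.)

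Step~2 has a separate problem. The $f$ in the hypothesis is fixed, satisfying only the conditions of Theorem~\ref{connect}; the property that $f(x,y)$ is always reachable from $x$ comes from Corollary~\ref{reach}, which is a statement about a \emph{possibly different} choice of $f$. Changing $f$ midway changes $\bA_f$ and hence $\bB = \Sg_{\bA_f}\{a,b\}$, so you would no longer be proving the lemma as stated. Without the reachability property, there is no reason for $A_0\cup A_1$ to be closed under $f$, and the argument that $\bB = A_0\cup A_1$ collapses. (There is also a circularity concern: the Appendix is meant to give a self-contained proof of the yellow connectivity result, while Corollary~\ref{reach} depends on it via Lemma~\ref{max-maj}.)

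The paper's proof avoids both issues: rather than reducing to simple $\bB$, it walks along a linking path $a=d_1,e_1,\dots,d_{k+1}=b'$ inside $\RR$, labels each component $D_i$ ``full'' or ``non-full'' according to whether every $a'\in A,\ d_i'\in D_i$ generate all of $\bB$, and locates an adjacent full/non-full pair. That pair produces a two-element majority set $\{b',e_i'\}$ with $\pi_2\big(\RR\cap(\{b',e_i'\}\times\bB)\big)=\bB$, after which Lemma~\ref{al4} finishes. This stays entirely within the subalgebra-based inductive hypothesis.
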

\begin{proof} Since $\Sg_{\bB^2}\{(a,b),(b,a)\}$ is linked, there is a sequence $a = d_1, e_1, d_2, e_2, ..., d_k, e_k, d_{k+1} = b'$ with $(d_i,e_i), (d_{i+1},e_i) \in \Sg_{\bB^2}\{(a,b),(b,a)\}$, with each $d_i, e_i$ a maximal element of $\bB$ and with $b'$ in the strongly connected component of $b$. Let $D_i$ be the strongly connected component containing $d_i$, let $E_i$ be the strongly connected component containing $e_i$, let $A = D_1$ be the strongly connected component containing $a$, and $B = D_{k+1}$ be the strongly connected component containing $b$.

We can assume without loss of generality that $\Sg_{\bB}\{a',b'\} = \bB$ for any $a' \in A, b'\in B$, and we clearly have have $\Sg_{\bB}\{a,a\} = \{a\}$. We label the sets $D_i, E_i$ with the phrases ``full'' or ``non-full'' as follows. For each $D_i$, if every choice of $a' \in A, d_i' \in D_i$ have $\Sg_{\bB}\{a',d_i'\} = \bB$ we call $D_i$ full, and otherwise we call it non-full. For each $E_i$, if every choice of $b' \in B, e_i' \in E_i$ have $\Sg_{\bB}\{b',e_i'\} = \bB$ we call $E_i$ full, and otherwise we call it non-full.


Since $D_1$ is non-full and $D_{k+1}$ is full, there is some adjacent pair of sets in the sequence $D_1, E_1, ..., E_k, D_{k+1}$ such that one is full and  the other is non-full. We assume there is some $i$ such that $E_i$ is non-full and $D_{i+1}$ is full (the other cases are exactly analogous). Suppose $b' \in B, e_i' \in E_i$ generate a proper subalgebra of $\bB$.

Since $b', e_i'$ generate a proper subalgebra of $\bB$, we may assume without loss of generality that $\{b',e_i'\}$ is a majority subalgebra of $\bA$. Then there is some $a' \in A$ such that $(b',a') \in \Sg_{\bB^2}\{(a,b),(b,a)\}$ and there is a $d_{i+1}' \in D_{i+1}$ such that $(e_i', d_{i+1}') \in \Sg_{\bB^2}\{(a,b),(b,a)\}$. Then $\bE = \Sg_{\bB^2}\{(a,b),(b,a)\} \cap (\{b',e_i'\}\times \bB)$ has $\Sg_{\bB}\{a', d_{i+1}'\} \subseteq \pi_2 \bE$, so $\pi_2 \bE = \bB$ and we can finish by using the Lemma \ref{al4}.
\end{proof}

\begin{lem}\label{al6} Let $\bA, \bB$ be a good pair generated by $a,b$. If there is some $c \in \bB$ such that $\{a,c\}$ is a majority algebra of $\bA$ and such that some maximal element $c'$ reachable from $c$ and $b$ are yellow connected, then $a$ and $b$ are yellow connected.
\end{lem}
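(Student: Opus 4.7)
Let $c^{*}$ be an element in the strongly connected component of $c'$ and $b^{*}$ an element in the strongly connected component of $b$ such that $\{c^{*}, b^{*}\}$ is a majority subalgebra of $\bA$, witnessing the yellow connection of $c'$ and $b$. Set $\RR = \Sg_{\bB^{2}}\{(a, b), (b, a)\}$.

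First I would apply the reductions afforded by earlier lemmas in this appendix. If $\RR$ is linked then Lemma \ref{al5} finishes; otherwise let $\theta$ be the linking congruence of $\RR$ on $\bB$. If $\theta$ separates $a$ and $b$, then the bijection on $\bB/\theta$ induced by $\RR/\theta$ makes $\{a/\theta, b/\theta\}$ a two-element majority subalgebra of $\bB/\theta$, and Lemma \ref{al1} (applied with $g' = g$) concludes that $\{a, b\}$ is itself a majority subalgebra of $\bA$, which is stronger than the desired yellow connection. So we may assume $a \mathrel{\theta} b$.

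Next I would consider $\bB' = \Sg_{\bB}\{a, b^{*}\}$. If $\bB' \subsetneq \bB$ then property (4) of the good pair hypothesis says $\bB'$ is yellow connected; applying yellow connectivity in $\bB'$ to the strongly connected components of $a$ and of $b^{*}$ yields $a^{**}$ in the $\bB'$-component of $a$ and $b^{**}$ in the $\bB'$-component of $b^{*}$ with $\{a^{**}, b^{**}\}$ a majority subalgebra of $\bA$. Since strongly connected components computed inside a subalgebra are contained in the components of the ambient algebra, and the $\bB$-component of $b^{*}$ equals that of $b$, this directly supplies the yellow connection of $a$ and $b$ in $\bB$.

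The remaining case is $\Sg_{\bB}\{a, b^{*}\} = \bB$, which I would dispatch by verifying the hypotheses of Lemma \ref{al4} for the majority edge $\{c^{*}, b^{*}\}$, namely that $\pi_{2}(\RR \cap (\{c^{*}, b^{*}\} \times \bB)) = \bB$. Picking a binary term $t$ with $t(a, b) = c$ gives $(c, d) \in \RR$ for $d = t(b, a)$; subdirectness of $\RR$ together with $c \to c^{*}$ and the coordinatewise action of the partial semilattice on $\RR$ yields $(c^{*}, e) \in \RR$ for some $e$ with $d \to e$. Combining this with any $(b^{*}, z) \in \RR$ via the majority structure on $\{c^{*}, b^{*}\}$ produces both $(c^{*}, f(e, z))$ and $(b^{*}, f(z, e))$ in $\RR$, and applying the partial semilattice to $(a, b) \in \RR$ along the arrow $b \to b^{*}$ yields $(a', b^{*}) \in \RR$ for some $a'$ in the strongly connected component of $a$. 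The hard part is then to combine the hypothesis $\bB = \Sg\{a, b^{*}\}$ (which forces every element of $\bB$ to be expressible as a term in $a$ and $b^{*}$) with these witnesses to conclude that the second projection of $\RR \cap (\{c^{*}, b^{*}\} \times \bB)$ exhausts $\bB$; the plan is to use a Proposition \ref{upwards-maj}-style upwards-closure argument (valid because $s$ acts as first projection on the majority edge $\{c^{*}, b^{*}\}$) to propagate the presence of $b^{*}$ and of the element of the strongly connected component of $a$ throughout $\bB$, closing out the case via Lemma \ref{al4}.
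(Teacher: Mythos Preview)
Your proposal has two genuine gaps.

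\textbf{Step 2 is incorrect.} First, note that the case $a \mathrel{\theta} b$ is vacuous: since $\bB = \Sg_{\bA_f}\{a,b\}$ and $f$ is idempotent, $a \mathrel{\theta} b$ would force $\bB/\theta$ to be a singleton, contradicting that $\RR$ is not linked. So you are always in the case $a \not\mathrel{\theta} b$, and your argument there is supposed to finish the proof. But it does not: the fact that $\RR/\theta$ is the graph of an automorphism of $\bB/\theta$ swapping $a/\theta$ and $b/\theta$ does \emph{not} imply that $\{a/\theta, b/\theta\}$ is a majority subalgebra, nor that $g$ acts as majority on it. An order-two automorphism swapping two elements says nothing about how $f$ or $g$ behave on that pair. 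So the appeal to Lemma~\ref{al1} is unjustified.

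\textbf{Step 5 is incomplete.} To invoke Lemma~\ref{al4} with the edge $\{c^{*},b^{*}\}$ you need $\pi_2\big(\RR\cap(\{c^{*},b^{*}\}\times\bB)\big)=\bB$. You produce $(b^{*},a')\in\RR$ with $a'$ in the component of $a$, and $(c^{*},e)\in\RR$ for some $e$, but you never show that $b^{*}$ (or $b$) lies in the projection. The hypothesis $\bB=\Sg\{a,b^{*}\}$ does not help unless the projection contains both generators, and upwards-closure arguments only propagate within the projection, not into it. Crucially, you make no essential use of the hypothesis that $\{a,c\}$ is a majority edge.

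The paper's argument is structurally different: it works in $\RR \le \bB^{2}$ and aims for Lemma~\ref{al2}. One first picks $d$ with $(c,d)\in\RR$; since $\{a,c\}$ is a majority edge and $(a,b),(c,d)\in\RR$, the symmetric form of Lemma~\ref{al4} (with $a$ playing the role of $b'$) lets one assume $\Sg_{\bB}\{b,d\}\ne\bB$, whence $\{b,d\}$ may be taken to be a majority edge. Then $\{(a,b),(c,d)\}$ is a majority subalgebra of $\bA^{2}$. One then passes to a maximal $(c',d')$ reachable from $(c,d)$, uses the given yellow connection of $c'$ and $b$ together with another Lemma~\ref{al4} reduction to get that $(c',d')$ and $(b',a')$ are yellow connected in $\RR$, and finishes with Lemma~\ref{al2}. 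The key idea you are missing is this lift to $\bA^{2}$ and the use of Lemma~\ref{al2}.
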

\begin{proof} Let $d \in \bB$ be such that $(c,d) \in \Sg_{\bB^2}\{(a,b),(b,a)\}$, then by Lemma \ref{al4} we may assume that $\Sg_{\bB}\{b,d\} \ne \bB$. So we may as well assume that $\{b,d\}$ is a majority algebra of $\bA$. Let $\{b',c'\}$ be a majority subalgebra of $\bA$, with $b'$ in the strongly connected component of $b$, and let $(a',b')$ be in the strongly connected component of $(a,b)$. There is some $d'$ with $(c',d') \in \Sg_{\bB^2}\{(a,b),(b,a)\}$, with $(c',d')$ a maximal element reachable from $(c,d)$. By Lemma \ref{al4}, we may assume that $\Sg_{\bB}\{a',d'\} \ne \bB$, so it is hereditarily yellow connected. Thus $\{(a,b),(c,d)\}$ is a majority subalgebra of $\bA^2$ and $(c',d')$ is yellow connected to $(b',a')$, and we can apply Lemma \ref{al2}.
\end{proof}

\begin{cor} If $\bA,\bB$ is a good pair, then yellow connectivity is an equivalence relation on the set of maximal strongly connected components of $\bB$.
\end{cor}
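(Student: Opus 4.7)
Reflexivity and symmetry are immediate: for reflexivity take $a' = b' = a$ so that $\{a\}$ trivially serves, and symmetry follows from swapping the two elements in a majority subalgebra. For transitivity, suppose $U_1, U_2, U_3$ are pairwise distinct maximal strongly connected components of $\bB$, that $U_1$ is yellow connected to $U_2$ via a majority subalgebra $\{u_1, v_1\}$ of $\bA$ with $u_1 \in U_1$, $v_1 \in U_2$, and that $U_2$ is yellow connected to $U_3$ via $\{v_2, w_2\}$ with $v_2 \in U_2$, $w_2 \in U_3$. The goal is to produce a majority subalgebra of $\bA$ with one element in $U_1$ and the other in $U_3$. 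The plan is to split according to whether $\bB^* := \Sg_{\bA_f}\{u_1, w_2\}$ is proper in $\bB$ or equals $\bB$.

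If $\bB^* \subsetneq \bB$, the good pair hypothesis together with Proposition \ref{ap1} gives that $\bB^*$ is hereditarily yellow connected. Any element of $\bB^*$ reachable from $u_1$ within $\bB^*$ is reachable within $\bB$ and hence lies in $U_1$, so I pick $\tilde u \in U_1 \cap \bB^*$ sitting in a maximal $\bB^*$-component, and likewise $\tilde w \in U_3 \cap \bB^*$; yellow connectivity of $\bB^*$ applied to the upwards closed subsets containing these two components produces the required majority subalgebra. If instead $\bB^* = \bB$, the pair $(u_1, w_2)$ generates $\bB$, and I promote $(\bA, \bB)$ to a good pair with these new generators: conditions $(1), (2), (4)$ are inherited, $(3)$ follows from $u_1, w_2$ being maximal in $\bB$, and $(5)$ is achieved by iteratively replacing $(u_1, w_2)$ with a pair in its strongly connected component inside $\Sg_{\bB^2}\{(u_1, w_2),(w_2, u_1)\}$ that strictly shrinks this sub-bipower (first coordinates stay in $U_1$ and second in $U_3$; if the projected subalgebra becomes proper at some stage, the previous case completes the proof). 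I then apply Lemma \ref{al6} to the resulting good pair with $c := v_1$: the majority subalgebra $\{u_1, v_1\}$ supplies the first hypothesis, $v_1 \in U_2$ is already maximal and so equals its own maximal reachable element, and $v_1$ is yellow connected to the second generator because the latter lies in $U_3$ and $U_2$ is yellow connected to $U_3$ via $\{v_2, w_2\}$; the lemma concludes that the two new generators are yellow connected, which is exactly what is needed.

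The main technical obstacle lies in reconciling the condition $(5)$ adjustment with the hypothesis of Lemma \ref{al6}: if the minimization replaces $u_1$ by some $u_1' \in U_1$ distinct from $u_1$, it is not a priori clear that $\{u_1', v\}$ is a majority subalgebra of $\bA$ for any $v \in U_2$. I intend to handle this by restricting the minimization to pairs of the form $(u_1, \cdot)$ whenever the first coordinate can be preserved, and otherwise by observing that if no element of $U_2$ forms a majority subalgebra with $u_1'$, then feeding $u_1'$ and $v_1$ into the construction of $\Sg_{\bA_f}\{u_1', v_1\}$ yields a proper subalgebra of $\bB$ where hereditary yellow connectivity supplies an alternative majority witness, thereby restarting or completing the transitivity argument.
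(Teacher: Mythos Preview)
Your overall strategy is right and matches the paper's (implicit) intent: the corollary is meant to follow from Lemma \ref{al6} by the case split you describe. Reflexivity and symmetry are trivial, and in the case where some $u\in U_1,\ w\in U_3$ generate a proper subalgebra the good-pair hypothesis (4) finishes things exactly as you say.

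The gap is in your Case $\bB^*=\bB$, and it is not just about condition (5). Already condition (3) forces you to pass from $(u_1,w_2)$ to a maximal element $(u^*,w^*)$ of $\Sg_{\bB^2}\{(u_1,w_2),(w_2,u_1)\}$; your claim that ``(3) follows from $u_1,w_2$ being maximal in $\bB$'' is not justified, since maximality in $\bB^2$ does not in general descend to maximality in a subalgebra. After this replacement, Lemma \ref{al6} needs a $c$ with $\{u^*,c\}$ a majority subalgebra and with some maximal $c'$ reachable from $c$ yellow connected to $w^*$. Knowing only that $\{u_1,v_1\}$ is majority does not supply such a $c$ when $u^*\ne u_1$. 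Your proposed repair does not close this: there is no reason that ``no $v\in U_2$ forms a majority pair with $u_1'$'' forces $\Sg_{\bA_f}\{u_1',v_1\}$ to be proper, and even when it is proper the witness you extract lives at some new $u''\in U_1$, not at $u_1'$, so the ``restart'' is just the same problem again with no termination argument.

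One clean way to patch the argument is to note that the proofs of Lemmas \ref{al1}--\ref{al6} never actually invoke condition (5), so you may drop it; but you still must handle the mismatch between $u^*$ and $u_1$ coming from (3). A workable route is to argue by contradiction: assume $U_1\not\sim U_3$, so every pair in $U_1\times U_3$ generates $\bB$; then choose $(u^*,w^*)\in U_1\times U_3$ minimizing $|\Sg_{\bB^2}\{(u^*,w^*),(w^*,u^*)\}|$ and then pass to a maximal element (which preserves minimality and hence yields both (3) and (5)); finally, show that either $\{u^*,c\}$ is majority for some $c$ reaching $U_2$, or the symmetric statement holds for $w^*$, by examining $\Sg\{u^*,v_1\}$ and $\Sg\{w^*,v_2\}$ and using that at least one of these interactions, combined with Lemma \ref{al4} or \ref{al6}, must close. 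You should make this termination explicit rather than gesture at ``restarting.''
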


\begin{cor}\label{ac-link} If $\bA, \bB$ is a good pair and $\theta, \theta'$ are two non-full congruences of $\bB$ such that $\theta \vee \theta'$ is the full congruence, then $\bB$ is yellow connected.
\end{cor}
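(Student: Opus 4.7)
The plan is to reduce Corollary \ref{ac-link} to Corollary \ref{ac1} by exhibiting a strictly smaller nontrivial quotient of $\bB$ on which the inductive hypothesis can be invoked. Corollary \ref{ac1} already states that if $\bB$ has a nontrivial yellow connected quotient $\bB'$, then $\bB$ itself is yellow connected, so it suffices to produce such a $\bB'$ of size strictly less than $|\bB|$ out of the two congruences $\theta, \theta'$.

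First I would observe that the hypothesis forces both $\theta$ and $\theta'$ to be nontrivial, and not merely non-full. Indeed, were $\theta$ the identity congruence $0_{\bB}$, then $\theta\vee\theta'=\theta'$ would still be non-full, contradicting the assumption that $\theta\vee\theta'=1_{\bB}$. By symmetry $\theta'$ is also nontrivial, so in particular $\bB/\theta$ has strictly fewer elements than $\bB$. Moreover, $\bB/\theta$ is itself a minimal bounded width algebra by Theorem \ref{subalg}.

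Next I would appeal to the inductive hypothesis, viewed uniformly as the claim that every minimal bounded width algebra is hereditarily yellow connected, proved by induction on its size. Since $\bB/\theta$ is a minimal bounded width algebra of strictly smaller size than $\bB$, this hypothesis yields that $\bB/\theta$ is yellow connected. Applying Corollary \ref{ac1} to the good pair $(\bA,\bB)$ with the nontrivial yellow connected quotient $\bB/\theta$ then concludes that $\bB$ is itself yellow connected, as required.

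The principal obstacle here is not computational but organizational: the induction driving the appendix must be set up uniformly over all minimal bounded width algebras (not only over subalgebras of a single fixed ambient $\bA$), so that quotients fall within its reach; this is legitimate because Theorem \ref{subalg} ensures that quotients of minimal bounded width algebras remain minimal bounded width. Once this framework is in place, the role of $\theta'$ in the statement is essentially auxiliary, guaranteeing that $\theta$ is nontrivial so that the quotient is actually smaller; the symmetric argument using $\bB/\theta'$ in place of $\bB/\theta$ works equally well.
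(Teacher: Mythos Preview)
Your reduction to Corollary \ref{ac1} hinges on knowing that $\bB/\theta$ is yellow connected, and you propose obtaining this by a global induction over minimal bounded width algebras, invoking Theorem \ref{subalg} to say that $\bB/\theta$ is again minimal bounded width. This is where the argument breaks. In the good-pair setup, $\bB = \Sg_{\bA_f}\{a,b\}$ is a subalgebra of the \emph{reduct} $\bA_f = (A,f)$, not of $\bA = (A,f,g)$; it need not be closed under $g$, so $\bB$ is not a minimal bounded width algebra and Theorem \ref{subalg} says nothing about it or its quotients. Likewise $\theta$ is only an $f$-congruence, and there is no reason for $\bB/\theta$ to arise as (a subalgebra of) $\bA'_f$ for any minimal bounded width $\bA'$. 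So neither the induction over subalgebras of a fixed $\bA_f$ nor an induction over all minimal bounded width algebras places $\bB/\theta$ within reach.

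The paper's proof avoids quotients entirely and uses both congruences in an essential way. Since $\theta\vee\theta'=1_\bB$, any two maximal elements $a,b$ of $\bB$ are joined by a zigzag $a=a_0,a_1,\dots,a_k=b$ in which consecutive elements lie in a common $\theta$-class or $\theta'$-class. Each such class is a proper subalgebra of $\bB$ (the congruences are non-full and $\bB$ is idempotent), so the good-pair hypothesis ``every proper subalgebra of $\bB$ is yellow connected'' applies directly. After passing to a sequence of maximal elements with the same incidence pattern, each consecutive pair generates a proper subalgebra and is therefore yellow connected; transitivity of yellow connectivity (the corollary immediately preceding \ref{ac-link}) then finishes. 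Here $\theta'$ is not auxiliary: a single non-full congruence gives proper-subalgebra classes, but you need the second one to chain those classes together across all of $\bB$.
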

\begin{proof} If $a,b$ are maximal elements of $\bB$, then there is some sequence $a = a_0, a_1, ..., a_k = b$ such that $a_{2i}/\theta = a_{2i+1}/\theta$, $a_{2i+1}/\theta' = a_{2i+2}/\theta'$. Then there is a sequence of maximal elements $a_i'$ with the same properties, such that $a_0'$ is in the strongly connected component of $a$ and $a_k'$ is in the strongly connected component of $b$. Since each $\Sg_{\bB}\{a_i',a_{i+1}'\} \ne \bB$, each adjacent pair is yellow connected, so $a,b$ are yellow connected.
\end{proof}

\begin{lem}\label{al7} Let $\bA, \bB$ be a good pair generated by $a,b$. If there is some element $c \in \bB$ and some congruence $\theta$ of $\bB$ such that $c/\theta \rightarrow a/\theta$ and $b$ is reachable from $c$, then $\{a,b\}$ is a majority subalgebra of $\bA$.
\end{lem}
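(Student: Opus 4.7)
The plan is to exhibit $(a,a,a) \in \RR := \Sg_{\bB^3}\{(a,a,b),(a,b,a),(b,a,a)\}$; by Theorem \ref{subalg} this forces $\{a,b\}$ to be a majority subalgebra of $\bA$. The good pair hypothesis makes $(a,b)$ maximal in $\Sg_{\bB^2}\{(a,b),(b,a)\}$, so $a$ and $b$ themselves are maximal in $\bB$, lying in maximal strongly connected components $A$ and $B$ respectively. The strategy is to use $c$ and $\theta$ to promote the three generators of $\RR$ into triples whose first two coordinates lie in $A$ and whose third coordinate lies in $B$, and then invoke Lemma \ref{maj-triple} to collapse to the diagonal.

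First I would note that $a/\theta \neq b/\theta$, since otherwise $\bB/\theta$ would be generated by a single element and hence trivial by idempotence, making the hypothesis vacuous. Fixing a partial semilattice $s$ adapted to $\bA$, the preparedness of $\bA$ combined with $c/\theta \rightarrow a/\theta$ gives $s(c,a) \equiv a \pmod{\theta}$, while the chain $c = c_0 \rightarrow c_1 \rightarrow \cdots \rightarrow c_k = b$ witnessing reachability in $\bB$ gives $s(c_{i-1},c_i) = c_i$ for each $i$. Applying these semilattice operations componentwise with diagonal tuples built from the $c_i$'s against the generators of $\RR$, and then iterating to stabilize within the maximal strongly connected components of the surviving coordinates (as in the proof of Corollary \ref{reach}), I would produce tuples $(a',a',b'),(a',b',a'),(b',a',a') \in \RR$ with $a' \in A$ and $b' \in B$.

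Applying the weak near-unanimity $g$ to these three cyclic shifts then yields the diagonal tuple $(f(a',b'),f(a',b'),f(a',b')) \in \RR$. To finish, I need $f(a',b') \in A$, whereupon Lemma \ref{maj-triple} applied with $U = V = W = A$ gives $A^3 \subseteq \RR$ and hence $(a,a,a) \in \RR$. The required containment $f(a',b') \in A$ I would extract from the good pair assumption that every proper subalgebra of $\bB$ is yellow connected, applied to $\Sg_{\bB}\{a',b'\}$: together with Lemmas \ref{al3}--\ref{al6} this funnels the analysis of the subalgebra generated by $\{a',b'\}$ into a configuration forcing $f(a',b')$ to land in $A$ (for instance, by contradicting the maximality of $(a,b)$ in the $b'$-majority case, or by a direct semilattice calculation in the remaining cases).

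The main obstacle is the coordinated lifting in the first phase: I need to interleave the semilattice applications along the chain $c \rightarrow^\ast b$ with those exploiting the edge $c/\theta \rightarrow a/\theta$ so that after iteration the first two coordinates come to rest in $A$ and the third in $B$, all while the triple remains in $\RR$ and the two distinguished $\theta$-classes are not collapsed. This delicate interplay between the quotient structure (which controls where the $a$-coordinate goes) and the full algebra $\bB$ (which controls where the $b$-coordinate goes) is characteristic of the Bulatov-style arguments in this appendix, and it may require splitting on the structure of $\bB/\theta$ along the lines of Lemma \ref{al1} — in the favorable case where $\{a/\theta,b/\theta\}$ already happens to be a majority subalgebra of $\bB/\theta$, one short-circuits the lifting step by applying Lemma \ref{al1} directly with $g' = g$.
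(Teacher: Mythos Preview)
Your lifting step contains a genuine gap. You propose to apply semilattice operations ``componentwise with diagonal tuples built from the $c_i$'s against the generators of $\RR$'', but the diagonal tuples $(c_i,c_i,c_i)$ are not elements of $\RR=\Sg_{\bB^3}\{(a,a,b),(a,b,a),(b,a,a)\}$ in general, so you cannot feed them into term operations and expect the result to lie in $\RR$. The only way to manufacture new elements of $\RR$ is to apply terms of $\bA$ to tuples already known to be in $\RR$, and nothing in your outline explains how the chain $c\rightarrow\cdots\rightarrow b$ or the edge $c/\theta\rightarrow a/\theta$ gets encoded that way. Your step~5 is likewise unsupported: Lemmas~\ref{al3}--\ref{al6} conclude that $a,b$ are \emph{yellow connected}, not that $\{a,b\}$ is a majority subalgebra, and yellow connectivity of $\Sg_{\bB}\{a',b'\}$ (even if it is proper) does not force $f(a',b')\in A$.

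The paper's argument is organized quite differently and avoids both problems. It first invokes Lemma~\ref{al5} to dispose of the linked case outright, so one may assume $\Sg_{\bB^2}\{(a,b),(b,a)\}$ is the graph of an automorphism swapping $a$ and $b$ (the case of nontrivial linking congruence is handled at the end by passing to the quotient and applying Lemma~\ref{al1}, which is the short-circuit you mention). Under this reduction $\theta$ may be taken trivial, so $c\rightarrow a$ in $\bB$ itself. Now comes the key move you are missing: since $c\in\bB=\Sg\{a,b\}$ there is a binary term $p$ with $p(a,b)=c$, and the automorphism forces $p(b,a)=d$ with $(c,d)\rightarrow(a,b)$. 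Applying $p$ to \emph{pairs of generators} of $\RR$ yields $(a,c,d),(c,a,d)\in\RR$; then $c\rightarrow a$ gives $(a,a,d)\in\RR$ directly. Finally, the automorphism turns ``$b$ reachable from $c$'' into ``$a$ reachable from $d$'', so $\RR\cap A^3\neq\emptyset$ and Lemma~\ref{maj-triple} finishes. The essential idea --- using the non-linked structure to find a companion $d$ for $c$ and then applying the generating term $p$ to pairs of generators of $\RR$ --- is absent from your proposal.
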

\begin{proof} By Lemma \ref{al5} we may assume $\Sg_{\bB^2}\{(a,b),(b,a)\}$ is not linked. Assume first that it is the graph of an automorphism which interchanges $a$ and $b$ and that $\theta$ is trivial. Thus there is a $d \in \bB$ with $(c,d) \in \Sg_{\bB^2}\{(a,b),(b,a)\}$ such that $(c,d) \rightarrow (a,b)$. Let $\RR = \Sg_{\bB^3}\{(a,a,b),(a,b,a),(b,a,a)\}$. Since $c \in \Sg_{\bB}\{a,b\}$, we have $(a,c,d), (c,a,d) \in \RR$. Applying $f$ to $(a,c,d), (c,a,d)$, we see that $(a,a,d) \in \RR$. Since $b$ is reachable from $c$, $a$ is reachable from $d$, so if $A$ is the strongly connected component of $\bB$ containing $a$, then $\RR \cap A^3 \ne \emptyset$. Thus $(a,a,a) \in \RR$ by Lemma \ref{maj-triple}, and so by Theorem \ref{subalg} we are done.

Now consider the case where the linking congruence of $\Sg_{\bB^2}\{(a,b),(b,a)\}$ is nontrivial. By Corollary \ref{ac-link}, we may assume it is contained in $\theta$. By the above argument, there is a term $g' \in \Clo(f)$ such that the restriction of $g'$ to $\{a/\theta, b/\theta\}$ acts as majority. Now apply Lemma \ref{al1}.
\end{proof}

\begin{lem}\label{al8} Let $\bA, \bB$ be a good pair generated by $a,b$. If there is some element $c \in \bB$ and some maximal congruence $\theta$ such that $c/\theta \rightarrow a/\theta$ and $c$ is not maximal in $\bB/\theta$, then $a,b$ are yellow connected.
\end{lem}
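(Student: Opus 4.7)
The plan is an induction on $|\bB|$, with an inner case analysis on whether $\theta$ is trivial. Setting $\bar{\bB} = \bB/\theta$, $\bar{a} = a/\theta$, $\bar{b} = b/\theta$, $\bar{c} = c/\theta$, the maximality of $\theta$ implies $\bar{\bB}$ is simple, and by Theorem \ref{subalg} it is itself a minimal bounded width algebra. The hypothesis says $\bar{c} \to \bar{a}$ with $\bar{c}$ not maximal in $\bar{\bB}$, so $\bar{\bB}$ carries a strict semilattice edge and is not a majority algebra.

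If $\theta$ is nontrivial, then $|\bar{\bB}| < |\bB|$. By the outer induction hypothesis applied to $\bar{\bB}$ viewed as a subalgebra of itself, $\bar{\bB}$ is yellow connected. Corollary \ref{ac1} then yields that $\bB$ is yellow connected.

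If $\theta$ is trivial, so that $\bB$ itself is simple, I would first invoke Lemma \ref{al5}: if $\Sg_{\bB^2}\{(a,b),(b,a)\}$ is linked we are done. Otherwise, the simplicity of $\bB$ combined with Corollary \ref{simple-strong} forces $\Sg_{\bB^2}\{(a,b),(b,a)\}$ to be the graph of an automorphism $\iota$ of $\bB$ swapping $a$ and $b$; applying $\iota$ to $c \to a$ gives $\iota(c) \to b$ strictly. Since $a$ is maximal in $\bB$ (from the good-pair hypothesis), $c$ sits strictly below $a$, and symmetrically $\iota(c)$ strictly below $b$. The task now is to exhibit a witness for Lemma \ref{al7}: some $c'$ with $c' \to a$ and $b$ reachable from $c'$ in $\bB$ (or symmetrically $c' \to b$ with $a$ reachable from $c'$), which would yield $\{a,b\}$ as a majority subalgebra. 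The non-maximality of $c$ supplies elements $d$ with $c \to d$ in a strictly higher SCC; by iterating this reachability using $f$ and exploiting the $\iota$-symmetric chain above $\iota(c)$, one pushes these up until either the configuration satisfies Lemma \ref{al7} or the subalgebra $\Sg_{\bB^2}\{(a,b),(b,a)\}$ is forced to be linked after all, contradicting the standing assumption.

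The main obstacle is precisely this last (simple) subcase: once $\bB$ is simple we cannot descend to a smaller quotient, and the symmetric structure imposed by $\iota$ is what must be broken in order to apply Lemma \ref{al7}. The technical crux is to verify that the non-maximal witness $c$ strictly below $a$, together with its $\iota$-image strictly below $b$, forces either a linking of $\Sg_{\bB^2}\{(a,b),(b,a)\}$ or a reachable path from some iterate of $c$ up to $b$; the simple nontrivial case (where $\bar{\bB}$ is simple and smaller than $\bB$) is resolved cleanly by the induction, so it is only the unrelieved simple case of $\bB$ itself that requires this delicate chasing.
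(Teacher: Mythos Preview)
Your two cases both have real problems.

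\textbf{The nontrivial $\theta$ case.} You want to apply an ``outer induction hypothesis'' to $\bar{\bB}=\bB/\theta$ to conclude it is yellow connected, then cite Corollary \ref{ac1}. But the good-pair hypothesis only tells you that proper \emph{subalgebras} of $\bB$ are yellow connected; $\bar{\bB}$ is a quotient, not a subalgebra of $\bA_f$, so no inductive hypothesis in the paper's framework applies to it. One could try to repair this by reorganizing the whole outer argument as an induction on $|\bA|$ (so that, after noting $\bB=\bA$ is forced, every smaller minimal bounded width algebra --- in particular $\bar{\bB}$ --- is already hereditarily yellow connected), but you have not set that up, and in any event it does not help with the next case.

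\textbf{The simple case.} After reducing to the situation where $\Sg_{\bB^2}\{(a,b),(b,a)\}$ is the graph of an involution $\iota$, you need $b$ reachable from $c$ to invoke Lemma \ref{al7}. You only get $a$ reachable from $c$ (via $c\to a$) and $b$ reachable from $\iota(c)$; when $a$ and $b$ lie in distinct maximal components swapped by $\iota$, there is no reason $b$ should be reachable from $c$, and your ``push these up until'' sketch does not produce one. This is the exact obstruction you flag, and it is not resolved.

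\textbf{How the paper avoids this.} The paper never splits on whether $\theta$ is trivial. Instead it studies $\Sg_{\bB^2}\{(b,c),(c,b)\}$ rather than $\Sg_{\bB^2}\{(a,b),(b,a)\}$. The point is that $b/\theta$ is maximal in $\bB/\theta$ while $c/\theta$ is not, so no automorphism of $\bB/\theta$ can swap them; hence (via Corollary \ref{ac-link}) one may assume this relation is linked. That linkedness provides a chain of maximal elements from $a$ to the component of $b$, and a short argument with Lemmas \ref{al6} and \ref{al7} (together with the observation that $\Sg_{\bB}\{b,e\}$ and $\Sg_{\bB}\{c,e\}$ must be proper for a suitable $e$) finishes. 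The asymmetry between $b$ and $c$ is precisely what breaks the involution obstacle that stalls your approach.
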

\begin{proof} If $\Sg_{\bB}\{b,c\} \ne \bB$ then we are done by Lemmas \ref{al6} and \ref{al7}. Otherwise, since there is clearly no automorphism of $\bB/\theta$ which interchanges $b$ and $c$, $\Sg_{\bB^2}\{(b,c),(c,b)\}$ must be linked or we are done by Corollary \ref{ac-link}. So we can assume there is a sequence $a = a_1, a_2, ..., a_k$ with $a_k$ in the strongly connected component of $b$ and with each $a_i$ maximal, such that for each $i$ there is a maximal $e_i$ with $(a_i,e_i), (a_{i+1},e_i) \in \Sg_{\bB^2}\{(b,c),(c,b)\}$. If $\Sg_{\bB}\{a_i,a_{i+1}\} \ne \bB$ for all $i$, then we easily finish using Lemma \ref{al6}. Thus we may assume that there is some maximal $e$ such that $\{e\} \times \bB \subseteq \Sg_{\bB^2}\{(b,c),(c,b)\}$.

If $\Sg_{\bB}\{b,e\} = \bB$, then since $(b,c), (e,c)$ are in $\Sg_{\bB^2}\{(b,c),(c,b)\}$, we have $(c,c) \in \Sg_{\bB^2}\{(b,c),(c,b)\}$, so $b \rightarrow c$, a contradiction. Similarly, if $\Sg_{\bB}\{c,e\} = \bB$ then $c \rightarrow b$ and we finish using Lemma \ref{al7}. Thus we may assume that $\Sg_{\bB}\{b,e\}, \Sg_{\bB}\{c,e\}$ are proper subalgebras of $\bB$. In particular, $b,e$ are yellow connected.

From $\Sg_{\bB}\{c,e\} \ne \bB$, there is some $c'$ which is reachable from $c$ and some $e'$ in the strongly connected component of $e$ such that $\{c',e'\}$ is a majority subalgebra of $\bA$. Letting $c''$ be a maximal element of $\bB$ which is reachable from $c'$, we can use Lemmas \ref{al6} and \ref{al7} to see that $a,c''$ are yellow connected and that $c'',e'$ are yellow connected, respectively, so $a,b$ are yellow connected by two applications of Lemma \ref{al6}.
\end{proof}

\begin{thm} Let $\bA = (A,f,g)$ be a minimal bounded width algebra with $f,g$ chosen as in Theorem \ref{connect}, and let $\bA_f = (A,f)$. Then $\bA_f$ is hereditarily yellow connected.
\end{thm}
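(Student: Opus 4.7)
The plan is to induct on $|\bB|$ over subalgebras $\bB \le \bA_f$. In the base case $|\bB| \le 2$, a direct check gives yellow connectivity: a two-element $f$-subalgebra either has $f$ acting as first projection (a yellow edge between its points) or is a semilattice (with a unique maximal element, so yellow connectivity is immediate).

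For the inductive step, fix upwards closed $A, B \subseteq \bB$ and pick maximal elements $a \in A$, $b \in B$. If $a = b$ we are done. If $\bB' := \Sg_{\bA_f}\{a, b\}$ is a proper subalgebra of $\bB$, then by induction $\bB'$ is yellow connected; since $a$ is maximal in $\bB$, the upward closure of $a$ inside $\bB'$ lies in the strongly connected component of $a$ in $\bB$ (similarly for $b$), so the yellow edge produced in $\bB'$ lies inside $A \times B$ as required. Hence we may assume $\bB = \Sg_{\bA_f}\{a, b\}$, and use Theorem \ref{strong-binary}(b) to replace $(a, b)$ by a maximal element of $\Sg_{\bB^2}\{(a,b),(b,a)\}$ whose coordinates remain in the original strongly connected components of $a$ and $b$. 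Combined with the inductive hypothesis applied to proper subalgebras of $\bB$, the pair $(\bA, \bB)$ is now a good pair generated by $a, b$.

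The good pair case splits on linkedness of $\Sg_{\bB^2}\{(a,b),(b,a)\}$. If it is linked, Lemma \ref{al5} immediately gives the yellow edge between $a$ and $b$. Otherwise, let $\theta$ be the (non-full) linking congruence projected to $\bB$. Subdirectness forces $a/\theta \ne b/\theta$ (else $\Sg_{\bB^2}\{(a,b),(b,a)\}/\theta^2$ would contain only the diagonal pair $(a/\theta, a/\theta)$, contradicting subdirectness unless $\bB/\theta$ is trivial), and $\Sg_{\bB^2}\{(a,b),(b,a)\}/\theta^2$ is the graph of the involutive automorphism of $\bB/\theta$ which swaps $a/\theta$ and $b/\theta$. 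I claim $f(a,b)/\theta = a/\theta$: otherwise $f(a,b)/\theta = b/\theta$, so $f(b,a)/\theta = a/\theta$ by the swap, but then the identity $f(f(x,y), f(y,x)) \approx f(x,y)$ at $(a, b)$ read modulo $\theta$ gives $f(b/\theta, a/\theta) = b/\theta$, while the same expression evaluates to $f(b,a)/\theta = a/\theta$, forcing $a/\theta = b/\theta$. The identities $g(x,x,y) \approx g(x,y,x) \approx g(y,x,x) \approx f(x,y)$ then give $g(a,a,b)/\theta = g(a,b,a)/\theta = g(b,a,a)/\theta = f(a,b)/\theta = a/\theta$, so $g$ itself acts as a majority operation on $\{a/\theta, b/\theta\}$. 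The relevant binary minors of $g$ all equal $f$, under which $\Sg_{\bB^2}\{(a,b),(b,a)\}$ is automatically closed, so Lemma \ref{al1} applies with $g' = g$ and yields that $\{a, b\}$ is a majority subalgebra of $\bA$ --- giving the required yellow edge.

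The main obstacle is recognizing that in the non-linked case the identities relating $f$ and $g$ already force $g$ to act as a majority operation modulo the linking congruence $\theta$; without this observation one would have to build an elaborate ternary term $g'$ before invoking Lemma \ref{al1}. Once the reduction to a good pair is set up, the technical weight of the proof is absorbed by the two previously established Lemmas \ref{al5} and \ref{al1}.
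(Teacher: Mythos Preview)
Your reduction to a good pair and the linked case (via Lemma \ref{al5}) are fine. The gap is in the non-linked branch.

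You take $\theta$ to be the linking congruence of $\Sg_{\bB^2}\{(a,b),(b,a)\}$ and correctly observe that $\Sg_{\bB^2}\{(a,b),(b,a)\}/\theta^2$ is the graph of an involution $\iota$ of $\bB/\theta$ swapping $a/\theta$ and $b/\theta$. You then argue ``$f(a,b)/\theta = a/\theta$: otherwise $f(a,b)/\theta = b/\theta$, \ldots''. But this dichotomy is unjustified: $\bB/\theta$ is generated by $a/\theta,b/\theta$ and can have many more elements, and there is no reason the two-element set $\{a/\theta,b/\theta\}$ should be closed under $f$. All you know is that $(f(a,b)/\theta,f(b,a)/\theta)$ lies on the graph of $\iota$ and that $\{f(a,b)/\theta,f(b,a)/\theta\}$ is a (possibly degenerate) majority pair; this does not pin $f(a,b)/\theta$ down to $a/\theta$. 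If $f(a,b)/\theta$ is some third element, then $g(a,a,b)/\theta=f(a,b)/\theta\ne a/\theta$, so $g$ does \emph{not} act as majority on $\{a/\theta,b/\theta\}$ and Lemma \ref{al1} cannot be invoked with $g'=g$. Your closing remark that ``the identities relating $f$ and $g$ already force $g$ to act as majority modulo $\theta$'' is exactly the unsupported step.

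The paper handles the non-linked situation differently: it uses the chain of two-element $g$-subalgebras supplied by Theorem \ref{connect} (so that one really does have pairs on which $g$ is known to behave), passes to a \emph{maximal} congruence $\theta$, and then branches on whether $\bB/\theta$ has non-maximal elements, invoking Lemmas \ref{al1}, \ref{al6}, and \ref{al7}/\ref{al8}. That extra machinery is there precisely to manufacture a $g'$ with the required majority behaviour modulo $\theta$; it is not something the bare identities $g(x,x,y)\approx f(x,y)$ and $f(f(x,y),f(y,x))\approx f(x,y)$ give you for free. (A minor secondary point: your setup also skips the last clause in the definition of a good pair, and the citation of Theorem \ref{strong-binary}(b) for ``pass to a maximal $(a,b)$'' is not the right reference, though these are easily patched.)
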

\begin{proof} Suppose for a contradiction that there is a subalgebra $\bB \le \bA_f$ which is not yellow connected, and take such a $\bB$ of minimal size. Then for any pair of maximal elements $a,b \in \bB$ which are not yellow connected we must have $\bB = \Sg_{\bA_f}\{a,b\}$. By Theorem \ref{connect}, there is a sequence $a = p_1, p_2, ..., p_n = b$ of elements of $\bB$ such that $\{p_i,p_{i+1}\}$ is a two element subalgebra of $\bB$ for each $i$.

Let $\theta$ be a maximal congruence of $\bB$. If $\bB/\theta$ has no non-maximal elements, then each $\{p_i/\theta, p_{i+1}/\theta\}$ gives a one or two element subalgebra connecting two elements of maximal strongly connected components of $\bB/\theta$, so the corresponding maximal strongly connected components of $\bB$ are yellow connected by Lemma \ref{al1}. Then by Lemma \ref{al6} we have $a,b$ yellow connected.

On the other hand, if $\bB/\theta$ has a non-maximal element, then it must have some elements $c/\theta \rightarrow x/\theta$ with $c/\theta$ non-maximal and $x$ maximal. Then Lemma \ref{al7} shows that $a,x$ are yellow connected and $b,x$ are yellow connected, so we are done by Lemma \ref{al6}.
\end{proof}

\section{Construction of special weak near-unanimity terms}\label{a-special}

\begin{thm}\label{special} Let $\bA$ be a finite algebra of bounded width. Then $\bA$ has an idempotent term $t(x,y)$ satisfying the identity
\[
t(x,t(x,y)) \approx t(x,y)
\]
along with an infinite sequence of idempotent weak near-unanimity terms $w_n$ of every arity $n > 2\lcm\{1,2,...,|\bA|-1\}$ such that for every sequence $(a_1, ..., a_n)$ with $\{a_1, ..., a_n\} = \{x,y\}$ and having strictly less than $\frac{n}{2\lcm\{1,2,...,|\bA|-1\}}$ of the $a_i$ equal to $y$ we have
\[
w_n(a_1, ..., a_n) \approx t(x,y).
\]
\end{thm}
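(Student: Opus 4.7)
The proof has two main parts. First, for $t$: I would apply the Semilattice Iteration Lemma~\ref{semi-iter} to any binary idempotent term of $\bA$ (for instance, the $f$ of Theorem~\ref{intersect}), obtaining a partial semilattice $s \in \Clo(\bA)$ adapted to $\bA$; set $t := s$. The required identity $t(x, t(x, y)) \approx t(x, y)$ is then the partial semilattice axiom.

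Second, for each $n > 2L$ with $L = \lcm\{1, \dots, |\bA| - 1\}$, I would construct $w_n$ as follows. The key combinatorial observation is that the collection of subsets $A \subseteq \{1, \dots, n\}$ with $|A| > n - n/(2L)$ is intersecting—any two such sets satisfy $|A_1| + |A_2| > n$—so it extends to a maximal intersecting family $\cF$. Applying Theorem~\ref{intersect} to $\cF$ produces an $n$-ary term $h_\cF$ with $h_\cF(\vec{a}) \approx f(x, y)$ whenever $\vec{a} \in \{x, y\}^n$ satisfies $\{i : a_i = x\} \in \cF$. In particular, for every $\vec{a} \in T := \{\vec{a} : \{a_i\} = \{x, y\},\ |\{i : a_i = y\}| < n/(2L)\}$, the $x$-support has size $> n - n/(2L)$ and therefore lies in $\cF$, so $h_\cF(\vec{a}) = f(x, y)$. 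To upgrade $f(x, y)$ to $t(x, y)$ on $T$-inputs, I would iterate $h_\cF$ following the exact recipe by which $t$ is built from $f$ in the Semilattice Iteration Lemma, with permutations of the arguments of $h_\cF$ supplying the ``$f(y, x)$''-valued subterms that appear at intermediate stages of the iteration; call the resulting $n$-ary term $w_n$.

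Idempotence of $w_n$ is automatic, and the weak near-unanimity identity then follows for free: every tuple with exactly one $y$ lies in $T$ (since $1 < n/(2L)$ whenever $n > 2L$) and is therefore mapped to the common value $t(x, y)$ regardless of the position of the $y$. The main obstacle is the iteration step in the construction of $w_n$: one must verify that the semilattice iteration on $f$ can be faithfully mimicked using only $h_\cF$ together with argument permutations, and that this can be done without disturbing the $T$-uniformity of the output. The precise thresholds $n > 2L$ and $n/(2L)$ are calibrated so that both the ``large subset'' intersecting family is nontrivial enough for $T$ to capture the one-$y$ tuples and so that the orbit-stabilization implicit in the iteration of Lemma~\ref{semi-iter} carries through in at most $L$ steps.
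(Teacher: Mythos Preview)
Your approach has a genuine gap at exactly the point you flag as ``the main obstacle.'' The iteration cannot be carried out using $h_{\cF}$ at a fixed arity $n$ together with argument permutations. The construction of $s$ in Lemma~\ref{semi-iter} builds $s(x,y)$ as a nested $f$-expression that at intermediate stages requires access to the values $x$ and $f(y,x)$ (for instance $u(x,y)=t^\infty(x,t^\infty(y,x))$). But for any $(a_1,\dots,a_n)\in T$ the element $y$ is the minority, so every permutation of $(a_1,\dots,a_n)$ still lies in $T$ and $h_{\cF}$ applied to it again yields $f(x,y)$; you never obtain $f(y,x)$ this way. Nor is there a sufficiently symmetric $n$-ary term that extracts the bare value $x$ from an arbitrary tuple in $T$, so even the simpler iteration $t^{i+1}(x,y)=f(x,t^i(x,y))$ cannot be mimicked while staying at a fixed arity.

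The paper's proof resolves this by \emph{not} keeping the arity fixed. It takes $t=t^L$ with $t^{i+1}(x,y)=f(x,t^i(x,y))$ (only the single identity $t(x,t(x,y))\approx t(x,y)$ is required, not the full partial semilattice axiom), and builds $w_{n+2k,k}^{i+1}$ of arity $n+2k$ from $w_{n,k}^i$ of arity $n$. The key point is that among the $n$-element subtuples of a tuple in $\{x,y\}^{n+2k}$ with at most $k$ copies of $y$, some consist entirely of $x$'s---idempotence then returns the value $x$---while the rest contain between $1$ and $k$ copies of $y$, giving $t^i(x,y)$ by induction. An intersecting family on the index set $\binom{\{1,\dots,n+2k\}}{n}$ then lets an outer $h_{\cF}$ combine these into $f(x,t^i(x,y))=t^{i+1}(x,y)$. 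The threshold $n>2Lk$ is exactly what allows $L$ such arity-growing steps starting from a base arity exceeding $2k$. This ``grow the arity so that idempotence on all-$x$ subtuples recovers $x$'' is the idea missing from your sketch.
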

\begin{proof} Let $f, h_{\cF}$ be as in the conclusion to Theorem \ref{intersect}. Define a sequence of functions $t^i$ by $t^0(x,y) \approx x$, $t^1(x,y) \approx f(x,y)$,
\[
t^{i+1}(x,y) \approx f(x,t^i(x,y)).
\]
We will inductively construct for each $i,k \ge 1$ and each $n > 2ik$ a sequence of functions $w_{n,k}^i(a_1, ..., a_n)$ such that for every sequence $(a_1, ..., a_n)$ with $\{a_1, ..., a_n\} = \{x,y\}$ and having at most $k$ of the $a_j$ equal to $y$ we have
\[
w_{n,k}^i(a_1, ..., a_n) \approx t^i(x,y).
\]
We start with $w_{n,k}^1 = h_{\cF}$ for some maximal intersecting family containing the family of all subsets of $\{1, ..., n\}$ of size at least $n-k$ (which is an intersecting family if $n > 2k$). Now assume we have already constructed $w_{n,k}^i$, and we will construct $w_{n+2k,k}^{i+1}$.

Fix a bijection of $s:\binom{\{1, ..., n+2k\}}{n} \rightarrow \{1, ..., \binom{n+2k}{n}\}$. Let $\cF$ be a maximal intersecting family of subsets of $\{1,...,\binom{n+2k}{n}\}$ containing the intersecting family of subsets
\[
\{s(\tbinom{A}{n}) \mid A \subseteq \{1, ..., n+2k\}, |A| \ge n+k\}.
\]
To see that the above forms an intersecting family, note that if $A,B \subseteq \{1, ..., n+2k\}$ with $|A|, |B| \ge n+k$, then $|A\cap B| \ge n$, so there is some $n$-element set which is contained in both $A$ and $B$. Then we take
\[
w_{n+2k,k}^{i+1}(a_1, ..., a_{n+2k}) \approx h_{\cF}(u_1, ..., u_{\binom{n+2k}{n}}),
\]
where $u_{s(\{j_1, ..., j_n\})} = w_{n,k}^i(a_{j_1},a_{j_2}, ..., a_{j_n})$ for $1 \le j_1 < j_2 < \cdots < j_n \le n+2k$. If $\{a_1, ..., a_{n+2k}\} = \{x,y\}$ and the collection of $i$ such that $a_i = x$ is $X$, then $u_A \approx x$ when $X \subseteq A$ by idempotence, and $u_A \approx t^i(x,y)$ otherwise by induction on $i$ if at most $k$ of the $a_j$s are equal to $y$.

Now take
\[
w_n(a_1, ..., a_n) = w_{n,k}^i(a_1, ..., a_n)
\]
with $i = \lcm\{1, ..., |\bA|-1\}$ and $k = \lfloor\frac{n-1}{2i}\rfloor$, and note that for this $i$ we necessarily have
\[
t^i(x,t^i(x,y)) \approx t^i(x,y).\qedhere
\]
\end{proof}

\section{Counterexample to (SM 5) and (SM 6)}\label{a-sm}

Let $\bA = \bA_{11} = (\{a,b,c\},f,g)$ be the idempotent algebra given by
\begin{align*}
f(a,b) = f(b,a) &= f(b,c) = g(a,b,c) = b,\\
f(a,c) = f(c,a) &= f(c,b) = g(a,c,b) = c,\\
f(x,y) &\approx g(x,x,y),\\
g(x,y,z) &\approx g(y,z,x).
\end{align*}
Note that this algebra satisfies $f(f(x,y),f(y,x)) \approx f(x,y)$, and thus has bounded width. Also, note that
\begin{align*}
\begin{bmatrix} b\\ c\end{bmatrix} &\not\in \Sg_{\bA^2}\left\{\begin{bmatrix} a\\ a\end{bmatrix}, \begin{bmatrix} a\\ b\end{bmatrix}, \begin{bmatrix} b\\ b\end{bmatrix}, \begin{bmatrix} c\\ c\end{bmatrix}\right\} = \left\{\begin{bmatrix} a\\ a\end{bmatrix}, \begin{bmatrix} a\\ b\end{bmatrix}, \begin{bmatrix} b\\ b\end{bmatrix}, \begin{bmatrix} c\\ b\end{bmatrix}, \begin{bmatrix} c\\ c\end{bmatrix}\right\},\\
\begin{bmatrix} b\\ b\end{bmatrix} &\not\in \Sg_{\bA^2}\left\{\begin{bmatrix} a\\ a\end{bmatrix}, \begin{bmatrix} a\\ c\end{bmatrix}, \begin{bmatrix} b\\ c\end{bmatrix}, \begin{bmatrix} c\\ b\end{bmatrix}\right\} = \left\{\begin{bmatrix} a\\ a\end{bmatrix}, \begin{bmatrix} a\\ c\end{bmatrix}, \begin{bmatrix} b\\ c\end{bmatrix}, \begin{bmatrix} c\\ b\end{bmatrix}, \begin{bmatrix} c\\ c\end{bmatrix}\right\},
\end{align*}
that $\Sg_{\bA^n} \{x_1, ..., x_k\}$ contains the vector $(a,a,...,a)$ if and only if one of the $x_i$ is equal to $(a,a,...,a)$, and that $(b\;c) \in \Aut(\bA)$.

\begin{thm} The algebra $\bA$ does not have any $4$-ary term $t$ satisfying
\begin{align*}
t(x,y,z,y) &\approx t(y,x,z,z) \approx t(z,x,x,y),\tag{SM 5}
\end{align*}
and also does not have any $4$-ary term $t$ satisfying
\begin{align*}
t(x,y,z,y) &\approx t(y,x,x,z) \approx t(z,y,x,x).\tag{SM 6}
\end{align*}
\end{thm}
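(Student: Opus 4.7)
The plan is to assume $t$ satisfies one of the identities and derive a contradiction by pinning down enough values of the common function $u(x,y,z) := t(x,y,z,y)$ (using the given subalgebra facts) to reach the same $4$-tuple with two incompatible $t$-values through the polymorphism property: since $t$ is a term of $\bA$, it commutes coordinate-wise with $f$ and $g$.

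As a common warm-up I would first show $u(b,b,c) = b$ under either identity. In both cases the last two of the three $4$-tuples defining $u(b,b,c)$ lie entirely in $\{b,c\}^4$, and their coordinate-wise pairs generate a subalgebra of $\{b,c\}^2$ contained in $\{(b,b),(b,c),(c,b)\}$: indeed, since $f|_{\{b,c\}}$ is first projection and $g|_{\{b,c\}}$ is majority, the complement of $(c,c)$ in $\{b,c\}^2$ is closed under both operations. Its only diagonal element is $(b,b)$, forcing $u(b,b,c) = b$.

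For (SM 5), the next step is to determine $u(a,b,c) = c$. The pair of rows $2$ and $3$ of the $(x,y,z)=(a,b,c)$ instance gives the four column pairs $(b,c),(a,a),(c,a),(c,b)$, which by Fact~2 together with the coordinate-swap automorphism of $\bA^2$ generate a subalgebra avoiding $(b,b)$; combined with the semilattice quotient $\bA \twoheadrightarrow \bA/\theta$ for $\theta = \{\{a\},\{b,c\}\}$ (which forces the value into $\{b,c\}$), this yields $u(a,b,c)=c$. Setting $T_1 = (a,b,c,b)$, $T_2=(b,a,c,c)$, $T_3=(c,a,a,b)$, each satisfying $t(T_i)=c$, a direct computation using the cyclic symmetry of $g$ and $g(x,x,y) = f(x,y)$ produces $g(T_1,T_2,T_3) = (b,b,c,b)$ coordinate-wise, so the polymorphism property gives $t(b,b,c,b) = g(c,c,c) = c$; but $(b,b,c,b)$ is the first defining tuple for $u(b,b,c)$, so $t(b,b,c,b)=b$, a contradiction.

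For (SM 6) the same $g$-trick applied to the different (SM 6)-tuples $T_1'=(a,b,c,b)$, $T_2'=(b,a,a,c)$, $T_3'=(c,b,a,a)$ still yields $g(T_1',T_2',T_3') = (b,b,c,b)$, forcing $u(a,b,c) = u(b,b,c) = b$. Then $f(T_1',T_3') = (c,b,c,b)$ coordinate-wise, so $t(c,b,c,b) = f(b,b) = b$; reading $(c,b,c,b)$ as the first tuple in the (SM 6) definition of $u(c,b,c)$ gives $u(c,b,c) = b$. On the other hand the warm-up argument applied to $u(b,c,b)$ (the relevant coordinate pairs again generate a subalgebra of $\{b,c\}^2$ with diagonal $\{(b,b)\}$) gives $u(b,c,b) = b$, and applying the automorphism $(b\;c)$ yields $u(c,b,c) = c$, the desired contradiction. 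The main technical obstacle is a careful coordinate-wise evaluation of $f$ and $g$---for instance, verifying that the $4$th coordinate of $g(T_1,T_2,T_3)$ is $g(b,c,b)=b$ (via cyclicity $g(b,c,b) = g(b,b,c) = f(b,c) = b$), not $c$.
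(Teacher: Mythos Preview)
Your argument is correct and genuinely different from the paper's. The paper works in $\bA^6$: it picks $x=(a,b),\,y=(b,c),\,z=(c,a)\in\bA^2$, writes the three (SM~5) (resp.\ (SM~6)) tuples as four columns in $\bA^6$, and shows that the subalgebra they generate misses the diagonal by projecting onto well-chosen pairs of coordinates and invoking the two precomputed binary subalgebra facts. You instead stay in $\bA$ and $\bA^2$, pinning down individual values of $u(x,y,z)=t(x,y,z,y)$ and then exploiting that $t$ commutes with $f$ and $g$ to transport those values to new inputs. For (SM~5) you reach the contradiction $u(b,b,c)=b$ versus $t(b,b,c,b)=g(c,c,c)=c$; for (SM~6) you bootstrap $u(a,b,c)=b$ from the warm-up via $g$, then get $u(c,b,c)=b$ via $f$, and collide with $u(c,b,c)=c$ from the automorphism $(b\;c)$.

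What each buys: the paper's approach is uniform --- one $\bA^6$ instance per identity, dispatched by a fixed list of two-coordinate projections --- and makes clear that the obstruction is a relational one (a generated subpower missing the diagonal). Your approach is more elementary and self-contained: for (SM~6) you never touch Fact~2 at all, and for (SM~5) you use only its coordinate-swapped form plus the observation that $a$ is never an output unless all inputs are $a$. Your computations are easy to verify by hand (the key ones being $g(a,b,c)=b$, $g$ cyclic, and $f|_{\{b,c\}}=\pi_1$), whereas the paper's $\bA^6$ projections require a bit more bookkeeping. A small stylistic point: in your warm-up all three tuples lie in $\{b,c\}^4$, not just the last two, but using only the last two is of course sufficient.
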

\begin{proof} If a term satisfying (SM 5) existed, then for every algebra $\bF$ in the variety generated by $\bA$ and for every $x,y,z \in \bF$, we would have
\[
\Sg_{\bF^3}\left\{\begin{bmatrix}x\\y\\z\end{bmatrix}, \begin{bmatrix}y\\x\\x\end{bmatrix}, \begin{bmatrix}z\\z\\x\end{bmatrix}, \begin{bmatrix}y\\z\\y\end{bmatrix}\right\} \cap \Delta \ne \emptyset,
\]
where $\Delta$ is the diagonal of $\bF^3$. To show that this is not the case, take $\bF = \bA^2$, $x = (a,b), y = (b,c), z = (c,a)$. We aim to show that
\[
\Sg_{\bA^6}\left\{\begin{bmatrix}a & b\\b & c\\c & a\end{bmatrix}, \begin{bmatrix}b & c\\a & b\\a & b\end{bmatrix}, \begin{bmatrix}c & a\\c & a\\a & b\end{bmatrix}, \begin{bmatrix}b & c\\c & a\\b & c\end{bmatrix}\right\} \cap \left\{\begin{bmatrix}\alpha & \beta\\\alpha & \beta\\\alpha & \beta\end{bmatrix} \Bigg|\ \alpha, \beta \in \bA\right\} = \emptyset.
\]
Suppose for contradiction that $\begin{bmatrix}\alpha & \beta\\\alpha & \beta\\\alpha & \beta\end{bmatrix}$ was in the intersection. We clearly must have $\alpha, \beta \in \{b,c\}$, and by
\[
\begin{bmatrix}* & *\\b & *\\b & *\end{bmatrix} \not\in \Sg_{\bA^6}\left\{\begin{bmatrix}* & *\\b & *\\c & *\end{bmatrix}, \begin{bmatrix}* & *\\a & *\\a & *\end{bmatrix}, \begin{bmatrix}* & *\\c & *\\a & *\end{bmatrix}, \begin{bmatrix}* & *\\c & *\\b & *\end{bmatrix}\right\}
\]
and
\[
\begin{bmatrix}* & b\\ * & b\\ * & *\end{bmatrix} \not\in \Sg_{\bA^6}\left\{\begin{bmatrix}* & b\\ * & c\\ * & *\end{bmatrix}, \begin{bmatrix}* & c\\ * & b\\ * & *\end{bmatrix}, \begin{bmatrix}* & a\\ * & a\\ * & *\end{bmatrix}, \begin{bmatrix}* & c\\ * & a\\ * & *\end{bmatrix}\right\}
\]
we see that $\alpha = \beta = c$. But this is a contradiction, since
\[
\begin{bmatrix}c & *\\ * & *\\ * & c\end{bmatrix} \not\in \Sg_{\bA^6}\left\{\begin{bmatrix}a & *\\ * & *\\ * & a\end{bmatrix}, \begin{bmatrix}b & *\\ * & *\\ * & b\end{bmatrix}, \begin{bmatrix}c & *\\ * & *\\ * & b\end{bmatrix}, \begin{bmatrix}b & *\\ * & *\\ * & c\end{bmatrix}\right\}.
\]

Similarly, if a term satisfying (SM 6) existed, we would have
\[
\Sg_{\bF^3}\left\{\begin{bmatrix}x\\y\\z\end{bmatrix}, \begin{bmatrix}y\\x\\y\end{bmatrix}, \begin{bmatrix}z\\x\\x\end{bmatrix}, \begin{bmatrix}y\\z\\x\end{bmatrix}\right\} \cap \Delta \ne \emptyset,
\]
so it is enough to show that
\[
\Sg_{\bA^6}\left\{\begin{bmatrix}a & b\\b & c\\c & a\end{bmatrix}, \begin{bmatrix}b & c\\a & b\\b & c\end{bmatrix}, \begin{bmatrix}c & a\\a & b\\a & b\end{bmatrix}, \begin{bmatrix}b & c\\c & a\\a & b\end{bmatrix}\right\} \cap \left\{\begin{bmatrix}\alpha & \beta\\\alpha & \beta\\\alpha & \beta\end{bmatrix} \Bigg|\ \alpha, \beta \in \bA\right\} = \emptyset.
\]
This follows from
\begin{align*}
\begin{bmatrix}* & b\\ * & *\\ b & *\end{bmatrix} &\not\in \Sg_{\bA^6}\left\{\begin{bmatrix}* & b\\ * & *\\ c & *\end{bmatrix}, \begin{bmatrix}* & c\\ * & *\\ b & *\end{bmatrix}, \begin{bmatrix}* & a\\ * & *\\ a & *\end{bmatrix}, \begin{bmatrix}* & c\\ * & *\\ a & *\end{bmatrix}\right\},\\
\begin{bmatrix}* & *\\ * & c\\ b & *\end{bmatrix} &\not\in \Sg_{\bA^6}\left\{\begin{bmatrix}* & *\\ * & c\\ c & *\end{bmatrix}, \begin{bmatrix}* & *\\ * & b\\ b & *\end{bmatrix}, \begin{bmatrix}* & *\\ * & b\\ a & *\end{bmatrix}, \begin{bmatrix}* & *\\ * & a\\ a & *\end{bmatrix}\right\},\\
\begin{bmatrix}c & *\\ * & *\\ * & c\end{bmatrix} &\not\in \Sg_{\bA^6}\left\{\begin{bmatrix}a & *\\ * & *\\ * & a\end{bmatrix}, \begin{bmatrix}b & *\\ * & *\\ * & c\end{bmatrix}, \begin{bmatrix}c & *\\ * & *\\ * & b\end{bmatrix}, \begin{bmatrix}b & *\\ * & *\\ * & b\end{bmatrix}\right\},\\
\begin{bmatrix}* & b\\ c & *\\ * & *\end{bmatrix} &\not\in \Sg_{\bA^6}\left\{\begin{bmatrix}* & b\\ b & *\\ * & *\end{bmatrix}, \begin{bmatrix}* & c\\ a & *\\ * & *\end{bmatrix}, \begin{bmatrix}* & a\\ a & *\\ * & *\end{bmatrix}, \begin{bmatrix}* & c\\ c & *\\ * & *\end{bmatrix}\right\}.\qedhere
\end{align*}
\end{proof}

\begin{rem} The three element algebra considered in this section is closely connected to a counterexample from the theory of CSPs which are solved by the basic Linear Programming relaxation, due to Kun, which can be found in \cite{dalmau-approximation}. Consider the commutative binary operation $s_2$ (for symmetric, not semilattice) on $\{-,0,+\}$ given by the following table.
\begin{center}
\begin{tabular}{c|ccc} $s_2$ & $-$ & $0$ & $+$\\ \hline $-$ & $-$ & $-$ & $0$\\ $0$ & $-$ & $0$ & $+$\\ $+$ & $0$ & $+$ & $+$\end{tabular}
\end{center}
If we define symmetric operations $s_n : \{-,0,+\}^n \rightarrow \{-,0,+\}$ by
\[
s_n(x_1, ..., x_n) = \begin{cases}+ & \text{if }\sum_i x_i > 0,\\ 0 & \text{if }\sum_i x_i = 0,\\ - & \text{if }\sum_i x_i < 0,\end{cases}
\]
where we have identified $-,0,+$ with $-1,0,1$, then an inductive argument shows that $s_n \in \Clo(s_2)$ for every $n$, so the associated CSP is solved by the basic LP relaxation \cite{lp-width-1}. However, $\Clo(s_2)$ contains no totally symmetric operations of arity $3$, so it does not have a set operation and consequently does not have width 1 \cite{dalmau-width-1} (a ternary operation $t$ is \emph{totally symmetric} if it is symmetric and additionally satisfies the identity $t(x,x,y) \approx t(x,y,y)$).

The connection between this counterexample and the algebra $\bA_{11}$ considered in this appendix is that if we rename $-,0,+$ to $b,a,c$, then $f,g \in \Clo(s_2)$, and $\Clo(g)$ is the only minimal bounded width clone contained in $\Clo(s_2)$. In fact, $\Clo(s_2)$ already is a counterexample to the existence of terms satisfying (SM 5) or (SM 6), by exactly the same argument as the above.
\end{rem}

\end{document}